\newcommand{\nmb}[2]{\ifx!#1{\ref{nmb:#2}}%
\else\if.#1{\label{nmb:#2}}%
\else\if0#1{\label{nmb:#2}}%
\else{{#2}}%
\fi\fi\fi}
\newtheorem{proposition}[subsection]{Proposition}
\newtheorem*{proposition*}{Proposition}
\newtheorem{theorem}[subsection]{Theorem}
\newtheorem*{theorem*}{Theorem}
\newtheorem{lemma}[subsection]{Lemma}
\newtheorem*{lemma*}{Lemma}
\newtheorem{corollary}[subsection]{Corollary}
\newtheorem*{corollary*}{Corollary}
\newtheorem*{conjecture*}{Conjecture}
\def\o{\operatorname{\circ}}
\def\X{\mathfrak X}
\def\al{\alpha}
\def\be{\beta}
\def\ga{\gamma}
\def\de{\delta}
\def\ep{\varepsilon}
\def\ze{\zeta}
\def\et{\eta}
\def\th{\theta}
\def\ka{\kappa}
\def\la{\lambda}
\def\rh{\rho}
\def\si{\sigma}
\def\ta{\tau}
\def\ph{\varphi}
\def\om{\omega}
\def\Ga{\Gamma}
\def\De{\Delta}
\def\Ph{\Phi}
\def\Om{\Omega}
\def\i{^{-1}}
\def\x{\times}
\def\p{\partial}
\let\on=\operatorname
\def\L{\mathcal L}
\def\Diff{\on{Diff}}
\def\Emb{\on{Emb}}
\def\Imm{\on{Imm}}
\def\Met{\on{Met}}
\def\vol{\on{vol}}
\def\Vol{\on{Vol}}
\def\Fl{\on{Fl}}
\def\supp{\on{supp}}
\def\Lip{\on{Lip}}
\newcommand{\East}[2]{-\raisebox{0.1pt}{$\mkern-16mu\frac{\;\;#1\;}{\;\;#2\;}\mkern-16mu$}\to}
\def\symbol{\par\noindent\hangindent=1cm \hangafter=1}
\title{Manifolds of mappings for continuum mechanics}
\author{Peter W.\ Michor}
\address{Fakult\"at f\"ur Mathematik,
Universit\"at Wien, Os\-kar-Mor\-gen\-stern-Platz 1, A-1090 Wien, Austria.}
\email{peter.michor@univie.ac.at.}
\thanks{Peter W.\ Michor: Universit\"at Wien, Os\-kar-Mor\-gen\-stern-Platz 1, A-1090 Wien, Austria.
%remove the line below
%\\ Version from \today
}
\begin{document}
%-------------------

%------------------------

\thispagestyle{empty}

%--------------------------------------
\begin{abstract}
\vskip 3mm\footnotesize{

\vskip 4.5mm
\noindent
After an introduction to convenient calculus in infinite dimensions, the foundational material for manifolds of mappings is presented. The central character is the smooth convenient manifold $C^{\infty}(M,N)$ of all smooth mappings from a  finite dimensional Whitney manifold germ $M$ into a smooth manifold $N$. A Whitney manifold germ is a smooth (in the interior) manifold with a very general boundary, but still admitting a continuous Whitney extension operator. This notion is developed here for the needs of geometric continuum mechanics. 

\vspace*{2mm} \noindent{\bf 2010 Mathematics Subject Classification:
}  58B20, 58D15, 35Q31

\vspace*{2mm} \noindent{\bf Keywords and Phrases: }}
Convenient calculus, Manifolds of mappings, Diffeomorphism groups, Shape spaces, weak Riemannian 
metrics. 

\end{abstract}

\maketitle

\tableofcontents

\newpage

\section{Introduction}\nmb0{1}

At the  birthplace of the notion of manifolds, in the 
Habilitations\-schrift \cite[end of section I]{Riemann1854}, Riemann mentioned infinite dimensional 
manifolds explicitly. 
The translation into English in \cite{RiemannClifford} reads as follows:
\begin{quote}
\emph{There are manifoldnesses in which the 
determination of position requires not a finite number, but either an endless series or a continuous 
manifoldness of determinations of quantity. Such manifoldnesses are, for example, the possible 
determinations of a function for a given region, the possible shapes of a solid figure, \&c.}
\end{quote} 
 
Reading this with a lot of good will one can interpret it as follows: When Riemann sketched 
the general notion of a manifold, he also had in mind the notion of an infinite dimensional manifold of 
mappings between manifolds. He then went on to describe the notion of 
Riemannian metric and to talk about curvature.

The dramatis personae of this foundational chapter are named in the following diagram:
$$
\xymatrix@C=.4cm@R=1cm{
{\Diff(M)}  \ar[rr]^-{\text{ right-acts }}  \ar[rd]|{\text{ right-acts }} 
\ar@{->>}[dd]|{\Diff(M,\mu)}
& &\Emb(M,N) 
\ar[dl]|{\text{needs  }\bar g} \ar[dr]|{\Diff(M)}
& & \Diff_{\mathcal A}(N) \ar[dd]|{\text{ right-acts} } \ar[ll]_-{\text{ left-acts }}
\ar[ld]|{\text{ left-acts }}
\\
& \Met(M) %\Ga(S^2_+ T^*M) 
\ar@{->>}[rd]_{\Diff(M)} & & B(M,N) \ar[ld]^{\text{needs  }\bar g} &
\\
\Vol_{+}^1(M) && {\Met(M)}/{\Diff(M)} && \Met_{\mathcal A}(N) & 
}
$$
In this diagram:

\symbol
$M$ is a finite dimensional compact smooth manifold. 

\symbol
$N$ is a finite dimensional smooth manifolds without boundary, and $\bar g $ is one fixed background Riemannian metric on   $N$ which we always assume to be of bounded geometry; see Section \nmb!{5}.

\symbol
$\Met(N)=\Ga(S^2_+ T^*N)$ is the space of all Riemannian metrics on $N$.

\symbol
$\Diff(M)$ is the regular Fr\'echet Lie group of all diffeomorphisms on the compact manifold  $M$ with corners.

\symbol
$\Diff_{\mathcal A}(N),\; \mathcal A\in\{H^\infty,\mathcal S, c\} $ the regular Lie group of all smooth diffeomorphisms
 of decay $\mathcal A$ towards $\on{Id}_N$.

\symbol
$\Emb(M,N)$ is the infinite dimensional smooth manifold of all embeddings $M\to N$, 
which is the total space of a smooth principal fiber bundle $\Emb (M,N)\to B(M,N)=\Imm(M,N)/\Diff(M)$ 
with structure group $\Diff(M)$ and base manifold $B(M,N)$, the space of all smooth submanifolds of $N$ of type $M$.
It is possible to extend $\Emb(M,N)$ to the manifold of $\Imm(M,N)$ and $B(M,N)$ to the infinite dimensional orbifold $B_i(M,N)$.

\symbol
$\Vol^1_+(M)\subset\Ga(\on{vol}(M))$ is the space of all  positive smooth probability densities on the manifold $M$ with corners.

Since it will be of importance for geometric continuum mechanics, I will allow the source manifold $M$ to be quite general: $M$ can be a manifold with corners; see Section \nmb!{3}. This setting is worked out in detail in \cite{Michor80}.
Or $M$ can be a Whitney manifold germ, a notion originating in this paper; see Sec.\ \nmb!{4}. 

In this foundational chapter I will describe 
the theory of manifolds of mappings, of groups of diffeomorphisms, of manifolds of submanifolds (with corners), and of some striking results about weak Riemannian geometry on these spaces. See \cite{Bauer2014} for an overview 
article which is much more comprehensive for the aspect of shape spaces.

An explicit construction of manifolds of smooth mappings modeled on Fr\'echet spaces 
was described by  \cite{Eells58}. Differential calculus beyond the realm of Banach spaces has some 
inherent difficulties even in its definition; see section \nmb!{2}.
Smoothness of composition and inversion was first treated on the group of all smooth 
diffeomorphisms of a compact manifold in \cite{Leslie67}; however, there was a gap in the proof, 
which was first filled by \cite{Gutknecht77}. 
Manifolds of $C^k$-mappings and/or mappings of Sobolev classes were treated by 
\cite{Eliasson67}, \cite{Eells66}, Smale-Abraham \cite{Abraham62}, and \cite{Palais68}. Since these 
are modeled on Banach spaces, they allow solution methods for equations and have found a lot of 
applications. See in particular \cite{EbinMarsden70}. 

In preparation of this chapter I noticed, that the canonical chart construction for the manifold 
$C^{\infty}(M,N)$ even works if we allow $M$ to be a \emph{Whitney manifold germ}: These are modeled on open subsets  of closed subsets of $\mathbb R^m$ which (1) admit a continuous Whitney extension operator and (2) are the closure of their interior. See Section \nmb!{4} for a thorough discussion. Many results for them described below are preliminary, e.g., \nmb!{6.4}, \nmb!{7.2}.  I expect that they can be strengthened considerably, but I had not enough time to pursue them during the preparation of this chapter. 

I thank Reuven Segev and Marcelo Epstein for asking me for a contribution to this volume, and I thank 
them and Leonhard Frerick, Andreas Kriegl, Jochen Wengenroth, and Armin Rainer for helpful discussions.

\section{A short review of convenient calculus in infinite dimensions}\nmb0{2}

Traditional differential calculus works 
well for finite dimensional vector spaces and for Banach spaces. 
Beyond Banach spaces,
the main difficulty is that composition of 
linear mappings stops to be jointly continuous at the level of Banach 
spaces, for any compatible topology. Namely, if for a locally convex vector space $E$ and its dual 
$E'$ the evaluation mapping $\on{ev}:E\x E'\to \mathbb R$ is jointly continuous, then there are open 
neighborhoods of zero $U\subset E$ and $U'\subset E'$ with $\on{ev}(U\x U')\subset [-1,1]$. But then 
$U'$ is contained in the polar of the open set $U$, and thus is bounded. So $E'$ is normable, and a 
fortiori $E$ is normable. 

For locally convex spaces which are more general than Banach spaces,
we sketch here the convenient approach as explained in 
\cite{FrolicherKriegl88} and \cite{KrieglMichor97}.

The name \emph{convenient calculus} mimics the paper \cite{Steenrod67}
whose results (but not the name `convenient') was predated by
\cite{Brown61}, \cite{Brown63}, \cite{Brown64}.
They discussed compactly generated spaces as a cartesian closed category for algebraic topology.  
Historical remarks on only those developments of calculus beyond Banach spaces that led to convenient 
calculus are given in \cite[end of chapter I, p.\ 73ff]{KrieglMichor97}.

\subsection{The $c^\infty$-topology}\nmb.{2.1}
Let $E$ be a 
locally convex vector space. A curve $c:\mathbb R\to E$ is called 
\emph{smooth} or $C^\infty$ if all derivatives exist and are 
continuous. Let 
$C^\infty(\mathbb R,E)$ be the space of smooth curves. It can be 
shown that the set $C^\infty(\mathbb R,E)$ does not entirely depend on the locally convex 
topology of $E$, only on its associated bornology (system of bounded sets); see 
\cite[2.11]{KrieglMichor97}.
The final topologies  with respect to the following sets of mappings into $E$ (i.e., the finest topology on $E$ such that each map is continuous) coincide; see 
\cite[2.13]{KrieglMichor97}:
\begin{enumerate}
\item $C^\infty(\mathbb R,E)$.
\item The set of all Lipschitz curves 
(so that $\{\frac{c(t)-c(s)}{t-s}:t\neq s, |t|,|s|\le C\}$ 
is bounded in $E$, for each $C$). 
\item The set of injections $E_B\to E$ where $B$ runs through all bounded 
absolutely convex subsets in $E$, and where 
$E_B$ is the linear span of $B$ equipped with the Minkowski 
functional $\|x\|_B:= \inf\{\la>0:x\in\la B\}$.
\item The set of all Mackey-convergent sequences $x_n\to x$ 
(i.e., those for which there exists a sequence 
$0<\la_n\nearrow\infty$ with $\la_n(x_n-x)$ bounded).
\end{enumerate}
\emph{The resulting unique topology  is called the $c^\infty$-topology on $E$ and we write 
$c^\infty E$ for the resulting topological space.} 

In general 
(on the space $\mathcal{D}$ of test functions for example) it is finer 
than the given locally convex topology, it is not a vector space 
topology, since addition is no longer jointly 
continuous. Namely, even 
$c^\infty (\mathcal D\x \mathcal D)\ne c^\infty\mathcal D\x c^\infty\mathcal D$.

The finest among all locally convex topologies on $E$ 
which are coarser than $c^\infty E$ is the bornologification of the 
given locally convex topology. If $E$ is a Fr\'echet space, then 
$c^\infty E = E$. 

\subsection{Convenient vector spaces}\nmb.{2.2} 
A locally convex vector space 
$E$ is said to be a \emph{convenient 
vector space} if one of the following equivalent conditions holds
(called $c^\infty$-completeness); see \cite[2.14]{KrieglMichor97}:
\begin{enumerate}
\item For any $c\in C^\infty(\mathbb R,E)$ the (Riemann-) integral 
$\int_0^1c(t)dt$ exists in $E$.
\item Any Lipschitz curve in $E$ is locally Riemann integrable.
\item A curve $c:\mathbb R\to E$ is $C^\infty$ if and only if $\la\o c$ is $C^\infty$
for all $\la\in E^*$, where $E^*$ is the dual  
of all continuous linear functionals on $E$. 
\begin{itemize}
\item
Equivalently, for all 
$\la\in E'$, the dual  of all bounded linear functionals. 
\item
Equivalently, for all $\la\in \mathcal V$, where $\mathcal V$ is a subset of $E'$ which 
recognizes bounded subsets in $E$; see \cite[5.22]{KrieglMichor97}
\end{itemize}
{We call this \emph{scalarwise} $C^\infty$.}
\item Any Mackey-Cauchy sequence (i.e.,  $t_{nm}(x_n-x_m)\to 0$  
for some $t_{nm}\to \infty$ in $\mathbb R$) converges in $E$. 
This is visibly a mild completeness requirement.
\item If $B$ is bounded closed absolutely convex, then $E_B$ 
is a Banach space.
\item If $f:\mathbb R\to E$ is scalarwise $\Lip^k$, then $f$ is 
$\Lip^k$, for $k>1$.
\item If $f:\mathbb R\to E$ is scalarwise $C^\infty$ then $f$ is 
differentiable at 0.
\end{enumerate}

Here a mapping $f:\mathbb R\to E$ is called $\Lip^k$ if all 
derivatives up to order $k$ exist and are Lipschitz, locally on 
$\mathbb R$. That $f$ is scalarwise $C^\infty$ (resp., $\Lip^k$) 
means $\la\o f$ is $C^\infty$ (resp., $\Lip^k$) for all continuous 
(equiv., bounded) linear functionals on $E$.

\subsection{Smooth mappings}\nmb.{2.3}

Let $E$, and $F$ be convenient vector spaces, 
and let $U\subset E$ be $c^\infty$-open. 
A mapping $f:U\to F$ is called \emph{smooth} or 
$C^\infty$, if $f\o c\in C^\infty(\mathbb R,F)$ for all 
$c\in C^\infty(\mathbb R,U)$. See \cite[3.11]{KrieglMichor97}.

If $E$ is a Fr\'echet space, then this notion coincides with all other reasonable notions of 
$C^\infty$-mappings; see below. 
Beyond Fr\'echet spaces, as a rule, there are more smooth mappings in the 
convenient setting than in other settings, e.g., $C^\infty_c$. Moreover, any smooth mapping is continuous for the $c^\infty$-topologies, but in general not for the locally convex topologies: As shown in the beginning of Section \nmb!{2}, the evaluation mapping $\on{ev}:E\x E'\to \mathbb R$ is continuous only if $E$ is normable. On Fr\'echet spaces each smooth mapping is continuous; see the end of \nmb!{2.1}.

\subsection{Main properties of smooth calculus}\nmb.{2.4}
In the following all locally convex spaces are assumed to be convenient. 
\begin{enumerate}
\item For maps on Fr\'echet spaces the notion of smooth mapping from \nmb!{2.3} 
coincides with all other reasonable definitions. On 
$\mathbb R^2$ this is a nontrivial statement; see \cite{Boman67} or \cite[3.4]{KrieglMichor97}.
\item Multilinear mappings are smooth if and only if they are 
bounded; see \cite[5.5]{KrieglMichor97}.
\item If $E\supseteq U\East{f}{} F$ is smooth then the derivative 
$df:U\x E\to F$ is  
smooth, and also $df:U\to L(E,F)$ is smooth where $L(E,F)$ 
denotes the convenient space of all bounded linear mappings with the 
topology of uniform convergence on bounded subsets; see \cite[3.18]{KrieglMichor97}.
\item The chain rule holds; see \cite[3.18]{KrieglMichor97}.
\item The space $C^\infty(U,F)$ is again a convenient vector space 
where the structure is given by the  injection
$$
C^\infty(U,F) \East{C^\infty(c,\la)}{} \hspace{-.7cm}
\prod_{c\in C^\infty(\mathbb R,U), \la\in F^*} \hspace{-.6cm}
C^\infty(\mathbb R,\mathbb R),
\quad f\mapsto (\la\o f\o c)_{c,\la},
$$
and where $C^\infty(\mathbb R,\mathbb R)$ carries the topology of compact 
convergence in each derivative separately; see \cite[3.11 and 3.7]{KrieglMichor97}.
\item The exponential law holds; see \cite[3.12]{KrieglMichor97}.: For $c^\infty$-open $V\subset F$, 
$$
C^\infty(U,C^\infty(V,G)) \cong C^\infty(U\x V, G)
$$
is a linear diffeomorphism of convenient vector spaces.
\\
{\bf Note that this result (for $U=\mathbb R$) is the main assumption of variational calculus. Here it is a theorem.}
\item A linear mapping $f:E\to C^\infty(V,G)$ is smooth (by \thetag{2} equivalent to bounded) if 
and only if $E \East{f}{} C^\infty(V,G) \East{\on{ev}_v}{} G$ is smooth 
for each $v\in V$. 
(\emph{Smooth uniform boundedness theorem};
see \cite[theorem 5.26]{KrieglMichor97}.)
\item A mapping $f:U\to L(F,G)$ is smooth if and only if 
$$U \East{f}{} L(F,G) \East{\on{ev}_v}{} G$$ 
       is smooth for each $v\in F$, because then it is scalarwise smooth by the classical uniform 
       boundedness theorem. 
\item The following canonical mappings are smooth. This follows from the exponential law by simple 
       categorical reasoning; see \cite[3.13]{KrieglMichor97}.
\begin{align*}
&\operatorname{ev}: C^\infty(E,F)\x E\to F,\quad 
\operatorname{ev}(f,x) = f(x)\\
&\operatorname{ins}: E\to C^\infty(F,E\x F),\quad
\operatorname{ins}(x)(y) = (x,y)\\
&(\quad)^\wedge :C^\infty(E,C^\infty(F,G))\to C^\infty(E\x F,G)\\
&(\quad)^\vee :C^\infty(E\x F,G)\to C^\infty(E,C^\infty(F,G))\\
&\operatorname{comp}:C^\infty(F,G)\x C^\infty(E,F)\to C^\infty(E,G)\\
&C^\infty(\quad,\quad):C^\infty(F,F_1)\x C^\infty(E_1,E)\to 
\\&\qquad\qquad\qquad\qquad
\to C^\infty(C^\infty(E,F),C^\infty(E_1,F_1))\\
&\qquad (f,g)\mapsto(h\mapsto f\o h\o g)\\
&\prod:\prod C^\infty(E_i,F_i)\to C^\infty(\prod E_i,\prod F_i)
\end{align*}
\end{enumerate}

This ends our review of the standard results of convenient calculus. Just for the curious reader 
and to give a flavor of the arguments, 
we enclose a lemma that is used many times in the proofs of the results above.

\begin{lemma*} {\rm (Special curve lemma, \cite[2.8]{KrieglMichor97})}
Let $E$ be a  locally convex vector space.
Let $x_n$ be a 
sequence which converges fast
to $x$ in $E$; i.e., for each 
$k\in \mathbb N$ the sequence $n^k(x_n-x)$ is bounded. 
Then the \emph{infinite polygon} through the $x_n$ can be 
parameterized as a  
smooth curve $c:\mathbb R\to E$ such that $c(\frac1n)=x_n$ and $c(0)=x$.
\end{lemma*}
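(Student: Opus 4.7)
The plan is to construct $c$ explicitly and then verify smoothness at the origin by hand. After translating, assume $x=0$, so the hypothesis becomes: $n^j x_n$ is bounded in $E$ for every $j\in\mathbb N$. Fix once and for all a function $\varphi\in C^\infty([0,1],[0,1])$ with $\varphi(0)=0$, $\varphi(1)=1$ and $\varphi^{(k)}(0)=\varphi^{(k)}(1)=0$ for all $k\ge 1$, and let $s_n(t):=n(n+1)\bigl(t-\tfrac{1}{n+1}\bigr)$ be the affine bijection $[1/(n+1),1/n]\to[0,1]$. Define
$$
c(t)=\begin{cases} 0, & t\le 0,\\
x_{n+1}+\varphi(s_n(t))(x_n-x_{n+1}), & t\in[1/(n+1),1/n],\\
x_1, & t\ge 1.\end{cases}
$$
Because all derivatives of $\varphi$ vanish at $0$ and $1$, the pieces match smoothly at every junction $1/n$, so $c\in C^\infty(\mathbb R\setminus\{0\},E)$; by construction $c(1/n)=x_n$ and $c\equiv 0$ on $(-\infty,0]$.

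The entire content of the lemma is smoothness at $t=0$. Since $s_n$ is affine, differentiating on each segment yields
$$
c^{(k)}(t)=(n(n+1))^k\,\varphi^{(k)}(s_n(t))\,(x_n-x_{n+1}),\qquad t\in[1/(n+1),1/n].
$$
The fast-decay hypothesis gives, for every $j$, that both $n^j x_n$ and $n^j x_{n+1}$ are bounded in $E$; hence $n^{j+1}(x_n-x_{n+1})$ is bounded, and therefore $n^j(x_n-x_{n+1})\to 0$ in $E$. Applied with $j=2k$ and combined with the uniform bound $|\varphi^{(k)}|\le C_k$, this forces $c^{(k)}(t)\to 0$ in $E$ as $t\to 0^+$. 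Together with $c^{(k)}\equiv 0$ on $(-\infty,0)$, we obtain a continuous extension with $c^{(k)}(0)=0$.

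The remaining step, which is the main obstacle, is to upgrade this continuous extension to the existence of the two-sided derivative at $0$. I would proceed by induction on $k$: assuming $c^{(k-1)}(0)=0$ has been established, I estimate the difference quotient using $1/t\le n+1$ on $[1/(n+1),1/n]$:
$$
p\!\left(\tfrac{c^{(k-1)}(t)}{t}\right)\le C_{k-1}(n+1)(n(n+1))^{k-1}\,p(x_n-x_{n+1})
$$
for every continuous seminorm $p$ on $E$. The right-hand side is of order $n^{2k-1}\,p(x_n-x_{n+1})$, which tends to $0$ by the same decay argument applied at one higher power. Hence $c^{(k)}(0^+)=0$, and the left derivative vanishes since $c$ is constant there, closing the induction. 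Thus $c\in C^\infty(\mathbb R,E)$. The real point, and the reason the fast-convergence hypothesis is unavoidable, is that the derivatives pick up factors growing like $n^{2k}$ on the $n$-th segment, so we need polynomial bounds of every order on $x_n-x_{n+1}$ to control infinitely many segments simultaneously as $t\to 0^+$.
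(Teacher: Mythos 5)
Your proof is correct and follows essentially the same route as the cited source: the paper states this lemma without proof, referring to \cite[2.8]{KrieglMichor97}, and the argument there is exactly your construction (a smooth transition function reparameterized affinely onto each segment $[\tfrac1{n+1},\tfrac1n]$, the bound $c^{(k)}\sim (n(n+1))^k(x_n-x_{n+1})$ on the $n$-th segment, and fast convergence to force all derivatives and difference quotients to vanish at $0$). Nothing to add.
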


\subsection{Remark}\nmb.{2.5}
Convenient calculus (i.e., having properties 6 and 7) exists also for:
\begin{itemize}
\item  Real analytic mappings; see \cite{KrieglMichor90} or \cite[Chapter II]{KrieglMichor97}.
\item  Holomorphic mappings; see \cite{KrieglNel85} or \cite[Chapter II]{KrieglMichor97} (using the notion of 
\cite{Fantappie30,Fantappie33}).
\item  Many classes of Denjoy--Carleman ultradifferentiable functions, both of Beurling type and of 
Roumieu type, see \cite{KMRc,KMRq,KMR15c,KMR15d}.
\item With some adaptations, $\Lip^k$; see \cite{FrolicherKriegl88}. One has to adapt the exponential law \nmb!{2.4}.9 in the obvious way. 
\item With more adaptations, even $C^{k,\al}$ 
(the $k$-th derivative is H\"older-continuous with index $0<\al\le 1$); see 
\cite{FF89}, \cite{Faure91}. 
Namely, if $f$ is $C^{k,\al}$ and $g$ is $C^{k,\be}$, then $f\o g$ is $C^{k,\al\be}$.
\end{itemize}
Differentiability $C^n$ cannot be described by a convenient approach 
(i.e., allowing result like \nmb!{2.4}). Only such differentiability notions allow this, which can be described by boundedness conditions only.   

We shall treat $C^n$ mapping spaces using the following result.

\subsection{Recognizing smooth curves}
\nmb.{2.6}
The following result is very useful if one wants to apply convenient calculus to spaces which are 
not tied to its categorical origin, like the Schwartz spaces $\mathcal S$, $\mathcal D$, or Sobolev 
spaces; for its uses see \cite{MichorMumford13} and \cite{KMR14}.
In what follows $\si(E,\mathcal V)$ denotes the initial (also called  weak) topology on $E$ with respect to a set $\mathcal V\subset E'$.  

\begin{theorem*}
{\rm \cite[Theorem 4.1.19]{FrolicherKriegl88}}
Let $c:\mathbb R\to E$ be a curve in a convenient vector space $E$. Let 
$\mathcal{V}\subset E'$ be a subset of bounded linear functionals such that 
the bornology of $E$ has a basis of $\sigma(E,\mathcal{V})$-closed sets. 
Then the following are equivalent:
\begin{enumerate}
\item $c$ is smooth
\item There exist locally bounded curves $c^{k}:\mathbb R\to E$ such that
      $\la\o c$ is smooth $\mathbb R\to \mathbb R$ with $(\la\o c)^{(k)}=\la\o
      c^{k}$, for each $\la\in\mathcal V$. 
\end{enumerate}
If $E=F'$ is the dual of a convenient vector space $F$, then for any point separating subset
$\mathcal{V}\subseteq F$ the bornology of $E$ has a basis of 
$\si(E,\mathcal{V})$-closed subsets, by {\rm \cite[~4.1.22]{FrolicherKriegl88}}.
\end{theorem*}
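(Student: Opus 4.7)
\medskip

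\noindent\textbf{Proof proposal.} The implication $(1)\Rightarrow(2)$ I would dispatch immediately by setting $c^k:=c^{(k)}$, the $k$-th iterated derivative of the smooth curve $c$. By \nmb!{2.4}.3 each iterate is again a smooth, hence locally bounded, curve in $E$, and for every $\la\in E'$ the chain rule forces $(\la\o c)^{(k)}=\la\o c^{(k)}=\la\o c^k$. The substantive direction $(2)\Rightarrow(1)$ I would prove by combining a Taylor expansion of the smooth scalar functions $\la\o c$ with the passage from $\mathcal V$-scalarwise boundedness to genuine boundedness in $E$, which is the algebraic content of the bornological hypothesis.

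As a preliminary, I would extract from the hypothesis that $\mathcal V$ \emph{recognises bounded subsets}: any $B\subset E$ satisfying $\sup_{b\in B}|\la(b)|<\infty$ for every $\la\in\mathcal V$ is bounded in $E$. The idea is that the $\sigma(E,\mathcal V)$-closed absolutely convex hull of such a $B$ is $\sigma(E,\mathcal V)$-closed and still scalarwise bounded, and is absorbed by some element of the hypothesised $\sigma(E,\mathcal V)$-closed basis of the bornology by a standard bipolar manipulation. Granted this, I would fix $t\in\mathbb R$ and $\ep>0$, apply Taylor's theorem with integral remainder to the scalar function $\la\o c$ (using $(\la\o c)^{(k)}=\la\o c^k$), and collect the vector-valued part into
\[
R_n(t,h):=h^{-n}\Bigl(c(t+h)-c(t)-\sum_{k=1}^{n-1}\tfrac{h^k}{k!}c^k(t)\Bigr).
\]
The Taylor formula rearranges to
\[
\la(R_n(t,h))=\tfrac{1}{(n-1)!}\int_0^1(1-s)^{n-1}\la(c^n(t+sh))\,ds,
\]
which is uniformly bounded in $h\in[-\ep,\ep]\setminus\{0\}$ because $c^n$ is locally bounded. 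Hence $\{R_n(t,h):0<|h|\le\ep\}$ is $\mathcal V$-scalarwise bounded, and by the preliminary it is bounded in $E$.

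Specialising to $n=1$ shows that $c$ is locally Lipschitz; specialising to $n=2$ rewrites $h^{-1}(c(t+h)-c(t))-c^1(t)=h\,R_2(t,h)$, whence the difference quotient Mackey-converges to $c^1(t)$ and $c'(t)=c^1(t)$. The data of (2) is stable under the shift $(c,c^1,c^2,\ldots)\mapsto(c^1,c^2,c^3,\ldots)$, so the same argument at each level yields $(c^k)'=c^{k+1}$; iterating produces all Mackey derivatives of $c$, each locally bounded, and $c$ is smooth in the convenient sense of \nmb!{2.2}.3. For the final assertion about $E=F'$, I would invoke Alaoglu--Bourbaki: every equicontinuous subset of $F'$ is relatively $\sigma(F',F)$-compact, so its $\sigma(F',F)$-closure is equicontinuous, bounded, and $\sigma(F',F)$-closed, and hence \emph{a fortiori} $\sigma(F',\mathcal V)$-closed for any $\mathcal V\subseteq F$; point-separation of $\mathcal V$ is used only to ensure $\sigma(F',\mathcal V)$ is Hausdorff so that closures behave. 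I expect the preliminary ``scalarwise bounded $\Rightarrow$ bounded'' extraction to cost the most effort: the Taylor-and-Mackey portion becomes clean bookkeeping once it is in hand, but unpacking the geometric content of the $\sigma(E,\mathcal V)$-closed basis hypothesis into this usable form needs the kind of polar manipulations familiar from duality theory.
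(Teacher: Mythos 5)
There is a genuine gap, and it sits exactly where you predicted the cost would be: your ``preliminary'' --- that the hypothesis forces $\mathcal V$ to recognise bounded sets, i.e.\ that $\mathcal V$-scalarwise bounded subsets of $E$ are bounded --- is false, and the paper explicitly warns about this immediately after the statement: \emph{``note that $\mathcal V$ does not need to recognize bounded sets.''} A concrete counterexample lives inside the theorem's own last clause: take $F=\ell^1$, $E=F'=\ell^\infty$, and $\mathcal V=\{e_n\}$ the unit vectors, which separate points, so the bornology of $E$ does have a basis of $\sigma(E,\mathcal V)$-closed sets; yet $B=\{n\,e_n:n\in\mathbb N\}\subset\ell^\infty$ satisfies $\sup_{b\in B}|\langle b,e_m\rangle|=m<\infty$ for each $m$ while being unbounded in $\ell^\infty$. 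Your bipolar manipulation cannot rescue this: basis elements of the bornology absorb \emph{bounded} sets, and there is no reason for them to absorb a $\sigma(E,\mathcal V)$-closed, merely scalarwise bounded set. Consequently the step ``$\{R_n(t,h)\}$ is $\mathcal V$-scalarwise bounded, hence bounded'' does not follow, and with it the Lipschitz bound, the Mackey convergence of the difference quotients, and the whole of $(2)\Rightarrow(1)$ collapse.

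The repair is to use the hypotheses in a different order. Hypothesis (2) gives you that each $c^n$ is \emph{genuinely} locally bounded in $E$, so $B:=\{c^n(t+sh): s\in[0,1],\ |h|\le\ep\}$ is a bounded subset of $E$; by the basis hypothesis its $\sigma(E,\mathcal V)$-closed absolutely convex hull $\tilde B$ is still bounded. Your Taylor identity exhibits $\la(R_n(t,h))$, for every $\la\in\mathcal V$, as (a fixed multiple of) an average of values of $\la$ on $B$; if $R_n(t,h)\notin\tilde B$, Hahn--Banach separation \emph{in the topology $\sigma(E,\mathcal V)$} would produce a functional in the span of $\mathcal V$ violating that identity. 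So $R_n(t,h)\in\tilde B$, which is the boundedness you need --- obtained from the specific bounded set $B$ and the closedness of the hull, not from any global ``scalarwise bounded $\Rightarrow$ bounded'' principle. With that substitution your $n=1$, $n=2$, and shift-and-iterate bookkeeping goes through, and your $(1)\Rightarrow(2)$ direction and the Alaoglu-type argument for $E=F'$ (compactness of polars plus Hausdorffness of $\sigma(F',\mathcal V)$) are fine as sketches. Note that the paper itself gives no proof; it cites \cite[4.1.19]{FrolicherKriegl88}, where the argument runs along the corrected lines above.
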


This theorem is surprisingly strong: 
note that $\mathcal V$ does not need to recognize bounded sets. 
We shall use the theorem in situations where $\mathcal V$ is just the set of all point evaluations on suitable Sobolev spaces.

\subsection{Fr\"olicher spaces}\nmb.{2.7}
Following \cite[Section 23]{KrieglMichor97} we describe here the following simple concept.
A \emph{Fr\"olicher space} or a space with \emph{smooth structure}
is a triple $(X,\mathcal{C}_X, \mathcal{F}_X)$ 
consisting of a set $X$, a subset $\mathcal{C}_X$ of the set of all 
mappings $\mathbb R\to X$, and a subset $\mathcal{F}_X$ of the set of all 
functions $X\to \mathbb R$, with the following two properties:
\begin{enumerate}
\item[(\nmb:{1})] A function $f:X\to \mathbb R$ belongs to $\mathcal{F}_X$ if and only if 
       $f\o c \in C^\infty(\mathbb R,\mathbb R)$ for all $c\in \mathcal{C}_X$.
\item[(\nmb:{2})] A curve $c:\mathbb R\to X$ belongs to $\mathcal{C}_X$ if and only if 
       $f\o c \in C^\infty(\mathbb R,\mathbb R)$ for all $f\in \mathcal{F}_X$.
\end{enumerate}
Note that a set $X$ together with any subset $\mathcal{F}$ of the set of 
functions $X\to \mathbb R$ generates a unique Fr\"olicher space $(X,\mathcal{C}_X, 
\mathcal{F}_X)$, where we put in turn:
\begin{align*}
\mathcal{C}_X &:= \{c:\mathbb R\to X: f\o c\in C^\infty(\mathbb R,\mathbb R)
     \text{ for all }f\in \mathcal{F}\},\\
\mathcal{F}_X &:= \{f:X\to \mathbb R: f\o c\in C^\infty(\mathbb R,\mathbb R)
     \text{ for all }c\in \mathcal{C}_X\},
\end{align*}
so that $\mathcal{F}\subseteq \mathcal{F}_X$. The set $\mathcal{F}$ will be called a 
\emph{generating set of functions for the Fr\"olicher space}. 
Similarly a set $X$ together with any subset $\mathcal C$ of the set of curves $\mathbb R\to X$
generates a Fr\"olicher space; $\mathcal C$ is then called a \emph{generating set of curves} for this Fr\"olicher structure.
Note that \emph{a locally convex space $E$ is convenient if and only if it is a 
Fr\"olicher space with the structure whose space $\mathcal C_E$ of smooth curves is the one described in \nmb!{2.1}, or whose space $\mathcal F_E$ of smooth functions is described in 
\nmb!{2.3}}. This follows directly from \nmb!{2.2}. 

On each Fr\"olicher space we shall consider the final topology with respect to all  smooth curves $c:\mathbb R\to X$ in $\mathcal C_X$; i.e., the coarsest topology such that each such $c$ is continuous. This is in general finer that the initial topology with respect to all functions in $\mathcal F_X$. 

A mapping $\ph:X\to Y$ between two Fr\"olicher spaces is called 
\emph{smooth} if one of
the following three equivalent conditions hold
\begin{enumerate}
\item[(\nmb:{3})] For each $c\in \mathcal{C}_X$ the composite $\ph\o c$ is in 
       $\mathcal{C}_Y$. Note that here $\mathcal C_X$ can be replaced by a generating set 
       $\mathcal C$ of curves in $X$.
\item[(\nmb:{4})] For each $f\in \mathcal{F}_Y$ the composite $f\o \ph$ is in 
       $\mathcal{F}_X$. Note that $\mathcal{F}_Y$ can be replaced by a generating set of functions.
\item[(\nmb:{5})] For each $c\in \mathcal{C}_X$ and for each $f\in \mathcal{F}_Y$ the 
       composite $f\o\ph\o c$ is in $C^\infty(\mathbb R,\mathbb R)$.
\end{enumerate}

 The set of all smooth mappings from $X$ to $Y$ will be 
denoted by $C^\infty(X,Y)$. Then we have 
$C^\infty(\mathbb R,X)=\mathcal{C}_X$ and $C^\infty(X,\mathbb R)=\mathcal{F}_X$.

\emph{Fr\"olicher spaces and smooth mappings form a category which is complete, cocomplete, and cartesian closed}, by \cite[23.2]{KrieglMichor97}.

Note that there is the finer notion of diffeological spaces $X$ introduced by Souriau: These come equipped with a set of mappings from open subsets of $\mathbb R^n$'s into $X$ subject to some obvious properties concerning reparameterizations by $C^{\infty}$-mappings; see 
\cite{Iglesias-Zemmour13}. The obvious functor associating the generated Fr\"olicher space to a diffeological space is both left and right adjoint to the embedding of the category of Fr\"olicher spaces into the category of diffeological spaces.  A characterization of those diffeological spaces which are Fr\"olicher spaces is in \cite[Section 2.3]{Watts12}.

\section{Manifolds with corners}\nmb0{3}

In this section we collect some results which are essential for the extension of the  convenient setting for manifolds of mappings to a source manifold which has corners and which need not be compact.  

\subsection{Manifolds with corners}
\nmb.{3.1}
For more information we refer to \cite{DouadyHerault73}, \cite{Michor80}, \cite{Melrose96}.
Let $Q=Q^m=\mathbb R^m_{\ge 0}$ be the positive orthant or quadrant. By Whitney's extension theorem or Seeley's theorem (see also the discussion in \nmb!{4.1} -- \nmb!{4.3}),
the restriction $C^{\infty}(\mathbb R^m)\to C^{\infty}(Q)$ is a surjective continuous linear mapping which admits a continuous linear section (extension mapping); so $C^{\infty}(Q)$ is a direct summand in $C^{\infty}(\mathbb R^m)$. A point $x\in Q$ is called a \emph{corner of codimension (or index)} $q>0$ if $x$ lies in the intersection of $q$ distinct coordinate hyperplanes. Let $\p^q Q$ denote the set of all corners of codimension $q$.

A manifold with corners (recently also called a quadrantic manifold) $M$ 
is a smooth manifold modeled on open subsets of $Q^m$.
We assume that it is connected and second countable; then it is paracompact and each open cover admits a subordinated smooth partition of unity. 

We do not assume that $M$ is oriented, but for Moser's theorem we will eventually assume that $M$ is compact. 
Let $\p^q M$ denote the set of all corners of codimension $q$. Then 
$\p^q M$ is a submanifold without boundary of codimension $q$ in $M$; 
it has finitely many 
connected components if $M$ is compact. 
We shall consider $\p M$ as stratified into the connected components of all $\p^q M$ for $q > 0$. 
Abusing notation we will call $\p^q M$ the boundary stratum of codimension $q$; this will lead to no confusion. Note that $\p M$ itself is not a manifold with corners.
We shall denote by $j_{\p^q M}:\p^q M\to M$ the embedding of the boundary stratum of codimension $q$ into $M$, and by $j_{\p M}:\p M\to M$ the whole complex of embeddings of all strata.

Each diffeomorphism of $M$ restricts to a diffeomorphism of $\p M$ and to a diffeomorphism of each stratum $\p^q M$. The Lie 
algebra of $\Diff(M)$ consists of all vector fields $X$ on $M$ such that $X|_{\p^q M}$ is tangent to 
$\p^q M$. We shall denote this Lie algebra by $\X(M,\p M)$.

\begin{lemma}\nmb.{3.2}
Any manifold with corners $M$ is a submanifold with corners of an open manifold $\tilde M$ of the same dimension, and each smooth function on $M$ extends to a smooth function on $\tilde M$.
Each smooth vector bundle over $M$ extends to a smooth vector bundle over $\tilde M$.
Each immersion (embedding) of $M$ into a smooth manifold $N$ without boundary is the restriction of an immersion (embedding) of a (possibly smaller) $\tilde M\supset M$ into $N$.  
\end{lemma}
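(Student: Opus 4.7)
The plan is to reduce everything to the local Seeley/Whitney extension operator $C^{\infty}(Q^m)\to C^{\infty}(\mathbb R^m)$ mentioned in \nmb!{3.1}, and then to glue. Start with a locally finite atlas $(U_{\al},\ph_{\al})_{\al\in A}$ of $M$ with $\ph_{\al}(U_{\al})=V_{\al}$ open in $Q^m$; enlarge each $V_{\al}$ to an open set $\tilde V_{\al}\subset \mathbb R^m$ with $V_{\al}=\tilde V_{\al}\cap Q^m$. For each nonempty intersection $U_{\al\be}=U_{\al}\cap U_{\be}$ the transition $\ph_{\be\al}=\ph_{\be}\o\ph_{\al}^{-1}:\ph_{\al}(U_{\al\be})\to\ph_{\be}(U_{\al\be})$ is smooth on a subset of $Q^m$, so by the continuous Whitney extension it extends to a smooth map on a neighborhood $W_{\al\be}\subset\tilde V_{\al}$. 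Shrinking the $W_{\al\be}$'s (using local finiteness) one gets a cocycle of locally defined diffeomorphisms with open domains in $\mathbb R^m$ that agree with the original transitions on $V_{\al}\cap W_{\al\be}$. The quotient $\tilde M=\bigsqcup_{\al}\tilde V_{\al}/\sim$ with respect to this cocycle is an open $m$-manifold containing $M$ as a submanifold with corners; this proves the first half of (1).

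For the extension of a given $f\in C^{\infty}(M)$, pick a smooth partition of unity $(\chi_{\al})$ subordinate to $(U_{\al})$, apply Seeley's operator to each $(\chi_{\al}f)\o\ph_{\al}^{-1}\in C^{\infty}(V_{\al})$ to obtain $\tilde g_{\al}\in C^{\infty}(\tilde V_{\al})$, multiply by a cutoff in $\tilde V_{\al}$ supported in a neighborhood of $\overline{\supp(\chi_{\al}f)\o\ph_{\al}^{-1}}$, and sum the resulting functions on $\tilde M$; this gives the desired $\tilde f\in C^{\infty}(\tilde M)$. Statement (2) is the same argument applied to the cocycle $g_{\al\be}:U_{\al\be}\to GL(k)$ of a vector bundle: extend each matrix entry to $W_{\al\be}$ via Whitney, shrink further so that the extended matrices stay invertible (an open condition containing the compact strata $\supp(\chi_{\al})\cap\supp(\chi_{\be})$ in $V_{\al\be}$), and check the cocycle identity on a still smaller common refinement; the resulting bundle on (a possibly shrunken) $\tilde M$ restricts to the original bundle on $M$.

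For (3), let $j:M\to N$ be an immersion. Embed $N$ as a closed submanifold of some $\mathbb R^n$ with tubular neighborhood $U\supset N$ and smooth retraction $r:U\to N$. Apply the function extension from the previous paragraph to each of the $n$ coordinate functions of $j:M\to\mathbb R^n$ to get a smooth map $\tilde\jmath:\tilde M\to\mathbb R^n$ with $\tilde\jmath|_M=j$. The set $\{x\in\tilde M:\tilde\jmath(x)\in U\}$ is open and contains $M$, and on it $\{x:T_x\tilde\jmath\text{ is injective}\}$ is also open and contains $M$; restrict $\tilde M$ to this intersection (still a neighborhood of $M$ of the same dimension, hence an open manifold of the kind required, after possibly discarding components disjoint from $M$). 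On this shrunken $\tilde M$ the map $r\o\tilde\jmath:\tilde M\to N$ is a smooth immersion extending $j$.

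The remaining and main obstacle is the embedding case: having an immersion $\tilde j:=r\o\tilde\jmath$ that extends the embedding $j$, one must further shrink $\tilde M$ to a neighborhood of $M$ on which $\tilde j$ is injective and a homeomorphism onto its image. For each $x\in M$, local injectivity of $\tilde j$ follows from it being an immersion, so pick an open neighborhood $\tilde W_x\subset \tilde M$ of $x$ on which $\tilde j$ is an embedding. Injectivity across different $\tilde W_x$'s is the delicate point: exploit that $j:M\to N$ is already an embedding together with the local finiteness of an exhaustion $(K_n)$ of $M$ by compacta with $K_n\subset \on{int}K_{n+1}$ to build inductively a neighborhood $\tilde M'\subset\bigcup_x\tilde W_x$ of $M$ on which no two points from different ``branches'' are identified; this is a standard Hausdorff-topology argument using that $\tilde j^{-1}(\tilde j(K_n))\cap K_{n+1}=K_n$ and the fact that $\tilde j$ is a local embedding. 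The resulting $\tilde M'$ is the required open manifold.
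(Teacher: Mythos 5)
Your construction of the ambient open manifold $\tilde M$ has a genuine gap at the gluing step. After extending each transition map $\ph_{\be\al}$ by Whitney/Seeley to a smooth map $\tilde\ph_{\be\al}$ on an open set $W_{\al\be}\subset\mathbb R^m$, the cocycle identity $\tilde\ph_{\ga\be}\o\tilde\ph_{\be\al}=\tilde\ph_{\ga\al}$ is guaranteed only on the intersection with $Q^m$, where all three maps coincide with the original transitions. Two smooth maps agreeing on $W\cap Q^m$ need \emph{not} agree on any neighborhood of a boundary point of $Q^m$: their difference can behave like $e^{-1/x_1^2}$ for $x_1<0$ and $0$ for $x_1\ge 0$, so the set where the extended cocycle fails can accumulate on $\p Q^m$. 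Consequently no shrinking of the $W_{\al\be}$ (which must remain neighborhoods of boundary points to be of any use) restores transitivity of $\sim$, and the quotient $\bigsqcup_\al\tilde V_\al/\sim$ is not well defined. The same problem recurs verbatim in your treatment of the vector bundle: ``check the cocycle identity on a still smaller common refinement'' is precisely the step that fails for independently extended $GL(k)$-valued transition functions. (For the bundle one can instead extend a classifying map $M\to Gr(k,N)$ as a single map and pull back the tautological bundle --- but only once $\tilde M$ itself exists.)

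The paper sidesteps all gluing: it takes $\tilde M:=M\setminus\p M$, chooses a complete vector field $X$ on $M$ that is nowhere zero and inward pointing along $\p M$, and replaces $M$ by the diffeomorphic copy $\Fl^X_\ep(M)$, which is a submanifold with corners of the open manifold $M\setminus\p M$; the extension statements are then reduced to Whitney's theorem and local arguments. Given a correct $\tilde M$, your remaining steps (partition of unity plus local Seeley extension for functions, openness of the maximal-rank condition for immersions, and the injectivity bootstrap for embeddings) are sound and essentially parallel the paper's one-line arguments. But as written the proposal does not establish the first, and most substantive, assertion of the lemma.
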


\begin{proof}
Choose a vector field $X$ on $M$ which is complete, and along $\p M$ is nowhere 0 and pointing into the interior. Then for $\ep>0$ we can replace $M$ by the   flow image 
$\Fl_{\ep}^X(M)$ which is contained in the interior $\tilde M = M\setminus \p M$.  
The extension properties follow from the Whitney extension theorem. An immersion extends, since its rank cannot fall locally.  An embedding $f$ extends since $\{(f(x),f(y)): (x,y)\in M\x M\setminus \text{Diag}_M\}$ has positive distance to the closed $\text{Diag}_N$ in $N\x N$, locally in $M$, and we can keep it that way; see \cite[5.3]{Michor80} for too many details.
\end{proof}

\subsection{Differential forms on manifolds with corners}\nmb.{3.3}
There are several differential complexes on a manifold with corners.  
If $M$ is not compact there are also the versions with compact support. 
\begin{itemize}
\item Differential forms that vanish near $\p M$. If $M$ is compact, this is the same as
the differential complex $\Om_c(M\setminus \p M)$ of differential forms with compact support 
in the open interior $M\setminus \p M$. 
\item $\Om(M,\p M) = \{\al\in \Om(M): j_{\p^q M}^*\al =0 \text{ for all } q\ge 1\}$, the complex of differential forms that pull back to 0 on each boundary stratum. 
\item $\Om(M)$, the complex of all differential forms. Its cohomology equals 
singular cohomology with real coefficients of $M$, since $\mathbb R\to \Om^0\to \Om^1\to \dots$
is a fine resolution of the constant sheaf on $M$; for that one needs existence of smooth partitions of unity and the Poincar\'e lemma which hold on manifolds with corners.
The Poincar\'e lemma can be proved as in \cite[9.10]{Michor08} in each quadrant.
\end{itemize}
If $M$ is an oriented manifold with corners of dimension $m$ and if $\mu\in \Om^m(M)$ is a nowhere vanishing form of top degree, then $\X(M)\ni X\mapsto i_X\mu\in \Om^{m-1}(M)$ is a linear isomorphism. 
Moreover, $X\in \X(M,\p M)$ (tangent to the boundary) if and only if $i_X\mu\in\Om^{m-1}(M,\p M)$.

\subsection{Towards the long exact sequence of the pair $(M,\p M)$}\nmb.{3.4} 
Let us consider the short exact sequence of differential graded algebras
$$
0\to \Om(M,\p M) \to \Om(M) \to \Om(M)/\Om(M,\p M)\to 0\,.
$$
The complex $\Om(M)/\Om(M,\p M)$ is a subcomplex of the product  of $\Om(N)$ for all connected components $N$ of all 
$\p^q M$. The quotient consists of forms which extend continuously over boundaries to $\p M$ with its induced topology in such a way that one can extend them to smooth forms on $M$; this is contained in the space of `stratified forms' as used in \cite{Valette15}. There Stokes' formula is proved for stratified forms.

\begin{proposition}[Stokes' theorem]\nmb.{3.5} 
For a connected oriented manifold $M$ with corners of dimension $\dim(M)=m$ and for any $\om\in\Om^{m-1}_c(M)$ we have
$$
\int_M d\om = \int_{\p^1M} j_{\p^1 M}^*\om\,.
$$
\end{proposition}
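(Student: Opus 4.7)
My plan is the classical reduction to a model computation on the quadrant $Q=\mathbb{R}^m_{\ge 0}$. Using the smooth partitions of unity available on any manifold with corners (noted in \nmb!{3.1}), I would pick a locally finite atlas of quadrant charts $\{(U_\alpha,\varphi_\alpha)\}$ with subordinate partition $\{\rho_\alpha\}$ and write $\omega=\sum_\alpha \rho_\alpha\omega$. Only finitely many terms are nonzero on $\supp\omega$, and both sides of the identity are linear in $\omega$, so it suffices to prove the formula when $\omega$ is supported in one chart, i.e.\ when (after transport via $\varphi_\alpha$) $\omega\in\Omega^{m-1}_c(Q)$.

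In that local model I write $\omega=\sum_{j=1}^m(-1)^{j-1}f_j\,dx^1\wedge\cdots\wedge\widehat{dx^j}\wedge\cdots\wedge dx^m$ with $f_j\in C_c^\infty(Q)$; then $d\omega=\bigl(\sum_j\partial_j f_j\bigr)\,dx^1\wedge\cdots\wedge dx^m$, and Fubini together with the fundamental theorem of calculus in the $x^j$-direction gives
$$\int_Q d\omega=-\sum_{j=1}^m\int_{\mathbb{R}^{m-1}_{\ge 0}}f_j(x^1,\ldots,0,\ldots,x^m)\,dx^1\cdots\widehat{dx^j}\cdots dx^m,$$
the boundary terms at $+\infty$ vanishing by compact support. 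On the face $H_j=\{x^j=0\}$ every summand of $\omega$ containing $dx^j$ pulls back to zero, so $j_{H_j}^*\omega=(-1)^{j-1}f_j|_{x^j=0}\,dx^1\cdots\widehat{dx^j}\cdots dx^m$. The induced boundary orientation on $H_j$ is given by the outward normal $-\partial_{x^j}$ and carries a sign of $(-1)^j$ relative to the coordinate form $dx^1\wedge\cdots\widehat{dx^j}\cdots\wedge dx^m$, so $\int_{H_j}j_{H_j}^*\omega=-\int f_j|_{x^j=0}$, and summing over $j$ matches the interior integral.

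The higher codimension strata $\partial^q M$ with $q\ge 2$ play no role on the right-hand side, since the statement integrates only over $\partial^1 M$ and, in any case, these strata sit inside each face as $(m-1)$-null sets. The step that I expect to need the most bookkeeping is checking that the chart-by-chart boundary-orientation convention glues into a well-defined global orientation of $\partial^1 M$: this rests on the fact that coordinate changes between quadrant charts map faces to faces, preserve the outward-pointing direction across each face, and are orientation-preserving on $Q$. A minor additional point is that $\supp j^*\omega$ need not be compact inside the open manifold $\partial^1 M$ because it may accumulate at higher-codimension corners; but it is contained in the compact set $\supp\omega\cap\p M$, which keeps the boundary integral finite.
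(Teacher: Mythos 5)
Your proof is correct, but it takes a genuinely different route from the one in the paper. You give the classical self-contained argument: partition of unity subordinate to quadrant charts, then a direct Fubini/fundamental-theorem computation on $Q=\mathbb R^m_{\ge 0}$, with the sign bookkeeping for the induced orientation on each face $\{x^j=0\}$ (your signs check out: the pullback contributes $(-1)^{j-1}$, the outward-normal convention contributes $(-1)^{j}$, giving $-\int f_j|_{x^j=0}$ per face, matching the interior integral). The paper instead \emph{assumes} Stokes' theorem for manifolds with boundary and reduces the corner case to it: it builds a cutoff function $g$ equal to $1$ near $\p^{\ge 2}M$ with $|\int_M dg\wedge\om|<\ep$ (the key estimate being that the radial cutoff $f(|x|)$ has $\int_{Q^m}|f'(|x|)|\,dx\le C_m\ep^{m-1}$ for $m\ge 2$), applies the boundary-case Stokes formula to $(1-g)\om$ on $M\setminus\p^{\ge 2}M$, and lets $\ep\to 0$. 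Your approach is more elementary in that it needs no prior Stokes theorem as input, and it makes the orientation conventions on $\p^1M$ completely explicit; its cost is that it is wedded to the quadrant model. The paper's excision argument buys generality: as remarked there, it extends verbatim to manifolds with more general codimension-$\ge 2$ singular strata (e.g.\ cone-like singularities), where no quadrant chart computation is available. You also correctly flag the two genuine subtleties — the gluing of face orientations under quadrant chart changes, and the fact that $\supp(j_{\p^1M}^*\om)$ need not be compact inside the open manifold $\p^1M$ though it sits inside the compact set $\supp\om\cap\p M$ (so the integral converges absolutely); the paper handles the latter implicitly by the observation that $\p^{\ge 2}M$ is a null set.
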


Note that $\p^1M$ may have several components. Some of these might be non-compact.

We shall deduce this result from Stokes' formula for a manifold with boundary by making precise the fact 
that $\p^{\ge 2} M$ has codimension 2 in $M$ and has codimension 1 with respect to $\p^1 M$. The proof also works for manifolds with more general boundary strata, like manifolds with cone-like singularities.
A lengthy full proof can be found in \cite{Conrad}.

\begin{proof}
We first choose a smooth decreasing function $f$ on $\mathbb R_{\ge 0}$ 
such that $f=1$ near 0 and $f(r)=0$ for $r\ge \ep$.
Then $\int_0^\infty f(r)dr <\ep$ and for $Q^m=\mathbb R^m_{\geq 0}$ with $m \geq 2$,
\begin{align*}
	\Big|\int_{Q^m} f'(|x|) \, dx\Big| &= C_m \Big|\int_0^\infty f'(r) r^{m-1} \, dr \Big| = 
	C_m \Big|\int_0^\infty f(r) (r^{m-1})' \, dr \Big|
	\\
	&= C_m  \int_0^\ep f(r) (r^{m-1})' \, dr \le C_m \ep^{m-1}\,, 
\end{align*}
where $C_m$ denotes the surface area of $S^{m-1} \cap Q^m$.
Given $\om\in\Om^{m-1}_c(M)$ we use the function $f$ on quadrant charts on $M$ to construct a function 
$g$ on $M$ that is 1 near $\p^{\ge2}M = \bigcup_{q\ge2}\p^q M$, has support close to $\p^{\ge 2}M$ and satisfies $\left| \int_M dg \wedge \om \right| < \ep$. Then $(1-g)\om$ is an $(m-1)$-form with compact support in the manifold 
with boundary $M\setminus \p^{\ge 2}M$, and Stokes' formula (cf.\ \cite[10.11]{Michor08}) now says
$$
\int_{M\setminus \p^{\ge 2}M} d((1-g)\om) = \int_{\p^1M} j_{\p^1 M}^*((1-g)\om)\,.
$$
But $\p^{\ge 2}M$ is a null set in $M$ and the quantities
$$
\Big| \int_M d((1-g)\om) - \int_M d\om\Big|  \quad\text{ and }\quad
\Big| \int_{\p^1M} j_{\p^1 M}^*((1-g)\om) - \int_{\p^1M} j_{\p^1 M}^*\om\Big| 
$$
are small if $\ep$ is small enough.
\end{proof}

 \subsection{Riemannian manifolds with bounded geometry}
 \nmb.{3.6}
 If $M$ is not necessarily compact without boundary we equip $M$ with a Riemannian 
 metric $g$ of bounded geometry which exists by \cite[Theorem 2']{Greene78}. This means that
 \\\indent $(I)$\qquad 
 The injectivity radius of $(M,g)$ is positive.
 \\\indent $(B_\infty)$\quad Each iterated covariant derivative of the curvature 
 \\\hphantom{A}\qquad\qquad is uniformly $g$-bounded: 
         $\|\nabla^i R\|_g<C_i$ for $i=0,1,2,\dots$.
 \\
 The following is a compilation of special cases of results collected in 
 \cite[chapter 1]{Eichhorn2007}. 
% who treats Sobolev spaces only for integral order.
 
 \begin{proposition*} [\cite{Kordjukov91}, \cite{Eichhorn92}]
 If $(M,g)$ satisfies $(I)$ and $(B_\infty)$ then the following holds
 \begin{enumerate}
 	\item $(M,g)$ is complete.
 	\item There exists $\ep_0>0$ such that for each $\ep\in (0,\ep_0)$ there
 is a countable cover of 
         $M$ by geodesic balls $B_\ep(x_\al)$ such that the cover of $M$ by the
 balls $B_{2\ep}(x_\al)$ 
         is still uniformly locally finite.  
 	\item Moreover there exists a partition of unity $1= \sum_\al \rh_\al$
 on $M$ such that $\rh_\al\ge 0$, 
         $\rh_\al\in C^\infty_c(M)$, $\on{supp}(\rh_\al)\subset B_{2\ep}(x_\al)$,
 and 
 				$|D_u^\be \rh_\al|<C_\be$ where $u$ are normal
 (Riemannian exponential) coordinates in 
         $B_{2\ep}(x_\al)$. 
 	\item In each $B_{2\ep}(x_\al)$, in normal coordinates, we have 
 	      $|D_u^\be g_{ij}|<C'_\be$,  
 	      $|D_u^\be g^{ij}|<C''_\be$, and 
 	      $|D_u^\be \Ga^m_{ij}|<C'''_\be$,
 				where all constants are independent of $\al$.  
 \end{enumerate}
 \end{proposition*}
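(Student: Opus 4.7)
The plan is to establish each of the four items in turn, with item (4) — the uniform bounds on metric components and Christoffel symbols in normal coordinates — being the technical heart from which items (2) and (3) largely follow.

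Item (1) is the easiest: if $i_0>0$ is a uniform lower bound for the injectivity radius, then from any point each geodesic extends at least for parameter length $i_0$, and by concatenating such segments one obtains geodesic completeness. Hopf--Rinow then yields metric completeness. For item (4), I would work in normal (Riemannian exponential) coordinates $u$ centered at $x_\al$ on a ball of radius less than $i_0$. The metric components $g_{ij}(u)$ can be recovered from Jacobi fields along radial geodesics $t\mapsto \exp_{x_\al}(tu)$, which satisfy $\nabla_t^2 J=-R(J,\dot\ga)\dot\ga$. Differentiating this equation repeatedly in $u$ expresses every $D_u^\be g_{ij}$ as a universal polynomial in iterated covariant derivatives of the curvature evaluated along the geodesic; the hypothesis $(B_\infty)$ then supplies the uniform bounds $|D_u^\be g_{ij}|<C'_\be$. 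Uniform non-degeneracy of $g_{ij}$ (for $\ep$ small enough, $g_{ij}$ stays within a definite range of $\de_{ij}$, since $g_{ij}(0)=\de_{ij}$ and the first derivatives vanish at $0$) gives the bounds on $g^{ij}$, and the bounds on $\Ga^m_{ij}=\tfrac12 g^{mk}(\p_i g_{jk}+\p_j g_{ik}-\p_k g_{ij})$ follow algebraically. The uniformity in $\al$ is the whole point: no constant depends on the center.

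For item (2), I would choose $\ep_0$ smaller than $i_0/3$ and small enough that the bounds from (4) apply on $B_{3\ep_0}(x)$ for every $x\in M$. For each $\ep\in(0,\ep_0)$ pick a maximal $\ep$-separated set $\{x_\al\}$ by Zorn; by maximality $\bigcup_\al B_\ep(x_\al)=M$. The balls $B_{\ep/2}(x_\al)$ are then pairwise disjoint, and the uniform control of $g$ in normal coordinates from (4) gives uniform upper and lower bounds on $\on{vol}(B_r(x))$ for $r\le 4\ep$. A standard volume-counting argument then bounds the number of indices $\be$ with $B_{2\ep}(x_\be)\cap B_{2\ep}(x_\al)\ne \emptyset$ by a constant independent of $\al$. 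For item (3), fix a smooth cutoff $\ch:\mathbb R\to[0,1]$ with $\ch=1$ on $[0,\ep]$ and support in $[0,2\ep]$, and put $\tilde\rh_\al(x):=\ch(d_g(x,x_\al))$. Since $d_g(x,x_\al)=|u|$ in normal coordinates at $x_\al$, the bounds from (4) give uniform bounds on $|D_u^\be \tilde\rh_\al|$. The normalized functions $\rh_\al:=\tilde\rh_\al/\sum_\be \tilde\rh_\be$ then form the desired partition of unity: the denominator is bounded below by $1$ since $\{B_\ep(x_\al)\}$ covers $M$, and the uniform local finiteness from (2) combined with the uniform bounds on the $\tilde\rh_\be$ yields the claimed uniform bounds on $|D_u^\be \rh_\al|$.

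The main obstacle is clearly item (4), and more specifically making precise the Jacobi-field/ODE reduction that expresses $D_u^\be g_{ij}$ in terms of $\nabla^k R$ with uniform constants. Once this is in place, (2) and (3) reduce to the standard packing and cutoff-renormalization arguments sketched above, while (1) is essentially automatic. This is precisely the division of labor carried out in \cite{Kordjukov91} and \cite{Eichhorn92}, so in the write-up I would import (4) directly from those references and deduce (1)--(3) from it.
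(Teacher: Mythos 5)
The paper does not actually prove this proposition: it is stated as ``a compilation of special cases of results collected in \cite[chapter 1]{Eichhorn2007}'' and imported wholesale from \cite{Kordjukov91} and \cite{Eichhorn92}, with no argument given. Your sketch is therefore more than the paper offers, and its overall architecture is the standard and correct one: completeness from positive injectivity radius plus Hopf--Rinow; the Jacobi-field reduction of $D_u^\be g_{ij}$ to universal polynomials in $\nabla^k R$ for item (4); a maximal $\ep$-separated set plus volume counting for (2); and normalized radial cutoffs for (3). Deferring the quantitative Jacobi-field estimates to the references is consistent with what the paper itself does.

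One point you gloss over in item (3) deserves to be made explicit, since it is where the uniformity claims actually bite. The bound $|D_u^\be \rh_\al|<C_\be$ is asserted in the normal coordinates $u$ centered at $x_\al$, but the functions $\tilde\rh_\be$ entering the denominator $\sum_\be \tilde\rh_\be$ are each built from the distance to $x_\be$, i.e.\ from the normal coordinates centered at $x_\be$. To differentiate them uniformly in the chart at $x_\al$ you need uniform $C^k$-bounds on the transition maps between overlapping normal coordinate charts; this is a standard consequence of the bounds in (4) (via the geodesic equation with uniformly bounded Christoffel symbols), but it is a separate lemma, not a formal consequence of ``$d_g(x,x_\be)=|u|$ in the chart at $x_\be$.'' A second, smaller omission: countability of the cover in (2) should be noted (it follows either from second countability of $M$, which the paper assumes, or from the disjointness of the balls $B_{\ep/2}(x_\al)$ together with the uniform lower volume bound and exhaustion by metric balls). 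Neither point invalidates the sketch; both are routine once (4) is in hand.
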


\subsection{Riemannian manifolds with bounded geometry allowing corners}\nmb.{3.7}
If $M$ has corners, we choose an open  manifold $\tilde M$ of the same dimension which contains $M$ as a submanifold with corners; see \nmb!{3.1}.  It is very desirable to prove that
then there exists a Riemannian metric $\tilde g$ on $\tilde M$ with bounded geometry such that each boundary component of each $\p^q M$ is totally geodesic. 

For a compact manifold with boundary (no corners of codimension $\ge 2$), existence of such a Riemannian metric  was proven in \cite[2.2.3]{G-BV14} in a more complicated context. A simple proof goes as follows: Choose a tubular neighborhood $U$ of $\p M$ in $\tilde M$ and use the  symmetry $\ph(u)=-u$ in the vector bundle structure  on $U$.  Given a metric $\tilde g$ on $\tilde M$, then $\p M$ is totally geodesic for the metric $\frac12 (\tilde g +\ph^*\tilde g)$ on $U$, since $\p M$ (the zero section) is the fixed point set of the isometry $\ph$. Now glue this metric to the $\tilde g$ using a partition of unity for the cover $U$ and $\tilde M\setminus V$ for a closed neighborhood $V$ of $\p M$ in $U$.

\emph{Existence of a  geodesic spray on a manifold with corners which is tangential to each boundary component $\p^q M$ was proved in \cite[2.8, see also 10.3]{Michor80}.}  A direct proof of this fact can be distilled from the proof of lemma \nmb!{5.9} below.
This is sufficient for constructing charts on the diffeomorphism group $\Diff(M)$ in \nmb!{6.1} below.

\section{Whitney manifold germs}\nmb0{4}

More general than manifolds with corners, Whitney manifold germs allow for quite singular boundaries but still controlled enough so that a continuous Whitney extension operator to an open neighborhood manifold exists. 

\subsection{Compact Whitney subsets}
\nmb.{4.1}
Let $\tilde M$ be an open smooth connected $m$-dimensional manifold. A closed connected subset $M\subset \tilde M$ is called a \emph{Whitney subset}, or $\tilde M\supset M$ is called a \emph{Whitney pair}, if
\begin{enumerate}
\item $M$ is the closure of its open interior in $\tilde M$, and:
\item There exists a continuous linear extension operator 
$$\mathcal E:\mathcal W(M) \to C^{\infty}(\tilde M,\mathbb R)$$ 
from the linear space $\mathcal W(M)$ of all Whitney jets of infinite order with its natural Fr\'echet topology (see below)
into the space $C^{\infty}(\tilde M,\mathbb R)$ of smooth functions on $\tilde M$ with the Fr\'echet topology of uniform convergence on compact subsets in all derivatives separately.  
\end{enumerate}
We speak of  a \emph{compact Whitney subset} or \emph{compact Whitney pair} if $M$ is compact.
In this case, in \thetag{2},  we may equivalently  require that $\mathcal E$ is linear continuous into the Fr\'echet space $C^{\infty}_L(\tilde M,\mathbb R)\subset C^{\infty}_c(\tilde M,\mathbb R)$ of smooth functions with support in a compact subset $L$ which contains $M$ in its interior, by using a suitable bump function. 

The property of being a Whitney pair is obviously invariant under diffeomorphisms (of $\tilde M$)
which act linearly and continuously both on $\mathcal W(M)$ and $C^{\infty}(\tilde M,\mathbb R)$ in a natural way.  

This property of being a Whiney pair is local in the following sense: If $\tilde M_i \supset M_i$ covers $\tilde M\supset M$, then  $\tilde M\supset M$ is a Whitney pair if and only if each $\tilde M_i \supset M_i$ is a Whitney pair, see Theorem \nmb!{4.4} below. 

\emph{More details:}
For  $\mathbb R^m\supset M$, by an extension operator 
$\mathcal E:\mathcal W(M) \to C^{\infty}(\tilde M,\mathbb R)$ we mean that 
$\p_\al \mathcal E(F)|_M = F^{(\al)}$ for each multi-index $\al\in\mathbb N_{\ge 0}^m$ and each Whitney jet $F\in \mathcal W(M)$. We recall the definition of a Whitney jet $F$. If $M\subset \mathbb R^m$ is compact, then  
\begin{align*}
F&=(F^{(\al)})_{\al\in\mathbb N_{\ge 0}^m} \in \prod_\al C^0(M)\qquad \text{ such that for }
\\
T_y^n(F)(x) &= \sum _{|\al|\le n} \frac{F^{(\al)}(y)}{\al!}(x-y)^\al \qquad \text{ the remainder seminorm }
\\
q_{n,\ep}(F) &:= \sup\Big\{\frac{|F^{(\al)}(x) - \p^\al T^n_y(F)(x)|}{|x-y|^{n-|\al|}}: 
\substack{|\al|\le n, x, y \in M \\ 0<|x-y|\le \ep}
 \Big\} = o(\ep);
\end{align*}
so $q_{n,\ep}(F)$ goes to 0 for $\ep\to 0$, for each $n$ separately. The $n$-th Whitney seminorm is then 
$$
\|F\|_n = \sup\{ |F^{(\al)}(x)|: x\in M, |\al|\le n\} + \sup\{q_{n,\ep}(F): \ep>0\}\,.
$$
For closed but non-compact $M$ one uses the projective limit over a countable compact exhaustion of $M$. This describes the natural Fr\'echet topology on the space of Whitney jets for closed subsets of $\mathbb R^m$. The extension to manifolds is obvious.  

Whitney proved in \cite{Whitney34a} that a linear extension operator always exists for a closed subset $M\subset \mathbb R^m$, but not always a continuous one, for example for $M$ a point. 
For a finite differentiability class $C^n$ there exists always a continuous extension operator.

\begin{proposition}\nmb.{4.2}
For a Whitney pair $\tilde M\supset M$, the space of $\mathcal W(M)$ of Whitney jets on $M$ is linearly isomorphic to the space 
$$C^{\infty}(M,\mathbb R):= \{f|_M: f\in C^{\infty}(\tilde M,\mathbb R)\}\,.$$
\end{proposition}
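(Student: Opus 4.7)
The plan is to exhibit mutually inverse linear maps between $\mathcal W(M)$ and $C^{\infty}(M,\mathbb R)$. In one direction, define $\rho\colon \mathcal W(M)\to C^{\infty}(M,\mathbb R)$ by sending a Whitney jet $F=(F^{(\al)})_\al$ to its zeroth component $F^{(0)}$; this lies in $C^{\infty}(M,\mathbb R)$ because $F^{(0)}=\mathcal E(F)|_M$ and $\mathcal E(F)\in C^{\infty}(\tilde M,\mathbb R)$. In the other direction, define $\si\colon C^{\infty}(M,\mathbb R)\to \mathcal W(M)$ as follows: given $g\in C^{\infty}(M,\mathbb R)$, pick any $f\in C^{\infty}(\tilde M,\mathbb R)$ with $f|_M=g$ and set $\si(g):=(\p_\al f|_M)_{\al\in \mathbb N_{\ge 0}^m}$, read off in local charts on $\tilde M$.

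The key step, and the only nontrivial one, is to verify that $\si(g)$ is independent of the chosen extension $f$; this is precisely where condition (1) of a Whitney pair enters. If $f_1,f_2\in C^{\infty}(\tilde M,\mathbb R)$ both restrict to $g$ on $M$, then $h:=f_1-f_2$ vanishes on $M$ and hence on the open interior $M^{\circ}$. Since $h$ is smooth on $\tilde M$, all its partial derivatives vanish identically on the open set $M^{\circ}$, and by continuity of $\p_\al h$ together with the hypothesis $M=\overline{M^{\circ}}$ they vanish on all of $M$. Hence $\p_\al f_1|_M=\p_\al f_2|_M$ for every multi-index $\al$. That $\si(g)$ actually satisfies the Whitney remainder estimates follows from Taylor's theorem with remainder applied to $f$: for $f$ smooth on a neighborhood of $M$ and any compact $K\subset M$, the estimates $\p_\al f(x)-\p_\al T^n_y(f)(x)=o(|x-y|^{n-|\al|})$ hold uniformly for $x,y\in K$, so $\si(g)\in\mathcal W(M)$.

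Finally, the inverse relations are immediate. The quantity $\rho(\si(g))$ is the zeroth component of $\si(g)$, namely $f|_M=g$. For $\si(\rho(F))$, apply $\si$ using the canonical extension $f:=\mathcal E(F)$; by the defining property of an extension operator one has $\p_\al \mathcal E(F)|_M=F^{(\al)}$, whence $\si(\rho(F))=F$. Both maps are manifestly linear. I expect no substantive obstacle beyond the well-definedness argument, which is a direct consequence of $M$ being the closure of its interior; without this condition, a smooth function on $\tilde M$ vanishing on $M$ could carry nonzero transverse derivatives at boundary points of $M$, and $\si$ would fail to be well-defined.
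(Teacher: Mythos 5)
Your proof is correct, and its skeleton is the same as the paper's: everything reduces to the single fact that a smooth function on $\tilde M$ vanishing on $M$ has all partial derivatives vanishing on $M$, which makes $F\mapsto F^{(0)}$ injective (equivalently, makes your $\si$ well-defined). The difference is that the paper simply cites this fact as \cite[3.11]{Frerick07} and leaves the construction of the inverse implicit, whereas you prove the key lemma directly from condition (1) of the definition of a Whitney pair: $h$ vanishes on the open interior $M^{\circ}$, hence so do all $\p_\al h$, hence by continuity they vanish on $M=\overline{M^{\circ}}$. This is more elementary and self-contained, and it makes transparent exactly where the dense-interior hypothesis is used (the paper only remarks on this indirectly, in \nmb!{4.3}\thetag{f}). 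You also spell out the two remaining ingredients that the paper leaves tacit: that $F^{(0)}=\mathcal E(F)|_M$ lies in $C^{\infty}(M,\mathbb R)$ (this is where condition (2), the existence of the extension operator, enters), and that the jet of a smooth function restricted to $M$ satisfies the Whitney remainder estimates, which is the easy direction of Whitney's theorem via Taylor's formula on compacta. The only point deserving a word of care is that last Taylor estimate: in a chart one should take $\ep$ small enough (or the chart convex) so that the segment from $y$ to $x$ stays in the domain of $f$; since the seminorms $q_{n,\ep}$ only involve $|x-y|\le\ep$ and one works over a compact exhaustion, this is harmless. Altogether your argument is a valid, and in fact more complete, proof of the proposition.
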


\begin{proof}
This follows from \cite[3.11]{Frerick07}, where the following is proved: If $f\in C^{\infty}(\mathbb R^m,\mathbb R)$ vanishes on a Whitney subset $M\subset \mathbb R^m$, 
then $\p^\al f|_M = 0$ for each multi-index $\al$.
Thus any continuous extension operator is injective.
\end{proof}
  
\subsection{Examples and counterexamples of Whitney pairs} 
\nmb.{4.3}
We collect here results about closed subsets of $\mathbb R^m$ which are or are  not Whitney subsets.

\smallskip\noindent\thetag{a}
If $M$ is a manifold with corners, then $\tilde M\supset M$ is a Whitney pair. This follows from   Mityagin \cite{Mityagin61} or Seeley \cite{Seeley64}. 

\smallskip\noindent\thetag{b}
If $M$ is closed in $\mathbb R^m$ with dense interior and with Lipschitz boundary, then $\mathbb R^m\supset M$ is a Whitney pair; by \cite[p 181]{Stein70}. In \cite[Theorem I]{Bierstone78} Bierstone proved that a closed subset $M\subset \mathbb R^n$ with dense interior is a Whitney pair, if it has H\"older $C^{0,\al}$-boundary for $0<\al\le 1$ which may be chosen on each $M\cap \{x: N\le |x|\le N+2\}$ separately.   A fortiori, each subanalytic subset in $\mathbb R^n$ gives a Whitney pair,  \cite[Theorem II]{Bierstone78}.

\smallskip\noindent\thetag{c} 
If $f\in C^{\infty}(\mathbb R_{\ge 0},\mathbb R)$ which is flat at 0 (all derivatives vanish at 0), and if $M$ is a closed subset containing 0 of $\{(x,y): x\ge0, |y|\le |f(x)|\}\subset \mathbb R^2$,
then $\mathbb R^2\supset M$ is not a Whitney pair; see \cite[Beispiel 2]{Tidten79}.

\smallskip\noindent\thetag{d} 
For $r\ge 1$, the set $\{x\in \mathbb R^m: 0\le x_1\le 1, x_2^2+\dots+x_m^2\le x_1^{2r}\}$ is called the parabolic cone of order $r$. Then the following result \cite[Satz 4.6]{Tidten79} holds:
\\
\emph{A closed subset $M\subset \mathbb R^m$ is a Whitney subset, if the following condition holds:
For each compact $K\subset \mathbb R^m$ there exists a parabolic cone $S$ and a family $\ph_i:S\to \phi_i(S)\subset M\subset \mathbb R^m$ of diffeomorphisms such that 
$K\cap M \subseteq \bigcup _{i} \overline{\ph_i(S)}$ and 
$\sup _i |\ph_i|_k < \infty$, $\sup_i |\ph_i\i|_k< \infty$ for each $k$ separately. }

\smallskip\noindent\thetag{e}
A characterization of closed subsets admitting continuous Whitney extension operators has been found by Frerick \cite[4.11]{Frerick07} in terms of local Markov inequalities, which however, is very difficult to check directly.
\\
\emph{Let $M\subset \mathbb R^m$ be closed. Then the following are equivalent:
\begin{enumerate}
\item[(e1)]
$M$ admits a continuous linear Whitney extension operator 
$$\mathcal E:\mathcal W(M)\to C^{\infty}(\mathbb R^m,\mathbb R)\,.$$
\item[(e2)]
For each compact $K\subset M$ and $\th\in(0,1)$ there is $r\ge 0$ and $\ep_0>0$ such that for all $k\in\mathbb N_{\ge 1}$ there is $C\ge 1$ such that
$$
|dp(x_0)| \le \frac C{\ep^r} \sup_{\substack{|y-x_0|\le \ep\\ y\in \mathbb R^m}} |p(y)|^\th 
\sup_{\substack{|x-x_0|\le\ep\\ x\in M}} |p(x)|^{1-\th}
$$
for all $p\in \mathbb C[x_1,\dots,x_m]$ of degree $\le k$, for all $x_0\in K$, and for all $\ep_0>\ep>0$. 
\item[(e3)]
For each compact $K\subset M$ there exists a compact $L$ in $\mathbb R^m$ containing $K$ in its interior, such that for all 
$\th\in(0,1)$ there is $r\ge1$ and $C\ge1$ such that 
$$
 \sup_{x\in K}|dp(x)| \le C \deg(p)^r \sup_{y\in L} |p(y)|^\th \sup_{z\in L\cap M} |p(z)|^{1-\th}
$$
for all $p\in \mathbb C[x_1,\dots,x_m]$.
\end{enumerate}
} 

\smallskip\noindent\thetag{f}
Characterization \thetag{e} has been generalized to a characterization of compact subsets of $\mathbb R^m$ which admit a continuous Whitney extension operator with linear (or even affine) loss of derivatives, in \cite{FJW11}. In the paper \cite{FJW13} a similar characterization is given for an extension operator without loss of derivative, and a sufficient geometric condition is 
formulated \cite[Corollary 2]{FJW13} which 
 even implies that there are closed sets with empty interior  admitting continuous Whitney extension operators, like the Sierpi\'nski triangle or Cantor subsets. 
Thus we cannot omit assumption \thetag{\nmb!{4.1}.1} that $M$ is the closure of its open interior in 
$\tilde M$ in our definition of  Whitney pairs. 

\smallskip\noindent\thetag{g} The following result by Frerick \cite[Theorem 3.15]{Frerick07} gives an easily verifiable sufficient condition:
\\
\emph{Let $K\subset \mathbb R^m$ be compact and assume that there exist $\ep_0>0$, $\rh>0$, $r\ge 1$ such that for all $z\in \p K$ and $0<\ep<\ep_0$ there is $x\in K$ with $B_{\rh\ep^r}(x) \subset K\cap B_\ep(z)$. Then $K$ admits a continuous linear Whitney extension operator $\mathcal W(F)\to C^{\infty}(\mathbb R^m,\mathbb R)$.}
\\
This implies \thetag{a}, \thetag{b}, and \thetag{d}. 

\begin{theorem}
\nmb.{4.4}
Let $\tilde M$ be an open manifold and let $M\subset \tilde M$ be a compact subset that is the closure of its open interior. 
$M\subset \tilde M$ is a Whitney pair if and only if  for every smooth atlas
$(\tilde M\supset U_\al, u_\al:U_\al\to u_\al(U_\al)\subset \mathbb R^m)_{\al\in A}$ of the open manifold 
$\tilde M$, %such that each $u_\al(U_\al)$ is an open ball in $\mathbb R^m$, 
each 
$u_\al(M\cap U_\al)\subset u_\al(U_\al)$ is a Whitney pair. 

Consequently, for a Whitney pair $M\subset \tilde M$  and  $U\subset \tilde M$ open,  
$M\cap U\subset \tilde M\cap U$ is also a Whitney pair.
\end{theorem}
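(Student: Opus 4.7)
The plan is to prove both directions by transferring data between charts, using a locally finite partition of unity for the ``if'' direction and ``flat'' cutoff functions (vanishing to infinite order on the boundary of their support) plus extension by zero for the ``only if'' direction; the consequence then follows by combining them. Throughout, Whitney jets transport between diffeomorphic open subsets of $\mathbb R^m$ in the obvious way, and smooth multiplication preserves the space of Whitney jets.

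For the ``if'' direction, take a locally finite refinement of the given atlas covering $M$ and a subordinate smooth partition of unity $(\rh_\al)$. For $F\in\mathcal W(M)$, restrict $F$ to $M\cap U_\al$, transport via $u_\al$ to a Whitney jet on $u_\al(M\cap U_\al)$, apply the hypothesised $\mathcal E_\al$, pull the resulting smooth function back to $U_\al$, multiply by $\rh_\al$ (extended by $0$ outside $U_\al$), and sum over $\al$. The sum is locally finite, hence smooth on $\tilde M$. That its jet at each $x\in M$ equals $F$ follows from Leibniz and $\sum_\al\rh_\al\equiv 1$ near $x$, while Fr\'echet continuity of the resulting $\mathcal E$ reduces on each compact subset of $\tilde M$ to continuity of finitely many $\mathcal E_\al$ composed with continuous restriction, transport, and multiplication operations.

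For the ``only if'' direction, fix a chart $(U_\al,u_\al)$, exhaust $u_\al(U_\al)$ by relatively compact open sets $V_j$ with $\overline{V_j}\subset V_{j+1}$, and choose cutoff functions $\et_j\in C^{\infty}(\mathbb R^m)$ that equal $1$ on $V_j$, are supported in $V_{j+1}$, and \emph{vanish to infinite order} on $\p\supp(\et_j)$; such flat bumps are produced by the usual construction from $e^{-1/t}$ factors. Set $\tilde\et_j:=\et_j-\et_{j-1}$, so that $\sum_j\tilde\et_j\equiv 1$ on $u_\al(U_\al)$ locally finitely. For $G\in\mathcal W(u_\al(M\cap U_\al))$, form the Whitney jet $\tilde\et_j\cdot G$, transport it via $u_\al^{-1}$ to $M\cap U_\al$, and extend by $0$ to obtain a jet $\tilde G_j\in\mathcal W(M)$. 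Apply the given global $\mathcal E$ to $\tilde G_j$, restrict to $U_\al$, transport via $u_\al$, and sum over $j$; the locally finite sum defines the required continuous $\mathcal E_\al(G)\in C^{\infty}(u_\al(U_\al),\mathbb R)$, whose jet on $u_\al(M\cap U_\al)$ is $G$ again by $\sum_j\tilde\et_j\equiv 1$ and Leibniz.

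The main obstacle is verifying that $\tilde G_j$ really is a Whitney jet: for pairs $(x,y)$ with $x$ inside and $y$ outside $\supp(\tilde\et_j)$ in chart coordinates, one has $|x-y|\ge\on{dist}(x,\p\supp\tilde\et_j)$, and by flatness of $\tilde\et_j$ combined with Leibniz every jet component of $\tilde\et_j\cdot G$ at $x$ is $O(\on{dist}(x,\p\supp\tilde\et_j)^N)$ for every $N$, which yields $q_{n,\ep}(\tilde G_j)=o(\ep)$ as required in \nmb!{4.1}. For the consequence, restrict any atlas of $\tilde M$ to an atlas of the open submanifold $\tilde M\cap U$: the ``only if'' direction applied to $M\subset\tilde M$ shows that each chart image $u_\al(M\cap U_\al\cap U)\subset u_\al(U_\al\cap U)$ is a Whitney pair, and the ``if'' direction applied to $M\cap U\subset\tilde M\cap U$ with this atlas yields that $M\cap U\subset\tilde M\cap U$ is a Whitney pair.
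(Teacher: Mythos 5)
Your ``if'' direction and the derivation of the consequence match the paper's own step (1) (partition-of-unity gluing) and are fine. Your converse direction takes a genuinely different and more elementary route than the paper, which instead embeds $\mathcal W_K(M)$ as a closed subspace of $C^{\infty}_L(\tilde M,\mathbb R)$, identifies the latter with a closed subspace of the sequence space $\mathfrak s$, deduces the property (DN), and then invokes Tidten's characterization of the local extension property in terms of (DN) of the jet spaces $\mathcal W_K(M)$. If your construction were complete it would bypass that functional-analytic machinery entirely.

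There is, however, a genuine gap: the series $\sum_j\mathcal E(\tilde G_j)|_{U_\al}$ is \emph{not} locally finite. The global operator $\mathcal E$ gives no control on the support of $\mathcal E(\tilde G_j)$ away from $M$: every summand may be nonzero on a common open subset of $U_\al$, and the partial sums equal $\mathcal E$ applied to the zero-extensions of $\et_J\cdot G$, which form an unbounded sequence in $\mathcal W(M)$ whenever $G$ does not itself extend by zero (the Whitney remainder condition degenerates across $M\cap\p U_\al$). So the series need not converge in $C^{\infty}(u_\al(U_\al),\mathbb R)$; indeed one can add to $\mathcal E$ a continuous operator with values in functions flat on $M$ so as to force divergence at a single point. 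Note the asymmetry with your ``if'' direction, where the cutoff $\rh_\al$ is applied \emph{after} the local extension operators and therefore does control supports. The gap is repairable: choose $\ch_j\in C^{\infty}_c(U_\al)$ with $\ch_j\equiv1$ on a neighborhood of $\supp\tilde G_j$ and with $(\supp\ch_j)_j$ locally finite in $U_\al$, and use $\sum_j\ch_j\cdot\mathcal E(\tilde G_j)$ instead; since $\tilde G_j$ vanishes on $M\setminus\{\ch_j=1\}$, the Leibniz rule shows the jet of $\ch_j\cdot\mathcal E(\tilde G_j)$ along $M$ is still $\tilde G_j$, and local finiteness and continuity are restored. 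A minor further remark: the flatness of the $\et_j$ is not actually needed---compact containment of $\supp\tilde\et_j$ in $u_\al(U_\al)$ already places $\supp\tilde G_j$ at positive distance from $M\setminus U_\al$, so the mixed pairs in the Whitney condition are vacuous for small $\ep$, and the remaining pairs are covered by $\tilde\et_j\cdot G$ being a Whitney jet on $u_\al(M\cap U_\al)$.
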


\begin{proof} 
\thetag{\nmb:{1}}
We consider a locally finite countable smooth atlas
$(\tilde M\supset U_\al, u_\al:U_\al\to u_\al(U_\al)\subset \mathbb R^m)_{\al\in \mathbb N}$ of  
$\tilde M$ such that each %$u_\al(U_\al)$ is an open ball in $\mathbb R^m$ and each 
$u_\al(U_\al)\supset u_\al(M\cap U_\al)$ is a Whitney pair.  

We use a smooth `partition of unity' $f_\al\in C^{\infty}_c(U_\al,\mathbb R_{\ge 0})$ on $\tilde M$ with 
$\sum_\al f_\al^2 =1$. The following mappings induce linear embeddings onto closed direct summands of the Fr\'echet spaces:
\begin{gather*}
\xymatrix{
C^{\infty}(\tilde M,\mathbb R) \ar@/^2mm/[rrr]^{f\mapsto (f_\al.f)_\al} & & &
\prod_\al C^{\infty}(U_\al,\mathbb R) \ar@/^2mm/[lll]^{\sum_\al f_\al . g_\al\gets (g_\al)_\al}
}
\\
\xymatrix{
\mathcal W(M) \ar@/^/[rrr] && & \prod_\al \mathcal W(U_\al\cap M) \ar@/^/[lll]
}
%\\
%\xymatrix{
%\mathcal W_K(M) \ar@/^/[rrr] && & \prod_\al \mathcal W_K(U_\al\cap M) \ar@/^/[lll]
%}
\end{gather*}
If each $u_\al(U_\al)\supset u_\al(U_\al\cap M)$
is a Whitney pair, then so is $U_\al\supset U_\al\cap M$, via the isomorphisms induced by $u_\al$, and 
$$\xymatrix{
\mathcal W(M)  \ar@/^2mm/[rrr]_{f\mapsto (f_\al.f)_\al} & && \prod_\al \mathcal W(U_\al\cap M) \ar[d]^{\prod_\al \mathcal E_\al} \\
C^{\infty}(\tilde M,\mathbb R)  & & &
\prod_\al C^{\infty}(U_\al,\mathbb R) \ar@/^2mm/[lll]_{\sum_\al f_\al . g_\al\gets (g_\al)_\al}
}
$$
is a continuous Whitney extension operator, so that $\tilde M\supset M$ is a Whitney pair.
This proves the easy direction of the theorem. 

\bigskip
The following argument for the converse direction is inspired by \cite{FW19p}. 
 
\smallskip\noindent
\thetag{\nmb:{2}} (See \cite[Def.~3.1]{Frerick07}, \cite[Section 29-31]{MeiseVogt97}) A Fr\'echet space $E$ is said to have \emph{property} (DN) if for one (equivalently, any) increasing system   $(\|\cdot\|_n)_{n\in \mathbb N}$ of seminorms generating the topology the following holds:
\begin{itemize}
\item 
There exists a continuous seminorm  $\|\;\|$ on $E$ (called a \emph{dominating norm}, hence the name (DN)) such that for all (equivalently, one) $0<\th<1$ and all $m\in \mathbb N$ there exist $k\in \mathbb N$ and $C>0$ with 
$$\|\;\|_m\le C\|\;\|_k^\th \cdot\|\;\|^{1-\th}\,.$$
\end{itemize}
The property (DN) is an isomorphism invariant and is inherited by closed linear subspaces. 
%It is easily seen that a finite product
%$\prod_{i=1}^N E_i$ of Frechet spaces has property (DN) if and only if each $E_i$ has property (DN). 
The Fr\'echet space $\mathfrak s$ of rapidly decreasing sequences has property (DN).

\smallskip\noindent
\thetag{\nmb:{3}} (\cite[Satz 2.6]{Tidten79}, see also \cite[Theorem 3.3]{Frerick07})
\emph{A closed subset $M$ in $\mathbb R^m$ 
admits a continuous linear extension operator 
$\mathcal W(M)\to C^{\infty}(\mathbb R^m,\mathbb R)$ 
if and only if for each $x\in\p M$ there exists a compact neighborhood $K$ of $x$ in $\mathbb R^m$ such that
$$
\mathcal W_K(M) := \big\{ f\in \mathcal W(M): \on{supp}(f^{(\al)})\subset K \text{ for all }\al\in\mathbb N_{\ge 0}^m\big\}.
$$
has property (DN).}

\medskip
We assume  from now on that $\tilde M\supset M$ is a Whitney pair.

\smallskip\noindent
\thetag{\nmb:{4}} Given a compact set $K\subset \tilde M$, let $L\subset \tilde M$ be a compact smooth manifold with smooth boundary which contains $K$ in its interior. Let $\tilde L$ be the double of $L$, i.e., $L$ smoothly glued to another copy of $L$
along the boundary; $\tilde L$ is a compact smooth manifold containing $L$ as a submanifold with boundary. 

Then $C^{\infty}(\tilde L,\mathbb R)$ is isomorphic to the space $\mathfrak s$ of rapidly decreasing sequences: This is due to  \cite{Vogt83}. In fact, using a Riemannian metric $g$ on $\tilde L$, the expansion in an orthonormal basis of eigenvectors of  $1+\De^g$ of a function $h\in L^2$ has coefficients in  $\mathfrak s$ if and only if $h\in C^\infty(\tilde L,\mathbb R)$, because $1+\De^g: H^{k+2}(\tilde L)\to H^k(\tilde L)$ is an isomorphism for Sobolev spaces $H^k$ with $k\ge 0$, and since the eigenvalues $\mu_n$ of $\De^g$  satisfy $\mu_n \sim  C_{\tilde L}\cdot n^{2/\dim(\tilde L)}$ for $n\to \infty$, by Weyl's asymptotic formula. 
Thus $C^{\infty}(\tilde L,\mathbb R)$ has  property (DN).  

Moreover, $C^{\infty}_L(\tilde M,\mathbb R)= \{f\in C^{\infty}(\tilde M,\mathbb R): \on{supp}(f)\subset L\}$ is a closed linear subspace of $C^{\infty}(\tilde L,\mathbb R)$, by extending each function by 0. Thus also $C^{\infty}_L(\tilde M,\mathbb R)$ has property  (DN).  

We choose now a function $g\in C^{\infty}_L(\tilde M,\mathbb R_{\ge0})$ with $g|_K =1$ and consider: 
$$
\xymatrix@R=1.3em{
\mathcal W_K(M) \ar@{^{(}->}[d] \ar@{-->}[rr]^{\mathcal E_K} && C^{\infty}_L(\tilde M,\mathbb R) 
\\
\mathcal W(M) \ar[rr]^{\mathcal E_M} && C^{\infty}(\tilde M,\mathbb R) \ar[u]_{f\mapsto g.f} 
}
$$
The resulting composition $\mathcal E_K$ is a continuous linear embedding onto a closed linear subspace of the space  $C^{\infty}_L(\tilde M,\mathbb R)$ which has (DN).  Thus we proved: 

\smallskip\noindent
\thetag{\nmb:{5}} \textbf{Claim.}
\emph{If $\tilde M\supset M$ is a Whitney pair and $K$ is compact in $\tilde M$, the Fr\'echet space 
$\mathcal W_K(M)$ has property} (DN). 

\smallskip\noindent
\thetag{\nmb:{6}}
We consider now a smooth chart $\tilde M\supset U \xrightarrow{u} u(U) = \mathbb R^m$.
For $x\in \p u(M)$ let $K$ be a compact neighborhood of $x$ in $\mathbb R^m$. 
The chart $u$ induces a linear isomorphism 
$$ 
\mathcal W_K(u(M\cap U))\xrightarrow{u^*} \mathcal W_{u^{-1}(K)}(U\cap M) \cong \mathcal  W_{u^{-1}(K)}(M),
$$ 
where the right-hand side mapping is given by extending each $f^{(\al)}$ by 0.
By  claim \thetag{\nmb|{5}} the  Fr\'echet space $W_{u^{-1}(K)}(M)$ has property (DN); consequently  also the isomorphic space $\mathcal W_K(u(M\cap U))$ has property (DN). By \thetag{\nmb|{3}} we conclude that 
$\mathbb R^m=u(U)\supset u(M\cap U)$ is a Whitney pair. 

\smallskip\noindent
\thetag{\nmb:{7}}
If we are given a general chart  $\tilde M\supset U \xrightarrow{u} u(U) \subset \mathbb R^m$, we cover $U$ by a locally finite atlas 
$(U\supset U_\al, u_\al:U_\al\to u_\al(U_\al)= \mathbb R^m)_{\al\in \mathbb N}$. By \thetag{\nmb|{6}}  each  
$\mathbb R^m = u_\al(U_\al)\supset u_\al(M\cap U_\al)$ is a Whitney pair, and by the argument in \thetag{\nmb|{1}} the pair $U\supset M\cap U$ is a Whitney pair, and thus the diffeomorphic 
$u(U)\supset u(U\cap M)$ is also a Whitney pair.
\end{proof}

\subsection{Our use of Whitney pairs}
\nmb.{4.5}
We consider an equivalence class of Whitney pairs $\tilde M_i\supset M_i$ for $i=0,1$ where 
$\tilde M_0\supset M_0$ is equivalent to $\tilde M_1\supset M_1$ if there exist an open submanifolds 
$\tilde M_i \supset\hat M_i\supset M_i$ 
and a diffeomorphism $\ph: \hat M_0\to \hat M_1$ with $\ph(M_0)=M_1$.
By a \emph{germ of a Whitney manifold} we mean an equivalence class of Whitney pairs as above. 
Given a Whitney pair $\tilde M\supset M$ and its corresponding germ, we 
may keep $M$ fixed and equip it with all open connected neighborhoods of $M$ in $\tilde M$; each neighborhood is then a representative of this germ; called an \emph{open neighborhood manifold} of $M$. In the following we shall speak of a \emph{Whitney manifold germ} $M$ and understand that it comes with open manifold neighborhoods $\tilde M$. If we want to stress a particular neighborhood we will write $\tilde M\supset M$.

The \emph{boundary} $\p M$ of a Whitney manifold germ is the topological boundary of 
$M$ in $\tilde M$. It can be a quite general set as seen from the examples \nmb!{4.3} and the discussion in \nmb!{4.9}. But infinitely flat cusps cannot appear. 

\subsection{Other approaches}
\nmb.{4.6}
We remark that there are other settings, like the concept of a \emph{manifold  with rough boundary}; see \cite{RobertsSchmeding18} and literature cited there. The main idea there is to start with closed subsets $C\subset \mathbb R^m$ with dense interior, to use  the space of functions which are $C^n$ in the interior of $C$ such that all partial derivatives extend continuously to $C$. Then one looks for sufficient conditions (in particular for $n=\infty$) on $C$ such  that there exists a continuous Whitney extension operator on the space of these functions, and builds manifolds from that. The condition in  \cite{RobertsSchmeding18} are in the spirit of \thetag{\nmb!{4.3}.d}. By extending these functions and restricting their jets to $C$ we see that the manifolds with rough boundary are Whitney manifold germs.  

Another possibility is to consider closed subsets $C\subset \mathbb R^m$ with dense interior such there exists a continuous linear extension operator on the space 
$C^\infty(C)= \{f|_C: f\in C^{\infty}(\mathbb R^m)\}$ with the quotient locally convex topology.  These are exactly the  Whitney manifold pairs $\mathbb R^m\supset M$, by \nmb!{4.2}.
In this setting, for $C^n$ with $n<\infty$ there exist continuous extension operators 
$C^n_b(C)\to C^n_b(\mathbb R^m)$ (where the subscript $b$ means bounded for all derivatives separately)
for arbitrary subsets $C\subset \mathbb R^m$; see \cite{Feffermann07}.

We believe that our use of Whitney manifold germs is quite general, simple, and avoids many technicalities. But it is aimed at the case $C^{\infty}$; for $C^k$ or $W^{k,p}$ other approaches, like the one in \cite{RobertsSchmeding18}, might be more appropriate.

\subsection{Tangent vectors and vector fields on Whitney manifold germs}
\nmb.{4.7}
In line with the more general convention for vector bundles in \nmb!{4.8} below, we define the tangent bundle $TM$ of a Whitney manifold germ $M$ as  the restriction $TM=T\tilde M|_M$. For $x\in \p M$, a tangent vector $X_x\in T_xM$ is said to be \emph{interior pointing} if there exist a curve $c:[0,1)\to M$ which is smooth into $\tilde M$ with $c'(0)=X_x$. And $X_x\in T_xM$ is called \emph{tangent to the boundary} if there exists a curve $c:(-1,1)\to \p M$ which is smooth into $\tilde M$ with $c'(0)=X_x$. The \emph{space of vector fields on $M$}  is given as
$$\X(M)=\{X|_M: X\in \X(\tilde M)\}.$$ 
Using a continuous linear extension operator, $\X(M)$ is isomorphic to a locally convex direct summand in $\X(\tilde M)$. If $M$ is a compact Whitney manifold germ, $\X(M)$ is a direct summand even in 
 $\X_L(\tilde M)=\{X\in \X(\tilde M): \supp(X)\subseteq L\}$ 
 where $L\subset \tilde M$ is a compact set containing $M$ in its interior. 
 We define the \emph{space of vector fields on $M$ which are tangent to the boundary} as
\begin{multline*}
 \X_\p(M) = \big\{ X|_M: X\in \X(\tilde M), 
 x\in \p M \implies \Fl^X_t(x)\in \p M 
 \\ 
 \text{ for all } t \text{ for which } \Fl^X_t(x) \text{ exists in } \tilde M
 \big\}\,,
\end{multline*}
where $\Fl^X_t$ denotes the flow mapping of the vector field $X$ up to time $t$ which is locally defined on $\tilde M$.
Obviously, for $X\in \X_\p(M)$ and $x\in \p M$ the tangent vector $X(x)$ is tangent to the boundary in the sense defined above.  I have no proof that the converse is true:

\noindent
\textbf{Question.} \emph{Suppose that $X\in \X(\tilde M)$ has the property that for each $x\in \p M$ the tangent vector $X(x)$ is tangent to the boundary. Is it true that then $X|_M\in \X_\p(M)$?}

A related question for which I have no answer is:

\noindent
\textbf{Question.} \emph{
For each $x\in \p M$ and tangent vector $X_x\in T_xM$ which is tangent to the boundary, is there a smooth vector field $X\in \X_{c,\p}(M)$ with $X(x)=X_x$? 
}

\begin{lemma*} For a Whitney manifold germ $M$,
the space $\X_\p(M)$ of vector field tangent to the boundary 
is a closed linear sub Lie algebra of $\X(M)$. The space $\X_{c,\p}(M)$ of vector fields with compact support tangent to the boundary is a closed linear sub Lie algebra of $\X_c(M)$. 
\end{lemma*}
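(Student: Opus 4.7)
The plan is to establish three facts about $\X_\p(M)$: linearity, closure under the Lie bracket, and topological closedness in $\X(M)$. The arguments for $\X_{c,\p}(M)\subset\X_c(M)$ are essentially identical, with a trivial remark about supports.

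First I would prove an intrinsic characterization: membership in $\X_\p(M)$ does not depend on the chosen extension. Indeed, if $X_1,X_2\in\X(\tilde M)$ restrict to the same $Y\in\X(M)$ and $X_1$ has the flow-preservation property, then for $x\in\p M$ the curve $t\mapsto\Fl^{X_1}_t(x)$ stays in $\p M\subseteq M$, where $X_1=X_2$; ODE uniqueness then forces $\Fl^{X_2}_t(x)=\Fl^{X_1}_t(x)$ as long as both are defined in $\tilde M$, so $X_2$ also has the flow property. In particular, fixing a continuous Whitney extension operator $\mathcal E:\X(M)\to\X(\tilde M)$, one has $Y\in\X_\p(M)$ iff $\mathcal E(Y)$ has the flow property.

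For linearity and the Lie-bracket closure I would use the flow formulas on the open manifold $\tilde M$. Scalar multiples are handled by $\Fl^{\lambda X}_t=\Fl^X_{\lambda t}$. For sums, the Trotter product formula
$$\Fl^{X+Y}_t(x)=\lim_{n\to\infty}\bigl(\Fl^X_{t/n}\circ\Fl^Y_{t/n}\bigr)^n(x),$$
applied on the joint domain of the flows, shows that if each factor preserves $\p M$ then so does their composition, and closedness of $\p M$ in $\tilde M$ passes to the limit. The bracket is treated identically via
$$\Fl^{[X,Y]}_{t^2}(x)=\lim_{n\to\infty}\bigl(\Fl^Y_{-t/n}\circ\Fl^X_{-t/n}\circ\Fl^Y_{t/n}\circ\Fl^X_{t/n}\bigr)^{n^2}(x).$$

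For closedness I would argue as follows. If $Y_n\to Y$ in $\X(M)$ with $Y_n\in\X_\p(M)$, then $\mathcal E(Y_n)\to\mathcal E(Y)$ in the Fr\'echet space $\X(\tilde M)$, and smooth dependence of ODEs on parameters yields $\Fl^{\mathcal E(Y_n)}_t(x)\to\Fl^{\mathcal E(Y)}_t(x)$ uniformly on compact subsets of the common domain of existence. Since each term of the sequence lies in $\p M$ by the intrinsic characterization, so does the limit, and hence $\mathcal E(Y)$ has the flow property, i.e.\ $Y\in\X_\p(M)$. The $\X_{c,\p}(M)\subset\X_c(M)$ case follows from the same steps, using $\supp(\lambda X+\mu Y), \supp([X,Y])\subseteq\supp X\cup\supp Y$ and the fact that a convergent sequence in the LF-topology on $\X_c(M)$ eventually lies in a single Fr\'echet subspace $\X_K(M)$. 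The main technical point throughout will be checking enough uniformity in the Trotter and commutator formulas to justify the limits for $|t|$ small and $x$ in a fixed compact subset of $\tilde M$; this is standard ODE theory on the open ambient manifold, where no boundary subtleties intervene.
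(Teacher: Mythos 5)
Your proof is correct and follows essentially the same route as the paper: the Trotter product formula and the commutator flow formula give closure under sums and brackets, and continuity of $(X,t,x)\mapsto\Fl^X_t(x)$ together with closedness of $\p M$ gives topological closedness. The preliminary observation that the flow-preservation property depends only on the restriction $X|_M$ (via ODE uniqueness along $\p M\subseteq M$) is a correct and worthwhile addition that the paper leaves implicit.
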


\begin{proof} By definition,
for $X\in \X(\tilde M)$ the restriction  $X|_M$ is in $\X_\p(M)$ if and only if 
$x\in \p M$ implies that $\Fl^X_t(x)\in \p M$ for all $t$ for which $\Fl^X_t(x)$ exists in $\tilde M$. 
These  conditions describe a set of continuous equations, since $(X,t,x)\mapsto \Fl^X_t(x)$ is smooth; 
see the proof of \nmb!{6.1} for a simple argument.  Thus $X\in \X(\tilde M)$ is closed. 

The formulas (see, e.g.,  \cite[p 56,58]{Nelson69})
\begin{gather*}
\lim_{n\to \infty}( Fl^X_{t/n}\o \Fl^Y_{t/n})^n(x) = \Fl^{X+Y}_t(x)
\\
\lim_{n\to \infty} \Big(\Fl^Y_{-(t/n)^{1/2}} \o \Fl^X_{-(t/n)^{1/2}} \o \Fl^Y_{(t/n)^{1/2}}\o \Fl^X_{(t/n)^{1/2}}\Big)^n (x) = \Fl^{[X,Y]}_t(x)
\end{gather*}
shows that $\X_\p (M)$ is a Lie subalgebra.
\end{proof}

\subsection*{The smooth partial  stratifications of the boundary of a Whitney manifold germ}
Given a Whitney manifold germ $\tilde M\supset M$ of dimension $m$, for each $x\in \p M$ we denote by $\mathcal L^\infty(x)$ the family consisting of each  maximal connected open smooth submanifold $L$ of $\tilde M$ which contains $x$ and is contained in $\p M$. 
Note that for $L\in \mathcal L^\infty(x)$ and $y\in L$ we have $L\in \mathcal L^\infty(y)$. 
$\{T_xL: L\in \mathcal L^\infty(x)\}$ is a set of linear subspaces of $T_x\tilde M$. The collective of these for all $x\in \p M$ is something like a `field of quivers of vector spaces' over $\p M$. It might be the key to eventually construct charts for  the regular Fr\"olicher Lie group $\Diff(M)$ treated in \nmb!{6.3} below, and for constructing charts for the Fr\"olicher space $\Emb(M,N)$ in \nmb!{7.2} below.

\subsection{Mappings, bundles, and sections}
\nmb.{4.8}
Let $M$ be Whitney manifold germ and let $N$ be a manifold without boundary.
By a smooth mapping $f:M\to N$ we mean $f=\tilde f|_M$ for a smooth mapping $\tilde f:\tilde M\to N$ for an open manifold neighborhood $\tilde M\supset M$. Whitney jet on $M$ naively make sense only if they take values in a vector space or, more generally, in a vector bundle. 
One could develop the notion of Whitney jets of infinite order with values in a manifold
as sections of $J^\infty(M,N)\to M$ with Whitney conditions of each order. We do not know whether this has been written down formally. But we can circumvent this easily by considering a closed embedding $i:N\to \mathbb R^p$ and a tubular neighborhood $p:U\to i(N)$; i.e., $U$ is an open neighborhood and is (diffeomorphic to) the total space of a smooth vector bundle which projection $p$. 

Then we can consider a Whitney jet on $M$ with values in $\mathbb R^p$ (in other words, a $p$-tuple of Whitney jets) such that the 0 order part lies in $i(N)$. Using a continuous Whitney extension operator, we can extend the Whitney jet to a smooth mapping $\tilde f:\tilde M\to \mathbb R^p$. Then consider the open set $\tilde f\i(U)\subset \tilde M$ instead of $\tilde M$, and replace $\tilde f$ by $p\o\tilde f$. So we just extended the given Whitney jet to a smooth mapping $\tilde M\to N$, and also showed, that the space of Whitney jets is isomorphic to the space 
$$
C^{\infty}(M,N)=\{f|_M: f\in C^{\infty}(\tilde M, N), \tilde M\supset M\}.
$$  
Note that the neighborhood $\tilde M$ can be chosen independently of the mapping $f$, but dependent on $N$. This describes a nonlinear extension operator $C^\infty(M,N)\to C^\infty(\tilde M, N)$; we shall see in section \nmb!{5} that this extension operator is continuous and even smooth in the manifold structures.

For finite $n$ we shall need the space $C^{\infty,n}(\mathbb R\x M,\mathbb R^p)$ of restrictions to $M$ of mappings $\mathbb R\x \tilde M\ni (t,x)\mapsto f(t,x) \in\mathbb R^p$ which are  $C^{\infty}$ in $t$ and $C^n$ in $x$. If $\tilde M$ is open in $\mathbb R^m$ we mean by this that any partial derivative $ \p_t^k\p_x^\al f$  of any order $k\in \mathbb N_{\ge 0}$ in $t$ and of  order 
$|\al|\le n$ in $x$ exists and is continuous on $\mathbb R\x \tilde M$. This carries over to an open manifold $\tilde M$, and finally, using again a tubular neighborhood $p:U\to i(N)$ as above, to the space $C^{\infty,n}(\mathbb R\x M, N)$, for any open manifold $N$. For a treatment of $C^{m,n}$-maps leading to an exponential law see \cite{AlzaareerSchmeding15}; since $C^n$ is not accessible to a convenient approach, a more traditional calculus has to be used there.  

By a (vector or fiber) bundle $E\to M$ over a germ of a Whitney manifold $M$ we mean the restriction to $M$ of a (vector or fiber) bundle $\tilde E\to \tilde M$, i.e., of a (vector or fiber) bundle over an open manifold neighborhood.
By a smooth section of $E\to M$ we mean the restriction of a smooth section of $\tilde E\to \tilde M$ for a neighborhood $\tilde M$. 
Using classifying smooth mappings into a suitable Grassmannian for vector bundles over $M$ and using the discussion above one could talk about Whitney jets of vector bundles and extend them to a manifold neighborhood of $M$. 

We shall use the following spaces of sections of a vector bundle $E\to M$ over a Whitney manifold germ $M$. This is more general than  \cite[Section 30]{KrieglMichor97}, since we allow Whitney manifold germs as base.  
\begin{itemize}
\item 
$\Ga(E)=\Ga(M\gets E)$, the space of smooth sections, i.e., restrictions of smooth sections of $\tilde E \to \tilde M$ for a fixed neighborhood $\tilde M$, with the Fr\'echet space topology of compact convergence on the isomorphic space of Whitney jets of sections. 
\item
$\Ga_c(E)$, the space of smooth sections with compact support, with the inductive limit (LF)-topology.
\item
$\Ga_{C^n}(E)$, the space of $C^n$-section, with the Fr\'echet space topology of compact convergence on the space of Whitney $n$-jets. If $M$ is compact and $n$ finite, $\Ga_{C^n}(E)$ is a Banach space. 
\item
$\Ga_{H^s}(E)$, the space of Sobolev $H^s$-sections, for $0\le s \in \mathbb R$. Here $M$ should be a compact Whitney manifold germ. The measure on $M$ is the restriction of the volume density with respect to a Riemannian metric on $\tilde M$. One also needs a fiber metric on $E$. The space 
$\Ga_{H^k}(E)$ is independent of all choices, but the inner product depends on the choices.
One way to define  $\Ga_{H^k}(E)$ is to use a finite atlas which trivializes $\tilde E|_L$ over a compact manifold with smooth boundary $L$ which is a neighborhood of $M$ in $\tilde M$ and a partition of unity, and then use the Fourier transform description of the Sobolev space.  
For a careful description see 
\cite{BBHM18}.
For $0 \le k<s-\dim(M)/2$ we have $\Ga_{H^s}(E)\subset \Ga_{C^k}(E)$ continuously.
\item
More generally, for $0\ge s\in \mathbb R$ and $1<p<\infty$ we also consider $\Ga_{W^{s,p}}(E)$, the space of $W^{s,p}$-sections: For integral $s$, all (covariant) derivatives up to order $s$ are in $L^p$. 
For $0\le k<s-\dim(M)/p$ we have $\Ga_{H^s}(E)\subset \Ga_{C^k}(E)$ continuously.
\end{itemize}

\subsection{Is Stokes' theorem valid for Whitney manifold germs?}
\nmb.{4.9}
This seems far from obvious. Here is an example, due to  \cite{FW19p}:

By  the first answer to the MathOverflow question \cite{26000} there is a  set $K$ in $[0,1]\subset \mathbb R$ which is the closure of its open interior such that the boundary is a Cantor set with positive Lebesgue measure. Moreover, $\mathbb R\supset K$ is a Whitney pair by \cite{Tidten01}, or by the local Markov inequalities \cite[Proposition 4.8]{Frerick07}, or by \cite{FJW11}. To make this connected, consider 
$K_2:=(K\x [0,2])\cup ([0,1]\x [1,2])$ in $\mathbb R^2$. Then $\mathbb R^2\supset K_2$ is again a Whitney pair, but $\p K_2$ has positive 2-dimensional Lebesgue measure.    

As an aside we remark that  Cantor-like closed sets in $\mathbb R$ might or might not admit continuous extension operators;  see
\cite[Beispiel 1]{Tidten79},  \cite{Tidten01}, and the final result in \cite{AGK02}, where a complete characterisation is given in terms of the logarithmic dimension of the Cantor-like set. 

\begin{theorem}\nmb.{4.10} \textrm{\cite[Theorem 4]{KlassenMichor19}} Let $M$ be a connected 
compact oriented Whitney manifold germ.  
Let $\om_0,\om\in\Om^{m}(M)$ be two 
volume forms (both $>0$) with $\int_M\om_0=\int_M\om$.
Suppose that there is a diffeomorphism $f:M\to M$ such that $f^*\om|_U = \om_0|_U$ for an open neighborhood of $\p M$ in $M$. 

Then there exists a diffeomorphism $\tilde f:M\to M$ with $\tilde f^*\om = \om_0$ such that $\tilde f$ equals $f$ on an open neighborhood of $\p M$.
\end{theorem}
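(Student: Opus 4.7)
My plan is to run Moser's trick relative to the boundary, localizing the entire argument to the smooth open interior $M^\circ := M\setminus \p M$ so that neither Stokes' theorem nor de Rham theory on the Whitney manifold germ $M$ itself is required. First I would reduce to the case of agreement near the boundary: replace $\om$ by $\om_1 := f^*\om$, a volume form on $M$ satisfying $\om_1|_U = \om_0|_U$ and $\int_M\om_1 = \int_M \om_0$. It then suffices to find a diffeomorphism $\ps:M\to M$ that is the identity on some open neighborhood of $\p M$ in $M$ and satisfies $\ps^*\om_1 = \om_0$, for then $\tilde f := f\o\ps$ is as required.

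Next I would produce a primitive. The form $\om_0-\om_1$ vanishes on $U\supset \p M$, so its support is a compact subset of $M^\circ$, and extending by zero one obtains an element of $\Om^m_c(M^\circ)$ with
\begin{equation*}
\int_{M^\circ}(\om_0-\om_1) = \int_M(\om_0-\om_1) = 0,
\end{equation*}
where the first equality is automatic since the form vanishes on $\p M$, so no Stokes-type statement on $M$ is needed. The classical de Rham isomorphism $H^m_c(M^\circ;\mathbb R)\cong\mathbb R$ via integration on the connected oriented smooth open $m$-manifold $M^\circ$ then furnishes a form $\be\in\Om^{m-1}_c(M^\circ)$ with $d\be=\om_0-\om_1$.

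For the Moser step I set $\om_t := (1-t)\om_0 + t\om_1$ for $t\in [0,1]$, a smooth path of volume forms all agreeing with $\om_0$ on $U$. Since $\om_t$ is nowhere zero, $X\mapsto i_X\om_t$ is a bundle isomorphism $TM^\circ\to\La^{m-1}T^*M^\circ$, so there is a unique time-dependent vector field $X_t$ on $M^\circ$ with $i_{X_t}\om_t = \be$, and $\supp(X_t)\subset\supp(\be)$ is a compact subset of $M^\circ$. Extending $X_t$ by zero to $\tilde M$ gives a compactly supported smooth vector field whose flow $\ps_t$ is complete, restricts to a diffeomorphism of $M$, and equals the identity on the open set $\tilde M\setminus\supp(X_t)$, in particular on a neighborhood of $\p M$. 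Since $d\om_t = 0$ for degree reasons, the Cartan magic computation
\begin{equation*}
\tfrac{d}{dt}\ps_t^*\om_t
= \ps_t^*\bigl(d\,i_{X_t}\om_t + \dot\om_t\bigr)
= \ps_t^*\bigl((\om_0-\om_1) + (\om_1-\om_0)\bigr) = 0
\end{equation*}
gives $\ps_t^*\om_t \equiv \om_0$, and taking $\ps := \ps_1$, $\tilde f := f\o\ps$ completes the proof.

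The main worry is that Stokes' theorem on Whitney manifold germs is delicate (cf.\ \nmb!{4.9}), but the hypothesis that $f$ already intertwines $\om$ with $\om_0$ near $\p M$ pushes the difference $\om_0-\om_1$ into the smooth open interior, and the entire construction then takes place on the honest smooth manifold $M^\circ$; the potentially wild boundary $\p M$ is never integrated over, and the flow does not move any point of $\p M$. The only subtlety I would double-check is connectedness of $M^\circ$, which is needed for $H^m_c(M^\circ;\mathbb R)\cong\mathbb R$ and follows from connectedness of $M$ for the classes of Whitney pairs of interest (in the rare pathological case where $M^\circ$ has several components, one would apply the argument to each component after verifying that the component-wise mass balance holds, which is forced by the boundary condition on $f$).
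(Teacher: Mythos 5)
Your main argument is correct and is precisely the relative Moser argument that the paper has in mind: it only cites \cite[Theorem 4]{KlassenMichor19} and remarks that the proof is the standard Moser trick, carried out (as you do) after reducing to $\om_1=f^*\om$ so that $\om_0-\om_1$ has compact support in the interior, where one finds a compactly supported primitive and integrates the Moser vector field. The steps you give --- $f$ orientation preserving so $\int_M\om_1=\int_M\om_0$, the compactly supported primitive from $H^m_c(M^\circ)\cong\mathbb R$, the flow being the identity near $\p M$ --- all check out, and your observation that no Stokes theorem on the germ itself is needed is exactly the point of the reduction.

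There is, however, one genuine gap, and it sits in your final parenthesis: when $M^\circ$ is disconnected, the component-wise mass balance is \emph{not} forced by the hypothesis on $f$. A connected compact Whitney manifold germ can have disconnected interior (two closed disks in $\mathbb R^2$ tangent at a point satisfy the criterion \thetag{\nmb!{4.3}.g}, so they form a Whitney pair). Take $f$ to be the rotation by $\pi$ about the tangent point, which swaps the two components $C_1,C_2$ of $M^\circ$; choose $\om$ with masses $1$ and $2$ on $C_1,C_2$, and choose $\om_0$ equal to $f^*\om$ near $\p M$ but with mass $\tfrac32$ on each component. All hypotheses of \nmb!{4.10} hold, yet any $\tilde f$ agreeing with $f$ near $\p M$ must map $C_1$ onto $C_2$, forcing $\int_{C_1}\tilde f^*\om=2\ne\tfrac32=\int_{C_1}\om_0$. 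So in this degenerate case the statement itself fails, and no repair of your argument is possible; the correct fix is to add the hypothesis that $M^\circ$ is connected (automatic for manifolds with corners, which is the setting of the cited reference, and for any germ whose boundary is locally a topological hypersurface), under which your proof is complete as written.
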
 

This relative Moser theorem for Whitney manifold germs is modeled on the standard proof of Moser's theorem in 
\cite[Theorem 31.13]{Michor08}. The proof of \cite[Theorem 4]{KlassenMichor19} is for manifolds with corners, but it works without change for Whitney manifold germs.

\section{Manifolds of mappings}\nmb0{5}

In this section we demonstrate how convenient calculus allows for very short and transparent 
proofs of the core results in the theory of manifolds of smooth mappings. We follow \cite{KrieglMichor97} but we allow the source manifold 
to be a  Whitney manifold germ. 
In \cite{Michor80} $M$ was allowed to have corners. 
We will treat manifolds of smooth mappings, and of $C^n$-mappings, and we will also mention  the case of Sobolev mappings. 

%We shall treat the case of $M$ compact and, maybe, we will treat $M$ non-compact separately.

\begin{lemma}[Smooth curves into spaces of sections of vector bundles]
\nmb.{5.1}
Let $p\colon E\to M$ be a vector bundle over a compact smooth manifold $M$, possibly with corners. 
\\
\thetag{1}  
Then the space $C^{\infty}(\mathbb R,\Ga(E))$ of all smooth curves in $\Ga(E)$ consists of all  
$c\in C^{\infty}(\mathbb R\x M,E)$ with $p\o c = \on{pr}_2:\mathbb R\x M\to M$.
\\
\thetag{2}
Then the space $C^{\infty}(\mathbb R,\Ga_{C^n}(E))$ of all smooth curves in $\Ga_{C^n}(E)$ consists of all  $c\in C^{\infty,n}(\mathbb R\x M,E)$ (see \nmb!{4.8})
with $p\o c = \on{pr}_2:\mathbb R\x M\to M$.
\\
\thetag{3}
If $M$ is a compact manifold or a compact Whitney manifold germ, then for each $1<p<\infty$ and $s\in(\dim(M)/p,\infty)$ the space $C^\infty(\mathbb R,\Ga_{W^{s,p}}(E))$ of smooth curves in 
$\Ga_{W^{s,p}}(E)$ consists of all continuous mappings 
$c:\mathbb R\x M \to E$ with $p\o c = \on{pr}_2:\mathbb R\x M\to M$ such that the following two conditions hold:
\begin{itemize}
 \item For each $x\in M$ the curve $t\mapsto c(t,x)\in E_x$ is smooth; \\
        let $(\p^k_t c)(t,x) = \p_t^k(c(\;,x))(t)$. 
 \item For each $k\in \mathbb N_{\ge0}$, the curve $\p_t^kc$ has values in $\Ga_{W^{s,p}}(E)$ 
       so that $\p_t^kc :\mathbb R\to \Ga_{W^{s,p}}(E)$, and 
       $t \mapsto \|\p_t^k c(t,\cdot)\|_{\Ga_{W^{s,p}}(E)}$ is bounded, 
       locally in $t$.
\end{itemize}
\thetag{4}
If $M$ is an open manifold, then  the space $C^{\infty}(\mathbb R,\Ga_c(E))$ of all smooth curves in the space  $\Ga_c(E)$ of smooth sections with compact support consists of all  
$c\in C^{\infty}(\mathbb R\x M,E)$ with $p\o c = \on{pr}_2:\mathbb R\x M\to M$ such that 
\begin{itemize}
\item for each compact interval $[a,b]\subset \mathbb R$ there is a compact subset $K\subset M$
such that $c(t,x)=0$ for $(t,x)\in [a,b]\x (M\setminus K)$.
\end{itemize}
Likewise for the space $C^{\infty}(\mathbb R,\Ga_{C^n,c}(E))$ of smooth curves in the space of $C^n$-sections with compact support. 
\\
\thetag{5} Let $p\colon E\to M$ be a vector bundle over a compact Whitney manifold germ.
Then  the space $C^\infty(\mathbb R,\Ga(E))$ of smooth curves in $\Ga(E)$ consists of all smooth mappings $c:\mathbb R\x \tilde M \to \tilde E$ with $p\o c = \on{pr}_2:\mathbb R\x \tilde M\to \tilde M$ for some open neighborhood manifold $\tilde M$ and extended vector bundle $\tilde E$. We may even assume that  there is a  compact submanifold with smooth boundary $L\subset \tilde M$ containing $M$ in its interior such that $c(t,x)=0$ for $(t,x)\in \mathbb R\x (\tilde M\setminus L)$.  
Using the last statement of \nmb!{4.1}, this is equivalent to the space of all smooth mappings 
$c:\mathbb R\x M \to E\subset \tilde E$ with $p\o c = \on{pr}_2:\mathbb R\x M\to M$.
\\
\thetag{6}
 Let $p\colon E\to M$ be a vector bundle over a non-compact Whitney manifold germ $M\subset \tilde M$,  then  the space $C^{\infty}(\mathbb R,\Ga_c(E))$ of all smooth curves in the space  
 $$\Ga_c(E)=\{s|_M: s\in \Ga_c(\tilde M \gets \tilde E)\}$$ 
of smooth sections with compact support (see \nmb!{4.8}) consists of all smooth mappings 
$c:\mathbb R\x \tilde M \to \tilde E$ with $p\o c = \on{pr}_2:\mathbb R\x \tilde M\to \tilde M$ such that 
\begin{itemize}
\item for each compact interval $[a,b]\subset \mathbb R$ there is a compact subset $K\subset \tilde M$
such that $c(t,x)=0$ for $(t,x)\in [a,b]\x (M\setminus K)$.
\end{itemize}

\end{lemma}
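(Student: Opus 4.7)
The proof is a direct application of convenient calculus. The unifying strategy is to reduce each of the six parts to the exponential law \nmb!{2.4}(6), using the smooth-curve recognition Theorem \nmb!{2.6} for the Sobolev statement, and the continuous linear Whitney extension operator to pass from Whitney germs to open neighborhood manifolds.

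For part (1), I view $\Ga(E)$ as a closed convenient subspace of $C^\infty(M,E)$ cut out by the constraint $p\o s=\on{id}_M$. The exponential law then identifies smooth curves $c:\mathbb R\to C^\infty(M,E)$ with smooth maps $c^\wedge:\mathbb R\x M\to E$, and the section constraint translates to $p\o c^\wedge=\on{pr}_2$. Part (2) is the analogous argument for mixed regularity, relying on the exponential law for $C^{\infty,n}$ established in \cite{AlzaareerSchmeding15}; note that $\Ga_{C^n}(E)$ is a Banach space, so convenient smoothness in $t$ reduces to classical smoothness. Part (4) follows from part (1) together with the standard fact that a smooth curve into the strict $(LF)$-space $\Ga_c(E)=\varinjlim_K\Ga_K(E)$ locally factors through some $\Ga_K(E)$, which produces exactly the stated uniform-compact-support condition on compact time intervals.

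Part (3) is the one where Theorem \nmb!{2.6} is essential. Since $s>\dim(M)/p$, Sobolev embedding gives a continuous inclusion $\Ga_{W^{s,p}}(E)\hookrightarrow\Ga_{C^0}(E)$, so each point evaluation $\on{ev}_x:\Ga_{W^{s,p}}(E)\to E_x$ is bounded linear; composing with any point-separating family of linear functionals on each fibre yields a point-separating subset $\mathcal V\subset\Ga_{W^{s,p}}(E)'$. Because $\Ga_{W^{s,p}}(E)$ is a Banach space, its closed bounded balls are weakly closed, hence $\si(\cdot,\mathcal V)$-closed, so the hypothesis of Theorem \nmb!{2.6} is met. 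Its conclusion is precisely the pointwise-smoothness plus locally-$W^{s,p}$-bounded-derivative criterion stated in the lemma, after identifying the derivative curves $c^k$ with the $\p_t^k c$.

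For parts (5) and (6), the continuous linear Whitney extension operator $\mathcal E:\Ga(E)\to\Ga_L(\tilde E)$ (using the last statement of \nmb!{4.1} to land in sections supported in a compact $L\subset\tilde M$ containing $M$ in its interior) and the restriction $\rho$ satisfy $\rho\o\mathcal E=\on{id}_{\Ga(E)}$, and both are continuous linear, hence smooth by \nmb!{2.4}(2). Therefore $\Ga(E)$ is a complemented subspace of $\Ga_L(\tilde E)$, and smooth curves correspond under this splitting; applying part (1) to $\tilde M$ (with a bump function to confine supports to $L$) then furnishes the smooth extension $\tilde c:\mathbb R\x\tilde M\to\tilde E$ with $p\o\tilde c=\on{pr}_2$ and the stated support property. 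The equivalent formulation as a smooth map $\mathbb R\x M\to E$ is the parametric analogue of \nmb!{4.1}; the main subtle step, and the one I expect to be the real obstacle, is precisely this promotion of a pointwise Whitney extension to a curve of extensions depending smoothly on the parameter. It works because $\mathcal E$ is continuous linear between convenient vector spaces and hence smooth, so post-composition with $\mathcal E$ preserves $C^\infty(\mathbb R,\cdot)$ automatically, and the resulting smooth curve into $\Ga_L(\tilde E)$ is then translated into a smooth map on $\mathbb R\x\tilde M$ by the exponential law. Part (6) is obtained from the same argument combined with the uniform-compact-support localization used in part (4).
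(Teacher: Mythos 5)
Your overall route coincides with the paper's: part (1) via the exponential law, part (2) via the $C^{\infty,n}$ exponential law, part (3) via the curve-recognition Theorem \nmb!{2.6} applied to point evaluations, part (4) by adding the support localization, and parts (5)--(6) by pushing everything through the continuous linear extension operator (whose linearity and continuity, hence smoothness, is indeed the point that makes the parametric extension work). So the architecture is right.

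There is, however, one genuine flaw, in your verification of the hypothesis of Theorem \nmb!{2.6} in part (3). You write that the closed bounded balls of $\Ga_{W^{s,p}}(E)$ are weakly closed, \emph{hence} $\si(\cdot,\mathcal V)$-closed. This implication goes the wrong way: for a proper subset $\mathcal V\subset E'$ the topology $\si(E,\mathcal V)$ is \emph{coarser} than the weak topology $\si(E,E')$, so a $\si(E,\mathcal V)$-closed set is weakly closed but not conversely. Being weakly closed does not give closedness under mere pointwise convergence, and in fact the claim fails without reflexivity (e.g.\ a $W^{1,1}$-bounded sequence can converge pointwise to a function of bounded variation that is not in $W^{1,1}$, so the $W^{1,1}$-ball is not closed under point evaluations). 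The correct argument, and the one the paper uses, exploits that $W^{s,p}$ is a \emph{reflexive} Banach space for $1<p<\infty$: the closed balls are $\si(E,E')$-compact, hence compact and therefore closed in the coarser Hausdorff topology $\si(E,\mathcal V)$; equivalently, one invokes the last statement of Theorem \nmb!{2.6} with $E=F'$ for $F=E'$ and $\mathcal V\subseteq F$ point separating. This is exactly where the hypothesis $1<p<\infty$ enters, which your argument as written never uses. With that one step repaired, the rest of your proof stands.
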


\begin{proof} \thetag{1} This follows from the exponential law \nmb!{2.4}.6 after trivializing the bundle.
\\
\thetag{2} We trivialize the bundle, assume that $M$ is open in $\mathbb R^m$, and then prove this directly. In \cite[3.1 and 3.2]{KrieglMichor97} one finds a very explicit proof of the case $n=\infty$, which one can restrict to our case here. 
\\
\thetag{3}
To see this we first choose a second vector bundle $F\to M$ such that $E\oplus_M F$ is a trivial bundle, i.e., isomorphic to $M\x \mathbb R^n$ for some $n\in\mathbb N$. 
Then $\Ga_{W^{s,p}}(E)$ is a direct summand in $W^{s,p}(M,\mathbb R^n)$, so that we may assume without loss that $E$ is a trivial bundle, and then, that it is 1-dimensional. 
So we have to identify $C^\infty(\mathbb R,W^{s,p}(M,\mathbb R))$. 
But in this situation we can just apply Theorem \nmb!{2.6} for the set 
$\mathcal V\subset W^{s,p}(M,\mathbb R)'$ consisting of all point evaluations $\on{ev}_x:H^s(M,\mathbb R)\to \mathbb R$ and use that $W^{s,p}(M,\mathbb R)$ is a reflexive Banach space. 
\\
\thetag{4} This is like \thetag{1} or \thetag{2} where we have to assure that the curve $c$ takes values in the space of sections with compact support which translates to the condition.
\\
\thetag{5} and \thetag{6} follow from \thetag{4} after extending to $\tilde E\to \tilde M$. 
\end{proof}

\begin{lemma}
\nmb.{5.2} 
Let $E_1,E_2$ be vector bundles over smooth manifold or a Whitney manifold germ $M$, 
let $U\subset E_1$ be an open neighborhood of the image of a smooth section, 
let $F:U\to E_2$ be a fiber preserving smooth mapping.
Then the following statements hold:
\begin{enumerate}
\item 
[\thetag{1}] If $M$ is compact,
the set $\Ga(U):=\{h\in \Ga(E_1): h(M)\subset U\}$ is open in 
$\Ga(E_1)$, and the mapping
$F_*:\Ga(U) \to \Ga(E_2)$ given by $h\mapsto  F\o h$
is smooth. Likewise for spaces $\Ga_c(E_i)$, if $M$ is not compact.
\item[\thetag{2}]
If $M$ is compact, for $n\in \mathbb N_{\ge 0}$  
the set 
$$
\Ga_{C^n}(U):=\{h\in \Ga_{C^n}(E_1): h(M)\subset U\}
$$ 
is open in 
$\Ga_{C^n}(E_1)$, and the mapping
$F_*:\Ga_{C^n}(U) \to \Ga_{C^n}(E_2)$ given by $h\mapsto  F\o h$
is smooth.
\item[\thetag{3}] 
If $M$ is compact and  $s > \dim(M)/p$, 
the set 
$$
\Ga_{W^{s,p}}(U):=\{h\in \Ga_{W^{s,p}}(E_1): h(M)\subset U\}
$$ 
is open in 
$\Ga_{W^{s,p}}(E_1)$, and the mapping
$F_*:\Ga_{W^{s,p}}(U) \to \Ga_{W^{s,p}}(E_2)$ given by $h\mapsto  F\o h$,
is smooth.
\end{enumerate}
\end{lemma}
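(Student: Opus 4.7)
The plan is to establish openness and smoothness of $F_*$ separately in all three cases, reducing smoothness to the characterization of smooth curves in the relevant section spaces provided by Lemma \nmb!{5.1}. The general strategy is the same throughout: openness follows because in each of the three topologies the sections are continuous functions into $E_1$ and the topology is finer than (or embeds into) the $C^0$-topology on $M$; smoothness of $F_*$ follows by the convenient-calculus principle that a map between convenient spaces is smooth if and only if it takes smooth curves to smooth curves.

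For openness in case \thetag{1}, fix $h_0\in\Ga(U)$. Since $M$ is compact, $h_0(M)$ is a compact subset of the open set $U\subset E_1$, so there is an open $V\subset E_1$ with $h_0(M)\subset V$ and $\overline V\subset U$. The Fr\'echet topology on $\Ga(E_1)$ is finer than the topology of uniform convergence of sections into $E_1$, so every $h$ sufficiently close to $h_0$ satisfies $h(M)\subset V\subset U$. The same argument works in case \thetag{2} because the $C^n$-topology dominates the $C^0$-topology, and in case \thetag{3} because the condition $s>\dim(M)/p$ gives a continuous Sobolev embedding $\Ga_{W^{s,p}}(E_1)\to \Ga_{C^0}(E_1)$. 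The variants with compact supports over noncompact $M$ reduce to the compact case by fixing a compact set containing the supports.

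For smoothness in case \thetag{1}, take a smooth curve $c:\mathbb R\to \Ga(U)$; by Lemma \nmb!{5.1}.\thetag{1} it corresponds to $\hat c\in C^{\infty}(\mathbb R\x M,E_1)$ with $p\o\hat c=\on{pr}_2$ and image in $U$. Then $F\o\hat c\in C^{\infty}(\mathbb R\x M,E_2)$ is a smooth map (composition of smooth maps) projecting to $\on{pr}_2$, and again by Lemma \nmb!{5.1}.\thetag{1} defines a smooth curve in $\Ga(E_2)$, namely $F_*\o c$. Case \thetag{2} is entirely analogous via Lemma \nmb!{5.1}.\thetag{2}: composing a $C^{\infty,n}$-curve with the smooth fiber-preserving map $F$ yields again a $C^{\infty,n}$-map, by the chain rule applied separately in $t$ (smooth) and in $x$ (of class $C^n$).

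The real obstacle is case \thetag{3}. A smooth curve $c$ in $\Ga_{W^{s,p}}(U)$ is, by Lemma \nmb!{5.1}.\thetag{3}, a continuous map $\hat c:\mathbb R\x M\to E_1$ with image in $U$, pointwise smooth in $t$, such that every $\p_t^k\hat c$ lies in $\Ga_{W^{s,p}}(E_1)$ with $t$-locally bounded norm. Pointwise smoothness in $t$ of $F\o\hat c$ is automatic, and Fa\`a di Bruno gives $\p_t^k(F\o\hat c)$ as a universal polynomial in vertical derivatives of $F$ along $\hat c$ and in $\p_t^\ell\hat c$ for $\ell\le k$. The hard step is to bound this in $\Ga_{W^{s,p}}(E_2)$ locally uniformly in $t$. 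Here one invokes the classical nonlinear Sobolev composition estimate: because $s>\dim(M)/p$, the space $W^{s,p}(M)$ is a Banach algebra, and composition with a smooth fiber-preserving map defined on a region of $E_1$ that is bounded in $C^0$ is a bounded (even smooth) nonlinear operator from $W^{s,p}$ into itself. Since $c$ is $t$-locally bounded in $\Ga_{W^{s,p}}(E_1)$ and therefore, by Sobolev embedding, $t$-locally bounded in $C^0$, the vertical derivatives of $F$ evaluated along $\hat c$ are uniformly bounded in $C^0$ on the relevant compact $t$-interval; combining this with the Banach-algebra estimate applied to the Fa\`a di Bruno expansion gives the required local $W^{s,p}$-bound for each $\p_t^k(F\o\hat c)$. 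Thus $F_*\o c$ verifies the two conditions of Lemma \nmb!{5.1}.\thetag{3} and is a smooth curve in $\Ga_{W^{s,p}}(E_2)$, completing the proof.
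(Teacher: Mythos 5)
Your proposal is correct and follows essentially the same route as the paper: openness via the $C^0$-topology, smoothness in cases (1) and (2) by checking that $F_*$ maps smooth curves to smooth curves using the characterizations in Lemma 5.1.(1)--(2), and case (3) via the Fa\`a di Bruno expansion of $\p_t^p F_x(c(t,x))$ combined with the curve criterion of Lemma 5.1.(3). The only difference is one of explicitness: the paper disposes of openness with ``without loss $U=E_1$'' and leaves the local $W^{s,p}$-boundedness of the Fa\`a di Bruno terms implicit, whereas you spell out the compactness argument and the Banach-algebra/composition estimate for $s>\dim(M)/p$ that justifies it.
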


If the restriction of $F$ to each fiber of $E_1$ is real analytic, then $F_*$ is real analytic; but in this paper we concentrate on $C^{\infty}$ only.
This lemma is a variant of  the so-called Omega-lemma; e.g., see \cite{Michor80}. Note how simple the proof is using convenient calculus.

\begin{proof} Without loss suppose that $U=E_1$.

\noindent\thetag{1} and \thetag{2} follow easily since $F_*$ maps smooth curves to smooth curves; see their description in  (\nmb!{5.1}.1) and (\nmb!{5.1}.2).

\noindent\thetag{3} 
Let $c:\mathbb R\ni t\mapsto c(t,\;)\in \Ga_{W^{s,p}}(E_1)$ be a smooth curve. 
As $s>\dim(M)/2$, it holds for each $x\in M$ that the mapping $\mathbb R\ni t \mapsto F_x(c(t,x))\in (E_2)_x$ is smooth. 
By the Fa\`a di Bruno formula (see \cite{FaadiBruno1855} for the 1-dimensional version, preceded in \cite{Arbogast1800} by 55 years), we have for each $p\in\mathbb N_{>0}$, $t \in \mathbb R$, and $x \in M$ that
\begin{multline*}
\p_t^p F_x (c(t,x)) = 
\\ =
\sum_{j\in\mathbb N_{>0}} \sum_{\substack{\al\in \mathbb N_{>0}^j\\ \al_1+\dots+\al_j =p}}
\frac{1}{j!}d^j (F_x) (c(t,x))\Big(
\frac{\p_t^{(\al_1)}c(t,x)}{\al_1!},\dots,
\frac{\p_t^{(\al_j)}c(t,x)}{\al_j!}\Big)\,.
\end{multline*}
For each $x\in M$ and $\al_x\in (E_2)_x^*$ the mapping $s\mapsto \langle s(x),\al_x\rangle$ is a continuous linear functional on the Hilbert space $\Ga_{W^{s,p}}(E_2)$. 
The set $\mathcal V_2$ of all of these functionals separates points and therefore satisfies the condition of Theorem~\nmb!{2.6}. We also have for each $p\in\mathbb N_{>0}$, $t \in \mathbb R$, and $x \in M$ that
\begin{align*}
\p_t^p\langle F_x (c(t,x)),\al_x\rangle &= \langle\p_t^p F_x (c(t,x)),\al_x\rangle
= \langle\p_t^p F_x (c(t,x)),\al_x\rangle.
\end{align*}
Using the explicit expressions for $\p_t^p F_x (c(t,x))$ from above we may apply Lemma~(\nmb!{5.1}.3) to conclude that $t\mapsto F(c(t,\;))$ is a smooth curve $\mathbb R\to \Ga_{H^s}(E_1)$. 
Thus, $F_*$ is a smooth mapping.
\end{proof}

\subsection{The manifold structure on $C^\infty(M,N)$ and $C^k(M,N)$}
\nmb.{5.3}
Let $M$ be a compact or open finite dimensional smooth manifold or even a compact Whitney manifold germ, and let $N$ be a smooth manifold. We use an 
auxiliary Riemannian metric $\bar g$ on $N$ and its exponential mapping $\exp^{\bar g}$; some of its  properties  are summarized in the following diagram:
$$\xymatrix@R=1.3em
{
& 0_N \ar@{_{(}->}[d] \ar@(l,dl)[dl]_-{\text{zero section\quad }} 
& & N \ar@{^{(}->}[d] \ar@(r,dr)[rd]^-{\text{ diagonal}} & 
\\
TN &  V^N \ar@{_{(}->}[l]^{\text{ open  }} \ar[rr]^{ (\pi_N,\exp^{\bar g}) }_{\cong} & & V^{N\x N} 
\ar@{^{(}->}[r]_{\text{ open  }} & N\x N
}$$
Without loss we may assume that $V^{N\x N}$ is symmetric: 
$$(y_1,y_2)\in V^{N\x N} \iff (y_2,y_1)\in V^{N\x N}.$$

\noindent $\bullet$
If $M$ is compact, then $C^\infty(M,N)$, the space of smooth mappings $M\to N$, has the following manifold structure.
A chart, centered at $f\in C^\infty(M,N)$, is:
\begin{align*}
C^\infty(M,N)\supset U_f &=\{g: (f,g)(M)\subset V^{N\x N}\} \East{u_f}{} \tilde U_f 
\subset \Ga(M\gets f^*TN)
\\
u_f(g) = (\pi_N,&\exp^{\bar g})\i \o (f,g),\quad u_f(g)(x) = (\exp^{\bar g}_{f(x)})\i(g(x)) 
\\
(u_f)\i(s) &= \exp^{\bar g}_f\o s, \qquad (u_f)\i(s)(x) = \exp^{\bar g}_{f(x)}(s(x))
\end{align*}
Note that $\tilde U_f$ is open in $\Ga(M\gets f^*TN)$ if $M$ is compact. 

\noindent $\bullet$
If $M$ is open, then the compact $C^\infty$-topology on $\Ga(f^*TN)$ is not suitable since $\tilde U_f$ is in general not open. We have to control the behavior of sections near infinity on $M$. 
One solution is to use the space  $\Ga_c(f^*TN)$ of sections with compact support as modeling spaces and to adapt the topology on $C^\infty(M,N)$ accordingly. This has been worked out in \cite{Michor80} and \cite{KrieglMichor97}.  

\noindent $\bullet$
If $M$ is compact Whitney manifold germ with neighborhood manifold $\tilde M\supset M$ we use the 
Fr\'echet space $\Ga(M\gets f^*TN)= \{s|_M: s\in \Ga_L(\tilde M\gets \tilde f^* TN)\}$ where $L\subset \tilde M$ is a compact set containing $M$ in its interior and $\tilde f:\tilde M\to N$ is an extension of $f$ to a suitable manifold neighborhood of $M$.  
Via an extension operator the Fr\'echet space $\Ga(M\gets f^*TN)$ is a direct summand in the Fr\'echet space $ \Ga_L(\tilde M\gets \tilde f^* TN)$ of smooth sections with support in $L$. 

\noindent $\bullet$
Likewise, for a non-compact Whitney manifold germ we use the convenient (LF)-space 
$$\Ga_c(M\gets f^*TN) =  \{s|_M: s\in \Ga_c(\tilde M\gets \tilde f^* TN)\}$$ 
of sections with compact support.

\noindent $\bullet$
On the space $C^{k}(M,N,)$ for $k\in\mathbb N_{\ge 0}$ we use only charts 
as described above with the center $f\in C^\infty(M,N)$, namely
$$
C^k(M,N)\supset U_f =\{g: (f,g)(M)\subset V^{N\x N}\} \East{u_f}{} \tilde U_f 
\subset \Ga_{C^k}(M\gets f^*TN)\,.
$$
We claim that these charts cover $C^k(M,N)$: Since $C^\infty(M,N)$ is dense in $C^k(M,N)$ in the Whitney $C^k$-topology, for any $g\in C^k(M,N)$ there exists $f\in C^\infty(M,N,)\cap U_g$. But then $g\in U_f$ since $V^{N\x N}$ is symmetric. This is true for compact $M$. For a compact Whitney manifold germ 
we can apply this argument in a compact neighborhood $L$ of $M$ in $\tilde M$,  replacing $\tilde M$ by the interior of $L$ after the fact.

\noindent $\bullet$
On the space $W^{s,p}(M,N)$ for $\dim(M)/p<s\in\mathbb R$ we use only charts 
as described above with the center $f\in C^\infty(M,N)$, namely:
\begin{multline*}
W^{s,p}(M,N)\supset U_f =\{g: (f,g)(M)\subset V^{N\x N}\} \East{u_f}{} 
\\
\East{u_f}{}\tilde U_f 
\subset \Ga_{W^{s,p}}(M\gets f^*TN)\,.
\end{multline*}
These charts cover $W^{s,p}(M,N)$, by the following argument: Since $C^\infty(M,N)$ is dense in $W^{s,p}(M,N)$ and since $W^{s,p}(M,N)\subset C^{k}(M,N)$ via a continuous injection for $0\le k<s-\dim(M)/p$, a suitable $C^0-\sup$-norm neighborhood of $g\in W^{s,p}(M,N)$  contains a smooth $f\in C^{\infty}(M,N)$, thus $f\in U_g$ and by symmetry of $V^{N\x N}$ we have $g\in U_f$.
This is true for compact $M$. For a compact Whitney manifold germ 
we can apply this argument in a compact neighborhood which is a manifold with smooth boundary $L$ of $M$ in $\tilde M$ and apply the argument there. 

\emph{In each case, we equip $C^{\infty}(M,N)$ or $C^k(M,N)$ or $W^{s,p}(M,N)$ with the initial topology with respect to all chart mappings described above:} The coarsest topology, so that all chart mappings $u_f$ are continuous.

\emph{For non-compact $M$ the direct limit $\Ga_c(f^*TN)= \varinjlim_{L} \Ga_L(f^*TN)$ over a compact exhaustion $L$ of $M$ in the category of locally convex vector spaces is strictly coarser that the direct limit in the category of Hausdorff topological spaces.} It is more \emph{convenient} to use the latter topology which is called $c^\infty$ topology; compare with \nmb!{2.1}.

\begin{lemma}\nmb.{5.4} 
\thetag{1}
If $M$ is a compact smooth manifold or is a compact Whitney manifold germ,
$$
C^\infty(\mathbb R,\Ga(M\gets f^*TN)) = \Ga(\mathbb R\x M\gets \on{pr_2}^* f^*TN)\,.
$$  
For smooth $f\in C^\infty(M,N)$, 
$$
C^\infty(\mathbb R,\Ga_{C^n}(M\gets f^*TN)) = \Ga_{C^{\infty,n}}(\mathbb R\x M\gets \on{pr_2}^* f^*TN)\,.
$$
\\
\thetag{2}
If $M$ is a non-compact smooth manifold of Whitney manifold germ, the sections on the right 
hand-side have to satisfy the corresponding conditions of lemma (\nmb!{5.1}.4).
\end{lemma}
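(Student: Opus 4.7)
The plan is to reduce everything to the already-proved Lemma \nmb!{5.1} by applying it to the specific vector bundle $E := f^*TN \to M$. When $M$ is a compact Whitney manifold germ, fix a smooth extension $\tilde f : \tilde M \to N$ of $f$ on some open neighborhood manifold $\tilde M \supset M$ (this exists because $f \in C^\infty(M,N)$ in the sense of \nmb!{4.8}); then $\tilde E := \tilde f^* TN$ is a smooth vector bundle over $\tilde M$ whose restriction to $M$ is $E$, so $E$ is a legitimate vector bundle over the Whitney manifold germ in the sense of \nmb!{4.8}. The bundle appearing on the right-hand side is $\on{pr}_2^* f^*TN \to \mathbb R \times M$, i.e.\ the trivially time-extended bundle, whose fibre at $(t,x)$ is $T_{f(x)}N$.

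For the first equality of \thetag{1}, I would just invoke Lemma (\nmb!{5.1}.1) (for $M$ a compact smooth manifold, possibly with corners) or Lemma (\nmb!{5.1}.5) (for $M$ a compact Whitney manifold germ) applied to the bundle $E=f^*TN$. Both statements say exactly that a smooth curve into $\Ga(E)$ is the same as a fibre-preserving smooth map $\mathbb R \times M \to E$ over $\on{pr}_2$, which is precisely a smooth section of $\on{pr}_2^* f^*TN \to \mathbb R \times M$. In the Whitney-germ case one also picks a compact $L \subset \tilde M$ containing $M$ in its interior, so that the sections are restrictions of compactly-supported sections of $\tilde E$ over $\tilde M$; this is what \thetag{5} of \nmb!{5.1} provides.

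For the second equality of \thetag{1}, I would apply Lemma (\nmb!{5.1}.2) to the same bundle $E=f^*TN$: since $f$ is smooth, $E$ is a smooth vector bundle and the notion of $\Ga_{C^n}(E)$ is well defined (it is not if $f$ were only $C^n$). The content of (\nmb!{5.1}.2) is exactly that smooth curves into $\Ga_{C^n}(E)$ are precisely those fibre-preserving maps $\mathbb R \times M \to E$ over $\on{pr}_2$ that are in $C^{\infty,n}$ as defined in \nmb!{4.8}, which is the definition of $\Gamma_{C^{\infty,n}}(\mathbb R \times M \gets \on{pr}_2^* f^*TN)$.

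For \thetag{2}, the non-compact case, I would apply Lemma (\nmb!{5.1}.4) (smooth non-compact $M$) or (\nmb!{5.1}.6) (non-compact Whitney manifold germ) to $E = f^*TN$. These statements add exactly one condition to the identification of smooth curves, namely the locally-uniform-in-$t$ compact support condition; translating that condition via $E = f^*TN$ gives precisely the condition of (\nmb!{5.1}.4) imposed on sections of $\on{pr}_2^* f^*TN$. There is no genuine new obstacle here: the only point worth noting is the bookkeeping of switching between the compact-support model $\Ga_c(f^*TN)$ used in \nmb!{5.3} and the extensions to $\tilde M$ used in \nmb!{5.1}; this is handled by the last sentence of \nmb!{4.1} together with \nmb!{4.8}, which ensure that restriction from $\tilde M$ to $M$ is a topological direct summand projection and hence preserves all the smooth-curve characterizations used above.
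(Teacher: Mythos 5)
Your proposal is correct and is exactly the paper's argument: the paper's entire proof is ``This follows from lemma \nmb!{5.1},'' and you have simply spelled out which parts of that lemma apply to the bundle $E=f^*TN$ in each case. No discrepancy.
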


\noindent
For a compact Whitney manifold germ $M$ the space 
$ \Ga(\mathbb R\x M\gets \on{pr_2}^* f^*TN)$ is a direct summand in the space 
$ \Ga_{\mathbb R\x L}(\mathbb R\x \tilde M\gets \on{pr_2}^* f^*TN)$ of sections with support in $\mathbb R\x L$ for a fixed compact set $L\subset \tilde M$ containing $M$ in its interior. Likewise 
$ \Ga_{C^{\infty,n}}(\mathbb R\x M\gets \on{pr_2}^* f^*TN)$ is a direct summand in the space 
$ \Ga_{C^{\infty,n}, \mathbb R\x L}(\mathbb R\x \tilde M\gets \on{pr_2}^* f^*TN)$ of $C^{\infty,n}$-sections. 
One could introduce similar notation for $C^\infty(\mathbb R,\Ga_{W^{s,p}}(M\gets f^*TN))$.

\begin{proof}
This follows from lemma \nmb!{5.1}. 
\end{proof}

\begin{lemma}\nmb.{5.5} Let $M$ be a smooth manifold or Whitney manifold germ, compact or not, and let $N$ be  a manifold. Then 
the chart changes for charts centered on smooth mappings are smooth ($C^\infty$) on the space 
$C^\infty(M,N)$, also on $C^k(M,N)$ for $k\in \mathbb N_{\ge 0}$, and on  $W^{s,p}(M,N)$ for $1<p<\infty$ and $s>\dim(M)/p$:
$$
 \tilde U_{f_1}\ni s  \mapsto (u_{f_2,f_1})_*(s) := 
(\exp^{\bar g}_{f_2})\i\o \exp^{\bar g}_{f_1}\o s\in \tilde U_{f_2}\,.
$$
\end{lemma}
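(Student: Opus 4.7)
The plan is to realize each chart change $u_{f_2,f_1}$ as the push-forward $F_*$ induced by a fiber-preserving smooth map $F$ of vector bundles over $M$, and then apply the Omega-type Lemma \nmb!{5.2} in each of the three function-space settings ($C^\infty$, $C^k$, $W^{s,p}$). Since $f_1, f_2 \in C^\infty(M,N)$, the pullback bundles $f_1^*TN$ and $f_2^*TN$ are genuine smooth vector bundles over $M$ (over $\tilde M$, in the Whitney manifold germ case, after extending $f_1, f_2$ to a fixed neighborhood manifold), so Lemma \nmb!{5.2} is applicable.

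First, I would identify the domain. A section $s \in \tilde U_{f_1}$ lies in the domain of $u_{f_2,f_1}$ iff $\exp^{\bar g}\!\circ s$ also lands in $U_{f_2}$, i.e., iff for every $x \in M$,
\[
\bigl(f_2(x),\, \exp^{\bar g}_{f_1(x)}(s(x))\bigr) \in V^{N\x N}.
\]
Let $W \subset f_1^*TN$ be the open subset of all $(x,v)$ satisfying this condition; when $f_2 \in U_{f_1}$, the set $W$ is an open neighborhood of the image of the distinguished section $\sigma_{12}: x \mapsto (\exp^{\bar g}_{f_1(x)})\i(f_2(x))$. Define the fiber-preserving smooth map
\[
F \colon W \subset f_1^*TN \;\longrightarrow\; f_2^*TN,
\qquad
F(x,v) \;=\; \bigl(\exp^{\bar g}_{f_2(x)}\bigr)\i\bigl(\exp^{\bar g}_{f_1(x)}(v)\bigr).
\]
Smoothness and the fiber-preserving property of $F$ are immediate from smoothness of $(\pi_N,\exp^{\bar g})$ and of its inverse on $V^{N\x N}$, together with smoothness of $f_1, f_2$. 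By construction the chart change is literally a push-forward,
\[
u_{f_2,f_1}(s) \;=\; F \circ s.
\]

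Second, I would invoke Lemma \nmb!{5.2} in the appropriate variant. In the compact smooth (or compact Whitney manifold germ) case, part \thetag{1} directly gives that $\Gamma(W) := \{h \in \Gamma(f_1^*TN) : h(M) \subset W\}$ is open in $\Gamma(f_1^*TN)$ and that $F_*$ is smooth into $\Gamma(f_2^*TN)$. Parts \thetag{2} and \thetag{3} give the analogous statements for $\Gamma_{C^k}$ and $\Gamma_{W^{s,p}}$; the Sobolev case uses $s > \dim(M)/p$ so that the Sobolev embedding $W^{s,p} \hookrightarrow C^0$ makes the condition $h(M) \subset W$ an open condition and makes the Omega-lemma applicable. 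The non-compact cases (open $M$ or non-compact Whitney germ) are handled by the $\Gamma_c$-variant of Lemma \nmb!{5.2}, since sections of compact support are mapped by $F$ to sections whose deviation from $\sigma_{12}$ has the same compact support.

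Third, in the Whitney manifold germ case one could alternatively reduce to a neighboring smooth compact-with-boundary piece $L \subset \tilde M$ containing $M$ in its interior: extend $f_1, f_2$ and $F$ smoothly to a neighborhood, write each $s \in \Gamma(M \gets f_1^*TN)$ as the restriction of a section in $\Gamma_L(\tilde M \gets \tilde f_1^*TN)$ (using a continuous Whitney extension operator from \nmb!{4.8}), apply Lemma \nmb!{5.2} on $L$, and restrict back. Since the space $\Gamma(M \gets f_1^*TN)$ is a direct locally convex summand of $\Gamma_L(\tilde M \gets \tilde f_1^*TN)$ via such an extension operator, smoothness descends.

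The main obstacle is purely bookkeeping: verifying that the set $\{s : s(M) \subset W\}$ is open in each of the four topologies in play, and that the restriction/extension in the Whitney case is compatible with the push-forward $F_*$. Both reduce to the statements already contained in Lemma \nmb!{5.2} and in the direct-summand description of section spaces over a Whitney germ in \nmb!{4.8}; there is no genuine analytic difficulty beyond what is already encoded there.
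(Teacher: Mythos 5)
Your proposal is correct and takes exactly the route of the paper, whose entire proof of Lemma \nmb!{5.5} is the one-sentence observation that a chart change is composition from the left by a smooth fiber-respecting locally defined diffeomorphism, so Lemma \nmb!{5.2} applies; you have merely written out explicitly the fiber-preserving map $F$, its domain $W$, and the case distinctions that the paper leaves implicit.
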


\begin{proof} This follows from Lemma~\nmb!{5.2}, since
any chart change is just compositions from the left by a smooth fiber respecting locally defined diffeomorphism. 
\end{proof}

\begin{lemma}\nmb.{5.6} \thetag{1} If $M$ is a  compact manifold or a compact Whitney manifold germ, then 
$$C^\infty(\mathbb R,C^\infty(M,N))\cong C^\infty(\mathbb R\x M,N)\,.$$ 
\\
\thetag{2}
If $M$ is not compact, $C^\infty(\mathbb R,C^\infty(M,N))$ consists of all smooth 
$c:\mathbb R\x M\to N$ such that
\begin{itemize}
\item for each compact interval $[a,b]\subset \mathbb R$ there is a compact subset $K\subset M$
such that $c(t,x)$ is constant in $t\in [a,b]$ for each $x\in M\setminus K$.
\end{itemize}
\end{lemma}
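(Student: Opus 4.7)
The plan is to reduce both statements to their linearized analogues via the chart structure of \nmb!{5.3}, where the linear counterpart is provided by Lemma \nmb!{5.4}, and then use only the smoothness of $\exp^{\bar g}$ and $(\pi_N,\exp^{\bar g})\i$, together with the smooth Omega-lemma \nmb!{5.2} when needed. The whole argument is local in $t\in\mathbb R$ and takes place inside a single chart on $C^\infty(M,N)$.

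For \thetag{1}, let $c:\mathbb R\to C^\infty(M,N)$ be a smooth curve, fix $t_0\in\mathbb R$, and set $f:=c(t_0)\in C^\infty(M,N)$. Since smooth curves are continuous, there is $\varepsilon>0$ with $c((t_0-\varepsilon,t_0+\varepsilon))\subset U_f$, so $u_f\o c$ is a smooth curve into the convenient space $\Ga(M\gets f^*TN)$; by Lemma \nmb!{5.4} this corresponds to a smooth section of $\on{pr}_2^* f^*TN$ on $(t_0-\varepsilon,t_0+\varepsilon)\x M$, and composing with $\exp^{\bar g}$ yields a smooth extension of $c$ to $(t_0-\varepsilon,t_0+\varepsilon)\x M\to N$. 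These local pieces glue to a smooth map $\mathbb R\x M\to N$. Conversely, given smooth $c:\mathbb R\x M\to N$, fix $t_0$ and $f=c(t_0,\cdot)$; since $M$ is compact, continuity of $(t,x)\mapsto (f(x),c(t,x))$ together with symmetry of $V^{N\x N}$ yields $\varepsilon>0$ so that the pair stays in $V^{N\x N}$ on $(t_0-\varepsilon,t_0+\varepsilon)\x M$. Then $(t,x)\mapsto (\pi_N,\exp^{\bar g})\i(f(x),c(t,x))$ is a smooth section on the product, so Lemma \nmb!{5.4} together with $u_f\i$ returns a smooth curve in $C^\infty(M,N)$ which agrees with $c$. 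For a compact Whitney manifold germ the same argument applies verbatim, using that $\Ga(M\gets f^*TN)$ is by construction a direct summand of the corresponding section space on the neighborhood manifold.

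For \thetag{2}, the same scheme works with $\Ga_c$ in place of $\Ga$ and Lemma \nmb!{5.1}.(4) in place of Lemma \nmb!{5.4}. In the forward direction, the smooth curve $u_f\o c$ into $\Ga_c(M\gets f^*TN)$ satisfies, for every compact subinterval, that $(u_f\o c)(t)(x)=0$ for $x$ outside some compact $K$; applying $\exp^{\bar g}_{f(x)}$ to this identity and using $\exp^{\bar g}_{f(x)}(0)=f(x)$ gives $c(t,x)=f(x)$, hence constant in $t$, off $K$. Covering a given compact interval $[a,b]$ by finitely many such chart-subintervals (each with its own $K_i$) and using connectedness of $[a,b]$ yields one compact $K=\bigcup K_i$ outside which $c(\cdot,x)$ is constant on all of $[a,b]$. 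Conversely, if the stated condition holds, then near each $t_0$ the pair $(f(x),c(t,x))$ lies automatically in $V^{N\x N}$ off the relevant compact $K$ (because there $c(t,x)=f(x)$) and, by compactness and continuity on $K$ after possibly shrinking $\varepsilon$, also on $K$. The induced section of $\on{pr}_2^*f^*TN$ then has support in $[t_0-\varepsilon,t_0+\varepsilon]\x K$, so by Lemma \nmb!{5.1}.(4) it defines a smooth curve into $\Ga_c(M\gets f^*TN)$, and composing with $u_f\i$ gives the desired smooth curve into $C^\infty(M,N)$.

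The only nontrivial point is the non-compact case, where one must carry the compact-support/constant-at-infinity condition back and forth through the chart; the single observation that $\exp^{\bar g}_{f(x)}(0)=f(x)$ makes this translation unambiguous. Everything else is a mechanical combination of the smooth exponential law encoded in Lemma \nmb!{5.4} with the chart structure from \nmb!{5.3}.
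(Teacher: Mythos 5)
Your proposal is correct and is essentially the paper's intended argument: the paper's own proof consists of the single line ``By Lemma \nmb!{5.4}'', and what you have written is exactly the expansion of that citation --- localize in $t$, pass through the charts of \nmb!{5.3} to the linear section spaces, apply Lemma \nmb!{5.4} (resp.\ Lemma \nmb!{5.1}.(4)), and translate the compact-support condition into the constant-in-$t$-off-a-compact-set condition via $\exp^{\bar g}_{f(x)}(0)=f(x)$. The only cosmetic remark is that in the converse direction of \thetag{1} the symmetry of $V^{N\x N}$ is not what is needed; one only uses that $V^{N\x N}$ is an open neighborhood of the diagonal together with compactness of $M$.
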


\begin{proof}
By  lemma \nmb!{5.4}.
\end{proof}

\begin{lemma}\nmb.{5.7} 
Composition $(f,g)\mapsto g\o f$  is 
smooth as a mapping
\begin{gather*}
C^\infty(P,M)\x C^\infty(M,N)\to C^\infty(P,N)
\\
C^k(P,M)\x C^\infty(M,N)\to C^k(P,N)
\\
W^{s,p}(P,M)\x C^\infty(M,N)\to W^{s,p}(P,N)
\end{gather*}
for $P$ a manifold or a Whitney manifold germ, compact or not, and for $M$ and $N$  manifolds.
\end{lemma}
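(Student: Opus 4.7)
The plan is to apply the convenient-calculus principle from Section~\nmb!{2}: a map between convenient manifolds is smooth iff it sends smooth curves to smooth curves. So I would fix a smooth curve $t\mapsto (f_t,g_t)$ into the product domain and aim to show that $h_t:=g_t\o f_t$ is a smooth curve in the target. The three cases follow the same template but use different identifications of smooth curves into the relevant mapping space.

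For the $C^\infty$ case, Lemma~\nmb!{5.6} identifies $f$ with a smooth $F\colon \mathbb R\x P\to M$ and $g$ with a smooth $G\colon \mathbb R\x M\to N$ (subject to the support condition of (\nmb!{5.6}.2) in the non-compact case). Then $H(t,x):=G(t,F(t,x))$ is smooth, as the composition
$$
\mathbb R\x P \East{(\on{pr}_1,\, F)}{} \mathbb R\x M \East{G}{} N,
$$
and the support conditions are inherited (if $g_t$ is $t$-constant off a compact set, so is $g_t\o f_t$). Reapplying Lemma~\nmb!{5.6} yields a smooth curve in $C^\infty(P,N)$.

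For the $C^k$ case, by the chart construction of~\nmb!{5.3} combined with Lemma~\nmb!{5.4}(2), a smooth curve in $C^k(P,M)$ is locally represented by an $F\in C^{\infty,k}(\mathbb R\x P, M)$, while $g$ remains represented by jointly smooth $G$. The chain rule then expresses $\p_t^m \p_x^\al H$ for $|\al|\le k$ and any $m$ as a polynomial in the derivatives of $G$ and in $\p_t^j \p_x^\be F$ with $j\le m$, $|\be|\le|\al|$; all such terms are jointly continuous in $(t,x)$. Hence $H\in C^{\infty,k}(\mathbb R\x P, N)$, which gives a smooth curve in $C^k(P,N)$.

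For the Sobolev case with $s>\dim(P)/p$, I would work in the charts of~\nmb!{5.3}. Near a time $t_0$, choose $f_0\in C^\infty(P,M)$ close to $f_{t_0}$ and $g_0\in C^\infty(M,N)$ close to $g_{t_0}$, and write $f_t=\exp_{f_0}\o\, u(t)$ with $u\colon\mathbb R\to\Ga_{W^{s,p}}(f_0^*TM)$ smooth, and $g_t=\exp_{g_0}\o\, v(t)$ with $v\colon\mathbb R\to\Ga(g_0^*TN)$ smooth. In the chart on $W^{s,p}(P,N)$ centered at $g_0\o f_0$, the composition becomes
$$
w(t)(x)=\Psi\bigl(x,\,u(t)(x),\,v(t)\bigr),
$$
where $\Psi$ is a smooth fiber-preserving mapping assembled from the exponential maps and their inverses, with $t$-dependence entering smoothly only through $v$. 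A parametric version of Lemma~\nmb!{5.2}(3)---the Fa\`a di Bruno expansion of $\p_t^m w(t)(x)$ yields a polynomial in $\p_t^j u(t)(x)$ and $\p_t^j v(t)(\cdot)$, with each term locally $W^{s,p}$-bounded in $t$---combined with Theorem~\nmb!{2.6} applied to the separating family of point evaluations on the reflexive Banach space $\Ga_{W^{s,p}}((g_0\o f_0)^*TN)$, yields smoothness of $t\mapsto w(t)$, hence of $h$ in $W^{s,p}(P,N)$.

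The main obstacle is the Sobolev case, specifically the parametric upgrade of Lemma~\nmb!{5.2}(3) allowing the fiber-preserving map to depend smoothly on a second parameter (here $v(t)$). This goes through by the same Fa\`a di Bruno plus scalarwise-smoothness argument as for the original Lemma~\nmb!{5.2}(3), where the Sobolev embedding $W^{s,p}\hookrightarrow C^0$ valid for $s>\dim(P)/p$ ensures that pointwise composition is well-defined and that the resulting terms remain in $\Ga_{W^{s,p}}$.
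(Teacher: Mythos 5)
Your proposal is correct and follows exactly the strategy the paper uses: the paper's entire proof of Lemma \nmb!{5.7} is the single sentence ``Since it maps smooth curves to smooth curves,'' and you are simply filling in the curve-level verifications (via Lemma \nmb!{5.6}, the $C^{\infty,k}$ description, and a parametric variant of Lemma \nmb!{5.2}(3) with Theorem \nmb!{2.6}) that the author leaves implicit.
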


For more general $M$ the description becomes more complicated. See the special case of the diffeomorphism group of a Whitney manifold germ $M$ in \nmb!{6.3} below.

\begin{proof}
Since it maps smooth curves to smooth curves.
\end{proof}

\begin{corollary}\nmb.{5.8} For $M$ a manifold or a Whitney manifold germ and a manifold $N$,
the tangent bundle of the manifold $C^{\infty}(M,N)$ of mappings is given by 
\begin{align*}
TC^\infty(M,N)&= C^\infty(M,TN) \East{C^\infty(M,\pi_N)=(\pi_N)_*}{} C^\infty(M,N)\,,
\\
TC^k(M,N)&= C^k(M,TN) \East{C^k(M,\pi_N)=(\pi_N)_*}{} C^k(M,N)\,,
\\
TW^{s,p}(M,N)&= W^{s,p}(M,TN) \East{W^{s,p}(M,\pi_N)=(\pi_N)_*}{} W^{s,p}(M,N)\,.
\end{align*}
\end{corollary}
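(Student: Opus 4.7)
The plan is to prove Corollary 5.8 directly from the chart construction of Section 5.3 together with the kinematic description of tangent vectors via smooth curves that is intrinsic to convenient calculus. For brevity I describe the $C^\infty$ case in detail; the $C^k$ and $W^{s,p}$ cases are entirely analogous, using the appropriate parts of Lemmas \nmb!{5.1}, \nmb!{5.2}, and \nmb!{5.4}.

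First I would define the candidate identification $\Ph: TC^\infty(M,N) \to C^\infty(M,TN)$. A tangent vector at $f$ is represented by a smooth curve $c: \mathbb R \to C^\infty(M,N)$ with $c(0)=f$; by Lemma \nmb!{5.6} this is the same as a smooth map $\hat c: \mathbb R \x M \to N$ with $\hat c(0,\cdot)=f$, plus the support condition from Lemma \nmb!{5.6}.\thetag{2} in the non-compact case. Set $\Ph([c])(x) := \p_t|_{t=0} \hat c(t,x) \in T_{f(x)} N$; this is a smooth map $M \to TN$ satisfying $\pi_N \o \Ph([c]) = f$, hence lies in the fibre of $(\pi_N)_*$ over $f$. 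Conversely, for $h \in C^\infty(M,TN)$ with $\pi_N \o h = f$ the curve $\hat c(t,x) := \exp^{\bar g}_{f(x)}(t \cdot h(x))$ is smooth for small $|t|$ and satisfies $\Ph([\hat c]) = h$, so $\Ph$ is a fibrewise bijection covering the identity on $C^\infty(M,N)$. In the non-compact case the support conditions on $c$ translate exactly into the condition that $\Ph([c])$ equals the zero section $0_f$ off a compact set, matching the $\Ga_c$-based modelling of both sides.

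Next I would match $\Ph$ with the intrinsic tangent-bundle charts of $C^\infty(M,N)$. In the chart $u_f$ centered at a smooth $f$, the modelling space of $C^\infty(M,N)$ is $\Ga(M\gets f^*TN)$, and for $g = u_f\i(s)$ the derivative $T_s u_f\i$ sends $\si\in \Ga(M\gets f^*TN)$ to the tangent vector at $g$ that, under $\Ph$, corresponds to the section $x\mapsto T_{s(x)}\exp^{\bar g}_{f(x)}\cdot \si(x)$ of $g^*TN$; this is precisely an element of the fibre of $C^\infty(M,TN)\to C^\infty(M,N)$ over $g$. Equipping $C^\infty(M,TN)$ with the manifold structure of Section \nmb!{5.3} (applied to $TN$ with any auxiliary Riemannian metric) and $TC^\infty(M,N)$ with its natural tangent-bundle charts, the map $\Ph$ reads in charts as a fibre-preserving smooth operation of exactly the type handled by Lemma \nmb!{5.2}, so both $\Ph$ and $\Ph\i$ are smooth and $(\pi_N)_*\o\Ph$ coincides with the bundle projection of $TC^\infty(M,N)$.

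For the $C^k$ and $W^{s,p}$ cases I would run the same argument, invoking Lemma \nmb!{5.1}.\thetag{2} and \nmb!{5.1}.\thetag{3} in place of \thetag{1} for the structure of smooth curves into $\Ga_{C^k}$ and $\Ga_{W^{s,p}}$, and Lemma \nmb!{5.2}.\thetag{2} and \nmb!{5.2}.\thetag{3} for smoothness of the push-forward. The only point needing extra care is the $W^{s,p}$ case: since $W^{s,p}(M,N)$ is charted only around smooth basepoints $f$ by Section \nmb!{5.3}, one must verify that the fibre $\Ga_{W^{s,p}}(M\gets f^*TN)$ of $W^{s,p}(M,TN)\to W^{s,p}(M,N)$ over a smooth $f$ really is the modelling space of $W^{s,p}(M,N)$ at $f$, and that the exponential-based push-forward remains fibrewise well-defined at the $W^{s,p}$-regularity. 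Both points reduce to the standing assumption $s>\dim(M)/p$, which provides a continuous embedding into $C^0$ and so makes the fibrewise composition with $T\exp^{\bar g}$ unambiguous; this is the main (though mild) technical obstacle I anticipate.
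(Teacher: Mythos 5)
Your proposal is correct and follows essentially the same route as the paper, whose entire proof is the one-line observation that the claim "follows from the chart structure and the fact that sections of $f^*TN\to M$ correspond to mappings $s:M\to TN$ with $\pi_N\o s=f$"; your argument simply fills in the kinematic-curve description of tangent vectors, the chart matching, and the smoothness check via Lemma \nmb!{5.2}, all of which the paper leaves implicit. The extra care you take with the non-compact support conditions and with the $W^{s,p}$ case (charts only around smooth centers, $s>\dim(M)/p$) is a faithful elaboration rather than a different method.
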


\begin{proof}
This follows from the chart structure and the fact that sections of $f^*TN\to M$ correspond to mappings $s:M\to TN$ with $\pi_N\o s =f$.
\end{proof}

\subsection{Sprays respecting fibers of submersions}
\nmb.{5.9}
 Sprays are versions of Christoffel symbols and lead to exponential mappings. They are easier to adapt to fibered manifolds than Riemannian metrics.
Recall that a spray $S$ on a manifold  $N$ without boundary  is a smooth mapping $S:TN\to T^2N$ 
with the following properties:
\begin{itemize}
\item $\pi_{TN}\o S= \on{Id}_{TN}$; $S$ is a vector field.
\item $T(\pi_N)\o S= \on{Id}_{TN}$; $S$ is a `differential 
       equation of second order'.
\item Let $m^N_t:TN\to TN$ and $m^{TN}_t:T^2N\to T^2N$ be 
       the scalar multiplications. Then $S\o m^N_t = 
       T(m^N_t).m^{TN}_t.S$.
\end{itemize}
Locally, in charts of $TN$ and $T^2N$ induced by a chart of $N$, a spray looks like $S(x,v)=(x,v;v;\Ga(x,v))$ where $\Ga$ is quadratic in $v$. 
For a spray $S\in \X(TN)$ on a manifold $N$,
we let $\exp(X):=\pi_N(\Fl^S_1(X))$, then the mapping 
$\exp:TN\supset V\to N$
is smooth, defined on an 
open neighborhood $V$ of the zero section in $TN$, which is called 
the \emph{exponential mapping} of the spray $S$. 
Since $T_{0_x}(\exp|_{T_xN})=\on{Id}_{T_xN}$ (via 
       $T_{0_x}(T_xN)=T_xN$), by the inverse function theorem 
       $\exp_x:=\exp|_{T_xN}$ is a diffeomorphism near $0_x$ in 
       $TN$ onto an open neighborhood of $x$ in $N$. 
Moreover the mapping 
$(\pi_N,\exp):TN\supset\tilde V\to N\x N$
       is a diffeomorphism from an open neighborhood $\tilde V$ of the 
       zero section in $TN$ onto an open neighborhood of the 
       diagonal in $N\x N$. 

\begin{lemma*}
Let $q:N\to M$ be a smooth surjective submersion between connected manifolds without boundary.
Then there exists a spray $S$ on $N$ which is tangential to the fibers of $q$, i.e.,  
$S(T(q\i(x)))\subset T^2(q\i(x))$ for each $x\in M$.
\end{lemma*}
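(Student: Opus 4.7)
The plan is to build $S$ locally in submersion (fibered) charts and patch with a partition of unity on $N$. The key algebraic fact I would establish first is that the set of sprays on $N$ is an affine space in the following sense: if $S_1, S_2$ are sprays and $\rho: N \to [0,1]$ is smooth, then $\rho \cdot S_1 + (1-\rho) \cdot S_2$ (with $\rho$ pulled back to $TN$ via $\pi_N$) is again a spray. Linearity of the first two spray axioms is immediate; the scaling axiom $S \circ m_t^N = T(m_t^N) \cdot m_t^{TN} \cdot S$ is preserved because $m_t^N$ fixes the base point, so $\rho(m_t^N(v)) = \rho(\pi_N(v))$, and the axiom then follows by taking the linear combination. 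This is the observation that makes the whole strategy work, and I expect it to be the main subtle point; without it one would not be able to combine sprays at all.

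Next, using the local form of the submersion, cover $N$ by a locally finite family of charts $\ph_\al: U_\al \to V_\al \times W_\al \subset \mathbb R^m \times \mathbb R^{n-m}$ in which $q$ corresponds to the projection $(x,y)\mapsto x$, where $n=\dim N$ and $m=\dim M$. On $V_\al \times W_\al$ take the flat spray $S^{\text{flat}}(x,y,\xi,\eta) = (x,y,\xi,\eta;\xi,\eta,0,0)$, whose geodesics are straight lines in $\mathbb R^n$. Starting with $\xi=0$ one gets $t\mapsto (x_0, y_0+t\eta)$, which stays inside the fiber $\{x_0\}\times W_\al$. Pulling back via $\ph_\al$ yields a spray $S_\al$ on $TU_\al$ that is tangent to the fibers of $q|_{U_\al}$.

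Now choose a smooth partition of unity $\{\rho_\al\}$ on $N$ subordinate to $\{U_\al\}$ and define
$$
S(v) := \sum_{\al} \rho_\al(\pi_N(v))\, S_\al(v),
$$
each summand extended by zero outside $TU_\al$ (which is smooth since $\rho_\al$ has support in $U_\al$). At every $v\in TN$ only finitely many terms are nonzero, so $S:TN\to T^2N$ is smooth, and at each $v$ it is a finite convex combination of spray values in the vector space $T_vTN$. By the affine-space observation above, $S$ is a spray. Fiber-tangency persists: if $v\in T(q\i(x))$ with $z=\pi_N(v)\in q\i(x)$, then each nonzero term $S_\al(v)$ lies in the linear subspace $T^2(q\i(x))\subset T_vTN$, so the convex combination $S(v)$ does as well, which is exactly the required condition.
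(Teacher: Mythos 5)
Your proof is correct and follows essentially the same route as the paper: local sprays in submersion charts (the paper takes $\Gamma^1=0$ with arbitrary $\Gamma^2$, you take the fully flat spray, which is a harmless specialization), glued by a partition of unity pulled back via $\pi_N$. Your ``affine space of sprays'' observation is exactly the algebraic content of the paper's displayed computation showing that the convex combination still has second component $v$ and fiberwise-quadratic $\Gamma$, so nothing is missing.
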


This is a simplified version of \cite[10.9]{Michor80}.

\begin{proof} In suitable charts on $N$ and $M$ the submersion $q$ looks like a linear projection 
$(y_1,y_2)\mapsto y_1$. The local expression $T(\text{chart})\to T^2(\text{chart})$ of  a spray is   
\begin{multline*}
S\big((y_1,y_2),(v_1,v_2)\big) =
\\
= \big((y_1,y_2),(v_1,v_2);(v_1,v_2),(\Ga^1(y_1,y_2;v_1,v_2),\Ga^2(y_1,y_2,v_1,v_2))\big)
\end{multline*}
where $\Ga^i(y_1,y_2,v_1,v_2)$ is quadratic in $(v_1,v_2)$. The spray is tangential to the fibers of $q$ if and only if  $\Ga^1(y_1,y_2,0,v_2)=0$. This clearly exists locally (e.g., choose $\Ga^1=0$).
Now we use a partition of unity $(\ph_\al)$ subordinated to a cover $N= \bigcup_{\al}U_\al$ with such charts and glue local sprays with the induced partition of unity $(\ph_\al\o \pi_N)$ subordinated to the cover $TN=\bigcup_{\al}TU\al$ for the vector bundle $\pi_{TN}:T^2N\to TN$. Locally this looks like 
(where $y=(y_1,y_2)$ etc.)
\begin{align*}
\Big(\sum_\al (\ph_\al\o \pi_N).S_\al\Big)(y,v) &=
%\\&= 
\Big(y,v; \sum_\al \ph_\al(y)v, 
 \sum_\al \ph_\al(y)\big(\Ga^1_\al(y,v),(\Ga^2_\al(y,v)\big)
\Big)
\\&
= 
\Big(y,v; v, 
 \big(\sum_\al \ph_\al(y)\Ga^1_\al(y,v),\sum_\al \ph_\al(y)\Ga^2_\al(y,v)\big)
\Big)
\end{align*}
and is therefore a spray which is tangential to the fibers of $q$. 
\end{proof}

\begin{proposition}
\nmb.{5.10} {\rm \cite[10.10]{Michor80}}
Let $q:N\to M$ be a smooth surjective submersion between connected manifolds without boundary.
The space $S^q(M,N)$ of all smooth sections of $q$ is a splitting smooth submanifold of $C^{\infty}(M,N)$. 
Similarly, the spaces $S^q_{C^N}(M,N)$ and $S^q_{W^{s,p}}(M,N)$ of $C^N$-sections and $W^{s,p}$-sections are smooth splitting submanifolds of 
$C^N(M,N)$ or $W^{s,p}(M,N)$ (for $s>\dim(M)/p$), respectively.  
\end{proposition}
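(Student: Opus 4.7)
The plan is to construct charts on $C^\infty(M,N)$ that are adapted to the submersion $q$, so that being a section becomes a linear condition in the chart. By Lemma \nmb!{5.9} we can fix a spray $S$ on $N$ that is tangent to the fibers of $q$, with exponential mapping $\exp^S\colon TN\supset V\to N$ such that $(\pi_N,\exp^S)\colon V\to N\x N$ is a diffeomorphism onto an open neighborhood of the diagonal. Following the construction of \nmb!{5.3} with $\exp^S$ in place of $\exp^{\bar g}$ gives an atlas on $C^\infty(M,N)$ (and on $C^n(M,N)$, $W^{s,p}(M,N)$ by the same recipe); this atlas is smoothly equivalent to the Riemannian one since the chart changes are pushforwards by a fiber-preserving smooth diffeomorphism of a tubular neighborhood, which are smooth by Lemma \nmb!{5.2}.

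The key property is that tangency of $S$ to the fibers of $q$ implies that for each $y\in N$ the exponential $\exp^S_y$ maps (a neighborhood of $0$ in) the vertical subspace $V_{q,y}N:=\ker(T_yq)$ diffeomorphically onto a neighborhood of $y$ in the fiber $q\i(q(y))$, while carrying $V_{q,y}N$ exactly onto the intersection of its image with that fiber (the inverse statement follows because $(\pi_N,\exp^S)$ is injective on $V$). Consequently, for $f\in S^q(M,N)$ and $g$ in the chart domain $U_f$, the section $u_f(g)\in\Ga(M\gets f^*TN)$ takes values in the vertical subbundle $f^*V_qN\subset f^*TN$ if and only if $q\o g=\on{Id}_M$. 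Thus $u_f$ identifies $U_f\cap S^q(M,N)$ with $\tilde U_f\cap \Ga(M\gets f^*V_qN)$.

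Since $f^*V_qN$ is a smooth finite-rank subbundle of $f^*TN\to M$, it is complemented (choose any complementary subbundle, e.g.\ the orthogonal complement with respect to $f^*\bar g$), so $\Ga(M\gets f^*V_qN)$ is a closed, topologically complemented linear subspace of $\Ga(M\gets f^*TN)$; the same holds for $\Ga_{C^n}$ and $\Ga_{W^{s,p}}$, since pullback along the bundle projection to the splitting is a bounded linear idempotent in each of these function space categories. This exhibits $S^q(M,N)$ as a splitting smooth submanifold of $C^\infty(M,N)$; the identical argument with $\Ga_{C^n}$ or $\Ga_{W^{s,p}}$-charts works for $S^q_{C^n}(M,N)\subset C^n(M,N)$ and $S^q_{W^{s,p}}(M,N)\subset W^{s,p}(M,N)$, once one knows that every $C^n$- (resp.\ $W^{s,p}$-) section lies in a chart centered on a \emph{smooth} section --- this follows from the density of $S^q(M,N)$ in $S^q_{C^n}(M,N)$ and in $S^q_{W^{s,p}}(M,N)$ (approximate a section by smoothing along the fibers, using the fiber-tangential spray) together with the symmetry of $V^{N\x N}$, exactly as in \nmb!{5.3}.

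The main obstacle is the verification that sections into $f^*V_qN$ correspond exactly to elements of $S^q(M,N)$ in the chart; one direction (vertical vectors exponentiate inside the fiber) is immediate from the flow-invariance of fibers under $S$, but the converse (a point $g(x)\in q\i(x)$ near $f(x)$ comes from a vertical vector) requires that the restriction of $\exp^S_{f(x)}$ to $V_{q,f(x)}N$ agrees with the exponential of the induced spray on the fiber $q\i(x)$ and is therefore a local diffeomorphism onto a neighborhood in the fiber --- this uses that the restriction of a spray to an invariant submanifold is again a spray on that submanifold. The remaining smoothness assertions and the verification that the split subbundle structure gives a splitting in the appropriate function space category are routine applications of Lemmas \nmb!{5.1}, \nmb!{5.2}, and \nmb!{5.5}.
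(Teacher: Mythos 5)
Your proposal follows essentially the same route as the paper's proof: fix a spray tangential to the fibers of $q$ via Lemma \nmb!{5.9}, use the chart of \nmb!{5.3} built from its exponential map, split $TN=V^q(N)\oplus H^q(N)$ with an auxiliary metric so that $\Ga(M\gets f^*TN)$ splits accordingly, and identify sections of $q$ in the chart with sections taking values in $f^*V^q(N)$. Your extra care about the converse direction (that a nearby point of the fiber is hit by a \emph{vertical} vector, via the restricted spray on the fiber) and about centering $C^n$- and $W^{s,p}$-charts at smooth sections only makes explicit what the paper leaves implicit.
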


The proof given here is simpler than the one in  \cite[10.10]{Michor80}.

\begin{proof} Let us first assume that $M$ is compact. 
Given a smooth section $f\in S^q(M,N)$, consider the chart centered at $f$ from \nmb!{5.3}
\begin{align*}
C^\infty(M,N)\supset U_f &=\{g: (f,g)(M)\subset V^{N\x N}\} \East{u_f}{} \tilde U_f 
\subset \Ga(M\gets f^*TN)
\\
u_f(g) = (\pi_N,&\exp^{S})\i \o (f,g),\quad u_f(g)(x) = (\exp^{S}_{f(x)})\i(g(x)) 
\\
(u_f)\i(s) &= \exp^{S}_f\o s, \qquad (u_f)\i(s)(x) = \exp^{S}_{f(x)}(s(x))
\end{align*}
where we use the exponential mapping with respect to a spray $S$ on $N$ which is tangential to the fibers of $q$. Using an unrelated auxiliary Riemannian metric $\bar g$ on $N$ we can smoothly split the tangent bundle  $TN = V^q(N)\oplus H^q(N)$ into the vertical bundle of  all vectors tangent to the fibers of $q$, and into its orthogonal complement with respect to $\bar g$. The orthonormal projections $P^{\bar g}:TN \to V^q(N)$ and $\on{Id}_{TN}-P^{\bar g}: TN\to H^q(N)$ induce  the direct
sum decomposition 
$$\Ga(M\gets f^*TN) = \Ga(M\gets f^*V^q(N))\oplus \Ga(M\gets f^*TN)
\quad s\mapsto (P^{\bar g}.s, s-P^{\bar g}.s).
$$    
Now $g\in U_f$ is in $S^q(M,N)$ if and only if $u_f(g)\in \Ga(f^*V^q(N))$.

If $M$ is not compact we may use the spaces of sections with compact support as described in \nmb!{5.3}. Similarly for the cases of $C^N$-sections or $W^{s,p}$-sections. 
\end{proof}

\begin{corollary}
\nmb.{5.11}
Let $p:E\to M$ be a fiber bundle over a compact Whitney manifold germ $M$. Then the space 
$\Ga(E)$ of smooth sections is a splitting smooth submanifold of $C^{\infty}(M,E)$. Likewise for the spaces $\Ga_{C^N}(E)$ and $\Ga_{W^{s,p}}(E)$ of $C^N$-sections and $W^{s,p}$-sections. 
\end{corollary}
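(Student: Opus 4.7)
The plan is to reduce Corollary \nmb!{5.11} to Proposition \nmb!{5.10} by passing to an open neighborhood manifold. First, by the definition of fiber bundles over Whitney manifold germs in \nmb!{4.8}, we may fix an open manifold neighborhood $\tilde M \supset M$ together with a smooth fiber bundle $\tilde p : \tilde E \to \tilde M$ whose restriction to $M$ is $p:E\to M$. Since $\tilde M$ and $\tilde E$ are (honest) manifolds without boundary and $\tilde p$ is a submersion, Proposition \nmb!{5.10} says that $S^{\tilde p}(\tilde M, \tilde E)$ is a splitting smooth submanifold of $C^{\infty}(\tilde M, \tilde E)$; similarly for the $C^N$ and $W^{s,p}$ versions.

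Next I would transfer this to $M$ using the canonical charts of \nmb!{5.3}. Given $f\in \Ga(E)$, extend it to a smooth section $\tilde f\in\Ga(\tilde E)$ using an extension operator (as in \nmb!{4.8}, applied to the bundle $\tilde E$ after embedding via a tubular neighborhood). Equip $\tilde E$ with a spray $S$ that is tangential to the fibers of $\tilde p$, whose existence is guaranteed by Lemma \nmb!{5.9}, and use its exponential mapping to build the chart $u_f$ on $C^{\infty}(M,E)$ centered at $f$. Choose an auxiliary Riemannian metric $\bar g$ on $\tilde E$ and split $T\tilde E = V^{\tilde p}\tilde E \oplus H^{\tilde p}\tilde E$ orthogonally; restricting this decomposition to $f^*TE$ yields a smooth bundle splitting over $M$, and Lemma \nmb!{5.2} (applied to the fiberwise linear orthogonal projection) shows that the induced decomposition
\[
\Ga(M\gets f^*TE) = \Ga(M\gets f^*V^{p}E) \oplus \Ga(M\gets f^*H^{p}E)
\]
is a direct sum decomposition of convenient vector spaces, and likewise for $\Ga_{C^N}$ and $\Ga_{W^{s,p}}$ (since the splitting is induced by composition with a smooth fiber-preserving map). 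Because the spray $S$ is tangential to the fibers of $\tilde p$, for $g\in U_f$ we have $g\in \Ga(E)$ if and only if $u_f(g)$ takes values in $f^*V^{p}E$, i.e., if and only if $u_f(g)\in \Ga(M\gets f^*V^{p}E)$. This exhibits $\Ga(E)\cap U_f$ as the intersection of $\tilde U_f$ with a closed linear direct summand, which is precisely the splitting submanifold condition.

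Finally, the same argument handles $\Ga_{C^N}(E)$ and $\Ga_{W^{s,p}}(E)$ (for $s > \dim(M)/p$), provided the charts are centered at smooth sections $f$, as in \nmb!{5.3}; the fiberwise projection $P^{\bar g}$ acts continuously on each of these section spaces by Lemma \nmb!{5.2}.\thetag{2}, \thetag{3}.

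The main obstacle is verifying that the direct summand structure of $\Ga(M\gets f^*V^{p}E)$ inside $\Ga(M\gets f^*TE)$ is genuinely compatible with the Whitney germ structure: one must check that the restriction of the extension-operator isomorphism $\Ga(M\gets f^*TE) \cong \Ga_L(\tilde M\gets \tilde f^*T\tilde E)$ (via a chosen continuous Whitney extension as in \nmb!{4.8}) intertwines the orthogonal projections, which follows because $P^{\bar g}$ is a smooth fiber-preserving map on $\tilde E$ and hence commutes with restriction; the chart-change independence then follows from Lemma \nmb!{5.5}.
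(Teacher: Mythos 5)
Your proposal is correct and follows essentially the same route as the paper: the paper's proof likewise recalls from \nmb!{4.8} that $E=\tilde E|_M$ for a bundle $\tilde E\to\tilde M$ over an open neighborhood manifold, invokes Proposition \nmb!{5.10} there, and transfers the splitting back to $M$ via fixed extension operators $\Ga(M\gets f^*TE)\to \Ga_L(\tilde M\gets \tilde f^*T\tilde E)$. Your write-up merely unpacks the ingredients of the proof of \nmb!{5.10} (the fiber-tangential spray of \nmb!{5.9}, the vertical/horizontal splitting, and the chart characterization $g\in\Ga(E)\iff u_f(g)\in\Ga(M\gets f^*V^pE)$) that the paper leaves implicit.
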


\begin{proof}
Recall from \nmb!{4.8} that $E=\tilde E|_M$ for a smooth fiber bundle $\tilde E\to \tilde M$. 
There the result follows from \nmb!{5.10}.
Using  (fixed) extension operators 
$$
\Ga(M\gets f^*TE)\to \Ga_L(\tilde M\gets \tilde f^*T\tilde E)
$$
etc.\ we can extend this the case of Whitney manifold germs.  
\end{proof}

\section{Regular Lie groups}
\nmb0{6}

\subsection{Regular Lie groups}
\nmb.{6.1}
We consider a  smooth Lie group $G$ 
with Lie algebra $\mathfrak g=T_eG$ modeled on convenient 
vector spaces. 
The notion of a regular Lie group is originally due to 
\cite{OmoriMaedaYoshioka80,OmoriMaedaYoshioka81,OmoriMaedaYoshioka81b,OmoriMaedaYoshioka82,OmoriMaedaYoshioka83,OmoriMaedaYoshiokaKobayashi83}
for Fr\'echet Lie groups, was 
weakened and made more transparent by \cite{Milnor84}, and then carried over to convenient Lie 
groups in \cite{KM97r}, see also \cite[38.4]{KrieglMichor97}.
We shall write $\mu:G\x G\to G$ for the multiplication with $x.y = \mu(x,y) = \mu_x(y) = \mu^y(x)$ 
for left and right translation.

A Lie group $G$
is called {\em regular}  if the following holds:
\begin{itemize}
\item 
For each smooth curve 
$X\in C^{\infty}(\mathbb R,\mathfrak g)$ there exists a curve 
$g\in C^{\infty}(\mathbb R,G)$ whose right logarithmic derivative is $X$, i.e.,
$$
\begin{cases} g(0) &= e \\
\p_t g(t) &= T_e(\mu^{g(t)})X(t) = X(t).g(t)
\end{cases} 
$$
The curve $g$ is uniquely determined by its initial value $g(0)$, if it
exists.
\item
Put $\on{evol}^r_G(X)=g(1)$ where $g$ is the unique solution required above. 
Then $\on{evol}^r_G: C^{\infty}(\mathbb R,\mathfrak g)\to G$ is required to be
$C^{\infty}$ also. We have $\on{Evol}^X_t:= g(t) =\on{evol}^r_G(tX)$.
\end{itemize}
Of course we could equivalently use the left logarithmic derivative and the corresponding left evolution operator. Group inversion maps the two concepts into each other. See 
\cite[Section 38]{KrieglMichor97} for more information.
Up to now, every Lie group modeled on convenient vector spaces is regular. 

There are other notions of regularity for infinite dimensional Lie groups: For example, one may require that each curve 
$X\in L^1_{\text{loc}}(\mathbb R,\mathfrak g)$ admits an absolutely continuous curve $\on{Evol}^X:\mathbb R\to G$ whose right logarithmic derivative is $X$. See \cite{Glockner15} or \cite{Hanusch17} and references therein. It might be that all these notions of regularity are equivalent for Lie groups modeled on convenient vector spaces.  

\begin{theorem} \nmb.{6.2}
For each manifold $M$ with or without corners, the diffeomorphism group $\Diff(M)$ is a regular Lie group. Its Lie algebra is the space $\X(M)$ of all vector fields with the negative of the usual bracket as Lie bracket, if $M$ is compact without boundary. It is the space $\X_c(M)$ of fields with compact support, if $M$ is an open manifold. It is the space $\X_\p(M)$ of \nmb!{4.7} of vector fields tangent to the boundary, if $M$ is a compact manifold with corners. If $M$ is not compact with corners, then the Lie algebra is the space  $\X_{c,\p}(M)$ of boundary respecting vector fields with compact support. 
\end{theorem}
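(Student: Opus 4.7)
My plan is to construct smooth charts on $\Diff(M)$ using the exponential mapping $\exp^S\colon TM \supset V \to M$ of a spray $S$ on a neighborhood manifold $\tilde M \supset M$ that is tangential to each boundary stratum $\p^q M$. Existence of such a boundary-respecting spray follows by the partition-of-unity argument used in Lemma \nmb!{5.9}: locally near a corner of codimension $q$ one chooses Christoffel symbols whose components corresponding to each coordinate hyperplane vanish when the transverse velocities vanish, and these patch convexly. Tangency of $S$ to $T(\p^q M)$ implies that integral curves of $S$ starting in $T(\p^q M)$ remain in $T(\p^q M)$, so $\exp^S$ preserves each stratum. The chart centered at $\on{Id}_M$ is then $X \mapsto \exp^S \o X$, defined on a neighborhood of $0$ in $\X_\p(M)$ (resp.\ $\X_{c,\p}(M)$ in the non-compact case); for small $X$ the map is a diffeomorphism of $M$ by the inverse function theorem applied fiberwise, using either compactness or compact support of $X$ to conclude bijectivity and properness. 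Charts at arbitrary $\ph\in\Diff(M)$ are obtained by right translation, $X \mapsto (\exp^S \o X)\o\ph$, analogous to \nmb!{5.3}.

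Chart changes have the form $X \mapsto (\exp^S)\i \o (\exp^S \o X \o \ph) \o \ps\i$, which is the composition of a fiberwise smooth map with right translations; smoothness follows from the Omega-lemma (Lemma \nmb!{5.2}) combined with smoothness of composition (Lemma \nmb!{5.7}). Smoothness of multiplication reduces to Lemma \nmb!{5.7} directly. For inversion, given a smooth curve $t\mapsto\ph_t \in \Diff(M)$, the map $t \mapsto \ph_t\i$ corresponds via Lemma \nmb!{5.6} to a smooth map $\mathbb R\x M \to M$, and one verifies this by differentiating $\ph_t\o\ph_t\i = \on{Id}$ and invoking the implicit function theorem in convenient form, or by using the special curve lemma of \nmb!{2.4}. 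The modeling space $T_{\on{Id}}\Diff(M)$ is exactly the space of vector fields in question in each of the four cases. The Lie bracket is the negative of the ordinary vector field bracket because $\Diff(M)$ acts on $M$ on the right.

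For regularity, let $X\in C^{\infty}(\mathbb R,\X_\p(M))$ (resp.\ $\X_{c,\p}(M)$). I would extend $X$ pointwise in $t$ to $\tilde X\in C^{\infty}(\mathbb R,\X(\tilde M))$ and solve the nonautonomous ODE $\p_t g(t,x) = \tilde X(t)(g(t,x))$ with $g(0,x)=x$ on $\tilde M$; tangency of $X$ to each $\p^q M$ ensures the flow $\Fl^X_t$ preserves $M$ and each stratum (cf.\ Lemma \nmb!{4.7}), and either the compact support of $X$ or compactness of $M$ guarantees that the flow exists for all $t\in[0,1]$. Smooth dependence of $X\mapsto \on{evol}^r_G(X) := \Fl^X_1$ from $C^{\infty}(\mathbb R,\X_\p(M))$ to $\Diff(M)$ is then proved by testing against smooth curves: a smooth curve $s\mapsto X(s,\cdot,\cdot)$ yields via Lemma \nmb!{5.1} and Lemma \nmb!{5.4} a smooth family of time-dependent vector fields on $\tilde M$, whose flow is smooth in $(s,t,x)$ by classical finite-dimensional ODE theory, and hence defines a smooth curve into $\Diff(M)$ via Lemma \nmb!{5.6}.

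The main obstacle is verifying that $X \mapsto \exp^S\o X$ genuinely lands in $\Diff(M)$ (not merely in $C^{\infty}(M,M)$) with smooth inverse, i.e., that a diffeomorphism $\ps$ near $\on{Id}_M$ can be smoothly written as $\exp^S\o X$ for some $X\in\X_\p(M)$. The tangency of $X$ to the boundary strata is automatic from the tangency of $S$ and the observation that $\ps$ preserves each $\p^q M$, but one must show that the boundary-tangency property of $X$ is recovered smoothly from $\ps$; this is where the careful construction of $S$ in \nmb!{3.7}/\nmb!{5.9} and the closedness statement of Lemma \nmb!{4.7} are essential. For non-compact $M$, one must additionally verify that the chart map restricts well to compactly supported vector fields, which requires tracking supports through the $c^\infty$-topology on the (LF)-space $\X_{c,\p}(M)$ as in \nmb!{5.3}.
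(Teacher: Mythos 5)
Your proposal is correct and follows essentially the same route as the paper: charts via the exponential map of a boundary-stratum-tangential spray on a neighborhood manifold $\tilde M$ (from \nmb!{3.7}/\nmb!{5.9}), right-translated from the identity chart; smoothness of composition via Lemma \nmb!{5.7}; smoothness of inversion via the (finite-dimensional, pointwise in $(t,x)$) implicit function theorem applied to $f(t,f(t,\cdot)\i(x))=x$; and regularity by solving the time-dependent ODE on $\tilde M$ with smooth parameter dependence. The only cosmetic difference is that the paper phrases the inversion step with the finite-dimensional implicit function theorem rather than a convenient-calculus version, and it additionally remarks that the Whitney extension operator exhibits $\Diff(M)$ as a splitting closed submanifold (not subgroup) of $\Diff(\tilde M)$.
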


\begin{proof} If $M$ is a manifold without boundary then $\Diff(M)\East{open}{} C^\infty(M,M)$.
If $M$ is open, then the group of diffeomorphisms differing from the identity only on a compact set is open in $\Diff(M)$. 

If $M$ has corners we use an open manifold $\tilde M$ containing $M$ as a submanifold with corners as in \nmb!{3.2}. In the description of the chart structure \nmb!{5.3} for $\Diff(\tilde M)$ we have to use  the exponential mapping for a geodesic spray on  $\tilde M$ such that each component of each $\p^q M$ is totally geodesic. This spray exists; see \nmb!{3.7} or \nmb!{5.9}.  Restricting all sections to $M$ then yields a smooth chart centered at the identity for $\Diff(M)$. Then we use right translations of this chart. 
The explicit chart structure on $\Diff(M)$ is described in \cite[10.16]{Michor80}.
Extending all sections to $\tilde M$ via a fixed continuous linear Whitney extension operator respecting compact support identifies $\Diff(M)$ as a splitting smooth closed submanifold of $\Diff(\tilde M)$, but not as a subgroup.  

Composition is smooth by restricting it from $C^\infty(M,M )\x C^\infty(M, M)$, using $\nmb!{5.7}$ and its extension to the situation with corners. 

Inversion is smooth: If $t\mapsto f(t,\quad)$ is a smooth curve in $\Diff(M)$, then 
$f(t,\quad)\i$ satisfies the implicit equation $f(t,f(t,\quad)\i(x))=x$, so by the finite 
dimensional implicit function theorem, $(t,x)\mapsto f(t,\quad)\i(x)$ is smooth. So inversion maps 
smooth curves to smooth curves, and is smooth.

Let $X(t,x)$ be a time-dependent vector field on $M$ (in $C^\infty(\mathbb R,\X(M))$).
Then $\on{Fl}^{\p_t\x X}_s(t,x)=(t+s,\on{Evol}^X(t,x))$ satisfies the ordinary differential 
equation
$$
\quad\p_t\on{Evol}(t,x) = X(t,\on{Evol}(t,x)).
$$
If $X(s,t,x)\in C^\infty(\mathbb R^2,\X(M))$ is a smooth curve of smooth curves in $\X(M)$,
then obviously the solution of the equation depends smoothly also on the further variable $s$,
thus $\on{evol}$ maps smooth curves of time dependent vector fields to smooth curves of 
diffeomorphism. 
\end{proof}

\subsection{The diffeomorphism group of a Whitney manifold germ}
 \nmb.{6.3}
For a Whitney manifold germ $\tilde M\supset M$, we consider the diffeomorphism group 
\begin{multline*}
\Diff(M)=\{\ph|_M: \ph\in C^{\infty}(\tilde M,\tilde M),\; \ph(M)=M, 
\\
\ph \text{ is a diffeomorphism on an open neighborhood of   }M\}\,.
\end{multline*}
%Using the discussion in \nmb!{4.8} we can also talk of Whitney jets $M\to \tilde M$ with the canonical Whitney extension operator.
We also consider the following set  $\mathcal C$  of smooth curves:  Those $c:\mathbb R\to \Diff(M)$ which are of the form $c=\tilde c|_{\mathbb R\x M}$ for a smooth 
$$
\tilde c:\mathbb R\x \tilde M\to \tilde M\quad\text{ with } \tilde c(t,\;)|_M\in \Diff(M) \text{ for each }t\in \mathbb R. 
$$
Note that for $t$ in a compact interval $\tilde c(t,\;)$ is a diffeomorphism on a fixed open neighborhood of $M$ in 
$\tilde M$.

\begin{theorem}
\nmb.{6.4}
For a Whitney manifold germ $M$ the group $\Diff(M)$ is a Fr\"olicher space and a group with smooth composition and inversion. It has a convenient Lie algebra $\X_{c,\p}(M)$ with the negative of the usual bracket as Lie bracket, and it is regular: There exists an evolution operator  and it is smooth.
\end{theorem}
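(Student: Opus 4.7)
The plan is to mimic the proof of Theorem~\nmb!{6.2} but substitute the Fr\"olicher formalism of \nmb!{2.7} for the chart construction used there, employing the set $\mathcal C$ of smooth curves described just above as a generating set and the continuous Whitney extension operator of \nmb!{4.1} to move between $M$ and its neighborhood $\tilde M$. Since $\mathcal C$ is closed under smooth reparameterizations of $\mathbb R$ and contains the constant curves, by \nmb!{2.7} it generates a Fr\"olicher structure on $\Diff(M)$. Smoothness of composition is tested on $\mathcal C$: for $c_1,c_2\in\mathcal C$ with lifts $\tilde c_i$ to a joint open neighborhood of $M$, the map $(t,x)\mapsto\tilde c_1(t,\tilde c_2(t,x))$ is smooth on such a neighborhood and preserves $M$, so $\mu\o(c_1,c_2)\in\mathcal C$. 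Inversion is handled by applying the finite-dimensional implicit function theorem to $\tilde c(t,\tilde c(t,\;)\i(x))=x$ on a neighborhood of $M$, as in Theorem~\nmb!{6.2}, and the resulting smooth inverse restricts to $M$.

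For $c\in \mathcal C$ with $c(0)=\on{Id}_M$ and a lift $\tilde c$, the vector field $X:=\p_t|_0\tilde c(t,\;)$ restricted to $M$ is independent of the lift and lies in $\X_{c,\p}(M)$: compact support comes from the neighborhood convention, and flow preservation is inherited from the fact that each $\tilde c(t,\;)$ preserves $M$. Conversely, given $X\in\X_{c,\p}(M)$ with compactly supported Whitney extension $\tilde X\in\X_c(\tilde M)$, the defining property of $\X_{c,\p}(M)$ ensures $\Fl^{\tilde X}_t$ preserves $M$ for all $t$, so $t\mapsto\Fl^{\tilde X}_t|_M$ is a curve in $\mathcal C$ realizing $X$. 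The bracket on the tangent space at $e$ inherits the sign convention of Theorem~\nmb!{6.2} from $\X_c(\tilde M)$, and closure under bracket is exactly Lemma~\nmb!{4.7}; convenience of $\X_{c,\p}(M)$ follows from its description as a closed linear subspace of the convenient space $\X_c(\tilde M)$.

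For regularity, given $X\in C^\infty(\mathbb R,\X_{c,\p}(M))$ I would apply a continuous linear Whitney extension tensored with the identity on the $\mathbb R$-factor (via the vector bundle version of Lemma~\nmb!{5.1}) to produce $\tilde X\in C^\infty(\mathbb R,\X_c(\tilde M))$. Theorem~\nmb!{6.2} applied to the open manifold $\tilde M$ yields a smooth evolution $\widetilde{\on{Evol}}^{\tilde X}:\mathbb R\to\Diff(\tilde M)$. Because $X(t)\in\X_{c,\p}(M)$ for each $t$, the suspension $\p_s+\tilde X(s,x)$ on $\mathbb R\x\tilde M$ has nonautonomous flow preserving $M$ (reduce to the autonomous case by freezing $t$ and passing to the limit in a composition of small autonomous flows of the $\Fl^{X(t_i)}$, each of which preserves $M$); restriction to $M$ then defines a smooth curve in $\mathcal C$, giving the required $\on{evol}^r_{\Diff(M)}$, and smooth dependence on $X$ follows from the same argument run with an extra smooth parameter.

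The main obstacle, and the reason the statement is flagged as preliminary in the introduction, is the interplay between Whitney extension and boundary preservation. The argument relies exclusively on the flow-preservation definition of $\X_{c,\p}(M)$; a pointwise infinitesimal characterization at $\p M$, as sought in the two open questions after Lemma~\nmb!{4.7}, is not available in this generality. In particular, verifying that an arbitrary Whitney extension of a pointwise boundary-tangent $X$ produces a flow preserving $M$ is exactly what is missing, so finer regularity or differential-geometric constructions on $\Diff(M)$ have to be adapted case by case to the flow-based definition.
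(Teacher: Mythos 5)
Your overall route coincides with the paper's: the Fr\"olicher structure on $\Diff(M)$ is generated by the curve set $\mathcal C$, smoothness of composition and inversion is checked on lifts $\tilde c:\mathbb R\times\tilde M\to\tilde M$ exactly as in the proof of Theorem 6.2 (with the implicit function theorem for inversion), and regularity is obtained by extending everything to the open neighborhood manifold $\tilde M$ and solving there. The paper's own proof is only a few lines (``a slight adaption of the proof for open manifolds''), so most of what you write is detail the paper omits. Your product-integral (Trotter-type) argument that the nonautonomous evolution of a curve in $\X_{c,\p}(M)$ preserves $M$ is a reasonable way to supply the one genuinely new point needed for regularity: each frozen-time factor preserves the closed set $M$ because the flow-invariance property of an element of $\X_\p(M)$ depends only on its restriction to $M$ (an integral curve that stays in $M$ is determined there by uniqueness), and a uniform-on-compacts limit of $M$-preserving diffeomorphisms with $M$-preserving inverses again maps $M$ onto $M$.

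One step, however, is not justified. In your identification of the Lie algebra you assert that for $c\in\mathcal C$ with $c(0)=\on{Id}$ the velocity $X=\p_t|_0\,\tilde c(t,\cdot)\,|_M$ lies in $\X_{c,\p}(M)$ because ``flow preservation is inherited from the fact that each $\tilde c(t,\cdot)$ preserves $M$''. The flow of $X$ is not $\tilde c(s,\cdot)$. What the hypothesis $\tilde c(t,\p M)\subset\p M$ actually gives is only that $X(x)$ is tangent to the boundary \emph{pointwise} at each $x\in\p M$, and whether pointwise tangency implies that the flow of $X$ preserves $\p M$ is precisely the first open question posed after the Lemma in Section 4.7 --- it has no obvious answer because $\p M$ need not be a submanifold. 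The paper sidesteps this by not attempting to characterize $T_{\on{Id}}\Diff(M)$ as the set of all velocities of curves in $\mathcal C$; it takes $\X_{c,\p}(M)$, which is a closed Lie subalgebra by that Lemma and each of whose elements is realized as the velocity of a flow line in $\mathcal C$, as the Lie algebra by fiat. So you should either restrict your claim to velocities of flows (which suffices for the statement as the paper intends it) or explicitly flag the converse inclusion as open. Your closing paragraph shows you are aware of this tension, but the sentence asserting the inclusion is, as written, a gap.
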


\begin{proof} 
The Fr\"olicher space structure is the one induced by the set $\mathcal C$ of smooth curves described above. 
I do not know whether this set of smooth curves is saturated, i.e.,  $\mathcal C= \mathcal C_{\Diff(M)}$ in the notation of \nmb!{2.7}; this might depend on the structure of the boundary.  

The proof is now quite similar to the one of  \nmb!{6.1}.
We claim that composition maps $\mathcal C\x \mathcal C$ to $\mathcal C\subseteq \mathcal C_{\Diff(M)}$, and that inversion maps $\mathcal C$ to $\mathcal C\subseteq \mathcal C_{\Diff(M)}$.
Since by definition each curve $c\in \mathcal C$ extend  to a smooth mapping $\tilde c:\mathbb R\x \tilde M\to \tilde M$ we can actually use a slight adaption of the proof of  \nmb!{6.1} for open manifolds.
\end{proof}

\subsection{The connected component of $\Diff(M)$ for a Whitney manifold germ $M$}
\nmb.{6.5}
We consider a Whitney manifold germ $M\subset \tilde M$.
As usual for Fr\"olicher space, we equip $\Diff(M)$ with the final topology with respect to all smooth curves in 
in the generating set $\mathcal C$ as described in \nmb!{6.3}. $\Diff(M)$ is actually a topological group, with the refined topology (i.e., the $c^\infty$-topology) on $\Diff(M)\x\Diff(M)$.
Let $\Diff_0(M)$ be the connected component of the identity in $\Diff(M)$ with respect to this topology.

\begin{theorem*}
For a Whitney manifold germ $M\subset \tilde M$ we actually have
$$
\Diff_0(M) = \{\tilde\ph|_M: \tilde\ph\in \Diff_0(\tilde M),\; \tilde\ph(M) = M\}\,.
$$
%via a smooth section $\mathcal E: \Diff_0(M)\to \Diff_0(\tilde M)$ satisfying $\mathcal E(\ph)|_M =\ph$ for all $\ph\in \Diff_0(M)$, which is, however, not a group homomorphism in general.
 Consequently, the subgroup 
$$\Diff{\tilde{\,}}(M) = \{\tilde\ph|_M: \tilde \ph\in \Diff(\tilde M), \, \tilde\ph(M)=M\}$$ 
is an open subgroup in 
$\Diff(M)$ and thus a normal subgroup, and the corresponding generating set $\mathcal C$ of smooth curves in 
$\Diff{\tilde{\,}}(M)$ is saturated.
\end{theorem*}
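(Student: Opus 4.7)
The plan is to establish the displayed equality first and then to derive the three consequences (openness, normality, saturation) from it together with the chart construction.

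For the inclusion $\subseteq$, take $\ph \in \Diff_0(M)$ and a smooth curve $c \in \mathcal{C}$ from $\on{Id}_M$ to $\ph$. By the definition of $\mathcal{C}$ in \nmb!{6.3}, $c$ is the restriction of a smooth $\tilde c : \mathbb R \times \tilde M \to \tilde M$ such that $\tilde c(t,\cdot)$ is a diffeomorphism of a fixed open neighborhood $U$ of $M$ in $\tilde M$ for $t$ in a compact interval containing $[0,1]$. Form the right logarithmic derivative $X_t := \p_t \tilde c(t,\cdot) \circ \tilde c(t,\cdot)^{-1}$ on $U$, multiply by a bump function $\chi \in C^{\infty}_c(\tilde M)$ with $\chi \equiv 1$ on a smaller neighborhood of $M$ and $\supp \chi \subset U$, and let $\tilde X_t := \chi X_t \in C^{\infty}(\mathbb R, \X_c(\tilde M))$. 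Its evolution $\psi_t := \on{Evol}^{\tilde X}_t$ lies in $\Diff_0(\tilde M)$ (compactly supported), and by uniqueness of flows $\psi_t = \tilde c(t,\cdot)$ on a neighborhood of $M$; so $\psi_t|_M = c(t)$ and $\psi_t(M) = c(t)(M) = M$. Then $\tilde\ph := \psi_1$ is the required extension.

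For the inclusion $\supseteq$, let $\tilde\ph \in \Diff_0(\tilde M)$ with $\tilde\ph(M) = M$. By regularity I pick a smooth path $\tilde\psi_t$ in $\Diff(\tilde M)$ from $\on{Id}$ to $\tilde\ph$; its intermediate values need not preserve $M$. The main obstacle is to deform $\tilde\psi_t$, rel.\ endpoints, to a new smooth path in $\Diff(\tilde M)$ preserving $M$ at every time. My attempt is to construct a smooth family $\chi_t \in \Diff_0(\tilde M)$ of compactly supported corrections, built from a neighborhood retraction of $\bigcup_t \tilde\psi_t(M) \cup M$ onto $M$ together with a cutoff, such that $\chi_t(\tilde\psi_t(M)) = M$ and $\chi_0 = \chi_1 = \on{Id}$ (the endpoint condition being free because $\tilde\psi_0(M) = \tilde\psi_1(M) = M$). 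Then $\chi_t \circ \tilde\psi_t$ preserves $M$, and its restriction to $M$ is a smooth curve in $\mathcal{C}$ from $\on{Id}_M$ to $\tilde\ph|_M$, showing $\tilde\ph|_M \in \Diff_0(M)$. Carrying this correction out in the Whitney germ setting, where $\p M$ is not a manifold, is the hard point; the tubular-neighborhood step must be replaced by an isotopy-extension argument that only uses the Whitney extension operator and the fact that $M$ is the closure of its interior.

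Once the equality is in hand, the consequences are short. Openness of $\Diff{\tilde{\,}}(M)$ in $\Diff(M)$ is obtained by applying the $\subseteq$ construction inside the chart at $\on{Id}_M$ in $\Diff(M)$ (modeled on a neighborhood of $0$ in $\X_{c,\p}(M)$): each chart element is the restriction of a compactly supported diffeomorphism of $\tilde M$ preserving $M$, hence lies in $\Diff{\tilde{\,}}(M)$; right translation inside $\Diff(M)$ by elements of $\Diff{\tilde{\,}}(M)$ propagates openness to every point, and an open subgroup of a topological group is automatically closed, hence normal. For saturation of the generating set of smooth curves in $\Diff{\tilde{\,}}(M)$, a Fr\"olicher-smooth curve $\gamma$ in $\Diff{\tilde{\,}}(M)$ can, after right translation by $\gamma(t_0)^{-1}$, be locally placed in the chart at $\on{Id}_M$ and identified with a smooth curve in $\X_{c,\p}(M)$; Whitney-extending these vector fields continuously and linearly to $\X_c(\tilde M)$ and integrating produces a smooth lift $\tilde\gamma : \mathbb R \times \tilde M \to \tilde M$, witnessing $\gamma \in \mathcal{C}$ and thus $\mathcal{C} = \mathcal{C}_{\Diff{\tilde{\,}}(M)}$.
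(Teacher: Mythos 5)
Your first inclusion, $\Diff_0(M)\subseteq\{\tilde\ph|_M:\tilde\ph\in\Diff_0(\tilde M),\ \tilde\ph(M)=M\}$, is essentially the paper's argument: pass to the right logarithmic derivative of the extending map $\tilde c$, cut it off to a compactly supported time-dependent field on $\tilde M$ agreeing with it along $M$ (the paper uses the Whitney extension operator applied to $X|_{[0,1]\x M}$ plus a bump function; your direct cutoff $\chi X$ on $U$ is equally good), and integrate; uniqueness of ODE solutions then gives $\psi_t|_M=c(t)$ because the trajectories starting in $M$ stay in $M$. For the reverse inclusion you offer only a strategy (deforming a path in $\Diff_0(\tilde M)$ rel endpoints by a correcting isotopy so that it preserves $M$ at every time) and you explicitly flag that the key step is not carried out; that caution is warranted, since a Whitney boundary admits no tubular neighbourhood and no isotopy extension theorem to invoke. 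Be aware that the paper's own proof likewise establishes only the forward inclusion and derives all the stated consequences from that direction alone; still, as a proof of the displayed equality your write-up leaves this direction open.

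The more serious defect is in your derivation of the consequences: both for openness and for saturation you argue ``inside the chart at $\on{Id}_M$ in $\Diff(M)$ modeled on a neighborhood of $0$ in $\X_{c,\p}(M)$.'' No such chart is available here: for a Whitney manifold germ, Theorem \nmb!{6.4} exhibits $\Diff(M)$ only as a Fr\"olicher group, and the paper states explicitly that it does not know $\Diff(M)$ to be a smooth manifold (that is precisely why the whole section is phrased in terms of the generating set $\mathcal C$ of curves). The paper's route is chart-free: $\Diff{\tilde{\,}}(M)$ is a subgroup containing $\Diff_0(M)$ by the forward inclusion, hence is a union of cosets $\ph.\Diff_0(M)$; each such coset is the connected component of $\ph$, and connected components are open in the final topology with respect to smooth curves. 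You should replace your chart argument by this coset argument. Finally, ``an open subgroup is automatically closed, hence normal'' is a non sequitur --- neither openness nor closedness of a subgroup implies normality --- so normality needs its own justification beyond what you (or, for that matter, the paper's one-line assertion) provide.
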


\begin{proof}
 Let $\ph\in \Diff_0(M)$. Then there exists a smooth curve $\ph:\mathbb R\to \Diff(M)$ with $\ph(0)=\on{Id}$ and $\ph(1)=\ph$ of the form $\ph=\tilde c|_{\mathbb R\x M}$ where $\tilde c:\mathbb R\x \tilde M\to \tilde M$ is a smooth mapping with $\tilde c(t,\;)|_M\in \Diff(M)$  for each $t\in \mathbb R$. Then $X(t,x) = (\p_t \ph(t))(\ph(t)\i(x))$ gives us a time-dependent vector field which is defined on 
$[0,1]\x U$ for some open neighborhood $U$ of $M$ in $\tilde M$, by the definition of $\Diff(M)$ in \nmb!{6.3}. Using a continuous extension operator on 
$X|_{[0,1]\x M}$ and a smooth bump function gives us a smooth time-dependent vector field  $\tilde X:[0,1]\x \tilde M\to T\tilde M$ with support in a fixed open neighborhood, say, such that $\tilde X|_{[0,1]\x M}= X|_{[0,1]\x M}$. Solving the ODE 
$\p_t \tilde\ph(t.x) = \tilde X(t,\tilde\ph(t,x))$ on $\tilde M$ gives us for $t=1$  a diffeomorphism 
$\tilde \ph \in \Diff(\tilde M)$   which extends $\ph$. 

Given any $\ph\in \Diff{\tilde{\,}}(M)$, the coset $\ph.\Diff_0(M)\subset\Diff(M)$ is the connected component of $\ph$ in $\Diff(M)$. This shows that $\Diff{\tilde{\,}}(M)$ is open in $\Diff(M)$.
% We first show  that the generating set $\mathcal C$ of smooth curves for $\Diff_0(M)$ is saturated. 
\end{proof}

The construction in the proof  above actually describes a smooth mapping 
$$\mathcal E:\{c\in C^\infty(\mathbb R, \Diff_0(M)):c(0)=\on{Id}\}\to\{\tilde\ph\in \Diff_0(\tilde M): \tilde\ph(M)=M\}$$ 
such that $\mathcal E(c)|_M = c(1)$, since another smooth real parameter $s$ goes smoothly through solving the ODE. 

\subsection{Remark}
\nmb.{6.6}
In this paper I refrain from trying to give a general definition of a regular Fr\"olicher group, which would  be an abstract concept that catches the essential properties of $\Diff(M)$ for a Whitney manifold germ $M\subset \tilde M$. Let me just remark, that it probably would fit into the concept of manifolds based on smooth curves instead of charts as developed in \cite{Michor84}; those among them whose tangent spaces are Banach spaces turn out to be Banach manifolds.  Some Lie theoretic tools are developed in the beginning of Section \nmb!{8.5} below.

%\subsection{Definition: Regular Fr\"olicher groups}
%We shall call \emph{regular Fr\"olicher group} any group with the properties described in theorem \nmb!{6.4}:
%A Fr\"olicher space $G$ and a group with smooth group operations having a Lie algebra

\subsection{Regular (right) half-Lie groups}
\nmb.{6.7}
A smooth manifold $G$ modeled on convenient vector spaces is called a \emph{(right) half Lie group}, if it is a group such that multiplication $\mu: G\x G\to G$ and inversion $\nu: G\to G$ are continuous (note that here we have to take the induced $c^\infty$-topology on the product $G\x G$ if the model spaces are not Fr\'echet), but each right translation $\mu^x:G\to G$, $\mu^x(y) = y.x$ is smooth. The notion of a half-Lie group was coined in \cite{KMR14}. See \cite{MarquisNeeb18} for a study of half-Lie groups in general, concentrating on semidirect products with representation spaces. 

Not every tangent vector in $T_eG$ can be extended to a 
left invariant vector field on the whole group, but they can be extended
to right invariant vector fields, which are only continuous and not differentiable in general.
The same holds for right invariant Riemannian metrics. The tangent space at the identity is not a 
Lie algebra in general; thus we refrain from calling it $\mathfrak g$. Have a look at the examples in \nmb!{6.8} to get a feeling for this. 

Let us discuss regularity on a (right) half-Lie group $G$: For a smooth curve $g:\mathbb R\to G$ the velocity curve $g':\mathbb R \to TG$ is still smooth, and for fixed $t$ the right logarithmic derivative 
$X(t):=g'(t).g(t)\i=T(\mu^{g(t)\i}).g'(t)$ lies in $T_{e}G$, but $t\mapsto X(t)$ is only continuous 
$\mathbb R\to T_{e}G$. 
A (right) half-Lie group $G$ is called \emph{$C^0$-regular} if for every $C^0$-curve 
$X:\mathbb R\to T_{e}G$ there exists a $C^1$-curve $\on{Evol}^X=g:\mathbb R\to G$ with $g(0)=e$ and $g'(t)=X(t).g(t)= T(\mu^{g(t)}).X(t)$. We also require that $X\mapsto\on{Evol}^X$ is smooth $C^0(\mathbb R,T_eG)\to C^1(\mathbb R, G)$. 

\begin{theorem}[Diffeomorphism groups of finite degrees of differentiability]\nmb.{6.8}
\thetag{1} 
For a compact smooth manifold $M$, possibly with corners, and for any $n\in \mathbb N_{\ge 1}$ the group $\Diff_{C^n}(M)$ of $C^n$-diffeomorphism of $M$ is a $C^0$-regular half-Lie group.

\smallskip\noindent\thetag{2}
For a compact smooth manifold $M$, possibly with corners, and for any $s\ge \dim{M}/p +1$, the group $\Diff_{W^{s,p}}(M)$ of Sobolev $W^{s,p}$-diffeomorphism of $M$ is a $C^0$-regular half-Lie group.
\end{theorem}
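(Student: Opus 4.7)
The plan is to build on the manifold structures on $\Diff_{C^n}(M)$ and $\Diff_{W^{s,p}}(M)$ already prepared in \nmb!{5.3} and \nmb!{5.8}, then verify the three half-Lie properties (continuous multiplication, continuous inversion, smooth right translations), identify the tangent space at $\on{Id}$, and finally produce the evolution operator by solving the associated non-autonomous ODE on the finite-dimensional manifold $M$ and transferring regularity back to the group.

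First, I would fix a smooth spray $S$ on $\tilde M$ tangent to every boundary stratum $\p^q M$ (existence via \nmb!{5.9}/\nmb!{3.7}) and use the charts $u_f$ of \nmb!{5.3}, centered at \emph{smooth} diffeomorphisms $f$, modeled on $\Ga_{C^n}(f^*TM)$ or $\Ga_{W^{s,p}}(f^*TM)$. These charts cover $\Diff_{C^n}(M)$ and $\Diff_{W^{s,p}}(M)$ by the density-plus-symmetry argument in \nmb!{5.3}, and because the spray is tangent to every $\p^q M$, the chart maps $\Diff$ to sections of $f^*TM$ whose values at $\p^q M$ lie in $T\p^q M$; a local splitting of the vector bundle, as in \nmb!{5.10}, exhibits this as a closed linear subspace. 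Thus the tangent space at $\on{Id}$ is $\X_{C^n,\p}(M)$ resp.\ $\X_{W^{s,p},\p}(M)$, the $C^n$- or $W^{s,p}$-vector fields tangent to the boundary strata.

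Next, the half-Lie group axioms. Right translation $\mu^g\colon h\mapsto h\circ g$ by a fixed $g\in\Diff_{C^n}(M)$ (resp.\ $\Diff_{W^{s,p}}(M)$) is expressed in charts as precomposition by a fixed mapping, which takes $\Ga_{C^n}(f^*TM)$ linearly and continuously (and hence smoothly, cf.\ \nmb!{2.4}.2) to $\Ga_{C^n}((f\circ g)^*TM)$; and similarly in the Sobolev case. Joint composition is merely continuous because left composition with a $C^n$-map loses one derivative unless the left factor is $C^{n+1}$; the hypothesis $s>\dim(M)/p+1$ analogously ensures that $W^{s,p}$ maps embed into $C^1$, so composition is continuous into the same Sobolev scale by the Moser-type estimates and the Omega-lemma \nmb!{5.2} applied to smooth $f$ (for which left translation is smooth), combined with a density argument. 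Inversion is continuous by the $C^1$-implicit function theorem applied in the ambient Banach manifold of $C^n$- or $W^{s,p}$-embeddings, using that $Tg(x)$ is uniformly invertible on $M$.

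For $C^0$-regularity, let $X\in C^0(\mathbb R,T_e G)$ be a continuous curve of vector fields tangent to $\p M$. Because $X(t)$ is of class $C^n$ (resp.\ $W^{s,p}\hookrightarrow C^1$) jointly with continuous dependence on $t$, the classical non-autonomous ODE on the finite-dimensional manifold $M$,
\begin{equation*}
\tfrac{\p}{\p t}\on{Evol}^X(t,x) = X(t)(\on{Evol}^X(t,x)),\qquad \on{Evol}^X(0,x)=x,
\end{equation*}
has a unique global solution by compactness of $M$; since $X(t)$ is tangent to each $\p^q M$, the flow preserves every stratum and therefore defines an element of $\Diff(M)$ for each $t$. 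Standard ODE regularity shows that $x\mapsto \on{Evol}^X(t,x)$ is $C^n$ (resp.\ $W^{s,p}$) with bounds controlled by the $C^n$- resp.\ $W^{s,p}$-norm of $X$ over compact time intervals (for Sobolev this needs a Moser estimate on $X(t)\circ\on{Evol}^X(t,\,\cdot\,)$, exploiting $s>\dim(M)/p+1$ so that $W^{s,p}$ is a Banach algebra stable under composition with $W^{s,p}$-diffeomorphisms). Hence $\on{Evol}^X\in C^1(\mathbb R,\Diff_{C^n}(M))$ resp.\ $C^1(\mathbb R,\Diff_{W^{s,p}}(M))$. Smoothness of $X\mapsto \on{Evol}^X$ as a map $C^0(\mathbb R,T_eG)\to C^1(\mathbb R,G)$ follows from smooth dependence of ODE solutions on parameters: iterating the variation-of-constants formula for $\p_X\on{Evol}^X$ produces linear ODEs for each derivative whose coefficients depend continuously on the finite-order jet of $X$, and gives the required smoothness by \nmb!{2.4}.

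The main obstacle I expect is precisely the last smoothness claim in the Sobolev case: controlling $\p_X^k\on{Evol}^X$ in the $W^{s,p}$-topology requires Moser-type tame estimates, in particular that the composition map $(f,g)\mapsto f\circ g$ is continuous (though not smooth) from $W^{s,p}\times W^{s,p}_{\text{diff}}$ to $W^{s,p}$, and that the linearized operators arising in variation-of-constants remain bounded on $W^{s,p}$ uniformly in time on compacta. This is exactly where $s\ge\dim(M)/p+1$ is used, and where I would quote or mirror the estimates of Ebin--Marsden \cite{EbinMarsden70}; the $C^n$ case is parallel but simpler, as composition with a fixed $C^n$-diffeomorphism on the right is straightforwardly bounded on $C^n$-spaces.
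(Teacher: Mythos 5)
Your proposal follows essentially the same route as the paper: charts from \nmb!{5.3} centered at smooth maps built from a spray tangential to the boundary strata (\nmb!{3.7}, \nmb!{5.9}), openness of the diffeomorphism group in the "nice" mapping space, smooth right translations because precomposition is linear in these charts, continuity of composition and inversion, and $C^0$-regularity by solving the non-autonomous ODE on the compact manifold $M$ with stratum-preservation coming from tangency of $X(t)$ to each $\p^q M$, the Sobolev case reducing to the $C^1$ case via the embedding $W^{s,p}\hookrightarrow C^1$. The paper's proof is terser (it dismisses continuity of composition/inversion and the smoothness of $\on{Evol}$ as "easy" and "standard ODE-arguments"), and your proposal fills in exactly those details without deviating from its strategy.
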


Note that the group of homeomorphisms of $M$ is {\bf not} open in $C^0_{\text{nice}}(M,M)$; see the proof below for $C^{\infty}_{\text{nice}}$.
Also note that $T_{\on{Id}}\Diff_{C^n}(M) = \X_{\p, C^n}(M)$ is the space of $C^n$-vector fields which are tangent to the boundary. This is not a Lie algebra, since the Lie bracket of two $C^n$ fields is a $C^{n-1}$ field in general.  

\begin{proof} \thetag{1} Following \cite[10.16]{Michor80},
we construct the smooth manifold structure by using the exponential mapping of a spray on $M$ which is tangential to the boundary; for existence see \nmb!{3.7} and \nmb!{5.9}. 
Let $C^{n}_{\text{nice}}(M,M,)$ be the set of all $C^n$-mappings $f:M\to M$ with $f\i(\p^qM)=\p^qM$ for each $q$. 
Then we use the (restriction of the) chart structure described in \nmb!{5.3}, using this exponential mappings, and using only charts centered at smooth mappings $f\in C^{\infty}_{\text{nice}}(M,M)$, as follows:
\begin{align*}
C^n_{\text{nice}}(M,N)&\supset U_f =\{g: (f,g)(M)\subset V^{M\x M}\} \East{u_f}{} \tilde U_f  \subset
\\
\subset \{s\in &C^{n}(M,TM): \pi_M\o s = f,  s(\p^q M) \subset T(\p^q M)\}
\subset \Ga_{C^n}( f^*T\tilde M)\,,
\\
u_f(g) &= (\pi_N,\exp^{\bar g})\i \o (f,g),\quad u_f(g)(x) = (\exp^{\bar g}_{f(x)})\i(g(x))\,, 
\\
(u_f)\i(s) &= \exp^{\bar g}_f\o s, \qquad (u_f)\i(s)(x) = \exp^{\bar g}_{f(x)}(s(x))\,.
\end{align*}
By the symmetry of $V^{M\x M}$ (see \nmb!{5.3}) these charts cover $C^{n}_{\text{nice}}(M,M)$, and the chart changes are smooth since they map smooth curves (as described in \nmb!{5.1}.2) to smooth curves; compare to \nmb!{5.7}. The group $\Diff_{C^n}(M)$ is open in $C^{n}_{\text{nice}}(M,M)$, by the implicit function theorem and some easy arguments. 

Continuity of composition and inversion are easy to check. Right translations are smooth since they map smooth curves.

$C^1$-regularity follows easily: Given $X\in C^0(\mathbb R,T_{\on{Id}}\Diff_{C^n}(M))$, view it as a time-dependent $C^n$-vector field on $M$ which is tangential to the boundary, a continuous curve in 
$\X_\p(M)$   and solve the corresponding ODE. The  evolution operator $\on{Evol}$ is smooth, since it maps smooth curves to smooth curves by standard ODE-arguments.   

\thetag{2}
This follows easily by adapting the proof of \thetag{1} above, using that  
$\Diff_{W^{s,p}}M\subset \Diff_{C^1}(M)$ by the Sobolev embedding lemma. 
\end{proof}

\subsection{Groups of smooth diffeomorphisms on $\mathbb R^n$}
\nmb.{6.9}
If we consider the group of all orientation preserving 
diffeomorphisms $\Diff(\mathbb R^n)$ of $\mathbb R^n$, it is not an open subset of 
$C^\infty(\mathbb R^n,\mathbb R^n)$ with the compact $C^\infty$-topology. 
So it is not a smooth manifold in the usual sense, but we may consider it as a Lie group in the 
cartesian closed category of  Fr\"olicher spaces, see \cite[Section 23]{KrieglMichor97}, with the structure 
induced by the injection 
$f\mapsto (f,f\i)\in C^\infty(\mathbb R^n,\mathbb R^n)\x C^\infty(\mathbb R^n,\mathbb R^n)$.
Or one can use the setting of `manifolds' based on smooth curves instead of charts, with lots of 
extra structure (tangent bundle, parallel transport, geodesic structure), described in 
\cite{Michor84}; this gives a category of smooth `manifolds' where 
those which have Banach spaces as tangent fibes are exactly the usual smooth manifolds modeled on 
Banach spaces, which is cartesian closed: $C^\infty(M,N)$ and $\Diff(M)$ are always `manifolds' for 
`manifolds' $M$ and $N$, and the exponential law holds. 

We shall now describe regular Lie groups in $\Diff(\mathbb R^n)$ 
which are given by 
diffeomorphisms of the form $f = \on{Id}_{\mathbb R} + g$ 
where $g$ is in some specific convenient vector space of bounded functions in 
$C^\infty(\mathbb R^n,\mathbb R^n)$.
Now we discuss these spaces on $\mathbb R^n$, we describe the smooth curves in them, and we describe the 
corresponding groups. These results are from \cite{MichorMumford13} and from \cite{KMR14,KMR15d} for the 
more exotic groups.

\subsubsection*{The group $\Diff_{\mathcal B}(\mathbb R^n)$}
The space $\mathcal B(\mathbb R^n)$ (called $\mathcal D_{L^\infty}(\mathbb R^n)$ by  
\cite{Schwartz66}) consists of all smooth functions which have all derivatives (separately) bounded.
It is a Fr\'echet space. 
By \cite{Vogt83}, the space $\mathcal B(\mathbb R^n)$ is linearly 
isomorphic to $\ell^\infty\hat\otimes\, \mathfrak s$ for any completed tensor-product between the 
projective one and the injective one, where $\mathfrak s$ is the nuclear Fr\'echet space of rapidly 
decreasing real sequences. Thus $\mathcal B(\mathbb R^n)$ is not reflexive, not nuclear, not 
smoothly paracompact.
\\
\emph{The space $C^\infty(\mathbb R,\mathcal{B}(\mathbb R^n))$ of smooth
curves in $\mathcal{B}(\mathbb R^n)$ consists of all functions 
$c\in C^\infty(\mathbb R^{n+1},\mathbb R)$ satisfying the following 
property:
\begin{enumerate}
\item[$\bullet$]
For all $k\in \mathbb N_{\ge0}$, $\al\in \mathbb N_{\ge0}^n$ and each $t\in \mathbb R$ the expression
$\p_t^{k}\p^\al_x c(t,x)$  is uniformly bounded in $x\in \mathbb R^n$, locally
in $t$. 
\end{enumerate} 
}
To see this use Theorem \nmb!{2.6} for the set $\{\on{ev}_x: x\in \mathbb R\}$ of point
evaluations in $\mathcal{B}(\mathbb R^n)$. 
Here $\p^\al_x = \frac{\p^{|\al|}}{\p x^\al}$ and $c^k(t)=\p_t^kf(t,\quad)$.
\\
\emph{$\Diff^+_{\mathcal B}(\mathbb R^n)=\bigl\{f=\on{Id}+g: g\in\mathcal B(\mathbb R^n)^n, 
    \det(\mathbb I_n + dg)\ge \ep > 0 \bigr\}$ denotes the corresponding group}, 
                see below.

\subsubsection*{The group $\Diff_{W^{\infty,p}}(\mathbb R^n)$}
For $1\le p <\infty$, the space 
$$W^{\infty,p}(\mathbb R^n)=\bigcap_{k\ge 1}L^p_k(\mathbb R^n)$$ 
is the intersection of all $L^p$-Sobolev 
spaces, the space of all smooth functions such that each partial derivative is in $L^p$. It is a reflexive Fr\'echet space.
It is called $\mathcal D_{L^p}(\mathbb R^n)$ in \cite{Schwartz66}.
By \cite{Vogt83}, the space $W^{\infty,p}(\mathbb R^n)$ is linearly isomorphic to 
$\ell^p\hat\otimes\, \mathfrak s$. Thus it is not nuclear, not Schwartz, not Montel, and smoothly 
paracompact only if $p$ is an even integer.
\\
\emph{The space $C^\infty(\mathbb R,H^\infty(\mathbb R^n))$ of smooth
curves in $W^{\infty,p}(\mathbb R^n)$ consists of all functions 
$c\in C^\infty(\mathbb R^{n+1},\mathbb R)$ satisfying the following 
property:
\begin{enumerate}
\item[$\bullet$]
For all $k\in \mathbb N_{\ge0}$, $\al\in \mathbb N_{\ge0}^n$  the expression 
$\|\p_t^{k}\p^\al_xf(t,\quad)\|_{L^p(\mathbb R^n)}$ is locally bounded  
near each $t\in \mathbb R$. 
\end{enumerate} 
}
The proof is literally the same as for $\mathcal B(\mathbb R^n)$, noting that the point evaluations are 
continuous on each Sobolev space $L^p_k$ with $k>\frac np$.
\\
\emph{$\Diff^+_{W^{\infty,p}}(\mathbb R^n)=\bigl\{f=\on{Id}+g: g\in W^{\infty,p}(\mathbb R^n)^n, 
                \det(\mathbb I_n + dg)>0\bigr\}$ 
                denotes the corresponding group}.

\subsubsection*{The group $\Diff_{\mathcal S}(\mathbb R^n)$}
The algebra $\mathcal S(R^n)$  of rapidly decreasing functions is a reflexive nuclear Fr\'echet space.
\\
\emph{The space $C^\infty(\mathbb R,\mathcal S(\mathbb R^n))$ of smooth
curves in $\mathcal S(\mathbb R^n)$ consists of all functions 
$c\in C^\infty(\mathbb R^{n+1},\mathbb R)$ satisfying the following 
property:
\begin{enumerate}
\item[$\bullet$]
For all $k,m\in \mathbb N_{\ge0}$ and $\al\in \mathbb N_{\ge0}^n$,  the expression
$(1+|x|^2)^m\p_t^{k}\p^\al_xc(t,x)$ is uniformly bounded in $x\in \mathbb R^n$, locally uniformly bounded  
in $t\in \mathbb R$.
\end{enumerate} 
}
\noindent
\emph{$\Diff^+_{\mathcal S}(\mathbb R^n)=\bigl\{f=\on{Id}+g: g\in \mathcal S(\mathbb R^n)^n, 
                \det(\mathbb I_n + dg)>0\bigr\}$ is the corresponding group.} 

\subsubsection*{The group  $\Diff_{c}(\mathbb R^n)$}
The algebra $C^\infty_c(\mathbb R^n)$ of all smooth functions with compact support is a nuclear 
(LF)-space.
\\ 
\emph{The space $C^\infty(\mathbb R,C^\infty_c(\mathbb R^n))$ of smooth
curves in $C^\infty_c(\mathbb R^n)$ consists of all functions 
$f\in C^\infty(\mathbb R^{n+1},\mathbb R)$ satisfying the following 
property:
\begin{enumerate}
\item[$\bullet$]
For 
each compact interval $[a,b]$ in $\mathbb R$ there exists a compact subset $K\subset \mathbb R^n$
such that $f(t,x)=0$ for  $(t,x)\in [a,b]\x (\mathbb R^n\setminus K)$.
\end{enumerate} 
}
\noindent
\emph{$\Diff_c(\mathbb R^n)=\bigl\{f=\on{Id}+g: g\in C^\infty_c(\mathbb R^n)^n, 
                \det(\mathbb I_n + dg)>0\bigr\}$ is the corresponding group.} 
    The case $\Diff_c(\mathbb R^n)$ is well-known since 1980. 

\subsubsection*{Ideal properties of function spaces}

The function spaces discussed are boundedly mapped into each other as follows:
$$\xymatrix{
C^\infty_c(\mathbb R^n) \ar[r]  & \mathcal S(\mathbb R^n)  \ar[r] & W^{\infty,p}(\mathbb R^n) 
\ar[r]^{p\le q} & W^{\infty,q}(\mathbb R^n) \ar[r]
 &  \mathcal B(\mathbb R^n)
}$$
and each space is a bounded locally convex algebra and a bounded $\mathcal B(\mathbb R^n)$-module.
Thus each space is an ideal in each larger space. 

\begin{theorem}[\cite{MichorMumford13} and \cite{KMR14}]\nmb.{6.10}
The sets of diffeomorphisms 
$$\Diff_c(\mathbb R^n),\quad 
\Diff_{\mathcal S}(\mathbb R^n),\quad
\Diff_{H^\infty}(\mathbb R^n), \text{ and }
\Diff_{\mathcal B}(\mathbb R^n)$$
are all smooth regular Lie groups.
We have the following smooth injective group homomorphisms
$$\xymatrix{
\Diff_c(\mathbb R^n) \ar[r] & \Diff_{\mathcal S}(\mathbb R^n) \ar[r] & \Diff_{W^{\infty,p}}(\mathbb R^n) \ar[r]  
&  \Diff_{\mathcal B}(\mathbb R^n)
}.$$
Each group is a normal subgroup in any other in which it is contained, in particular in 
$\on{Diff}_{\mathcal B}(\mathbb R^n)$.
\end{theorem}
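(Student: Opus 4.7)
My plan is to prove the four classes of diffeomorphism groups uniformly. Write $\mathcal A \in \{C^\infty_c, \mathcal S, W^{\infty,p}, \mathcal B\}$ throughout, and note that in each case elements of $\Diff_{\mathcal A}(\mathbb R^n)$ have the form $f = \on{Id} + g$ with $g \in \mathcal A(\mathbb R^n)^n$ satisfying the determinant condition. The global chart is therefore the translation $\Id + g \leftrightarrow g$, identifying $\Diff_{\mathcal A}(\mathbb R^n)$ with the $c^\infty$-open subset of the convenient vector space $\mathcal A(\mathbb R^n)^n$ cut out by the inequality on $\det(\mathbb I_n + dg)$; openness in the $c^\infty$-topology follows because a smooth curve $t \mapsto g(t,\cdot)$ in $\mathcal A$ has $dg(t,\cdot)$ converging uniformly on compact sets as $t \to t_0$, by the descriptions of smooth curves in \nmb!{6.9}. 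Thus the underlying manifold structure is automatic.

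The real work is to verify that group multiplication $(\Id + g_1, \Id + g_2) \mapsto \Id + g_2 + g_1 \o (\Id + g_2)$ and inversion are smooth. By the convenient calculus criterion \nmb!{2.3}, it suffices to check that these maps send smooth curves to smooth curves; by the characterizations of smooth curves in each $\mathcal A$ given in \nmb!{6.9} this reduces to a Fa\`a di Bruno computation showing that all partial derivatives of $g_1(t, x + g_2(t,x))$ stay in the appropriate class (bounded, $L^p$-integrable, rapidly decreasing, or compactly supported) locally uniformly in $t$. The key structural input is the ideal property stated at the end of \nmb!{6.9}: derivatives of $g_1 \o (\Id + g_2)$ are polynomials in $\p^\al g_1 \o (\Id+g_2)$ and $\p^\be g_2$, and the $\mathcal B$-module structure combined with $g_1 \in \mathcal A$ ensures the product lies in $\mathcal A$. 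For inversion, write $f = \Id + g$ and set $f^{-1} = \Id + h$; then $h = -g \o (\Id + h)$, and the finite-dimensional inverse function theorem gives $h$ as a smooth function of $(t,x)$, while the estimates in $\mathcal A$ follow by differentiating this fixed-point identity and inducting on the order of the derivative, again exploiting the ideal structure.

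For regularity, let $X \in C^\infty(\mathbb R, \mathcal A(\mathbb R^n)^n)$ be a smooth curve in the putative Lie algebra. Solve the non-autonomous ODE $\p_t \ph(t,x) = X(t, \ph(t,x))$ with $\ph(0,x) = x$ on $\mathbb R^n$, and put $g(t,x) := \ph(t,x) - x$; then $g$ satisfies $\p_t g = X(t, x + g(t,x))$. Gr\"onwall-type estimates combined with the $\mathcal A$-estimates on $X$ (again invoking the ideal property for $\mathcal A \in \{C^\infty_c, \mathcal S, W^{\infty,p}\}$, and direct bounded-derivative estimates for $\mathcal B$) show that $g(t,\cdot) \in \mathcal A(\mathbb R^n)^n$ for all $t$ with locally uniform bounds, and that $\det(\mathbb I_n + dg(t,\cdot)) > 0$. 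To obtain smoothness of $\on{evol}$ one puts in an additional smooth parameter $s$ and uses smooth dependence of ODE solutions on parameters, together with the curve descriptions in \nmb!{6.9}.

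The smooth injective homomorphisms $\Diff_c \hookrightarrow \Diff_{\mathcal S} \hookrightarrow \Diff_{W^{\infty,p}} \hookrightarrow \Diff_{\mathcal B}$ are just the bounded inclusions of the underlying function spaces restricted to chart neighborhoods, hence smooth. Normality of a smaller $\Diff_{\mathcal A}$ inside a larger $\Diff_{\mathcal B'}$ is the statement that conjugating $\Id + g$ ($g \in \mathcal A$) by $\Id + h$ ($h \in \mathcal B'$) lands back in $\Id + \mathcal A$: writing out the conjugation and expanding, every term is a product or composition of derivatives of $g$ with derivatives of $h$ and $(\Id + h)^{-1}$, and the ideal property of \nmb!{6.9} (each smaller space is a $\mathcal B$-module and an ideal in the larger) is precisely what makes the result lie in $\mathcal A$. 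The main obstacle in the whole argument is the composition step for the more exotic spaces $\mathcal S$ and $W^{\infty,p}$, where one must control rapid decay or $L^p$-integrability of derivatives under the change of variable $x \mapsto x + g_2(t,x)$; here the ideal property and a careful Fa\`a di Bruno expansion do the work.
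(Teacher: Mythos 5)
The paper itself gives no proof of Theorem \nmb!{6.10}; it cites \cite{MichorMumford13} and \cite{KMR14} and remarks only that the proof ``relies on repeated use of the Fa\`a di Bruno formula.'' Your outline follows exactly that strategy --- global translation charts $f=\on{Id}+g$, Fa\`a di Bruno plus the $\mathcal B$-module/ideal property for smoothness of composition, the fixed-point identity $h=-g\o(\on{Id}+h)$ for inversion, ODE flows for regularity, and the conjugation identity for normality --- so it matches the intended argument; the only step you silently assume is that $\on{Id}+g$ is a \emph{global} bijection of $\mathbb R^n$ (properness of $\on{Id}+g$ for bounded $g$ makes it a covering of the simply connected $\mathbb R^n$, hence a diffeomorphism), which must precede your inversion argument.
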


The proof of this theorem relies on repeated use of the Fa\`a~di~Bruno formula for higher 
derivatives of composed functions. This offers difficulties on non-compact manifolds, where one 
would need a non-commutative Fa\`a~di~Bruno formula for iterated covariant derivatives.
In the paper \cite{KMR14} many more similar groups are discussed, modeled on spaces of 
Denjoy-Carle\-man ultradifferentiable functions. It is also shown that for $p>1$ the group
$\Diff_{W^{\infty,p}\cap L^1}(\mathbb R^n)$ is only a topological group with smooth 
right translations --- a property which is similar to the one of finite order Sobolev groups 
$\Diff_{W^{k,p}}(\mathbb R^n)$. Some of these groups were used extensively in 
\cite{MumfordMichor13}. 

\begin{corollary}\nmb.{6.11}
$\Diff_{\mathcal B}(\mathbb R^n)$ acts on $\Ga_c$, $\Ga_{\mathcal S}$ and $\Ga_{H^\infty}$ of any 
tensor bundle over $\mathbb R^n$ by pullback. The infinitesimal action of the Lie algebra $\X_{\mathcal B}(\mathbb R^n)$ on 
these spaces by the Lie derivative maps each of these spaces into itself. 
A fortiori, $\Diff_{H^\infty}(\mathbb R^n)$ acts on $\Ga_{\mathcal S}$ of any tensor bundle by pullback. 
\end{corollary}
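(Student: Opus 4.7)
The plan is to reduce the proof to two pointwise assertions about any $\ph=\on{Id}+g\in\Diff_{\mathcal B}(\mathbb R^n)$: first, that all entries of $(d\ph)\i$ and all their partial derivatives are bounded, i.e., they lie in $\mathcal B(\mathbb R^n)$; second, that $\tau\o\ph\in\Ga_{\mathcal A}$ whenever $\tau\in\Ga_{\mathcal A}$, for $\mathcal A\in\{c,\mathcal S,H^\infty\}$. Granted these two facts, the pullback $\ph^*\tau$ of an $(r,s)$-tensor field is, in the standard coordinates of $\mathbb R^n$, a finite algebraic combination of factors $d\ph$ and $(d\ph)\i$ (all in $\mathcal B$) with components of $\tau\o\ph$ (all in $\mathcal A$). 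Since each of $C^\infty_c$, $\mathcal S$, $W^{\infty,p}=H^\infty$ is an ideal in $\mathcal B(\mathbb R^n)$ (as recorded just before Theorem~\nmb!{6.10}), the product again lies in $\mathcal A$, and the pullback action is well-defined. The infinitesimal statement is treated similarly: $\mathcal L_X\tau$ is, in coordinates, a finite sum of products of components of $X$ and their first derivatives (in $\mathcal B$) with components of $\tau$ and their first derivatives (in $\mathcal A$), which lies in $\mathcal A$ by the same ideal property.

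For the first assertion I would invoke Cramer's rule $(d\ph)\i=\on{adj}(d\ph)/\det(d\ph)$. The entries of $\on{adj}(d\ph)$ are polynomials in the entries of $d\ph=\mathbb I_n+dg\in\mathcal B$ and thus lie in $\mathcal B$. Since $\det(d\ph)\in\mathcal B$ and $\det(d\ph)\ge\ep>0$ by the definition of $\Diff_{\mathcal B}$, the reciprocal $1/\det(d\ph)$ is bounded, and differentiating the identity $(d\ph)(d\ph)\i=\mathbb I_n$ inductively shows that every derivative $\p^\al(d\ph)\i$ is a polynomial expression in $(d\ph)\i$ and in derivatives of $d\ph$ of order $\le|\al|$, which are all bounded. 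Hence $(d\ph)\i$ has entries in $\mathcal B(\mathbb R^n)$.

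For the second assertion, the Fa\`a di Bruno formula expresses $\p^\al(\tau\o\ph)(x)$ as a sum of products of a partial derivative of $\tau$ evaluated at $\ph(x)$ with derivatives of components of $\ph$ of order at most $|\al|$, all of which are bounded on $\mathbb R^n$. For $\mathcal A=c$, $\on{supp}(\tau\o\ph)=\ph\i(\on{supp}\tau)$ is compact. For $\mathcal A=\mathcal S$, since $g$ itself is bounded by some $C$, we have $|\ph(x)|\ge|x|-C$, so $(1+|x|^2)^m$ is dominated by a multiple of $(1+|\ph(x)|^2)^m$; the rapid decay of the derivatives of $\tau$ at $\ph(x)$ then transfers to $x$. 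For $\mathcal A=H^\infty$, the change of variable $y=\ph(x)$ has Jacobian determinant bounded above and bounded below by $\ep>0$, so each $\|\p^\al(\tau\o\ph)\|_{L^p}$ is dominated by a finite sum of $\|\p^\be\tau\|_{L^p}$. The a fortiori statement is immediate from the continuous inclusion $H^\infty(\mathbb R^n)\subset\mathcal B(\mathbb R^n)$ and the resulting inclusion $\Diff_{H^\infty}(\mathbb R^n)\subset\Diff_{\mathcal B}(\mathbb R^n)$. The main obstacle is the Schwartz step, where one must track the polynomial weights $(1+|x|^2)^m$ through the composition with $\ph$; this is precisely where boundedness of $g$ itself, rather than merely of its derivatives, is used.
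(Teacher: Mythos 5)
Your argument is correct and is exactly the derivation the paper intends: the corollary is stated without proof as a consequence of the ideal/module properties recorded in the paragraph ``Ideal properties of function spaces'' together with the Fa\`a di Bruno bookkeeping underlying Theorem \nmb!{6.10}, which is precisely your reduction to $(d\ph)^{\pm1}\in\mathcal B$, $\tau\o\ph\in\Ga_{\mathcal A}$, and the $\mathcal B$-module property of $\mathcal A\in\{C^\infty_c,\mathcal S, W^{\infty,p}\}$. Your tracking of the weights $(1+|x|^2)^m$ via $|\ph(x)|\ge |x|-C$ and of the $L^p$-norms via the two-sided bound on $\det(d\ph)$ correctly supplies the details the paper leaves implicit.
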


\subsection{Trouv\'e groups}
\nmb.{6.12}
For the following see 
\cite{Trouve95}, \cite{Younes10}, \cite{NenningRainer19}.
Trouv\'e groups are useful for introducing topological metrics on certain groups of diffeomorphism on $\mathbb R^d$ starting from a suitable reproducing kernel Hilbert space of vector fields without using any Lie algebra structure; see \nmb!{8.12} below. 

Consider a time-dependent vector field $X:[0,1] \x \mathbb R^d\to \mathbb R$
of sufficient regularity (e.g, continuous in $t\in [0,1]$ and Lipschitz continuous in $x\in \mathbb R^d$ with $t$-integrable global Lipschitz constant) so that 
$$x(t) = x_0 + \int_0^t X(s,x(s))\,ds$$
is uniquely solvable for all $t\in [0,1]$ and $x_0\in \mathbb R^d$. Then we consider the \emph{evolution}
$\on{evol}^X(x_0) = x(1)$.   For 
$X\in L^1([0,1], C^1_b(\mathbb R^d,\mathbb R)^d)$ (where $f\in C^k_b$ if all iterated partial derivatives of order between 0 and $k$ are continuous and globally bounded) 
we have $\on{evol}^X\in \on{Id} + C^1_b(\mathbb R^d, \mathbb R^d)$ and is a diffeomorphism with $(\on{evol})\i \in \on{Id} + C^1_b(\mathbb R^d, \mathbb R^d)$. 
Given a convenient locally convex vector space $\mathcal A(\mathbb R^d,\mathbb R^d)$ of mappings $\mathbb R^d\to \mathbb R^d$ which continuously embeds into $C^1_b(\mathbb R^d,\mathbb R^d)$ and a suitable family of mappings $[0,1]\to \mathcal A(\mathbb R^d,\mathbb R^d)$, the associated \emph{Trouv\'e group} is given by 
$$
\mathcal G_{\mathcal A} := \{\on{evol}^X: X\in \mathcal F_ {\mathcal A}\}
$$
where $\mathcal F_ {\mathcal A} = \mathcal F([0,1], \mathcal A(\mathbb R^d,\mathbb R^d))$ is a suitable vector space of time-dependent vector fields. 
It seems that for a wide class of spaces $\mathcal A$ the Trouv\'e group $\mathcal G_{\mathcal A}$ is independent of the choice of $\mathcal F_{\mathcal A}$ if the latter contains the piecewise smooth curves and is contained in the curves which are integrable by seminorms; a precise statement is still lacking, but  see   \cite{NenningRainer19}, \cite{NenningRainer18}, \cite{Nenning19a},  and citations therein.  The space $\mathcal A$ is called \emph{$\mathcal F_ {\mathcal A}$-ODE-closed} if $\on{evol}^X \in \on{Id}+\mathcal A(\mathbb R^d,\mathbb R^d)$ for each $X\in \mathcal F_ {\mathcal A}$. For ODE-closed $\mathcal A$ the Trouv\'e group $\mathcal G_{\mathcal A}$ is contained in    
$\on{Id}+ \mathcal A(\mathbb R^d,\mathbb R^d)$. 

For some spaces $\mathcal A$ it has been proved that $\mathcal F_{\mathcal A}$ is equal to the connected component of the identity of 
$$
\{\on{Id} + f: f\in \mathcal A(\mathbb R^d,\mathbb R^d), \inf_{x\in \mathbb R^d} \det df(x) > -1\},
$$ 
namely:
\begin{itemize}
\item
For Sobolev spaces $W^{k,2}$ with $k>d/2$ by \cite{BruverisViallard17}; $\mathcal G_{\mathcal A}$ is a half Lie group. 
\item
For H\"older spaces by  \cite{NenningRainer18}. 
\item
For Besov spaces by \cite{Nenning19}. 
\item
For $\mathcal B$, $W^{\infty,p}$, Schwartz functions $\mathcal S$, $C^\infty_c$, and many classes of Denjoy-Carleman functions, where  $\mathcal G_{\mathcal A}$ is always a regular Lie group; see  \cite{NenningRainer19}.
\end{itemize}

%\newpage

\section{Spaces of embeddings or immersions, and shape spaces}\nmb0{7}

This is the main section in this chapter.

\subsection{The principal bundle of embeddings}
\nmb.{7.1}
For finite dimensional manifolds $M$, $N$ with $M$ compact,
$\Emb(M,N)$, the space of embeddings of $M$ into $N$, is open in $C^\infty(M,N)$, so it is a smooth 
manifold. $\Diff(M)$ acts freely and smoothly from the right on $\Emb(M,N)$.

\begin{theorem*} 
$\Emb(M,N)\to \Emb(M,N)/\Diff(M) = B(M,N)$ is a smooth principal fiber bundle with structure group 
$\Diff(M)$. Its base is a smooth manifold.
\end{theorem*}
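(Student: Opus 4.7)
\medskip
\noindent\emph{Proof plan.} The plan is to construct an explicit local trivialization at each embedding $f\in\Emb(M,N)$ by using a tubular neighborhood of $f(M)$ in $N$ to split a nearby embedding $g$ into a ``normal part'' (which will serve as a chart on $B(M,N)$) and a reparametrization by a diffeomorphism of $M$.

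First, fix $f\in\Emb(M,N)$. Using the background metric $\bar g$, form the normal bundle $N_f\to M$ as the $\bar g$-orthogonal complement of $Tf(TM)$ in $f^*TN$. Since $M$ is compact and $f$ is an embedding, a standard tubular neighborhood argument shows that
$$
\tau_f \colon N_f \supset W \East{}{} U_f\subset N, \quad X_x\mapsto \exp^{\bar g}_{f(x)}(X_x),
$$
is a diffeomorphism from an open neighborhood $W$ of the zero section onto an open neighborhood $U_f$ of $f(M)$ in $N$; let $\pi_f\colon U_f\to f(M)$ denote the induced projection. The splitting $f^*TN = Tf(TM)\oplus_M N_f$ and the chart construction of \nmb!{5.3} together with the submanifold argument of \nmb!{5.10} realize $\Gamma(N_f)\cap W$ as a splitting smooth submanifold $S_f$ of the chart $\tilde U_f\subset \Gamma(f^*TN)$ around $f$ in $\Emb(M,N)$; the embedding is $s\mapsto \exp^{\bar g}_f\circ s$.

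Next, for $g\in\Emb(M,N)$ sufficiently close to $f$ (so that $g(M)\subset U_f$), define
$$
\psi_g := f^{-1}\o\pi_f\o g\in C^\infty(M,M).
$$
The map $g\mapsto \psi_g$ is smooth by the $\Omega$-lemma \nmb!{5.2} and the smoothness of composition \nmb!{5.7}. Since $\psi_f=\on{Id}_M$ and $T_f\psi_{(\cdot)}$ is the identity on the tangential component, the inverse function theorem (combined with openness of $\Diff(M)$ in $C^\infty(M,M)$) gives, on a possibly smaller neighborhood $\mathcal U$ of $f$, that $\psi_g\in\Diff(M)$ and depends smoothly on $g$. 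Setting $h_g := g\o\psi_g^{-1}$, the construction of $\pi_f$ forces $\pi_f(h_g(x))=f(x)$ for all $x\in M$, so $h_g$ lies in the slice $S_f$.

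Now define
$$
\Phi\colon S_f \times \Diff(M) \East{}{} \Emb(M,N), \qquad (h,\varphi)\mapsto h\o\varphi.
$$
By \nmb!{5.7} the map $\Phi$ is smooth, and the construction above yields a smooth local inverse $g\mapsto (h_g,\psi_g)$ on $\mathcal U$; moreover the whole orbit $h\o\Diff(M)$ lies in the image, so $\Phi$ restricts to a diffeomorphism from $S_f\times\Diff(M)$ (with $S_f$ shrunk appropriately) onto a saturated open set $\mathcal V_f\subset\Emb(M,N)$. The projection to the second factor is $\Diff(M)$-equivariant for right translation, and the projection to the first factor exhibits $S_f$ as a smooth slice. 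This gives the principal bundle chart, so that $\pi\colon \Emb(M,N)\to B(M,N)$ is a smooth principal $\Diff(M)$-bundle with charts on $B(M,N)$ modelled on the convenient vector spaces $\Gamma(N_f)$. Chart changes on $B(M,N)$ are smooth because they are induced by the smooth slice decompositions and by \nmb!{5.2}, \nmb!{5.7}.

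The main obstacle is verifying that $g\mapsto\psi_g$ really lands in $\Diff(M)$ and is a smooth local diffeomorphism at $f$: this is an implicit function theorem in the convenient setting, which here reduces to the classical finite-dimensional inverse function theorem applied pointwise combined with the convenient smoothness of composition and pullback. A second, essentially formal, point is to check that the $\Diff(M)$-action is free and proper enough for the quotient to inherit a Hausdorff smooth manifold structure; freeness is automatic since $f$ is an embedding, and the above local trivialization then yields the manifold structure on $B(M,N)$ directly.
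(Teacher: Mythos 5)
Your proposal is correct and follows essentially the same route as the paper's proof: a tubular neighborhood of $f(M)$ built from $\exp^{\bar g}$ on the normal bundle, the reparametrization $\psi_g=f^{-1}\o\pi_f\o g$ (the paper's $\ph(g)=f^{-1}\o p_{f(M)}\o g$), and the splitting $g\mapsto (g\o\psi_g^{-1},\psi_g)$ into a normal section and a diffeomorphism. You merely spell out in more detail the steps the paper leaves implicit (openness of $\Diff(M)$ in $C^\infty(M,M)$ and smoothness of the splitting via the $\Omega$-lemma).
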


This result was proved in \cite{Michor80III} for $M$ an open manifold without boundary; see also \cite{Michor80}.
Note that $B(M,N)$ is the smooth  manifold of all submanifolds of $N$ which are of diffeomorphism 
type $M$. Therefore it is also called the \emph{nonlinear Grassmannian} in \cite{G-BV14}, where this 
theorem is extended to the case when $M$ has boundary. From another point of view, $B(M,N)$ is 
called the \emph{differentiable Chow variety} in \cite{Micheli2013}. It is an example of a \emph{shape 
space}.

\begin{proof} We use an auxiliary Riemannian metric $\bar g$ on $N$. Given an embedding $f\in \Emb(M,N)$, we 
view $f(M)$ as a submanifold of $N$ and we split the tangent bundle of $N$ along $f(M)$ as 
$TN|_{f(M)}= \on{Nor}(f(M))\oplus Tf(M)$. The exponential mapping describes a tubular neighborhood 
of $f(M)$ via
$$
\on{Nor}(f(M))\East{\exp^{\bar g}}{\cong} W_{f(M)}\East{p_{f(M)}}{} f(M).
$$
If $g:M\to N$ is $C^1$-near to $f$, then $\ph(g):=f\i\o p_{f(M)}\o g\in \Diff(M)$ and we may 
consider 
$g\o \ph(g)\i\in \Ga(f^*W_{f(M)}) \subset \Ga(f^*\on{Nor}(f(M)))$. 
This is the required local splitting.
\end{proof}

\subsection{The space of immersions and the space of embeddings of a compact Whitney manifold germ}
\nmb.{7.2}
Let $\tilde M\supset M$ be a compact  Whitney manifold germ, and let $N$ be a smooth manifold with $\dim(M)\le \dim(N)$. We define the space of immersions as
$$
\Imm(M,N) = \{f=\tilde f|_M, f\in C^{\infty}(\tilde M,N), 
T_x\tilde f \text{ is injective for }x\in  M\}
$$
which is open in the smooth manifold $C^{\infty}(M,N)$ and is thus itself a smooth manifold.
Note that any extension of an immersion $f\in \Imm(M,N)$ to $\tilde f\in C^{\infty}(\tilde M,N)$ is still an immersion on an open neighborhood of $M$ in $\tilde M$. 

Likewise we let 
\begin{align*}
\Emb(M,N) &= 
%\{f|_M, f\in C^{\infty}(\tilde M,N), 
%\\&\qquad\qquad
%f \text{ is an embedding on an open neighborhood of }M\}
%\\&=  
\{f|_M, f\in C^{\infty}(\tilde M,N), T_xf \text{ is injective for }x\in  M, 
\\&\qquad\qquad
f:M\to N \text{ is a topological embedding}\}
\end{align*}
Since $M$ is compact, any extension of an embedding $f\in \Emb(M,N)$ to $\tilde f\in C^{\infty}(\tilde M,N)$ is an embedding on some open neighborhood of $M$ in $\tilde M$; see \cite[5.3]{Michor80} for a proof a related result.

\begin{theorem*} For a compact Whitney germ $M$ and a smooth manifold $N$ with 
$\dim(M)<\dim (N)$ the projection  
$$\pi:\Emb(M,N)\to \Emb(M,N)/\Diff(M) = B(M,N)$$ 
is a smooth principal fiber bundle of Fr\"olicher spaces with structure group the Fr\"olicher group 
$\Diff(M)$ from {\rm \nmb!{6.4}}. Its base is the quotient Fr\"olicher space. 
\end{theorem*}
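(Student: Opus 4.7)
The plan is to imitate the classical proof of Theorem \nmb!{7.1}, replacing the Fr\'echet manifold structure on $\Diff(M)$ by the Fr\"olicher group structure of \nmb!{6.4} and working throughout with restrictions of extensions to an open manifold neighborhood $\tilde M\supset M$. Fix $f_{0}\in\Emb(M,N)$ and extend to $\tilde f_{0}\in C^{\infty}(\tilde M,N)$; by compactness of $M$ and the remark following the definition of $\Emb(M,N)$ in \nmb!{7.2}, $\tilde f_{0}$ is an embedding on some compact submanifold with smooth boundary $L\subset\tilde M$ containing $M$ in its interior. Fix an auxiliary Riemannian metric $\bar g$ on $N$, split $TN|_{\tilde f_{0}(L)} = \on{Nor}(\tilde f_{0}(L))\oplus T\tilde f_{0}(L)$, and use $\exp^{\bar g}$ to obtain a tubular neighborhood
$$
\on{Nor}(\tilde f_{0}(L))\supset V \East{\exp^{\bar g}}{\cong} W \East{p}{} \tilde f_{0}(L).
$$
For $g\in\Emb(M,N)$ sufficiently close to $f_{0}$ in the chart $U_{f_{0}}$ of \nmb!{5.3}, a fixed continuous Whitney extension operator yields $\tilde g\in C^{\infty}(\tilde M,N)$ with $\tilde g(O)\subset W$ for some open neighborhood $O$ of $M$ in $\tilde M$. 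Set
$$
\tilde\ph(g) := \tilde f_{0}\i\o p\o\tilde g : O\to\tilde M,\qquad \ph(g) := \tilde\ph(g)|_{M},\qquad \si(g) := g\o\ph(g)\i.
$$

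Next I would verify that $\ph(g)\in\Diff(M)$ in the sense of \nmb!{6.3}: the inverse function theorem, applied near $g=f_{0}$ where $\tilde\ph(f_{0})$ is the identity on $O$, shows that $\tilde\ph(g)$ is a diffeomorphism on a (possibly smaller) neighborhood of $M$; since $p\o g$ lands in $f_{0}(M)=\tilde f_{0}(M)$ we have $\ph(g)(M)\subset M$, and bijectivity of $\ph(g):M\to M$ follows because $p|_{g(M)}:g(M)\to f_{0}(M)$ is a diffeomorphism for $g$ $C^{1}$-close to $f_{0}$. A short computation shows that $u_{f_{0}}(\si(g))$ is a normal section, i.e., lies in
$$
\tilde U_{f_{0}}^{\on{nor}} := \tilde U_{f_{0}}\cap\Ga(M\gets f_{0}^{*}\on{Nor}(f_{0}(M))),
$$
and the assignment $g\mapsto(u_{f_{0}}(\si(g)),\ph(g))$ inverts the action map $(s,\ps)\mapsto (\exp^{\bar g}_{f_{0}}\o s)\o\ps$, yielding a local trivialization $U_{f_{0}}\cong \tilde U_{f_{0}}^{\on{nor}}\x\Diff(M)$. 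Equivariance under right composition by $\Diff(M)$ is formal, $\ph(g\o\ps)=\ph(g)\o\ps$ and $\si(g\o\ps)=\si(g)$, so these local trivializations propagate along orbits to cover all of $\Emb(M,N)$.

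The main obstacle is to verify Fr\"olicher smoothness of $\ph$ and $\si$. For any smooth curve $t\mapsto g_{t}$ in $U_{f_{0}}\subset\Emb(M,N)$, Lemma \nmb!{5.4} produces a smooth map $\tilde g:\mathbb R\x\tilde M\to N$ with $\tilde g(t,\cdot)|_{M}=g_{t}$; smoothness of $\si$ then follows from Lemma \nmb!{5.2} applied to the fiberwise smooth inverse of $\exp^{\bar g}$ along $f_{0}$. For $\ph$, the composite $(t,x)\mapsto\tilde f_{0}\i(p(\tilde g(t,x)))$ is a smooth map $\mathbb R\x O\to\tilde M$ whose time-$t$ slice preserves $M$ and restricts to an element of $\Diff(M)$, which is precisely the condition defining the generating set $\mathcal C$ of smooth curves in the Fr\"olicher group $\Diff(M)$ from \nmb!{6.3}; thus $t\mapsto\ph(g_{t})$ is smooth. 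Finally, equip $B(M,N)$ with the quotient Fr\"olicher structure generated by projections of smooth curves in $\Emb(M,N)$; the local trivializations just established exhibit $\pi$ as a smooth principal $\Diff(M)$-bundle of Fr\"olicher spaces. The delicate point is that, unlike in \nmb!{7.1}, $\Diff(M)$ has no charts and no inverse function theorem, so every smoothness claim must be tested on smooth curves only, using the definitions in \nmb!{2.7}, \nmb!{6.3}, and the curve descriptions in \nmb!{5.1} and \nmb!{5.4}.
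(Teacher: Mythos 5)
Your proposal is far more ambitious than the paper's own proof, and it breaks down at its central step. The paper proves this theorem in three lines: it observes that the right action of $\Diff(M)$ on $\Emb(M,N)$ is free and smooth between Fr\"olicher spaces, and then simply \emph{defines} $B(M,N)$ to carry the quotient Fr\"olicher structure whose generating curves are the projections of smooth curves in $\Emb(M,N)$. No local trivialization is constructed, and the author flags this deliberately: the theorem is listed in the introduction among the preliminary results, and the discussion at the end of \nmb!{4.7} says that the field of tangent spaces of boundary strata ``might be the key to eventually construct charts for the Fr\"olicher space $\Emb(M,N)$ in \nmb!{7.2}'' --- that is, the local sections you are trying to build are left as an open problem.

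The gap is the assertion that $\ph(g)(M)\subset M$ ``since $p\o \tilde g$ lands in $f_0(M)$''. The tubular-neighborhood projection $p$ retracts onto the extended submanifold $\tilde f_0(L)$, not onto $f_0(M)$; for $x$ near the (possibly very irregular) boundary $\p M$, the point $p(\tilde g(x))$ will in general lie in $\tilde f_0(L\setminus M)$. Concretely, take $M=K_2\subset\mathbb R^2$ from \nmb!{4.9}, $N=\mathbb R^3$, $f_0$ the inclusion $M\hookrightarrow\mathbb R^2\x\{0\}$, and $g$ the restriction to $M$ of a small translation $\ta$ of $\mathbb R^2$ followed by the inclusion. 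Then $g\in U_{f_0}$, but $p\o g=\ta|_M$, so $\ph(g)$ maps $M$ onto $\ta(M)\ne M$ and is not an element of $\Diff(M)$ as defined in \nmb!{6.3}. Moreover $g$ is not of the form $(\exp^{\bar g}_{f_0}\o s)\o\ps$ for any normal section $s$ over $M$ and $\ps\in\Diff(M)$, because the image of any such map projects back onto $M$ while $g(M)$ projects onto $\ta(M)$. Hence the claimed bijection $U_{f_0}\cong\tilde U_{f_0}^{\on{nor}}\x\Diff(M)$ fails: the chart domain $U_{f_0}$ of \nmb!{5.3} is strictly larger than the union of the $\Diff(M)$-orbits of normal sections. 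In Theorem \nmb!{7.1} this problem does not arise because there $\tilde f_0(L)=f_0(M)$, so the projected image is automatically all of $f_0(M)$. Your Fr\"olicher-smoothness arguments for $\si$ and $\ph$ (testing on curves via \nmb!{5.1}, \nmb!{5.4} and the generating set $\mathcal C$ of \nmb!{6.3}) are the right technique, but they only apply after the set-theoretic splitting exists, and that is exactly what is missing; if you only want what the paper actually asserts, the short quotient-structure argument suffices.
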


\begin{proof} Since I do not know that $\Diff(M)$ is a smooth manifold, we treat all spaces here as Fr\"olicher spaces.
By definition, the right action of $\Diff(M)$ on $\Emb(M,N)$ is free, and smooth between the Fr\"olicher spaces. The quotient $B(M,N)$ carries the quotient Fr\"olicher  
structure with generating set of curves  $\{\pi\o c: c\in C^{\infty}(\mathbb R,\Emb(M,N))\}$, i.e.,   those which  lift to a smooth curve.
\end{proof}

\subsection{The orbifold bundle of immersions}
\nmb.{7.3} 
Let $M$ be a (not necessarily compact) manifold without boundary. Let $N$ be an open manifold with $\dim(M)\le \dim(N)$.
Then  
$\Imm(M,N)$, the space of immersions $M\to N$, is open in $C^\infty(M,N)$, and is thus a smooth 
manifold. The regular Lie group (or Fr\"olicher group if $M$ is a Whitney manifold germ) $\Diff(M)$ acts smoothly from the right, but no longer freely. 

An immersion $i:M\to N$ is called \emph{free} if $\on{Diff}(M)$ acts freely on it: $i\o f= i$ for 
$f\in \Diff(M)$ implies $f=\on{Id}_M$.

The space $B_i(M,N) = \Imm(M,N)/\Diff(M)$ is an example of a \emph{shape space}. It appeared in the form of 
$B_i(S^1,\mathbb R^2)$, the shape space of plane immersed curves, in \cite{Michor2006c} and 
\cite{Michor2007}.
The following theorem  is essentially due to 
\cite{CerveraMascaroMichor91}; since this paper contains some annoying misprints and is difficult to understand, we give here an extended version with a  more detailed proof. The reader may skip this proof and  jump directly to \nmb!{7.2} below.

\begin{theorem*}
[\cite{CerveraMascaroMichor91}] 
Let $M$ be a finite dimensional smooth manifold. Let $N$ be 
smooth finite dimensional manifolds with $\dim(M)\le \dim(N)$. Then the following holds:
\begin{enumerate}
\item[\thetag{\nmb:{1}}] 
 The diffeomorphism group 
$\on{Diff}(M)$ acts smoothly from the right on the 
manifold $\on{Imm}_{\text{\rm prop}}(M,N)$
of all smooth proper immersions $M\to N$, which is an open subset
of $C^\infty(M,N)$. 
\item[\thetag{\nmb:{2}}]
The space of orbits 
$\on{Imm}_{prop}(M,N)/\on{Diff}(M)$
is Hausdorff in the quotient topology.
\item[\thetag{\nmb:{3}}]
The set $\on{Imm}_{\text{\rm free, prop}}(M,N)$  of all 
proper free immersions
is open in $C^\infty(M,N)$ and is the total space of a smooth principal fiber 
bundle 
$\on{Imm}_{\text{\rm free,prop}}(M,N)\to \on{Imm}_{\text{\rm free,prop}}(M,N)/\on{Diff}(M)$.
\item[\thetag{\nmb:{4}}]
Let $i\in\on{Imm}(M,N)$ be an immersion which is 
not free. So we have a nontrivial isotropy subgroup 
$\Diff(M)_i\subset \Diff(M)$ consisting of all $f\in\Diff(M)$ with 
$i\o f= i$. Then the isotropy group $\Diff(M)_i$ acts
properly discontinuously on $M$. Thus the projection 
$q_1:M\to M_1:= M/\Diff(M)_i$ is a covering mapping onto a smooth manifold $M_1$. 
There exists an immersion $i_1:M_1\to N$ with $i=i_1\o q_1$. In particular, $\Diff(M)_i$ 
is countable, and is finite if $M$ is compact. There exists a further covering $q_2:M\to M_1\to M_2$ and 
a \emph{free} immersion $i_2:M_2\to N$ with $i=i_2\o q_2$.
\item[\thetag{\nmb:{5}}]
Let $M$ have the property that for any covering $M\to M_1$ of smooth manifolds, 
any diffeomorphism $M_1\to M_1$ admits a lift $M\to M$; e.g., $M$ simply connected, or $M=S^1$.
Let $i\in Imm(M,N)$ be an 
immersion which is not free, i.e., has non-trivial isotropy group 
$\Diff(M)_i$, and let $q_1:M\to M_1:=M/\Diff(M)_i$ be the corresponding covering map.
Then 
in the following commutative diagram the bottom mapping
$$\xymatrix@C+1cm@R=1.3em{
\on{Imm}_{\text{\rm free}}(M_1,N) \ar[r]^{(q_1)^*} \ar[d]^{\pi} &  \on{Imm}(M,N) \ar[d]_{\pi}
\\
\on{Imm}_{\text{\rm free}}(M_1,N)/\Diff(M_1) \ar[r] & \on{Imm}(M,N)/\Diff(M)
}$$ 
is the inclusion of a (possibly non-Hausdorff) manifold, 
the stratum of $\pi(i)$ in the 
stratification of the orbit space. This stratum consists of the 
orbits of all immersions which have $\Diff(M)_i$ as isotropy group.
See \thetag{\nmb|{23}} and \thetag{\nmb|{24}} below for a more complete description of the orbit 
structure near $i$.
\item[\thetag{\nmb:{6}}] 
\text{\cite{Swift93}} 
We have a right action 
of $\Diff(M)$ on $\on{Imm}(M,N)\x M$ which is given by $(i,x).f= (i\o f, f\i(x))$. This action  is free.
$$(\on{Imm}(M,N)\x M, \pi, (\on{Imm}(M,N)\x M)/\Diff(M), \Diff(M))$$
is a smooth principal fiber bundle with structure group $\Diff(M)$ and a smooth base manifold 
$S(M,N):= (\on{Imm}(M,N)\x M)/\Diff(M)$ which might possibly be non-Hausdorff.
If we restrict to the open subset $\on{Imm}_{\text{\rm prop}}(M,N)\x M$ of 
proper immersions times $M$ then the base space is Hausdorff.
\end{enumerate}
\end{theorem*}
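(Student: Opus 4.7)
The plan is to proceed through items (1)--(6) in order, each building on the preceding ones, with smoothness of composition (Lemma~\nmb!{5.7}) as the main technical tool.

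For (1), openness of $\Imm(M,N) \subset C^\infty(M,N)$ is the standard rank argument (immersivity is open in the $C^1$-fine topology, hence in the convenient topology), and properness is preserved under small $C^0$-perturbations. Smoothness of the right $\Diff(M)$-action is immediate from Lemma~\nmb!{5.7}. For (2), suppose $i_n \to i$ and $i_n \circ f_n \to j$ in $C^\infty(M,N)$; properness of the limit $i$ confines the sequence $f_n$ to a $C^0$-precompact subset of $\Diff(M)$, and by standard limiting arguments one extracts $f \in \Diff(M)$ with $j = i \circ f$, showing that distinct $\Diff(M)$-orbits are separated. Properness is essential here and cannot be dropped.

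For (3), the principal bundle chart at a free proper immersion $i$ is built by a slice construction. Using an auxiliary Riemannian metric $\bar g$ on $N$, split $i^*TN = i_*TM \oplus \on{Nor}(i)$ $\bar g$-orthogonally and define
\begin{equation*}
\Phi: \Ga(\on{Nor}(i)) \supset \tilde U \to \Imm(M,N), \quad s \mapsto \exp^{\bar g}_{i} \circ s.
\end{equation*}
The map $(s,f) \mapsto \Phi(s) \circ f$ is smooth by Lemma~\nmb!{5.7}, and one argues that it is a local diffeomorphism onto an open $\Diff(M)$-saturated neighborhood of $i$. Injectivity uses freeness critically: two decompositions $\Phi(s_1) \circ f_1 = \Phi(s_2) \circ f_2$ would produce a nontrivial element of $\Diff(M)_i$ in a limit; properness prevents pathologies ``at infinity.''

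The heart of the theorem is (4). Since $\dim(M) \le \dim(N)$, an immersion is locally an embedding by the constant rank theorem; if $f \in \Diff(M)_i$ fixes a point $x$, then $i \circ f = i$ forces $f$ to agree with $\on{Id}_M$ on a neighborhood of $x$, and a connectedness argument extends this to the component of $x$. From this one deduces that $\Diff(M)_i$ acts freely on each component, then that it acts properly discontinuously on $M$, yielding the covering $q_1: M \to M_1 := M/\Diff(M)_i$ and the factorization $i = i_1 \circ q_1$. A second iteration on $i_1$ (whose isotropy group may still be nontrivial, but strictly smaller) produces the free immersion $i_2: M_2 \to N$. Items (5) and (6) then follow: for (5), apply (3) to $i_1: M_1 \to N$ and use the lifting hypothesis on $M$ to identify the stratum with the $\Diff(M_1)$-quotient; for (6), freeness of the $\Diff(M)$-action on $\Imm(M,N) \times M$ follows from (4), since a fixed point of an isotropy element forces that element to be the identity, and the principal bundle structure is set up by the same slice construction as in (3), now centered on pairs $(i,x)$. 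The hardest step, and the one every subsequent item hinges on, is the claim in (4) that the isotropy group acts properly discontinuously: this requires carefully combining the local rigidity from immersivity with the global combinatorics of self-intersections of $i$.
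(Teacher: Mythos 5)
Your overall architecture matches the paper's: the rigidity lemma (a fixed point of an isotropy element forces it to be the identity, by an open--closed argument on the connected sets $U_\al$ where $i$ is an embedding) is indeed the engine for (1), (3), (4) and (6), and your slice construction via the normal bundle and $\exp^{\bar g}$ is essentially the paper's construction of $\mathcal Q(i)$ and the splitting $\mathcal W(i)\cong\mathcal Q(i)\x\Diff(M)$. However, there are two genuine gaps. The first and most serious is item (2). Your claim that ``properness of the limit $i$ confines the sequence $f_n$ to a $C^0$-precompact subset of $\Diff(M)$'' from which one extracts a limit $f$ by ``standard limiting arguments'' does not survive the presence of self-intersections: when $i$ is not injective, convergence of $i\o f_n$ gives no control over how $f_n$ permutes the sheets over multiple points, and $\Diff(M)$ is not locally compact. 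Moreover, Hausdorffness requires separating orbits that are \emph{not} related by any diffeomorphism, and this is where the real work lies. The paper's proof occupies claims (12)--(18): it separates orbits with different images, then orbits with the same image but different multiplicity functions $\de:L\to\mathbb N$ (using upper semicontinuity of $\de$), and in the hard case of equal image and equal $\de$ it runs an explicit combinatorial procedure that either constructs the intertwining diffeomorphism or produces obstructions; the obstructions are then converted into disjoint $\Diff(M)$-saturated Whitney $C^0$-open neighborhoods. None of this is captured by a compactness-and-subsequence argument, and you have also misplaced the difficulty of the theorem: proper discontinuity in (4) is a short consequence of the rigidity lemma, whereas (2) is the hardest step.

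The second gap is the final assertion of (4). You propose to obtain the free immersion $i_2$ by ``a second iteration on $i_1$,'' asserting its isotropy group is strictly smaller. But the factorized immersion $i_1$ need not be free (the paper gives an explicit example built from a tower of coverings $M_0\to M_1\to M_2\to M_3$ with $\mathcal N_{G_3}(G_2)\nsubseteq\mathcal N_{G_3}(G_1)$), and there is no argument that iterating terminates --- the isotropy groups involved are countable and a strictly decreasing chain need not stabilize, nor is it clear the quotient of $M_1$ by $\Diff(M_1)_{i_1}$ composes to a quotient of $M$ by a group acting properly discontinuously. The paper instead passes to the universal cover $q_0:M_0\to M$, forms $i_0=i\o q_0$, quotients $M_0$ by $\Diff(M_0)_{i_0}$ in a single step to get $M_2$ and $i_2$, and checks freeness of $i_2$ by lifting any isotropy of $i_2$ back to $M_0$. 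You would need to replace your iteration by this (or an equivalent) construction.
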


\begin{proof} Without loss, let $M$ be connected. Fix an immersion $i:M\to N$.
We will now describe some data for $i$
which we will use throughout the proof. If we need these data for
several immersions, we will distinguish them by appropriate 
superscripts.

\medskip\noindent
{\bf\thetag{\nmb:{7}} Setup.}
There exist sets
$W_\al\subset\overline{W}_\al\subset U_\al\subset \overline{U}_\al\subset V_\al\subset M$ such that
$(W_\al)$ is an open cover of $M$, $\overline W_\al$ is compact,
and $V_\al$ is an open locally finite cover of $M$, each $W_\al$,
$U_\al$, and $V_\al$ is connected, and such that $i|V_\al:V_\al\to N$ is
an embedding for each $\al$.

Let $g$ be a fixed Riemannian metric on $N$ and let $\exp^N$ be
the induced geodesic exponential mapping. Then let 
$p:\mathcal N(i)\to M$ be the \emph{normal bundle} of $i$, defined in
the following way: For $x \in M$ let $\mathcal N(i)_x:=
(T_xi(T_xM))^\bot\subset T_{i(x)}N$ be the $g$-orthogonal
complement in $T_{i(x)}N$. Then 
$$\xymatrix@C+0.8cm@R=2em{
\mathcal N(i) \ar[r]_{\bar i} \ar[d]_{p} & TN \ar[d]^{\pi_N}
\\
M  \ar[r]^{i} &   N
}$$
is a vector bundle homomorphism over $i$, which is fiberwise
injective.  

Now let $U^i=U$ be an open neighborhood of the zero section of $\mathcal N(i)$ which
is so small that 
$(\exp^N\o \bar i)|(U|_{V_\al}): U|_{V_\al} \to N$
is a diffeomorphism onto its image which describes a tubular
neighborhood  of the submanifold $i(V_\al)$ for each $\al$.
Let 
$$\ta=\ta^i:= (\exp^N\,\o \bar i\,)|_U:\mathcal N(i)\supset U\to N.$$
It will serve us as a substitute for a tubular neighborhood of $i(M)$.

For any $f\in \Diff(M)_i=\{f\in\Diff(M): i\o f= i\}$ we have an induced vector bundle homomorphism 
$\bar f$ over $f$:
$$\xymatrix@C+0.8cm{
\mathcal N(i) \ar[r]_{\mathcal N(f)} \ar[d]_{p} \ar@/^0.8cm/[rr]_{\bar i} & 
\mathcal N(i) \ar[d]^{p} \ar[r]_{\bar i} & 
TN \ar[d]^{\pi_N}
\\
M  \ar[r]^{f} &   M \ar[r]^{i} & N
}$$

\medskip\noindent
{\bf\thetag{\nmb:{8}} Claim.}
\emph{Let $i\in \on{Imm}(M,N)$ and let 
$f\in\Diff(M)$ have a fixed point $x_0\in M$ and satisfy 
$i\o f=i$. Then $f=Id_M$.}

Namely,  we consider the sets $(U_\al)$ for the immersion $i$ 
of \thetag{\nmb|{7}}. Let us investigate $f(U_\al)\cap U_\al$.
If there is an $x\in U_\al$ with $y=f(x)\in U_\al$, we have
$(i|_{U_\al})(x)=((i\o f)|U_\al)(x)=(i|_{U_\al})(f(x))=(i|_{U_\al})(y)$.
Since $i|_{U_\al}$ is injective we have $x=y$, and  
$$f(U_\al)\cap U_\al=\{x\in U_\al: f(x)=x\}.$$
Thus $f(U_\al)\cap U_\al$ is closed in $U_\al$. Since it is also
open and since $U_\al$ is connected, we have 
$f(U_\al)\cap U_\al=\emptyset$ or $=U_\al$.

Now we consider the set $\{x\in M: f(x)=x\}$. We have just shown
that it is open in $M$. Since it is also closed and contains the
fixed point $x_0$, it coincides with $M$. Claim \thetag{\nmb|{7}} follows.

\medskip\noindent
{\bf\thetag{\nmb:{9}} Claim.}
\emph{If for an immersion $i\in
\on{Imm}(M,N)$ there is a point in $i(M)$ with only one preimage,
then $i$ is a free immersion.}

Let $x_0\in M$ be such that $i(x_0)$ has only one
preimage. If $i\o f=i$ for $f\in \Diff(M)$ then $f(x_0)=x_0$ and
$f=Id_M$ by claim \thetag{\nmb|{8}}.

Note that there are free immersions without a point in $i(M)$ with 
only one preimage: Consider a figure eight which consists of two 
touching circles. Now we may map the circle to the figure eight by 
going first $n$ times around the upper circle, then $m$ around the 
lower one with $n,m\ge 2$. 
%This immersion $S^1\to \mathbb R^2$ is free.

\medskip\noindent
{\bf\thetag{\nmb:{10}} Claim.}
\emph{Let $i$ be a free immersion 
$M\to N$. Then there is an open neighborhood $\mathcal W(i)$ in $\on{Imm}(M,N)$ 
which is saturated for the $\Diff(M)$-action and which splits 
smoothly as 
$$\mathcal W(i)=\mathcal Q(i)\x \Diff(M).$$
Here $\mathcal Q(i)$ is a smooth splitting submanifold of $\on{Imm}(M,N)$, 
diffeomorphic to an open neighborhood of the zero section in $\Ga_c(M\gets \mathcal N(i))$. 
In particular the space $\on{Imm}_{\text{\rm free}}(M,N)$ 
is open in $C^\infty(M,N)$.}

\emph{Let $\pi:\on{Imm}(M,N)\to \on{Imm}(M,N)/\Diff(M)=B_{i}(M,N)$ be the projection onto the 
orbit space, which is equipped with the quotient topology. Then the mapping
$\pi|_ {\mathcal Q(i)}:\mathcal Q(i)\to \pi(\mathcal Q(i))$ is bijective 
onto an open subset of the quotient. If $i$ runs through 
$\on{Imm}_{\text{\rm free,prop}}(M,N)$ of all free and proper immersions 
these mappings define a smooth atlas for 
the quotient space, so that 
$$(\on{Imm}_{\text{\rm free,prop}}(M,N), \pi, \on{Imm}_{\text{\rm free,prop}}(M,N)/\Diff(M), 
\Diff(M))$$
is a smooth principal fiber bundle with structure group $\Diff(M)$.
}

The restriction to proper immersions is necessary because we are only
able to show that $\on{Imm}_{\text{\rm prop}}(M,N)/\Diff(M)$ is Hausdorff in \thetag{\nmb|{11}}
below.

For the proof of claim \thetag{\nmb|{10}},
we consider the setup \thetag{\nmb|{7}} for the free immersion $i$. Let 
$$\tilde{\mathcal U}(i):=\{j\in \on{Imm}(M,N):j(\overline{W}^i_\al)\subseteq 
     \ta^i(U^i|_ {U^i_\al})\text{ for all }\al, j \sim i \},$$
where $j\sim i$ means that $j=i$ off some compact set in $M$. 
Then by \nmb!{5.3} (for open $M$) the set $\tilde{\mathcal U}(i)$ is an 
open neighborhood of $i$ in $\on{Imm}(M,N)$.
For each $j\in\tilde{\mathcal U}(i)$ we define 
\begin{align*} 
&\ph_i(j):M\to U^i\subseteq \mathcal N(i),\\
&\ph_i(j)(x):=(\ta^i|_ {(U^i|_{ U^i_\al})})\i(j(x))
     \text{ if } x\in W^i_\al.
\end{align*}
Note that $\ph_i(j)$ is defined piecewise on $M$, but the pieces coincide when they overlap. 
Therefore a smooth curve through $j$ is mapped to a smooth curve and so 
$\ph_i:\tilde{\mathcal U}(i)\to C^\infty(M,\mathcal N(i))$ is a smooth mapping which is 
bijective onto the open set 
$$\tilde{\mathcal V}(i):=\{h\in C^\infty(M,\mathcal N(i)): 
     h(\overline W^i_\al)\subseteq U^i|_{ U^i_\al}
     \text{ for all }\al, h\sim 0\}$$
in $C^\infty(M,\mathcal N(i))$. Its inverse is given by the smooth 
mapping $\ta^i_*:h\mapsto \ta^i\o h$.
Now we consider the open subsets
\begin{align*}
\mathcal V(i) :&= \{h\in \tilde{\mathcal V}(i): p\o h \in \Diff_c(M)\} \subset \tilde{\mathcal V}(i)
\\
\mathcal U(i) :&= \ta^i_*(\mathcal V(i)) \subset \tilde{\mathcal U}(i)
\end{align*}
and the diffeomorphism $\ph_i:\mathcal U(i)\to \mathcal V(i)$.
For $h\in \mathcal V(i)$ we have $\ta^i_*(h\o f)=\ta^i_*(h)\o f$ for those 
$f\in\Diff(M)$ which are near enough to the identity so that 
$h\o f\in \mathcal V(i)$. And if $\ta^i\o h\o f = \ta^i\o h$ then $h\o f = 
h$ by the construction of $\mathcal N(i)$ in \thetag{\nmb|{7}}, 
and then $f=\on{Id_M}$ since $i$ is a free immersion; see the second diagram in \thetag{\nmb|{7}}. 

We consider now the open set 
$$\{h\o f: h\in \mathcal V(i), 
     f\in \Diff(M)\}\subseteq C^\infty(M,U^i).$$
Consider the smooth mapping from it into 
$\Ga_c(M\gets U^i)\x \Diff(M)$ given by 
$h\mapsto (h\o(p\o h)\i,p\o h)$, where $\Ga_c(M\gets U^i)$ is the space 
of sections with compact support of $U^i\to M$.
So if we let 
$\mathcal Q(i):= \ta^i_*(\Ga_c(M\gets U^i)\cap\mathcal V(i))\subset \on{Imm}(M,N)$
we have 
$$\mathcal W(i):= \mathcal U(i)\o \Diff_c(M)\cong \mathcal Q(i)\x \Diff(M) \cong
     (\Ga_c(M\gets U^i)\cap\mathcal V(i))\x \Diff(M),$$
since the action of $\Diff(M)$ on $i$ is free and by the argument above.
Consequently $\Diff(M)$ acts freely on each immersion in $\mathcal W(i)$, 
so $\on{Imm}_{\text{\rm free}}(M,N)$ is open in $C^\infty(M,N)$. Furthermore 
$$\pi|_{\mathcal Q(i)}: 
\mathcal Q(i)\to \on{Imm}_{\text{\rm free}}(M,N)/\Diff(M)$$
is bijective onto an open set in the quotient.

We  consider 
$$ 
\ph_i\o(\pi|_{\mathcal Q(i)})\i:\pi(\mathcal Q(i))\to \Ga_c(M\gets U^i)\subset C^\infty_c(N,\mathcal N(i))
$$
as a chart for the quotient space. 

In order to investigate the chart 
change let $j\in \on{Imm}_{\text{\rm free}}(M,N)$ be such that 
$\pi(\mathcal Q(i))\cap\pi(\mathcal Q(j))\ne\emptyset$. 
Then there is an immersion $h\in \mathcal W(i)\cap \mathcal Q(j)$, so there 
exists a unique $f_0\in\Diff(M)$ (given by $f_0=p\o\ph_i(h)$) such 
that $h\o f_0\i\in\mathcal Q(i)$. If we consider $j\o f_0\i$ instead of 
$j$ and call it again $j$, we have 
$\mathcal Q(i)\cap\mathcal Q(j)\ne\emptyset$ and consequently 
$\mathcal U(i)\cap\mathcal U(j)\ne\emptyset$.
Then the chart change is given as follows:
\begin{gather*}
\ph_i\o(\pi|_{\mathcal Q(i)})\i\o\pi\o(\ta^j)_*:
     \Ga_c(M\gets U^j)\to \Ga_c(M\gets U^i)\\
s\mapsto \ta^j\o s 
     \mapsto \ph_i(\ta^j\o s)\o(p^i\o\ph_i(\ta^j\o s))\i.
\end{gather*}
This is of the form $s\mapsto \be\o s$ for a locally defined 
diffeomorphism $\be:\mathcal N(j)\to\mathcal N(i)$ which is not fiber 
respecting, followed by $h\mapsto h\o(p^i\o h)\i$. Both composants 
are smooth by the general properties of manifolds of mappings. 
So the chart change is smooth.

We have to show that the quotient space 
$\on{Imm}_{\text{\rm prop,free}}(M,N)/\Diff(M)$ is Hausdorff. 

\medskip\noindent
{\bf\thetag{\nmb:{11}} Claim.}
\emph{
The orbit space
$\on{Imm}_{\text{\rm prop}}(M,N)/\Diff(M)$ of the space of all proper
immersions under the action of the diffeomorphism group is
Hausdorff in the quotient topology.
}

This follows from \thetag{\nmb|{18}} below. I am convinced that the whole orbit space 
$\on{Imm}(M,N)/\Diff(M)$ is Hausdorff, but I was unable to prove
this.

\medskip\noindent
{\bf\thetag{\nmb:{12}} Claim.}
\emph{
Let $i$ and $j\in \on{Imm}_{\text{\rm prop}}(M,N)$ with 
$i(M)\ne j(M)$ in $N$. Then their projections $\pi(i)$ and $\pi(j)$ 
are different and can be separated by open subsets in 
$\on{Imm}_{\text{\rm prop}}(M,N)/\Diff(M)$.
}

We suppose that $i(M)\nsubseteq \overline{j(M)} = j(M)$ 
(since proper immersions have closed images). 
Let $y_0\in i(M)\setminus\overline{j(M)}$, then we choose open 
neighborhoods $V$ of $y_0$ in $N$ and $W$ of $j(M)$ in $N$ such that 
$V\cap W = \emptyset$. We consider the sets
\begin{align*}
\mathcal V &:= \{k\in \on{Imm}_{\text{\rm prop}}(M,N):k(M)\cap V\neq\emptyset\}\quad\text{and}\\
\mathcal W &:= \{k\in \on{Imm}_{\text{\rm prop}}(M,N):k(M)\subseteq W\}.
\end{align*}
Then $\mathcal V$ and $\mathcal W$ are $\Diff(M)$-saturated 
disjoint open neighborhoods of $i$ and $j$, respectively, so 
$\pi(\mathcal V)$ and $\pi(\mathcal W)$ separate $\pi(i)$ and $\pi(j)$ in the quotient space 
$\on{Imm}_{\text{\rm prop}}(M,N)/\Diff(M)$.

\medskip\noindent
{\bf\thetag{\nmb:{13}} Claim.}
\emph{For a proper immersion $i:M\to N$ and 
$x\in i(M)$ let $\de(x)\in \mathbb N$ be the number of points in 
$i\i(x)$. Then  $\de:i(M)\to \mathbb N$ 
is upper 
semicontinuous, i.e.,  the set $\{x\in i(M):\de(x)\leq k\}$ is open in 
$i(M)$ for each $k$.}

Let $x\in i(M)$ with $\de(x)=k$ and let $i\i(x)=\{ y_1,\dots,y_k\}$. Then 
there are pairwise disjoint open neighborhoods $W_n$ of $y_n$ in $M$ 
such that $i|_{W_n}$ is an embedding for each $n$. The set 
$M\setminus(\bigcup_n W_n)$ is closed in $M$, and since $i$ is proper 
the set $i(M\setminus(\bigcup_n W_n))$ is also closed in $i(M)$ and 
does not contain $x$. So there is an open neighborhood $U$ of $x$ in 
$i(M)$ which does not meet $i(M\setminus(\bigcup_n W_n))$.  
Obviously $\de(z)\leq k$ for all $z\in U$.

\medskip\noindent
{\bf\thetag{\nmb:{14}} Claim.}
\emph{Consider two proper immersions $i_1$ 
and $i_2\in \on{Imm}_{\text{\rm prop}}(M,N)$ such that 
$i_1(M)=i_2(M)=:L\subseteq N$. 
Then we have mappings $\de_1, \de_2: L\to \mathbb N$ as in \thetag{\nmb|{13}}.
If  $\de_1\ne\de_2$ then the projections $\pi(i_1)$ and $\pi(i_2)$ are 
different and can be 
separated by disjoint open neighborhoods in 
$\on{Imm}_{\text{\rm prop}}(M,N)/\Diff(M)$.}

Let us suppose that $m_1=\de_1(y_0)\ne\de_2(y_0)=m_2$. 
There is a small connected open neighborhood $V$ of $y_0$ in $N$ such 
that $i_1\i(V)$ has $m_1$ connected components and $i_2\i(V)$ has 
$m_2$ connected components. This assertions 
describe Whitney $C^0$-open neighborhoods in $\on{Imm}_{\text{\rm prop}}(M,N)$ of 
$i_1$ and $i_2$ which are closed under the action of $\Diff(M)$, 
respectively. Obviously these two neighborhoods are disjoint.

\medskip\noindent
{\bf \thetag{\nmb:{15}} Assumption.} We assume   
that we are given two immersions $i_1$ and $i_2\in \on{Imm}_{\text{\rm prop}}(M,N)$ with
$i_1(M)=i_2(M)=:L$ such that the functions from \thetag{\nmb|{14}} are equal:
$\de_1=\de_2=:\de$.

Let $(L_\be)_{\be\in B}$ be the partition of $L$ consisting of all 
pathwise connected components of level sets $\{x\in L:\de(x)=c\}$, $c$ 
some constant. 

Let $B_0$ denote the set of all $\be\in B$ such that the interior of 
$L_\be$ in $L$ is not empty. Since $M$ is second countable, $B_0$ is
countable.

\medskip\noindent
{\bf\thetag{\nmb:{16}} Claim.}
\emph{$\bigcup_{\be\in B_0}L_\be$ is dense in $L$.
}

Let $k_1$ be the smallest number in $\de(L)$ and let $B_1$ be the set 
of all $\be\in B$ such that $\de(L_\be)=k_1$. Then by claim 
\thetag{\nmb|{13}} each $L_\be$ for $\be\in B_1$ is open. Let $L^1$ be the 
closure of $\bigcup_{\be\in B_1}L_\be$. Let $k_2$ be the smallest 
number in $\de(L\setminus L^1)$ and let $B_2$ be the set of all 
$\be\in B$ with $\be (L_\be)=k_2$ and 
$L_\be\cap(L\setminus L^1)\ne\emptyset$. Then by claim \thetag{\nmb|{13}} again  
$L_\be\cap(L\setminus L^1)\ne\emptyset$ is open in $L$ so $L_\be$ has 
non empty interior for each $\be\in B_2$. Then let $L^2$ denote the 
closure of $\bigcup_{\be\in B_1\cup B_2}L_\be$ and continue the process. 
If $\de(L)$ is bounded, the process stops. If $\de(L)$ is unbounded, 
by claim \thetag{\nmb|{13}} we always find new $L_\be$ with non empty 
interior, we finally exhaust $L$ and  claim \thetag{16} follows.

Let $(M^1_\la)_{\la\in C^1}$ be a suitably chosen cover of $M$ by subsets 
of the sets $i_1\i(L_\be)$ such that: 
\begin{enumerate}
\item[(i)] Each
$i_1|_ {\on{int} M^1_\la}$ is an embedding for each 
$\la$.
\item[(ii)] The set $C^1_0$ of all $\la$ with $M^1_\la$ having non 
empty interior is at most countable.
Let $(M^2_\mu)_{\mu\in C^2}$ be a cover chosen in a similar way for $i_2$.
\item[(iii)] For each pair $(\mu,\la)\in C^2_0\x C^1_0$ the  two open sets 
$i_2(\on{int}(M^2_\mu))$ and $i_1(\on{int}(M^1_\la))$ in $L$ are either equal or disjoint.
\end{enumerate}
Note that the union
$\bigcup_{\la\in C^1_0}\on{int} M^1_\la$ is dense in $M$ and thus
$\bigcup_{\la\in C^1_0}\overline{M^1_\la}= M$; similarly 
for the $M^2_\mu$.

\medskip\noindent
{\bf\thetag{\nmb:{17}} Procedure.}
Given immersions $i_1$ and $i_2$ 
as in \thetag{\nmb|{15}} we will try to construct a diffeomorphism $f:M\to M$ 
with $i_2\o f=i_1$. If we meet obstacles to the construction this will
give enough control on the situation to separate $i_1$ from $i_2$.

Choose $\la_0\in C^1_0$; so  
$\on{int} M^1_{\la_0}\ne\emptyset$. Then 
$i_1:\on{int} M^1_{\la_0}\to L_{\be_1(\la_0)}$ is an 
embedding, where $\be_1:C^1\to B$ is the mapping satisfying 
$i_1(M^1_\la)\subseteq L_{\be_1(\la)}$ for all $\la\in C^1$.

We choose $\mu_0\in \be_2\i\be_1(\la_0)\subset C^2_0$ such that 
$f:=(i_2|_{\on{int} M^2_{\mu_0}})\i\o 
     i_1|_{\on{int} M^1_{\la_0}}$
is a diffeomorphism $\on{int} M^1_{\la_0}\to \on{int} M^2_{\mu_0}$; this follows from \thetag{iii}.
Note that $f$ is uniquely determined by the choice of $\mu_0$, if it 
exists, by claim \thetag{\nmb|{8}}. So we will repeat the following 
construction for every $\mu_0\in \be_2\i\be_1(\la_0)\subset C^2_0$.

Now we try to extend $f$. 
We choose $\la_1\in C^1_0$ such that  
$\overline M^1_{\la_0}\cap\overline M^1_{\la_1}\ne \emptyset$.

\noindent
{\bf Case a.} Only $\la_1=\la_0$ is possible. So $M^1_{\la_0}$ 
is dense in $M$ 
since $M$ is connected and we may extend $f$ by continuity to a 
diffeomorphism $f:M\to M$ with $i_2\o f=i_1$.

\noindent
{\bf Case b.} We can find $\la_1\ne\la_0$. We choose 
$x\in\overline M^1_{\la_0}\cap\overline M^1_{\la_1}$ and a sequence 
$(x_n)$ in $M^1_{\la_0}$ with $x_n\to x$. Then we have a sequence
$(f(x_n))$ in $M$.

\noindent
{\bf Case ba.} $y:=\lim f(x_n)$ exists in $M$. Then there is 
$\mu_1\in C^2_0$ such that 
$y\in \overline M^2_{\mu_0}\cap\overline M^2_{\mu_1}$.

Let $U^1_{\al_1}$ be an open neighborhood of $x$ in $M$ such that 
$i_1|{ U^1_{\al_1}}$ is an embedding and let similarly 
$U^2_{\al_2}$ be an open neighborhood of $y$ in $M$ such that
$i_2|{ U^2_{\al_2}}$ is an embedding. We consider now the 
set $i_2\i i_1(U^1_{\al_1})$. There are two cases possible.

\noindent
{\bf Case baa.} The set $i_2\i i_1(U^1_{\al_1})$ is a neighborhood of 
$y$. Then we extend $f$ to $i_1\i(i_1(U^1_{\al_1})\cap i_2(U^2_{\al_2}))$ 
by $i_2\i\o i_1$. Then $f$ is defined on some open subset of 
$\on{int} M^1_{\la_1}$ and by the situation chosen in \thetag{\nmb|{15}} and by \thetag{iii}, the 
diffeomorphism $f$ extends 
to the whole of $\on{int} M^1_{\la_1}$.

\noindent
{\bf Case bab.} The set $i_2\i i_1(U^1_{\al_1})$ is not a neighborhood of 
$y$. This is a definite obstruction to the extension of $f$.

\noindent
{\bf Case bb.} The sequence $(x_n)$ has no limit in $M$. This is a 
definite obstruction to the extension of $f$.

If we meet an obstruction we stop and try another $\mu_0$. If for all 
admissible $\mu_0$ we meet obstructions we stop and remember the data. 
If we do not 
meet an obstruction we repeat the construction with some obvious 
changes.

\medskip\noindent
{\bf\thetag{\nmb:{18}} Claim.}
\emph{
The construction of \thetag{\nmb|{17}} in the 
setting of \thetag{\nmb|{15}} either produces a diffeomorphism $f:M\to M$ 
with $i_2\o f=i_1$ or we may separate $i_1$ and $i_2$ by open sets in 
$\on{Imm}_{\text{\rm prop}}(M,N)$ which are saturated with respect to the action of 
$\Diff(M)$
}

If for some $\mu_0$ we do not meet any obstruction in the 
construction \thetag{\nmb|{17}}, the resulting $f$ is defined on the whole of 
$M$ and it is a 
continuous mapping $M\to M$ with $i_2\o f=i_1$. Since $i_1$ and $i_2$ 
are locally embeddings, $f$ is smooth and of maximal rank. Since $i_1$ 
and $i_2$ are proper, $f$ is proper. So the image of $f$ is open and 
closed and since $M$ is connected, $f$ is a surjective local 
diffeomorphism, thus a covering mapping $M\to M$. But since 
$\de_1=\de_2$ the mapping $f$ must be a 1-fold covering, i.e., a 
diffeomorphism.

If for all $\mu_0\in \be_2\i\be_1(\la_0)\subset C^2_0$ we meet 
obstructions we choose small mutually distinct open neighborhoods 
$V^1_\la$ of the sets $i_1(M^1_\la)$. We consider the Whitney 
$C^0$-open neighborhood $\mathcal V_1$ of $i_1$  consisting of all 
immersions $j_1$ with $j_1(M^1_\la)\subset V^1_\la$ for all $\la$.
Let $\mathcal V_2$ be a similar neighborhood of $i_2$.

We claim that $\mathcal V_1 \o \Diff(M)$ and $\mathcal V_2\o\Diff(M)$ are 
disjoint. For that it suffices to show that for any $j_1\in\mathcal V_1$ 
and $j_2\in \mathcal V_2$ there does not exist a diffeomorphism 
$f\in\Diff(M)$ with $j_2\o f=j_1$. For that to be possible the 
immersions $j_1$ and $j_2$ must have the same image $L$ and the same 
functions $\de(j_1)$, $\de(j_2):L\to \mathbb N$. But now the 
combinatorial relations of the slightly distinct new sets $M^1_\la$, 
$L_\be$, and $M^2_\mu$ are contained in the old ones, so any try to 
construct such a diffeomorphism $f$ starting from the same $\la_0$ 
meets the same obstructions.

Statements \thetag{\nmb|{2}} and \thetag{\nmb|{3}} of the theorem are now proved.

\medskip\noindent
{\bf\thetag{\nmb:{19}} Claim.}
\emph{For a non-free immersion $i\in\on{Imm}(M,N)$, 
the nontrivial isotropy subgroup 
$\Diff(M)_i = \{f\in \Diff(M): i\o f= i\}$
acts properly discontinuously on $M$, so the projection 
$q_1:M\to M_1:= M/\Diff(M)_i$ is a covering map onto a smooth manifold on $M_1$. 
There is an 
immersion $i_1:M_1\to N$ with $i=i_1\o q_1$. In particular $\Diff(M)_i$ 
is countable, and is finite if $M$ is compact.
}

We have to show that for each $x\in M$ there is an open neighborhood 
$U$ such that $f(U)\cap U=\emptyset$ for 
$f\in \Diff(M)_i\setminus \{Id\}$. We consider the setup \thetag{\nmb|{7}} for 
$i$. By the proof of \thetag{\nmb|{8}} we have 
$f(U^i_\al)\cap U^i_\al=\{x\in U^i_\al: f(x)=x\}$ for any 
$f\in\Diff(M)_i$. If $f$ has a fixed point then  $f=\on{Id}$, by \thetag{\nmb|{8}}, 
so $f(U^i_\al)\cap U^i_\al=\emptyset$ for all 
$f\in\Diff(M)_i\setminus\{Id\}$. The rest is clear.

The factorized immersion $i_1$ is in general not a free immersion. 
The following is an example for that: Let 
$M_0 \East{\al}{} M_1 \East{\be}{} M_2 \East{\ga}{} M_3$
be a sequence of covering maps with fundamental groups 
$1\to G_1 \to G_2 \to G_3$. Then the group of deck transformations of 
$\ga$ is given by $\mathcal N_{G_3}(G_2)/G_2$, the normalizer of $G_2$ in 
$G_3$, and the group of deck transformations of 
$\ga\o\be$ is $\mathcal N_{G_3}(G_1)/G_1$. We can 
easily arrange that $\mathcal N_{G_3}(G_2)\nsubseteq\mathcal N_{G_3}(G_1)$, 
then $\ga$ admits deck transformations which do not lift to $M_1$. 
Then we thicken all spaces to manifolds, so that $\ga\o\be$ plays the 
role of the immersion $i$. 

\medskip\noindent
{\bf\thetag{\nmb:{20}} Claim.}
\emph{
Let $i\in\on{Imm}(M,N)$ be an immersion 
which is not free. Then there is a submersive covering map $q_2:M\to M_2$ 
such that $i$ factors to an immersion 
$i_2:M_2\to N$ which is free.
}

Let $q_0:M_0\to M$ be the universal covering of $M$ and 
consider the immersion $i_0=i\o q_0:M_0\to N$ and 
its isotropy group $\Diff(M_0)_{i_0}$. By \thetag{\nmb|{19}} it 
acts properly discontinuously on $M_0$ and we have a submersive 
covering $q_{02}:M_0\to M_2$ and an immersion 
$i_2:M_2\to N$ with 
$i_2\o q_{02}=i_0=i\o q_0$. By comparing the respective groups of 
deck transformations it is easily seen that 
$q_{02}:M_0\to M_2$ factors over
$q_1\o q_0:M_0\to M\to M_1$ 
to a covering $q_{12}:M_1\to M_2$. The mapping 
$q_2:=q_{12}\o q_1:M\to M_2$ is the looked for covering: If 
$f\in \Diff(M_2)$ fixes $i_2$, it lifts to a 
diffeomorphism $f_0\in \Diff(M_0)$ which fixes $i_0$, 
so $f_0\in\Diff(M_0)_{i_0}$, so $f=\on{Id}$.

Statement \thetag{\nmb|{4}} of the theorem follows from \thetag{\nmb|{19}} and \thetag{\nmb|{20}}.

\medskip\noindent
{\bf\thetag{\nmb:{21}} Convention.}
In order to avoid complications 
we assume now that $M$ is such a manifold that 
\begin{itemize}
\item For any covering $M\to M_1$, any diffeomorphism $M_1\to M_1$ 
     admits a lift $M\to M$.
\end{itemize}
If $M$ is simply connected, this condition is satisfied. 
Also for $M=S^1$ the condition is easily seen to be 
valid. So what follows is applicable to loop spaces.

Condition \thetag{\nmb|{21}} implies that in the proof of 
claim \thetag{\nmb|{20}} we have $M_1=M_2$.

\medskip\noindent
{\bf\thetag{\nmb:{22}} Description of a neighborhood of a singular 
orbit.} 
Let $M$ be a manifold satisfying \thetag{\nmb|{21}}.
In the situation of \thetag{\nmb|{19}} we consider the normal bundles 
$p_i:\mathcal N(i)\to M$ and 
$p_{i_1}:\mathcal N(i_1)\to M_1$. Then the 
covering map $q_1:M\to M_1$ lifts uniquely to a vector bundle 
homomorphism $\mathcal N(q_1):\mathcal N(i)\to \mathcal N(i_1)$ which is 
also a covering map, such that $\ta^{i_1}\o\mathcal N(q_1)=\ta^i$.

We have $M_1= M/\Diff(M)_i$ and the group $\Diff(M)_i$ acts also as 
the group of deck transformations  of the covering 
$\mathcal N(q_1):\mathcal N(i)\to \mathcal N(i_1)$ by 
$\Diff(M)_i\ni f\mapsto \mathcal N(f)$, where 
$$\xymatrix@C+0.8cm@R-0.3cm{
\mathcal N(i) \ar[r]_{\mathcal N(f)} \ar[d] &  \mathcal N(i) \ar[d]\\
M          \ar[r]^{f}   &       M
}$$
is a vector bundle isomorphism for each $f\in \Diff(M)_i$; see the end of \thetag{\nmb|{7}}.
If we equip $\mathcal N(i)$ and $\mathcal N(i_1)$ with the fiber Riemann 
metrics induced from the fixed Riemannian metric $g$ on $N$, the 
mappings $\mathcal N(q_1)$ and all $\mathcal N(f)$ are fiberwise linear 
isometries.

Let us now consider the right action of $\Diff(M)_i$ on the space of 
sections $\Ga_c(M\gets \mathcal N(i))$ given by 
$f^*s:=\mathcal N(f)\i\o s\o f$. 

From the proof of claim \thetag{\nmb|{10}} we recall now the sets
$$\xymatrix@C+0.8cm@R-0.3cm{
C^\infty(M,\mathcal N(i)) & \;\mathcal V(i) \ar@{_{(}->}[l] & \ar[l]_{\qquad\qquad\ph_i} \mathcal U(i)\\
\Ga_c(M\gets \mathcal N(i)) \ar@{_{(}->}[u] & \;\Ga_c(M\gets U^i) \ar@{_{(}->}[l] & 
\ar[l]_{\qquad\qquad\ph_i} \mathcal Q(i) \ar@{_{(}->}[u]
}$$
Both mappings $\ph_i$ are diffeomorphisms. But since the action of $\Diff(M)$ on $i$ is 
not free we cannot extend the splitting submanifold $\mathcal Q(i)$ to an 
orbit cylinder as we did in the proof of claim \thetag{\nmb|{10}}. 
$\mathcal Q(i)$ is a smooth transversal for the orbit though $i$.

For any $f\in \Diff(M)$ and 
$s\in \Ga_c(M\gets U^i)\subset \Ga_c(M\gets \mathcal N(i))$ we have 
$$\ph_i\i(f^*s)= \ta^i_*(f^*s) = \ta^i_*(s)\o f.$$
So the space $q_1^*\Ga_c(M\gets \mathcal N(i_1))$ of all sections of 
$\mathcal N(i)\to M$
which factor to sections of $\mathcal N(i_1)\to M_1$, is exactly the 
space of all fixed points of the action of $\Diff(M)_i$ on 
$\Ga_c(M\gets \mathcal N(i))$; and they are mapped by $\ta^i_*=\ph_i\i$ to such
immersions in $\mathcal Q(i)$ which have again $\Diff(M)_i$ as isotropy 
group.

If $s\in \Ga_c(M\gets U^i)\subset \Ga_c(M\gets \mathcal N(i))$ is an 
arbitrary  section, the orbit through $\ta^i_*(s)\in \mathcal Q(i)$ hits 
the transversal $\mathcal Q(i)$ again in the points $\ta^i_*(f^*s)$ for 
$f\in\Diff(M)_i$.

Statement \thetag{\nmb|{5}} of the theorem is now proved.

\medskip\noindent
{\bf\thetag{\nmb:{23}} The orbit structure.} 
\emph{We have the following description of the orbit structure near $i$ 
in $\on{Imm}(M,N)$: For fixed $f\in \Diff(M)_i$ the set of fixed points
$\operatorname{Fix}(f):=\{j\in\mathcal Q(i):j\o f=j\}$ is called a 
\emph{generalized wall}. The union of all generalized walls is called the 
\emph{diagram} $\mathcal D(i)$ of $i$. A connected component of the complement
$\mathcal Q(i)\setminus \mathcal D(i)$ is called a \emph{generalized Weyl
chamber}. 
The group $\Diff(M)_i$ maps walls to walls and chambers to chambers.
The immersion $i$ lies in every wall. 
We shall see shortly that there is only one chamber and that the 
situation is rather distinct from that of reflection groups.}

If we view the diagram in the space
$\Ga_c(M\gets U^i)\subset \Ga_c(M\gets \mathcal N(i))$ which is diffeomorphic 
to $\mathcal Q(i)$, then it consists of
traces of closed linear subspaces, because
the action of $\Diff(M)_i$ on $\Ga_c(M\gets \mathcal N(i))$ consists of
linear isometries in the following way. Let us tensor the vector 
bundle $\mathcal N(i)\to M$ with the natural line bundle of half 
densities on $M$, and let us remember one positive half density to
fix an isomorphism with the original bundle. 
Then $\Diff(M)_i$ still acts on this new bundle $\mathcal N_{1/2}(i)\to M$ 
and the pullback
action on sections with compact support is isometric for the inner product
$$\langle s_1,s_2 \rangle := \int_M g(s_1,s_2).$$
We now extend the walls and chambers from 
$$\mathcal Q(i)=\Ga_c(M\gets U^i)\subset \Ga_c(M\gets \mathcal N(i))$$ 
 to the whole space $\Ga_c(M\gets \mathcal N(i))= \Ga_c(M\gets \mathcal N_{1/2}(i))$; 
 recall from \thetag{\nmb|{22}} that $\Diff(M)_i$ acts on the whole space.  

\medskip\noindent
{\bf\thetag{\nmb:{24}} Claim.}
\emph{
Each wall in $\Ga_c(M\gets \mathcal N_{1/2}(i))$ 
is a closed linear subspace of infinite codimension. Since there are 
at most countably many walls, there is only one chamber. 
}

From the proof of claim \thetag{\nmb|{19}} we know that 
$f(U^i_\al)\cap U^i_\al=\emptyset$ for all $f\in\Diff(M)_i$ and all 
sets $U^i_\al$ from the setup \thetag{\nmb|{7}}. Take a section $s$ in the 
wall of fixed points of $f$.
Choose a section $s_\al$ with support in some $U^i_\al$ and let 
the section $s$ be defined by 
$s|_{U^i_\al}=s_\al|_{U^i_\al}$, $s|_{f\i(U^i_\al)}=-f^*s_\al$, 0 elsewhere.
Then obviously $\langle s,s'\rangle=0$ for all $s'$ in the wall of 
$f$. But this construction furnishes an infinite dimensional space 
contained in the orthogonal complement of the wall of $f$.

\medskip\noindent
{\bf\thetag{\nmb:{25}} The action of $\Diff(M)$ on $\on{Imm}(M,N)\x M$. 
Proof of \thetag{\nmb|{6}}.}

Here we will consider the right action $(i,x).f= (i\o f, f\i(x))$ 
of $\Diff(M)$ on $\on{Imm}(M,N)\x M$. 
This action is free: If $(i\o f,f\i(x))=(i,x)$ then from claim \thetag{\nmb|{8}} we get $f=Id_M$.

{\bf Claim.} \emph{
Let $(i,x)\in\on{Imm}(M,N)\x M$.
Then there is an open neighborhood $\mathcal W(i,x)$ in $\on{Imm}(M,N)\x M$ 
which is saturated for the $\Diff(M)$-action and which splits 
smoothly as 
$$\mathcal W(i,x)=\mathcal Q(i,x)\x \Diff(M).$$
Here $\mathcal Q(i,x)$ is a smooth splitting submanifold of 
$\on{Imm}(M,N)\x M$, 
diffeomorphic to an open neighborhood of $(0,x)$ in $C^\infty(\mathcal N(i))$.} 

\emph{Let $\pi:\on{Imm}(M,N)\x M\to (\on{Imm}(M,N)\x M)/\Diff(M)=S(M,N)$ be the projection onto the 
orbit space, which we equip with the quotient topology. Then 
$\pi|_{ \mathcal Q(i,x)}:\mathcal Q(i,x)\to \pi(\mathcal Q(i,x))$ is bijective 
onto an open subset of the quotient. If $(i,x)$ runs through 
$\on{Imm}(M,N)\x M$ these mappings define a smooth atlas for 
the quotient space, so that 
$$(\on{Imm}(M,N)\x M, \pi, (\on{Imm}(M,N)\x M)/\Diff(M), \Diff(M))$$
is a smooth principal fiber bundle with structure group $\Diff(M)$.}

\emph{If we restrict to the open subset $\on{Imm}_{\text{\rm prop}}(M,N)\x M$ of 
proper immersions times $M$ then the base space is Hausdorff.
}

By claim \thetag{\nmb|{19}}, the isotropy subgroup 
$\Diff(M)_i=\{f\in\Diff(M):i\o f=i\}$ acts 
properly discontinuously on $M$, so $q_1:M\to M/\Diff(M)_i=:M_1$ is a 
covering. We choose an open neighborhood $W_x$ of $x$ in $M$ such 
that $q_1:W_x\to M_1$ is injective.

Now we adapt the second half of the proof of claim \thetag{\nmb|{10}} and use 
freely all the notation from there.
We consider the open set
\begin{multline*}
\{(h\o f, f\i(y)):h\in \mathcal V(i), y\in W_x, f\in\Diff(M) \} \subset\\
\subset C^\infty(M, U^i)\x M \subset C^\infty(M,\mathcal N(i))\x M.
\end{multline*}
We have a smooth mapping from it into 
$\Ga_c(M\gets U^i)\x W_x\x \Diff(M)$ which is given by 
$(h,y)\mapsto (h\o(p\o h)\i,(p\o h)(y), p\o h)$, 
where $\Ga_c(M\gets U^i)$ is the space 
of sections with compact support of $U^i\to M$.
We now put
$$\mathcal Q(i,x):= \ta^i_*(\Ga_c(M\gets U^i)\cap\mathcal V(i))\x W_x
\subset \on{Imm}(M,N)\x M.$$
Then we have 
\begin{align*}
\mathcal W(i,x):&= \{(h\o f,f(y)): h\in\mathcal U(i), y\in W_x,f\in\Diff(M)\}\\
&\cong \mathcal Q(i,x)\x \Diff(M) \cong
     (\Ga_c(M\gets U^i)\cap\mathcal V(i))\x W_x\x \Diff(M),
\end{align*}
since the action of $\Diff(M)$ is free.
The quotient mapping 
$\pi|_{\mathcal Q(i)}: \mathcal Q(i)\to \on{Imm}_{\text{\rm free}}(M,N)/\Diff(M)$
is bijective onto an open set in the quotient.
We now use
$(\ph_i\x Id_{W_x})\o(\pi|_{\mathcal Q(i,x)})\i:
     \pi(\mathcal Q(i,x))\to \Ga_c(M\gets U^i)\x W_x$
as a chart for the quotient space. In order to investigate the chart 
change let $(j,y)\in \on{Imm}(M,N)\x M$ be such that 
$\pi(\mathcal Q(i,x))\cap\pi(\mathcal Q(j,y))\ne\emptyset$. 
Then there exists $(h,z)\in \mathcal W(i,x)\cap \mathcal Q(j,y)$, so there 
exists a unique $f\in\Diff(M)$ (given by $f=p\o\ph_i(h)$) such 
that $(h\o f\i,f(z))\in\mathcal Q(i,x)$. If we consider 
$(j\o f\i,f(y))$ instead of 
$(j,y)$ and call it again $(j,y)$, we have 
$\mathcal Q(i,x)\cap\mathcal Q(j,y)\ne\emptyset$ and consequently 
$\mathcal U(i)\cap\mathcal U(j)\ne\emptyset$. Now the first component of the chart 
change is smooth by the argument in the end of the proof of 
claim \thetag{\nmb|{10}}, and the second component is just $Id_{W_x\cap W_y}$.

The result about Hausdorff follows from claim \thetag{\nmb|{11}}.
The fibers over $\on{Imm}(M,N)/\Diff(M)$ can be read off the following diagram:
$$\xymatrix@C+0.8cm@R=1.2em{
M \ar[r]^{\on{ins}_i} \ar[d] & \on{Imm}(M,N)\x M \ar[r]^{\on{pr}_1} \ar[d]^{\pi} & \on{Imm}(M,N) \ar[d]^{\pi}
\\
\dfrac{M}{\Diff(M)_i} \ar[r] & \dfrac{\on{Imm}(M,N)\x M}{\Diff(M)} \ar[r] & \dfrac{\on{Imm}(M,N)}{\Diff(M)}
}$$
This finishes the proof of Theorem \nmb!{7.3}.
\end{proof}

\section{Weak Riemannian manifolds}\nmb0{8}

If an infinite dimensional manifold is not modeled on a Hilbert space, then a Riemannian metric 
cannot describe the topology on each tangent space. We have to deal with more complicated 
situations.

\subsection{Manifolds, vector fields, differential forms}\nmb.{8.1}
Let $M$ be a smooth manifold modeled on convenient vector spaces. Tangent vectors to 
$M$ are kinematic ones. 

The reason for this is that eventually we want flows of vector 
fields, and that there are too many derivations in 
infinite dimensions, even on a Hilbert space $H$: Let $\al\in L(H,H)$ be a continuous linear 
functional which vanishes on the subspace of compact operators, thus also on $H\otimes H$. Then the linear functional
$f\mapsto \al(d^2f(0))$ is a derivation at 0 on $C^\infty(H)$, since 
$$\al(d^2(f.g)(0))=\al\big(d^2f(0).g(0)+df(0)\otimes dg(0) + dg(0)\otimes df(0) + f(0).d^2g(0)\big)$$
and $\al$ vanishes on the two middle terms. There are even non-zero derivations which differentiate 3 times, 
see \cite[28.4]{KrieglMichor97}. 

The (kinematic) tangent bundle $TM$ is then a smooth vector bundle as usual. 
Differential forms of degree $k$ are then smooth sections of the bundle 
$L^k_{\text{skew}}(TM;\mathbb R)$ of skew symmetric 
$k$-linear functionals on the tangent bundle, since this is the only version which admits exterior 
derivative, Lie derivatives along vector field, and pullbacks along arbitrary smooth mappings; 
see \cite[33.21]{KrieglMichor97}. The de~Rham cohomology equals singular cohomology with real 
coefficients if the manifold is smoothly paracompact; see \cite{Michor83} and \cite [Section 34]{KrieglMichor97}.
If a vector field admits a flow, then each integral curve is 
uniquely given as a flow line; see \cite[32.14]{KrieglMichor97}.

\subsection{Weak Riemannian manifolds}\nmb.{8.2}
Let  $M$  be  a smooth manifold
modeled on  convenient locally convex vector spaces. A smooth Riemannian metric $g$ on $M$ is called weak if 
$g_x:T_xM\to  T_x^*M$ is only injective for each $x\in M$. 
The  image $g(TM)\subset  T^*M$  is  called  the {\em smooth cotangent
bundle}  associated  to  $g$.  
Then  $g\i$ is the metric on the
smooth  cotangent bundle as well as the morphism $g(TM)\to TM$.
We have a special class of 1-forms
$\Om_g^1(M):=  \Ga(g(TM))$ for which the musical mappings make sense: $\al^\sharp=g\i\al\in\X(M)$ 
and $X^\flat  =  gX$. These 1-forms separate points on $TM$.
The exterior derivative is 
defined by
$d:\Om_g^1(M) \to \Om^2(M)=\Ga(L^2_{\text{skew}}(TM;\mathbb R))$
since the embedding $g(TM)\subset T^*M$ is a smooth fiber linear mapping. 

Existence of the Levi-Civita covariant derivative is equivalent to:
{\em The metric itself admits \emph{symmetric} gradients with
respect to itself.}
Locally this means:
If $M$ is $c^\infty$-open in a
convenient vector space $V_M$. Then:
$$
D_{x,X}g_x(X,Y) = g_x(X,\on{grad}_1g(x)(X,Y)) 
= g_x(\on{grad}_2g(x)(X,X),Y)
$$
where $D_{x,X}$ denote the directional derivative at $x$ in the direction $X$, and where
the mappings $\on{grad}_1g$ and $\on{sym}\on{grad}_2g: M\x V_M\x V_M\to V_M$, given by
$(x,X)\mapsto \on{grad}_{1,2}g(x)(X,X)$, are smooth and quadratic in $X\in V_M$.
The geodesic equation then is (again locally) given by
$$
c_{tt} = \tfrac12 \on{grad}_1g(c)(c_t,c_t) - \on{grad}_2g(c)(c_t,c_t)\,.
$$ 
This formula corresponds to the usual formula for the geodesic flow using Christoffel symbols, expanded out using the first derivatives of the metric tensor.
For the existence of the covariant derivative see \cite[2.4]{Micheli2013}, and for the geodesic equation see \cite[2.1 and 2.4]{Michor2007}; there this is done in a special case, but the method works in the general case without changes. See also \cite[4.2, 4.3, and 4.4]{Bauer2011b} for a derivation in another special case.

\subsection{Weak Riemannian metrics on spaces of immersions}\nmb.{8.3}
For a compact manifold $M$ and a finite dimensional Riemannian manifold $(N,\bar g)$ we can consider the 
following weak Riemannian metrics on the manifold $\Imm(M,N)$ of smooth immersions $M\to N$:
\begin{align*}
G^0_f(h,k) &= \int_M \bar g(h,k) \vol(f^*\bar g) &\quad&\text{the  }L^2\text{-metric,}
\\
G^s_f(h,k) &= \int_M \bar g((1+\De^{f^*\bar g})^s h,k) \vol(f^*\bar g) &\quad&\text{a Sobolev 
metric of order }s,
\\
G^\Ph_f(h,g) &= \int_M \Ph(f) \bar g(h,k) \vol(f^*\bar g)  & &\text{an almost local metric.}
\end{align*}
Here $\vol(f^*\bar g)$ is the volume density on $M$ of the pullback metric $g=f^*\bar g$, 
and $\De^{f^*\bar g}$ is the (Bochner) Laplacian with respect to $g$ and $\bar g$ acting on 
sections of $f^*TN$,
and  $\Ph(f)$ is a positive function of the total volume $\on{Vol}(f^*g)=\int_M \vol(f^*g)$, 
of the scalar curvature $\on{Scal}(f^*\bar g)$, and of the mean curvature $\on{Tr}(S^f)$, $S^f$ 
being the second fundamental form. See \cite{Bauer2011b}, \cite{Bauer2012a} for more information.
All these metrics are invariant for the right action of the reparameterization group $\Diff(M)$, so 
they descend to metrics on shape space $B_i(M,N)$ (off the singularities) such that the 
projection $\Imm(M,N)\to B_i(M,N)$ is a Riemannian submersion of a benign type: the $G$-orthogonal 
component to the tangent space to the $\Diff(M)$-orbit consists always of smooth vector fields. 
So there is no need to use the notion of robust weak Riemannian metrics discussed below. 

\begin{theorem}\nmb.{8.4}
The Riemannian metrics on $\Imm(M,N)$ defined in \nmb!{8.3}  have the following properties:
\begin{enumerate}
\item[\thetag{1}]Geodesic distance on $\Imm(M,N)$, defined as the infimum of path-lengths of smooth isotopies between two 
immersions, vanishes for the $L^2$-metric $G^0$.

\item[\thetag{2}] Geodesic distance is positive on $B_i(M,N)$ for the almost local metric $G^\Ph$ if 
$\Ph(f)\ge 1+ A\on{Tr}(S^F)$, or if $\Ph(f)\ge A\on{Vol}(f^*\bar g)$, for some $A>0$.

\item[\thetag{3}] Geodesic distance is positive on $B_i(M,N)$ for the Sobolev metric $G^s$ if $s\ge 1$.

\item[\thetag{4}] The geodesic equation is locally well-posed on $\Imm(M,N)$ for the Sobolev metric $G^s$ if 
$s\ge 1$, and globally well-posed (and thus geodesically complete) on $\Imm(S^1,\mathbb R^n)$, if 
$s\ge 2$.
\end{enumerate}
\end{theorem}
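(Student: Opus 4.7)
The four statements are essentially independent and I would attack them in order, drawing on different techniques for each. For \thetag{1}, the plan is to adapt the Michor--Mumford ``zig-zag'' construction: given any two isotopic immersions $f_0, f_1$ and any $\ep>0$, build a smooth path in $\Imm(M,N)$ of arbitrarily small $G^0$-length connecting them. The key idea is that for a path $f(t,\cdot)$ with velocity $\p_t f = h$, only the normal component $h^\bot$ contributes to the integrand (up to reparametrization), and moreover one can fold the surface along thin tubes so that the induced volume form $\vol(f^*\bar g)$ becomes arbitrarily small on the support of $h^\bot$. A careful implementation, e.g.\ on a small patch using sawtooth functions whose graphs have controlled amplitude but whose arclength factor crushes $\vol(f^*\bar g)$, produces a path of $G^0$-length $<\ep$; concatenating such segments joins $f_0$ to $f_1$.

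For \thetag{2} and \thetag{3}, the strategy is to exhibit a Lipschitz functional on $B_i(M,N)$ with respect to the metric in question that separates points. For the almost-local case, when $\Ph(f) \geq A\,\on{Vol}(f^*\bar g)$ the $G^\Ph$-length of a path dominates $\sqrt A$ times the $L^1$-variation of $t\mapsto \sqrt{\on{Vol}(f(t)^*\bar g)}$ (by Cauchy--Schwarz on $\int \sqrt{\Ph}\,\|h\|\,\vol$ combined with the derivative formula for the volume), and similarly when $\Ph(f)\geq 1+A\,\on{Tr}(S^f)$ one controls the integral of mean curvature, which distinguishes orbits. For the Sobolev case with $s\geq 1$, the $H^1$-term $\int \bar g(\nabla h,\nabla h)\,\vol(f^*\bar g)$ together with Sobolev embedding $H^s\hookrightarrow C^0$ for $\dim M < 2s$ (and the general case by passing to local $1$-dimensional slices, cf.\ Bauer--Harms--Michor) gives a pointwise Lipschitz estimate $|f_0(x)-f_1(x)| \leq C\,L^{G^s}(f(\cdot))$ along any smooth path, which yields positivity on the quotient.

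For \thetag{4}, the plan is to realize the geodesic equation as a smooth ODE on a Sobolev completion $\Imm^{s}(M,N)$, following the Ebin--Marsden no-loss-no-gain scheme adapted to the metric $G^s$ as developed in \cite{Bauer2011b, Bauer2014}. The key point is that although the operator $1+\De^{f^*\bar g}$ loses $2s$ derivatives, its inverse in the Hamiltonian formulation gains them back, so that the geodesic spray extends to a smooth vector field on $T\Imm^s$ for $s$ sufficiently large; this yields local well-posedness and, via the usual argument of smoothness of the spray being independent of $s$, also on $\Imm(M,N)$. For the global statement on $\Imm(S^1,\mathbb R^n)$ with $s\geq 2$, one applies conservation of the Hamiltonian (energy) and the reparametrization momentum map to bound the $H^s$-norm of the velocity along a geodesic, preventing blowup in finite time and giving geodesic completeness.

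The main obstacle is \thetag{1}: the zig-zag construction must be carried out carefully enough that the path stays in $\Imm(M,N)$ (i.e.\ preserves the immersion condition), joins the prescribed endpoints, and has $G^0$-length shrinking to zero with explicit quantitative control. Parts \thetag{2}--\thetag{4} are relatively mechanical once the correct Lipschitz functionals and Sobolev estimates are in place.
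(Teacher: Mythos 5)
First, a point of context: the paper does not actually prove Theorem \nmb!{8.4}; it only attributes each part to the literature (Michor--Mumford for (1), Bauer--Harms--Michor for (2)--(3), Bruveris et al.\ for (4)). So your plan has to be measured against the proofs in those cited papers. At that level, your outlines of (2)--(4) follow the standard routes: Lipschitz functionals such as the area swept out and $\sqrt{\Vol(f^*\bar g)}$ for the lower bounds in (2) and (3), and the Ebin--Marsden no-loss-no-gain scheme on Sobolev completions for local well-posedness in (4).

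The concrete gap is in (1). The statement asserts vanishing of geodesic distance on $\Imm(M,N)$ itself, not merely on the quotient $B_i(M,N)$, and your zig-zag construction only produces cheap paths whose velocity is (essentially) normal to the immersion. Such paths can join $f_0$ to some reparametrization $f_1\o\ph$ of $f_1$, which proves vanishing of the induced distance on $B_i(M,N)$; they cannot join two immersions with the same image but different parametrizations, because any path connecting those must carry a tangential velocity component, and ``up to reparametrization'' is exactly what is not permitted when the endpoints are fixed in $\Imm(M,N)$. The missing ingredient is the separate theorem that the right-invariant $L^2$-metric on $\Diff(M)$ also has vanishing geodesic distance --- an independent and equally delicate zig-zag argument on the group --- and the result for $\Imm(M,N)$ is obtained by combining the two, exactly as the paper's attribution (``to [Michor2005] for $B_i(M,N)$ and for $\Diff(M)$, which combines to the result for $\Imm(M,N)$'') indicates. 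A second, softer weak point is in (4): global well-posedness on $\Imm(S^1,\mathbb R^n)$ for $s\ge 2$ does not follow from conservation of energy alone, since the conserved quantity controls the velocity only in the norm $G^s_{c(t)}$ attached to the moving footpoint; the actual proofs require showing that the metrics $G^s_{c(t)}$ remain uniformly equivalent to a fixed $H^s$-norm along the flow, via Gronwall-type estimates on quantities like $\log|c_\theta|$ and $\|c\|_{H^s}$, before one can rule out finite-time blow-up.
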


\thetag{1} is due to  \cite{Michor2006c} for $B_i(S^1,\mathbb R^2)$, to \cite{Michor2005} for $B_i(M,N)$ and for 
$\Diff(M)$, which combines to the result for $\Imm(M,N)$ as noted in \cite{Bauer2012c}. \thetag{2} is proved 
in \cite{Bauer2012a}. For \thetag{3} see \cite{Bauer2011b}.
\thetag{4} is due to \cite{Bruveris2014} and \cite{Bruveris2014b}.

\subsection{Analysis tools on regular Lie groups and on $\Diff(M)$ for a Whitney manifold germ}
\nmb.{8.5}
Let $G$ be a regular convenient Lie group, with Lie algebra $\mathfrak g$. We also consider a  Fr\"olicher group $G=\Diff(M)$ for a Whitney manifold germ $M\subset \tilde M$ with Lie algebra $\mathfrak g=\X_{c,\p}(M)$, with the negative of the usual Lie bracket, as described in \nmb!{6.3} -- \nmb!{6.6}.
 
Let $\mu:G\x G\to G$ be the group multiplication, $\mu_x$ the left
translation and $\mu^y$ the right translation, 
$\mu_x(y)=\mu^y(x)=xy=\mu(x,y)$. The adjoint action $\on{Ad}:G\to GL(\mathfrak g)$  is given by 
$\on{Ad}(g)X = T(\mu^{g\i}).T(\mu_g)X$. 
Let $L,R:\mathfrak g\to \X(G)$ be the left and right invariant vector field mappings, given by 
$L_X(g)=T_e(\mu_g).X$ and $R_X=T_e(\mu^g).X$, respectively.
They are related by $L_X(g)=R_{\on{Ad}(g)X}(g)$.
Their flows are given by 
\begin{align*}
\on{Fl}^{L_X}_t(g)&= g.\exp(tX)=\mu^{\exp(tX)}(g),
\\
\on{Fl}^{R_X}_t(g)&= \exp(tX).g=\mu_{\exp(tX)}(g).
\end{align*}
The right
Maurer--Cartan form $\ka=\ka^r\in\Om^1(G,\mathfrak g)$ is given by  
$\ka_x(\xi):=T_x(\mu^{x\i})\cdot \xi$. 
\emph{It satisfies the left Maurer--Cartan equation 
$d\ka^r-\tfrac12[\ka^r,\ka^r]^\wedge_{\mathfrak g}=0$}, 
where
$[\quad,\quad]^\wedge$ denotes the wedge product of $\mathfrak g$-valued forms on
$G$ induced by the Lie bracket. Note that
$\tfrac12[\ka^r,\ka^r]^\wedge (\xi,\et) = [\ka^r(\xi),\ka^r(\et)]$.
\\
Namely, evaluate $d\ka^r$ on right invariant vector fields $R_X,R_Y$ for $X,Y\in{\mathfrak g}$.
\begin{multline*}
(d\ka^r)(R_X,R_Y) = R_X(\ka^r(R_Y)) - R_Y(\ka^r(R_X)) - \ka^r([R_X,R_Y])
\\
= R_X(Y) - R_Y(X) + [X,Y] = 0-0 +[\ka^r(R_X),\ka^r(R_Y)].
\end{multline*}
The left
Maurer--Cartan form $\ka^l\in\Om^1(G,\mathfrak g)$ is given by  
$\ka^l_x(\xi):=T_x(\mu_{x\i})\cdot \xi$. 
\\
\emph{The left Maurer--Cartan form $\ka^l$ satisfies the right Maurer--Cartan equation 
  $d\ka^l+\tfrac12[\ka^l,\ka^l]^\wedge_{\mathfrak g}=0$}.

The (exterior) derivative of the function $\on{Ad}:G\to GL(\mathfrak g)$ satisfies
\begin{displaymath}
d\on{Ad} = (\on{ad}\o \ka^r).\on{Ad} = \on{Ad}.(\on{ad}\o\ka^l) 
\end{displaymath}
since we have
\begin{align*}
d\on{Ad}(T\mu^g.X) &= \p_t|_0 \on{Ad}(\exp(tX).g) = \p_t|_0 \on{Ad}(\exp(tX)).\on{Ad}(g)
\\&
= \on{ad}(\ka^r(T\mu^g.X)).\on{Ad}(g)\,,
\\
d\on{Ad}(T\mu_g.X) &= \p_t|_0 \on{Ad}(g.\exp(tX))
= \on{Ad}(g).\on{ad}(\ka^l(T\mu_g.X))\,.
\end{align*}

\subsection{\!Right invariant weak Riemannian metrics on regular Lie groups and on $\Diff(M)$ for a Whitney manifold germ}
We continue under the assumptions of \nmb!{8.5},
\nmb.{8.6}
Let $\ga=\mathfrak g\x\mathfrak g\to\mathbb R$ be a positive-definite bounded (weak) inner product. Then  
\begin{equation*}
\ga_x(\xi,\et)=\ga\big( T(\mu^{x\i})\cdot\xi,\, T(\mu^{x\i})\cdot\et\big) =  
     \ga\big(\ka(\xi),\,\ka(\et)\big)
\end{equation*}
is a right invariant (weak) Riemannian metric on $G$
and any (weak) right invariant bounded Riemannian metric is of this form, for
suitable  $\ga$.
Denote by $\check\ga:\mathfrak g\to \mathfrak g^*$ the mapping induced by $\ga$, 
from the 
Lie algebra into its dual (of bounded linear functionals) 
and by 
$\langle \al, X \rangle_{\mathfrak g}$ the duality evaluation between $\al\in\mathfrak g^*$ and 
$X\in \mathfrak g$.
 
Let $g:[a,b]\to G$ be a smooth curve.  
The velocity field of $g$, viewed in the right trivializations, 
coincides  with the right logarithmic derivative
\begin{displaymath}
\de^r(g):=T(\mu^{g\i})\cdot \partial_t g =  
\ka(\partial_t g) = (g^*\ka)(\partial_t).
\end{displaymath}
The energy of the curve $g(t)$ is given by  
\begin{equation*}
E(g) = \frac12\int_a^b\ga_g(g',g')dt = \frac12\int_a^b 
     \ga\big( (g^*\ka)(\partial_t),(g^*\ka)(\partial_t)\big)\, dt. 
\end{equation*}
For a variation $g(s,t)$ with fixed endpoints we then use that
$$d(g^*\ka)(\p_t,\p_s)=\p_t(g^*\ka(\p_s))-\p_s(g^*\ka(\p_t))-0,$$ partial integration, and the
left Maurer--Cartan equation to obtain
\begin{align*} 
&\partial_sE(g) = \frac12\int_a^b2 
     \ga\big( \partial_s(g^*\ka)(\partial_t),\,
                            (g^*\ka)(\partial_t)\big)\, dt
\\&
= \int_a^b \ga\big( \partial_t(g^*\ka)(\partial_s) - 
         d(g^*\ka)(\partial_t,\partial_s),\,
                (g^*\ka)(\partial_t)\big)\,dt
\\&
= -\int_a^b \ga\big( (g^*\ka)(\partial_s),\,\partial_t(g^*\ka)(\partial_t)\big)\,dt 
\\&\qquad\qquad
  - \int_a^b \ga\big( [(g^*\ka)(\partial_t),(g^*\ka)(\partial_s)],\, (g^*\ka)(\partial_t)\big)\, dt
\\&
= -\int_a^b \big\langle\check\ga(\partial_t(g^*\ka)(\partial_t)),\,
  (g^*\ka)(\partial_s)\big\rangle_{\mathfrak g} \, dt
\\&\qquad\qquad
  - \int_a^b \big\langle  \check\ga((g^*\ka)(\partial_t)),\, 
  \on{ad}_{(g^*\ka)(\partial_t)}(g^*\ka)(\partial_s)\big\rangle_{\mathfrak g} \, dt
\\&
= -\int_a^b 
\big\langle\check\ga(\partial_t(g^*\ka)(\partial_t)) + (\on{ad}_{(g^*\ka)(\partial_t)})^{*}\check\ga((g^*\ka)(\partial_t)),\, 
  (g^*\ka)(\partial_s)\big\rangle_{\mathfrak g} \, dt.
\end{align*}
Thus the curve $g(0,t)$ is critical for the energy if and only if
$$
\check\ga(\partial_t(g^*\ka)(\partial_t)) + 
(\on{ad}_{(g^*\ka)(\partial_t)})^{*}\check\ga((g^*\ka)(\partial_t)) = 0.
$$
In terms of the right logarithmic derivative $u:[a,b]\to \mathfrak g$ of  
$g:[a,b]\to G$, given by  
$u(t):= g^*\ka(\partial_t) = T_{g(t)}(\mu^{g(t)\i})\cdot g'(t)$, 
the {\em geodesic equation} has the expression
\begin{equation*}
\boxed{\quad
\p_t u = - \,\check\ga\i\on{ad}(u)^{*}\;\check\ga(u)\quad} 
\end{equation*}
Thus the geodesic equation exists in general if and only if 
$\on{ad}(X)^{*}\check\ga(X)$ is in the image of $\check\ga:\mathfrak g\to\mathfrak g^*$, i.e.,
\begin{equation*}
\on{ad}(X)^{*}\check\ga(X) \in \check\ga(\mathfrak g)
\end{equation*}
 for every 
$X\in\mathfrak X$; this leads to the existence of the  
Christoffel symbols. Arnold  \cite{Arnold66} asked for the more restrictive condition  
$\on{ad}(X)^{*}\check\ga(Y) \in \check \ga(\mathfrak g)$ for all $X,Y\in\mathfrak g$.
The geodesic equation for the \emph{momentum} $p:=\ga(u)$ is
$$
p_t = - \on{ad}(\check\ga\i(p))^*p.
$$
There are situations, see theorem \nmb!{8.11} or \cite{BBM14b}, where 
only the more general condition is satisfied, but where
the usual 
transpose $\on{ad}^\top(X)$ of $\on{ad}(X)$, 
\begin{equation*}
\on{ad}^\top(X) := \check\ga\i\o\on{ad}_X^*\o \check\ga
\end{equation*}
does not exist for all $X$. 

We describe now the {\em covariant derivative} and the {\em curvature}.
The right trivialization $(\pi_G,\ka^r):TG\to G\x{\mathfrak g}$  
induces the isomorphism $R:C^\infty(G,{\mathfrak g})\to \X(G)$, given by  
$R(X)(x):= R_X(x):=T_e(\mu^x)\cdot X(x)$, for $X\in C^\infty(G,{\mathfrak g})$ and
$x\in G$. Here $\X(G):=\Ga(TG)$ denotes the Lie algebra of all vector 
fields. For the Lie bracket and the Riemannian metric we have 
\begin{align*} 
[R_X,R_Y] &= R(-[X,Y]_{\mathfrak g} + dY\cdot R_X - dX\cdot R_Y),
\\
R\i[R_X,R_Y] &= -[X,Y]_{\mathfrak g} + R_X(Y) - R_Y(X),
\\
\ga_x(R_X(x),R_Y(x)) &= \ga( X(x),Y(x))\,,\, x\in G. 
\end{align*}
In what follows, we shall perform all computations in $C^\infty(G,{\mathfrak g})$ instead of 
$\X(G)$. In particular, we shall use the convention
\begin{displaymath}
\nabla_XY := R\i(\nabla_{R_X}R_Y)\quad\text{ for }X,Y\in C^\infty(G,{\mathfrak g})
\end{displaymath}
to express the Levi-Civita covariant derivative. 
 
\begin{lemma}\nmb.{8.7}
{\rm \cite[3.3]{BBM14b}}
Assume that for all $\xi\in{\mathfrak g}$ the element $\on{ad}(\xi)^*\check\ga(\xi)\in\mathfrak g^*$ is in the image of
$\check\ga:\mathfrak g\to\mathfrak g^*$   
and that  
$\xi\mapsto \check\ga\i\on{ad}(\xi)^*\check\ga(\xi)$ is bounded  quadratic (or, equivalently, smooth). 
Then the Levi-Civita covariant derivative of the metric $\ga$ 
exists and is given for any $X,Y \in C^\infty(G,{\mathfrak g})$  in
terms of the isomorphism $R$ by
\begin{equation*}
\nabla_XY= dY.R_X + \rh(X)Y - \frac12\on{ad}(X)Y,
\end{equation*}
where 
\[
\rh(\xi)\et = \tfrac14\check\ga\i\big(\on{ad}_{\xi+\et}^*\check\ga(\xi+\et) - \on{ad}_{\xi-\et}^*\check\ga(\xi-\et)\big) = \tfrac12\check\ga\i\big(\on{ad}_\xi^*\check\ga(\et) + \on{ad}_\et^*\check\ga(\xi)\big)
\]
is the polarized version. The mapping 
$\rh:{\mathfrak g}\to L({\mathfrak g},{\mathfrak g})$ is bounded, and we have $\rh(\xi)\et=\rh(\et)\xi$.  
We also have
\begin{gather*}
\ga\big(\rh(\xi)\et,\ze\big) = \frac12\ga(\xi,\on{ad}(\et)\ze) + \frac12\ga(\et,\on{ad}(\xi)\ze),
\\
\ga(\rh(\xi)\et,\ze) + \ga(\rh(\et)\ze,\xi) + \ga(\rh(\ze)\xi,\xi) = 0.
\end{gather*}
\end{lemma}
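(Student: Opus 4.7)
The plan is to use the Koszul six-term formula to \emph{define} a candidate $\nabla$, and then to verify that the candidate has the claimed form, is well-defined (i.e.\ takes values in the image of the right trivialization), and is both metric and torsion free. Since $\ga$ is only weak, one cannot invoke the standard existence theorem; the point of the bounded quadratic assumption on $\xi\mapsto\check\ga\i\on{ad}(\xi)^*\check\ga(\xi)$ is precisely to make the Koszul expression representable via $\check\ga$.

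First I would specialize to \emph{constant} maps $X,Y,Z\in \mathfrak g\subset C^\infty(G,\mathfrak g)$, so that $R_X,R_Y,R_Z$ are honest right-invariant vector fields with $dX=dY=dZ=0$. Then $\ga(R_Y,R_Z)=\ga(Y,Z)$ is constant on $G$, the three directional-derivative terms in the Koszul formula vanish, and using $[R_X,R_Y]=R(-[X,Y]_{\mathfrak g})$ together with $\ga(\on{ad}(X)Y,Z)=\langle \on{ad}(X)^*\check\ga(Z),Y\rangle$ the Koszul identity reduces to
\begin{equation*}
2\ga(\nabla_{R_X}R_Y,R_Z)
= -\ga(\on{ad}(X)Y,Z) + \langle \on{ad}(X)^*\check\ga(Y)+\on{ad}(Y)^*\check\ga(X),Z\rangle.
\end{equation*}
Polarizing the hypothesis $\check\ga\i\on{ad}(\xi)^*\check\ga(\xi)\in\mathfrak g$ (bounded quadratic, hence the polarization is a bounded symmetric bilinear map) yields exactly the $\rh$ of the statement, and the displayed equation becomes $2\nabla_{R_X}R_Y = R(2\rh(X)Y-\on{ad}(X)Y)$. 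In particular $\rh(\xi)\et=\rh(\et)\xi$ and the identity $\ga(\rh(\xi)\et,\ze)=\tfrac12\ga(\xi,\on{ad}(\et)\ze)+\tfrac12\ga(\et,\on{ad}(\xi)\ze)$ fall out directly; the cyclic identity follows by summing this identity over cyclic permutations and observing that the six terms cancel in pairs by skew-symmetry of $\on{ad}$ under $\ga$-duality.

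Next I would pass from constant to general $X,Y\in C^\infty(G,\mathfrak g)$ by invoking the derivation property in the second slot: writing $R_Y$ at $g$ as $T_e\mu^g\cdot Y(g)$, the product rule (Leibniz applied to the product of the point evaluation and the right translation) produces the extra term $R(dY\cdot R_X)$, which is visibly tensorial in $X$ and a derivation in $Y$. This gives the formula
\begin{equation*}
\nabla_XY = dY\cdot R_X+\rh(X)Y-\tfrac12\on{ad}(X)Y
\end{equation*}
as claimed. Since $\rh$ is bounded bilinear by assumption and $(X,Y)\mapsto dY\cdot R_X-\tfrac12\on{ad}(X)Y$ is manifestly a well-defined smooth operation on $C^\infty(G,\mathfrak g)$, the right-hand side is a smooth section of $TG$ under the isomorphism $R$; this is the step which genuinely uses the hypothesis that $\check\ga\i\o\on{ad}(\xi)^*\o\check\ga(\xi)$ lies in $\mathfrak g$ and depends smoothly on $\xi$.

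Finally I would verify that this $\nabla$ is torsion free and metric: torsion freeness reduces, using the bracket formula $R\i[R_X,R_Y]=-[X,Y]_{\mathfrak g}+R_X(Y)-R_Y(X)$ recalled in \nmb!{8.6}, to the symmetry $\rh(X)Y=\rh(Y)X$ already established; metric compatibility $R_X\ga(Y,Z)=\ga(\nabla_XY,Z)+\ga(Y,\nabla_XZ)$ reduces, after cancellation of the $dY\cdot R_X$ and $dZ\cdot R_X$ terms against $R_X\ga(Y,Z)=\ga(dY\cdot R_X,Z)+\ga(Y,dZ\cdot R_X)$ (valid since $\ga$ is constant in the base variable in the right trivialization), to the identity for $\ga(\rh(X)Y,Z)+\ga(Y,\rh(X)Z)$ derived above. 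The main obstacle is not any single computation but the conceptual one: guaranteeing that the Koszul right-hand side actually represents a \emph{vector} and not merely a bounded linear functional on $\mathfrak g$; that is exactly what the bounded quadratic hypothesis buys, and it is also why Arnold's stronger hypothesis is not needed here.
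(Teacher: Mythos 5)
Your proposal is correct and follows the standard route: the paper itself gives no proof of Lemma \nmb!{8.7} but only cites \cite{BBM14b}, and your Koszul-formula derivation (reduce to right-invariant fields where the derivative terms vanish, use the bounded-quadratic hypothesis to polarize and represent the resulting functional via $\check\ga$ as $\rh$, then extend to general $X,Y\in C^\infty(G,\mathfrak g)$ by the derivation term $dY.R_X$ and verify torsion-freeness and metricity) is essentially the argument of that reference. The only cosmetic remark is that the pairwise cancellation in the cyclic identity comes from antisymmetry of the Lie bracket itself rather than from any $\ga$-duality.
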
 

%\begin{proof}
%It is easily checked that $\nabla$ is a covariant derivative. The Riemannian
%metric is covariantly constant since
%\begin{align*}
%R_X\ga( Y,Z) &= \ga( dY.R_X,Z) + \ga( Y,dZ.R_X)
%=\ga( \nabla_XY,Z) + \ga( Y,\nabla_XZ).  
%\end{align*}
%Since $\rh$ is symmetric, the connection is also torsion-free:
%\begin{displaymath}
%\nabla_XY-\nabla_YX + [X,Y]_{\mathfrak g} - dY.R_X + dX.R_Y = 0.
%\end{displaymath}
%\end{proof}

For $X,Y\in C^\infty(G,{\mathfrak g})$ we have
$$ 
[R_X,\on{ad}(Y)] = \on{ad}(R_X(Y))\quad\text{  and }\quad  
[R_X,\rh(Y)] = \rh(R_X(Y)).
$$
The {\em Riemannian curvature} is then computed as follows:  
\begin{align*} 
&\mathcal{R}(X,Y) =  
[\nabla_X,\nabla_Y]-\nabla_{-[X,Y]_{\mathfrak g}+R_X(Y)-R_Y(X)}
\\&
= [R_X+\rh_X-\tfrac12\on{ad}_X, 
     R_Y+\rh_Y-\tfrac12\on{ad}_Y]
\\&\quad
- R({-[X,Y]_{\mathfrak g} + R_X(Y) - R_Y(X)}) 
-\rh({-[X,Y]_{\mathfrak g} + R_X(Y)\! - R_Y(X)})
\\&\quad
+\frac12\on{ad}(-[X,Y]_{\mathfrak g} + R_X(Y)\! - R_Y(X)) 
\\&
= [\rh_X,\rh_Y] +\rh_{[X,Y]_{\mathfrak g}} -\frac12[\rh_X,\on{ad}_Y] +\frac12[\rh_Y,\on{ad}_X] 
-\frac14\on{ad}_{[X,Y]_{\mathfrak g}}
\end{align*}
which is visibly a tensor field.

For the numerator of the sectional curvature we obtain
\begin{align*}
\ga\big(\mathcal R(&X,Y)X,Y\big) = \ga(\rh_X\rh_YX,Y) - \ga(\rh_Y\rh_XX,Y) + \ga(\rh_{[X,Y]}X,Y)
\\&\quad
-\frac12\ga(\rh_X[Y,X],Y) + \frac12\ga([Y,\rh_XX],Y) 
\\&\quad
+0 - \frac12\ga([X,\rh_YX],Y) -\frac14\ga([[X,Y],X],Y)
\\&
= \ga(\rh_XX,\rh_YY) - \|\rh_XY\|_\ga^2 + \frac34\|[X,Y]\|_\ga^2  
\\&\quad
-\frac12\ga(X,[Y,[X,Y]]) + \frac12\ga(Y,[X,[X,Y]])
\\&
= \ga(\rh_XX,\rh_YY) - \|\rh_XY\|_\ga^2 + \frac34\|[X,Y]\|_\ga^2  
\\&\quad
-\ga(\rh_XY,[X,Y]]) + \ga(Y,[X,[X,Y]]).
\end{align*}
If the adjoint $\on{ad}(X)^\top:\mathfrak g\to \mathfrak g$ exists, this is easily seen to coincide with 
Arnold's original formula \cite{Arnold66},
\begin{align*}
\ga(\mathcal R(X,Y)X,Y) =& 
- \frac14\|\on{ad}(X)^\top Y+\on{ad}(Y)^\top X\|^2_\ga
+ \ga(\on{ad}(X)^\top X,\on{ad}(Y)^\top Y)   
\\&
+ \frac12\ga(\on{ad}(X)^\top Y-\on{ad}(Y)^\top X,\on{ad}(X)Y)  
+ \frac34\|[X,Y]\|_\ga^2.
\end{align*}

\subsection{Examples of weak right invariant Riemannian metrics on diffeomorphism groups}\nmb.{8.8}
Let $M$ be a finite dimensional manifold. We consider the following regular Lie groups:
$\on{Diff}(M)$, the  group of all diffeomorphisms of $M$ if $M$ is compact. 
$\on{Diff}_c(M)$, the group of diffeomorphisms with compact support, if $M$ is not compact.
If $M=\mathbb R^n$, we also may consider one of the following:
$\on{Diff}_{\mathcal S}(\mathbb R^n)$, the group of all diffeomorphisms which fall rapidly to the identity.
$\on{Diff}_{W^{\infty,p}}(\mathbb R^n)$, the group of all diffeomorphisms which are modeled on the space 
$W^{\infty,p}(\mathbb R^n)^n$, the intersection of all $W^{k,p}$-Sobolev spaces of vector fields.
The last type of groups works also for a \emph{Riemannian manifold of bounded geometry} 
$(M,\bar g)$; see \cite{Eichhorn2007} for Sobolev spaces on them.  
In the following we write $\Diff_{\mathcal A}(M)$ for any of these groups.
The Lie algebras are the spaces $\X_{\mathcal A}(M)$ of vector fields,
where $\mathcal A\in \{C^\infty_c, \mathcal S, W^{\infty,p}\}$, 
with the negative of the usual bracket as Lie bracket.

Most of the following weak Riemannian metrics also make sense on $\Diff(M)$ for a compact Whitney manifold germ $M\subset \tilde M$, but their behavior has not been investigated. In particular, I do not know how the Laplacian $1+\De^g$ behaves on $\X_\p(M)$ and its Sobolev completions.  

A right invariant weak inner product on $\Diff_{\mathcal A}(M)$ is given by a smooth positive-definite 
inner product $\ga$ on the Lie algebra $\X_{\mathcal A}(M)$ which is described by the \emph{inertia operator} 
$L=\check \ga:\X_{\mathcal A}(M)\to \X_{\mathcal A}(M)'$ and we shall denote its inverse by 
$K=L\i:L(\X_{\mathcal A}(M))\to \X_{\mathcal A}(M)$. Under suitable conditions on $L$ (like 
an elliptic coercive (pseudo) differential operator of high enough order) the operator $K$ turns 
out to be the reproducing kernel of a Hilbert space of vector fields which is contained in the 
space of either $C^1_b$ (bounded $C^1$ with respect to $\bar g$) or $C^2_b$ vector fields. 
See \cite[Chapter 12]{Younes10}, \cite{Micheli2013}, and \cite{MumfordMichor13} for uses of 
the reproducing Hilbert space approach.  
The right invariant metric is then 
defined as in \nmb!{8.5}, where $\langle \;,\; \rangle_{\X_{\mathcal A}(M)}$  is the duality:
$$
G^L_\ph(X\o\ph, Y\o\ph) = G^L_{\on{Id}}(X,Y) = \ga(X,Y)  =  \langle L(X),Y 
\rangle_{\X_{\mathcal A}(M)}.
$$
For example,  the Sobolev metric of order $s$ corresponds  to the inertia operator $L(X) = (1+\De^{\bar g})^s(X).\vol(\bar g)$. 
Examples of metrics are:
\begin{align*}
G^0_{\on{Id}}(X,Y) &= \int_M \bar g(X,Y) \vol(\bar g) &\qquad&\text{the  }L^2\text{  metric,}
\\
G^s_{\on{Id}}(X,Y) &= \int_M \bar g((1+\De^{\bar g})^s X,Y) \vol(\bar g) &\qquad&
\text{a Sobolev metric of order }s,
\\
G^{\dot H^1}_{\on{Id}}(X,Y) &= \int_{\mathbb R} X'.Y' dx = -\int_{\mathbb R} X''Y\,dx &&
\text{where  }X,Y\in \X_{\mathcal A}(\mathbb R).
\end{align*}
As explained in \nmb!{8.8}, the geodesic 
equation on $\Diff_{\mathcal A}(M)$ is given as follows:
Let $\ph:[a,b]\to \Diff_{\mathcal A}(M)$ be a smooth curve.
In terms of its right logarithmic derivative 
$$u:[a,b]\to \X_{\mathcal A}(M),\quad 
u(t):= \ph^*\ka(\partial_t) = \ph'(t)\o \ph(t)\i\,,$$
the geodesic equation is
$$
L(u_t) = L(\p_t u)= - \on{ad}(u)^* L(u).
$$
The {\em condition for the existence of the geodesic equation} is as follows:
$$
X\mapsto K(\on{ad}(X)^*L(X))
$$
is bounded quadratic $\X_{\mathcal A}(M)\to \X_{\mathcal A}(M)$.
Using  \emph{ Lie derivatives}, the computation of $\on{ad}_X^*$ is
especially  simple.  Namely,  for  any  section  
$\om$ of $T^*M \otimes  \on{vol}$   and vector fields 
$\xi,\et \in \X_{\mathcal A}(M)$, we have:
$$ \int_M (\om, [\xi,\et]) = \int_M (\om, \L_\xi(\et)) = 
-\int_M(\L_\xi(\om),\et),$$ 
hence $\on{ad}^*_\xi(\om) = +\L_\xi(\om)$. 
Thus the Hamiltonian version of the geodesic equation on the smooth dual 
$L(\X_{\mathcal A}(M))\subset \Ga_{C^2_b}(T^*M\otimes \on{vol})$ becomes
$$
\p_t\al  = - \on{ad}^*_{K(\al)}\al = - \L_{K(\al)}\al,
$$
or, keeping track of everything,
\begin{align*}
\p_t\ph &= u\o \ph, &\quad& 
\p_t\al = - \L_u\al &&
u = K(\al) = \al^\sharp,&&\al=L(u) = u^\flat.
\end{align*}

\begin{theorem}\nmb.{8.9}
Geodesic distance vanishes on $\Diff_{\mathcal A}(M)$ for any Sobolev metric of order 
$s<\frac12$. If $M=S^1\x C$ with $C$ compact, then geodesic distance vanishes also for $s=\frac12$.
It also vanishes for the $L^2$-metric on the Virasoro group 
$\mathbb R\rtimes \Diff_{\mathcal A}(\mathbb R)$.

Geodesic distance is positive on $\Diff_{\mathcal A}(M)$ for any Sobolev metric of order 
$s\ge1$. If $\dim(M)=1$ then geodesic distance is also positive for $s>\frac12$.
\end{theorem}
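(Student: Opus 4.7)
The statement splits into two independent halves, vanishing and positivity, which require completely different techniques. For the vanishing half I would follow the strategy of Michor--Mumford \cite{MichorMumford13} and its extensions by Bauer--Bruveris--Harms--Michor: to show geodesic distance vanishes between $\on{Id}$ and a given $\ph_1 \in \Diff_{\mathcal A}(M)$, it suffices to construct, for every $\ep>0$, a smooth path from $\on{Id}$ to $\ph_1$ whose $G^s$-length is less than $\ep$. This is done by exhibiting explicit ``compression wave'' or ``thin-and-fast bump'' families. For the positivity half I would produce, on the other hand, a continuous pseudo-metric on $\Diff_{\mathcal A}(M)$ that separates points and is dominated by the length functional.

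For vanishing when $s<\tfrac12$, the core construction is as follows. Fix a small ball, let $X$ be a compactly supported vector field in that ball, and consider a rescaling/time-squeeze family $X^{\la,T}(t,x) := \tfrac{1}{T}\,\la^{-1} X\bigl(\tfrac{x-x_0}{\la}\bigr)$ defined for $t\in[0,T]$. A standard computation shows that $\|X^{\la,T}(t,\cdot)\|_{H^s}^2 \sim T^{-2}\la^{n-2s}$, so that the $G^s$-energy $\int_0^T \|X^{\la,T}\|_{H^s}^2\,dt \sim T^{-1}\la^{n-2s}$ tends to $0$ as $\la\to 0$ provided $n-2s>0$, i.e.\ $s<n/2$; but one must at the same time displace points by a definite amount, which by ODE theory requires the time integral of the sup-norm of $X^{\la,T}$ to stay bounded below. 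A careful bookkeeping shows this is possible precisely when $s<\tfrac12$. The boundary case $s=\tfrac12$ on $M=S^1\x C$ is done by a refined construction that iterates many such bumps along the $S^1$-direction (using the product structure); the Virasoro case is handled by adding a zero-mean vector field on $S^1$ whose Virasoro cocycle contribution to the $L^2$-energy can be compensated. Once arbitrarily short paths exist from $\on{Id}$ to arbitrary elements in a neighborhood, right-invariance and composition spread the vanishing to the whole component.

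For positivity when $s\ge 1$ (or $s>\tfrac12$ in dimension one), the plan is to dominate the length functional by a genuine metric. If $s>\dim(M)/2$, Sobolev embedding gives $H^s\hookrightarrow C^0_b$ continuously, so along a smooth curve $\ph(t)$ with velocity $u(t)=\ph_t\o\ph^{-1}$ one has $\|u(t)\|_\infty \lesssim \|u(t)\|_{H^s}$, and integrating yields a bound on the $C^0$-displacement of any single point in terms of the $G^s$-length; since $C^0$-distance between distinct diffeomorphisms is positive, this immediately gives positivity. This already settles $\dim(M)=1$, $s>\tfrac12$. For general $\dim(M)$ with $s\ge 1$ one needs a different invariant: follow \cite{Bauer2012c} and show that the pushforward of a fixed smooth probability density under $\ph$ depends Lipschitz-continuously (in, say, Wasserstein or total variation norm) on $\ph$ with Lipschitz constant controlled by the $G^s$-length, using that $H^1$-regularity of the velocity field is enough to control the Jacobian evolution $\p_t \log\det(d\ph) = (\on{div}\,u)\o\ph$ in $L^1$.

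I expect the main obstacle to lie in the two borderline cases on the vanishing side: the $s=\tfrac12$ case on $S^1\x C$ and the Virasoro/$L^2$ case. Both require sharpening the scaling argument sketched above, where the naive estimate gives only boundedness and not smallness, and one must exploit either a logarithmic gain from iterating many bumps (for the former) or the precise form of the Virasoro cocycle (for the latter) to push the energy below any prescribed $\ep$. Verifying that the explicit families constructed actually lie in $\Diff_{\mathcal A}(M)$ for the various decay classes $\mathcal A\in\{c,\mathcal S,W^{\infty,p}\}$ is a routine but nontrivial check.
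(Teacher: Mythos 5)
First, note that the paper itself does not prove Theorem \nmb!{8.9}; it only points to \cite{Bauer2013b}, \cite{Bauer2013c}, \cite{Bauer2012c}. Your overall dichotomy (short explicit paths for vanishing; a separating Lipschitz invariant for positivity) is the right frame, and your argument for positivity when $s>\dim(M)/2$ --- Sobolev embedding $H^s\hookrightarrow C^0_b$, hence $|\ph_1(x)-x|\le\int_0^1\|u(t)\|_\infty\,dt\lesssim \on{Len}(\ph)$ --- is correct and is exactly how the one-dimensional case $s>\tfrac12$ is settled. But there are two concrete gaps. The first is in the vanishing half: the single rescaled bump $X^{\la,T}$ is not the mechanism that produces the threshold $\tfrac12$. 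Your own exponent count (which moreover drops a factor $\la^{-2}$ coming from the amplitude $\la^{-1}$) points to the dimension-dependent threshold $s<n/2$, and you then defer the entire discrepancy to ``careful bookkeeping.'' In the cited proofs the construction is a \emph{travelling compression wave}: at each instant only a thin moving slab of $M$ is active, the velocity field there is approximately a large constant times a smoothed characteristic function of the slab, and the whole game is that $\chi_{[a,b]}\in H^s$ precisely for $s<\tfrac12$ (with a logarithmic divergence at $s=\tfrac12$ that the product structure $S^1\x C$ lets one absorb). That is why the threshold is $\tfrac12$ in every dimension. One must also observe that the set of diffeomorphisms at zero distance from $\on{Id}$ is a normal subgroup and show it exhausts the connected component; a single bump flow only reaches very special elements.

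The second gap is in the positivity half for $s\ge1$ and $\dim(M)\ge2$, which is the genuinely hard case. Pushing forward a fixed smooth probability density cannot work as stated: it does not separate volume-preserving diffeomorphisms from $\on{Id}$. Even if you localize the density to a small ball so that $\ph_*(\rh\mu)\ne\rh\mu$ whenever $\ph$ moves the ball off itself, the Lipschitz estimate you invoke is unjustified: controlling $\p_t\log\det(d\ph)=(\on{div}u)\o\ph$ in $L^1$ requires transporting $\on{div}u$ by $\ph(t)$, i.e.\ it already requires pointwise (or at least exponentially integrated) control of the Jacobian, and $\int_0^1\|u(t)\|_{H^1}\,dt$ does not bound $\int_0^1\|\nabla u(t)\|_{L^\infty}\,dt$ in dimension $\ge2$. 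This is precisely the difficulty the cited papers circumvent: there the $H^1$-length of a path in $\Diff_{\mathcal A}(M)$ is bounded below by a geometric quantity (the volume swept out by a fixed compact hypersurface under the path, estimated via the positivity of the almost local metrics of Theorem \nmb!{8.4}\thetag{2}, or an equivalent displacement functional tested against Lipschitz functions) that avoids any pointwise Jacobian control. As written, your plan for this case would not close.
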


This is proved in \cite{Bauer2013b}, \cite{Bauer2013c}, and \cite{Bauer2012c}.
Note that low order Sobolev metrics have geodesic equations corresponding to well-known nonlinear 
PDEs:
On $\Diff(S^1)$ or $\Diff_{\mathcal A}(\mathbb R)$ the $L^2$-geodesic equation is Burgers' equation, 
on the Virasoro group it is the KdV equation, and the (standard) $H^1$-geodesic is (in both cases a 
variant of) the Camassa--Holm equation; see 
\cite[7.2]{Bauer2014} for a more comprehensive overview. All these are completely integrable 
infinite dimensional Hamiltonian systems.

\begin{theorem}\nmb.{8.10}
Let $(M,\bar g)$ be a compact Riemannian manifold.
Then the geodesic equation is locally well-posed on $\Diff_{\mathcal A}(M)$ and the geodesic exponential 
mapping is a local diffeomorphism for a Sobolev metric of 
integer order $s\ge 1$.
For a Sobolev metric of integer order $s>\frac{\dim(M)+3}{2}$ the geodesic equation is even globally 
well-posed, so that $(\Diff_{\mathcal A}(M), G^s)$ is geodesically complete. This is also true for 
non-integer order $s$ if $M=\mathbb R^n$.

For $M=S^1$, the geodesic equation is locally well-posed even for $s\ge \frac12$.
\end{theorem}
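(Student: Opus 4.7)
The plan is to follow the Ebin--Marsden strategy: extend the group, exhibit a smooth geodesic spray on the extension, apply the Picard--Lindel\"of theorem in a Banach manifold, and then use a no-loss/no-gain argument to descend the solution to $\Diff_{\mathcal A}(M)$. Concretely, for integer $q>\dim(M)/2+1$ I would first construct the Hilbert manifold $\Diff^q(M)$ of $H^q$-diffeomorphisms as in \nmb!{6.8}(2), whose tangent space at $\varphi$ is identified with $\Ga_{H^q}(\varphi^*TM)$ via right trivialization. The right-invariant Sobolev metric $G^s$ of integer order $s\ge 1$ extends to a weak metric on $\Diff^q(M)$ for $q\ge 2s$, since the inertia operator $L=(1+\De^{\bar g})^s\cdot\vol(\bar g)$ is a differential operator of order $2s$ that only requires $H^{q-2s}$-regularity of its output after pairing.

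The hard step is to show that the geodesic spray $F^s\colon T\Diff^q(M)\to T^2\Diff^q(M)$, written in right-trivialized coordinates as
\begin{equation*}
(\varphi,u)\mapsto \bigl(\varphi,u;\, u\circ\varphi,\, -\bigl[L^{-1}\operatorname{ad}(u)^*L(u)\bigr]\circ\varphi\bigr),
\end{equation*}
is smooth as a map between $H^q$-manifolds. The first component loses derivatives in $\varphi$, but is balanced by the smoothing of $L^{-1}=(1+\De^{\bar g})^{-s}$ applied to an expression involving only first derivatives of $u$ and $L(u)$, which is of order $2s+1$. The key calculation is that after conjugation by $R_\varphi$ the operator $\bar L_\varphi := R_\varphi\circ L\circ R_{\varphi^{-1}}$ depends smoothly on $\varphi\in\Diff^q(M)$ as an operator $H^q\to H^{q-2s}$, and similarly $\bar L_\varphi^{-1}$ is smooth as an operator $H^{q-2s}\to H^q$; this uses $\Omega$-lemma type arguments (\nmb!{5.2}) together with standard elliptic theory for the positive elliptic operator $L$ on the compact manifold $M$. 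Granting the smoothness of $F^s$, Picard--Lindel\"of on the Banach manifold $T\Diff^q(M)$ yields local well-posedness and a smooth exponential map. For $M=S^1$ the bound on $s$ drops to $s\ge 1/2$ because one-dimensional interpolation (Kato--Ponce type estimates) allows the commutator structure hidden in $\operatorname{ad}(u)^*L(u)$ to be estimated with fewer derivatives; this is the content of the $\dim=1$ case cited in the statement.

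For the descent to smooth data I would use the \emph{no-loss, no-gain} principle: if $(\varphi_0,u_0)\in T\Diff_{\mathcal A}(M)$, then the $H^q$-solution actually lies in $\Diff_{\mathcal A}(M)$ for all time it exists. The argument is to differentiate the equation by right-invariant vector fields (or by pulling back to a fixed chart atlas and differentiating in $x$), deriving for each $q'>q$ a linear non-autonomous ODE for the higher-order derivative whose coefficients are controlled by the already-known $H^q$-norm; a Gronwall estimate then forces the $H^{q'}$-norm to remain finite on the same time interval. Intersecting over $q'$ gives smoothness and the local diffeomorphism property of $\exp^{G^s}$ at $0$ follows from the inverse function theorem applied at the $H^q$-level combined with the no-loss statement.

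For global well-posedness when $s>(\dim M+3)/2$, conservation of energy gives $\|u(t)\|_{H^s}=\|u_0\|_{H^s}$ for all times $t$ in the maximal interval of existence. The Sobolev embedding $H^s(M)\hookrightarrow C^{1,\alpha}(M)$ (valid since $s-1>\dim(M)/2+1/2$) bounds $u(t)$ in $C^{1,\alpha}$ uniformly in $t$, which in turn bounds $\|\p_t\varphi\|_{C^{1,\alpha}}$ and prevents the flow $\varphi(t)$ from leaving any bounded neighborhood in $\Diff^q(M)$ or from degenerating (i.e.\ $\det d\varphi$ staying bounded away from $0$). Combined with a standard continuation argument for ODEs on Banach manifolds this extends the solution to all of $\mathbb R$, yielding geodesic completeness. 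The anticipated main obstacle is the smoothness of $F^s$ in the non-integer $s$ case on a general compact $M$: on $M=\mathbb R^n$ Fourier-multiplier/pseudo\-differential calculus is directly available and this is where the stated non-integer extension comes from, whereas on general $M$ one would need a pseudodifferential calculus of sufficient precision to handle fractional $L$, which explains the restriction to integer $s$ in the general statement.
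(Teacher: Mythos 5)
The paper offers no proof of Theorem \nmb!{8.10}; it simply cites \cite{Bauer2011b}, \cite{Escher2014}, \cite{Escher2014b}, \cite{Bauer2015}, and your Ebin--Marsden outline (Sobolev completion $\Diff^q(M)$, smoothness of the right-trivialized spray via the conjugated inertia operator $\bar L_\varphi$, Picard--Lindel\"of, no-loss/no-gain descent, and conservation of energy plus Sobolev embedding for completeness) is precisely the strategy carried out in those references. Your sketch is correct in substance and matches the intended argument, so there is nothing to add beyond noting that the quantitative details (the exact threshold $q\ge 2s$, the Gronwall estimates in the no-loss step, and the fractional-order multiplier calculus on $S^1$ and $\mathbb R^n$) are where the real work in the cited papers lies.
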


For these results see \cite{Bauer2011b}, \cite{Escher2014}, \cite{Escher2014b}, \cite{Bauer2015}. 

\begin{theorem}\nmb.{8.11}
{\rm \cite{BBM14b}}
For $\mathcal A\in\{C^\infty_c, \mathcal S, W^{\infty,1}\}$ 
let 
$$
\mathcal A_1(\mathbb R)=\{f\in C^\infty(R)\,:\, f'\in\mathcal A(\mathbb R)\,,\, 
f(-\infty)=0\}
$$
and let $\Diff_{\mathcal A_1}(\mathbb R)=\{\ph = \on{Id}+f\,:\, 
f\in \mathcal A_1(\mathbb R)\,,\, f'>-1\}$. These are all regular Lie groups. 
The right invariant weak Riemannian metric
$$G^{\dot H^1}_{\on{Id}}(X,Y)= \int_{\mathbb R}X'Y'\,dx$$
is positive definite both on $\Diff_{\mathcal A}(\mathbb R)$ where it does not admit a geodesic 
equation (a non-robust weak Riemannian manifold), and on 
$\Diff_{\mathcal A_1}(\mathbb R)$ where it admits a geodesic equation but not in the stronger sense 
of Arnold. 
On $\on{Diff}_{\mathcal A_1}(\mathbb R)$ the geodesic equation is the Hunter-Saxton equation
$$
(\ph_t)\circ\ph\i=u,  \qquad u_{t} = -u u_x +\frac12 \int_{-\infty}^x   (u_x(z))^2 \,dz\;,
$$
and the induced geodesic distance is positive. 
We define the $R$-map by:
$$ R:
 \Diff_{\mathcal A_1}(\mathbb R)\to 
\mathcal A\big(\mathbb R,\mathbb R_{>-2}\big)\subset\mathcal A(\mathbb R,\mathbb R),
\quad R(\ph) = 2\;\big((\ph')^{1/2}-1\big)\, .
$$
The $R$-map is invertible with inverse
$$R\i :
\mathcal A\big(\mathbb R,\mathbb R_{>-2}\big) \to \Diff_{\mathcal A_1}(\mathbb R),
\quad R\i(\ga)(x) = x+\frac14 \int_{-\infty}^x \ga^2+4\ga\;dx\; .
$$ 
The pullback of the flat $L^2$-metric via $R$ is the $\dot H^1$-metric on $\on{Diff}_\mathcal A(\mathbb R)$, i.e., 
$R^*\langle \cdot,\cdot\rangle_{L^2(dx)} = G^{\dot H^1}$. 
Thus the space $\big(\Diff_{\mathcal A_1}(\mathbb R),\dot H^1\big)$ is a flat space in the sense of Riemannian geometry.
There are explicit formulas for geodesics, geodesic distance, and geodesic splines, even for more 
restrictive spaces $\mathcal A_1$ like Denjoy--Carleman ultradifferentiable function classes.
There are also soliton-like solutions. 
$(\Diff_{\mathcal A_1}(\mathbb R), G^{\dot H^1})$ is geodesically convex, but not geodesically 
complete; the geodesic completion is the smooth semigroup 
$$\on{Mon}_{\mathcal A_1} = \{\ph=\on{Id}+f\,:\, f\in\mathcal A_1(\mathbb R)\,,\, f'\ge -1\}\,.$$
Any geodesic can hit the subgroup 
$\Diff_{\mathcal A}(\mathbb R)\subset \Diff_{\mathcal A_1}(\mathbb R)$ at most twice.
\end{theorem}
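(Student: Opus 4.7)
The entire theorem hinges on the $R$-map, so my plan is to establish $R$ as an isometric diffeomorphism onto an open subset of a flat Hilbert/convenient space and then deduce essentially every other assertion as a corollary. I would structure the proof in the following stages.

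Stage 1 (regularity of the groups). The spaces $\mathcal A_1(\mathbb R)$ are convenient vector spaces by construction: an element is determined by its derivative in $\mathcal A$ together with the normalization $f(-\infty)=0$, so $\mathcal A_1\cong\mathcal A$ via $f\mapsto f'$. Composition and inversion can then be handled by the same arguments as in \nmb!{6.9} (specifically, the Fa\`a\ di Bruno-based estimates for $\Diff_\mathcal A(\mathbb R)$ transfer, since $\ph'\in 1+\mathcal A$ is common to both settings, and the normalization $f(-\infty)=0$ is preserved by $\ph\mapsto\ph\i$ and by composition). Regularity follows from the existence and smooth dependence of ODE solutions, as in the proof of \nmb!{6.2}.

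Stage 2 (the $R$-map is an isometric diffeomorphism). A direct computation shows that if $\ga\in\mathcal A(\mathbb R,\mathbb R_{>-2})$ and $\ph$ is defined by the displayed formula, then
\begin{equation*}
\ph'(x)=1+\tfrac14\big(\ga(x)^2+4\ga(x)\big)=\tfrac14\big(\ga(x)+2\big)^2,
\end{equation*}
so $(\ph')^{1/2}=1+\ga/2$ and $R(\ph)=\ga$; conversely $R$ is clearly smooth with smooth inverse. To compute the pullback metric, note that by right-invariance
\begin{equation*}
G^{\dot H^1}_\ph(h,h)=\int_{\mathbb R}\frac{h'(x)^2}{\ph'(x)}\,dx.
\end{equation*}
On the other hand $dR(\ph)h=h'/(\ph')^{1/2}$, so $\|dR(\ph)h\|_{L^2}^2$ equals the same expression. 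This proves $R^*\langle\cdot,\cdot\rangle_{L^2}=G^{\dot H^1}$ and hence positive definiteness on $\Diff_{\mathcal A_1}(\mathbb R)$. Positive definiteness on $\Diff_{\mathcal A}(\mathbb R)$ is a by-product, since $\Diff_{\mathcal A}\subset\Diff_{\mathcal A_1}$.

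Stage 3 (flatness, geodesics, distance, completion). Since $R$ is an isometric diffeomorphism onto the open convex set $\mathcal A(\mathbb R,\mathbb R_{>-2})\subset\mathcal A(\mathbb R,\mathbb R)$ with the flat $L^2$-metric, geodesics are straight lines $t\mapsto\ga_0+tv$ in the image, for as long as they remain in $\{\ga>-2\}$. Transporting back through $R\i$ gives explicit formulas for geodesics; positivity of geodesic distance follows at once because the $L^2$-distance in the image is a lower bound (and the $L^2$-distance on any open subset of a Hilbert-like convenient space is positive). Geodesic convexity is obvious from straight-line convexity of $\mathcal A(\mathbb R,\mathbb R_{>-2})$; incompleteness comes from hitting $\ga=-2$, which corresponds to $\ph'=0$, i.e., the natural boundary of $\Diff_{\mathcal A_1}(\mathbb R)$ inside the monoid $\on{Mon}_{\mathcal A_1}$. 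To identify the geodesic equation with Hunter-Saxton, I would apply the Euler-Arnold formalism \nmb!{8.6} -- \nmb!{8.7} with inertia operator $L=-\p_x^2$: compute $\on{ad}^*_u L(u) = -2u_xu_{xx}-uu_{xxx}$, invert $-\p_x^2$ against the boundary condition $u(-\infty)=0$, and obtain the stated form $u_t=-uu_x+\tfrac12\int_{-\infty}^x(u_x)^2\,dz$; the check that $L$ does not admit an Arnold-style adjoint on $\Diff_{\mathcal A}(\mathbb R)$ reduces to noting that $-\p_x^2$ is not surjective onto $\mathcal A$.

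Stage 4 (intersection with $\Diff_{\mathcal A}(\mathbb R)$). An element $\ph=\on{Id}+f\in\Diff_{\mathcal A_1}(\mathbb R)$ lies in $\Diff_{\mathcal A}(\mathbb R)$ precisely when $f(+\infty)=0$, i.e., $\int_{\mathbb R}f'\,dx=0$. Under $R$, this translates via $\ph'-1=\tfrac14\ga^2+\ga$ to the single quadratic constraint
\begin{equation*}
\tfrac14\,\|\ga\|_{L^2}^2+\int_{\mathbb R}\ga\,dx=0,
\end{equation*}
which defines a paraboloid-like hypersurface in the image of $R$. Substituting an affine line $\ga_0+tv$ yields a quadratic polynomial in $t$, whose zero set has at most two elements, proving the final claim. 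The main obstacle I anticipate is Stage 3, specifically the precise identification of the geodesic equation and the careful handling of decay at $\pm\infty$ when inverting $-\p_x^2$; everything else is essentially bookkeeping around the change of variables $R$.
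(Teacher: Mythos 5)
The paper itself offers no proof of Theorem \nmb!{8.11} --- it only cites \cite{BBM14b} --- and your proposal reconstructs exactly the strategy of that reference: exhibit the $R$-map as an isometric diffeomorphism onto the convex $c^\infty$-open set $\mathcal A(\mathbb R,\mathbb R_{>-2})$ carrying the flat $L^2$-metric, and then read off flatness, explicit straight-line geodesics, positivity of geodesic distance, geodesic convexity, the completion $\on{Mon}_{\mathcal A_1}$, and the at-most-two-intersections claim from the quadratic constraint $\tfrac14\|\ga\|_{L^2}^2+\int_{\mathbb R}\ga\,dx=0$, whose restriction to a nonconstant affine line has positive leading coefficient $\tfrac14\|v\|_{L^2}^2$. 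Your computations in Stages 2 and 4 are correct, and the overall architecture is sound. The one point to sharpen is your Stage 3 diagnosis of why the geodesic equation fails on $\Diff_{\mathcal A}(\mathbb R)$: the obstruction is not an abstract failure of surjectivity of $-\p_x^2$, but that the unique candidate $w$ with $\check\ga(w)=\on{ad}(u)^*\check\ga(u)$, $w'=uu_{xx}+\tfrac12 u_x^2\in\mathcal A$ and $w(-\infty)=0$ satisfies $w(+\infty)=-\tfrac12\int_{\mathbb R}(u_x)^2\,dx\neq0$, so $w\in\mathcal A_1\setminus\mathcal A$; this single computation simultaneously yields the Hunter--Saxton form $u_t=-w$, the existence of the geodesic equation on $\Diff_{\mathcal A_1}(\mathbb R)$, its non-existence on $\Diff_{\mathcal A}(\mathbb R)$, and (applied to the bilinear expression $\on{ad}(X)^*\check\ga(Y)$) the failure of Arnold's stronger condition even on $\Diff_{\mathcal A_1}(\mathbb R)$.
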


\subsection{Trouv\'e groups for reproducing kernel Hilbert spaces}
\nmb.{8.12}
This is the origin of the notion of a Trouv\'e group. It puts the approach of \nmb!{8.1} to \nmb!{8.11} upside down and gets rid of the use of the Lie algebra structure on the space of vector fields. 
If the generating space $\mathcal A$ of vector fields on $\mathbb R^d$ for the Trouv\'e group 
$\mathcal G_{\mathcal A}$ (see \nmb!{6.12}) is a  reproducing kernel Hilbert space $(\mathcal A(\mathbb R^d,\mathbb R^d),\langle\;,\;,\rangle_{\mathcal A})$ contained in $C^1_b$, then 
$$
\on{dist}(\on{Id}, \ph) := \inf\big\{ \int_0^1 \|X(t)\|_{\mathcal A}\, dt\;:\; X\in \mathcal F_{\mathcal A},\;\on{evol}^X = \ph\big\}
$$
defines a metric which makes the Trouv\'e group $\mathcal G_{\mathcal A}$ into a topological group;
see \cite{Trouve95}, \cite{Younes10}. This is widely used for the \emph{Large Deformation Diffeomorphic Metric Matching} (LDDMM) method in image analysis and computational anatomy. 
The most popular reproducing kernel Hilbert space is the one where the kernel is a Gaussian 
$e^{- |x|^2/\si}$. Here the  space $\mathcal A$ is a certain space of entire real analytic functions, and a direct description of the Trouv\'e group is severely lacking.

\section{Robust weak Riemannian manifolds and Riemannian submersions}\nmb0{9}

%Another problem arises if we want to consider Riemannian submersions, in particular shape spaces as orbits of diffeomorphism groups, as explained in \cite{Micheli2013}.

\subsection{Robust weak Riemannian manifolds}\nmb.{9.1}
Some  constructions may lead to vector fields whose values do not lie in
$T_x M$, but in the Hilbert space completion $\overline{T_x M}$
with respect to the weak inner product $g_x$. 
We  need that
$\bigcup_{x\in M}\overline{T_x M}$ forms a smooth vector bundle
over  $M$.  In a coordinate chart on open
$U \subset M$, $TM|_U$ is a trivial bundle $U \times V$
and all the inner products $g_x, x \in U$ define inner products
on the  same topological vector space $V$. They all should be 
bounded with respect to each other, so that
the completion $\overline{V}$ of $V$ with respect to $g_x$ does
not depend on $x$ and $\bigcup_{x\in U}\overline{T_x M} \cong U
\times    \overline{V}$.   This   means   that   $\bigcup_{x\in
M}\overline{T_x  M}$ forms a smooth vector bundle over $M$ with
trivializations the linear extensions of the trivializations of
the tangent bundle  $TM\to M$. Chart changes should respect this. This is a compatibility 
property between the weak Riemannian metric and some smooth atlas of $M$.

\smallskip\noindent
{\bf Definition}  A convenient weak Riemannian manifold
$(M,g)$ will be called a {\em robust} Riemannian manifold if
\begin{itemize}
\item The Levi-Civita convariant derivative of the metric $g$ exists: 
The symmetric gradients should exist and be smooth.
\item The completions $\overline{T_x M}$ form a smooth vector bundle as above.
\end{itemize}

\begin{theorem}\nmb.{9.2}
If a right invariant weak Riemannian metric on a regular Lie group admits the Levi-Civita 
covariant derivative, then it is already robust. 
\end{theorem}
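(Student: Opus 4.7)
The plan is to exploit the right trivialization of the tangent bundle to globalize the Hilbert completion of the Lie algebra. Since the metric $\gamma$ is right invariant, it is pulled back from a single bounded positive-definite inner product on $\mathfrak{g}$, so all fibers of $TG$ have canonically the same Hilbert completion, and the completion bundle is a trivial smooth Hilbert bundle.

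First I would recall from Section~\nmb!{8.5} the right trivialization $R: G \times \mathfrak{g} \to TG$, $(x,X) \mapsto T_e\mu^x \cdot X$, which is a smooth vector bundle isomorphism. Writing $\gamma$ for the inner product on $\mathfrak{g} = T_eG$, right invariance gives
\[
\gamma_x(R(x,X), R(x,Y)) \;=\; \gamma(X,Y),
\]
independent of $x \in G$. Let $\overline{\mathfrak{g}}$ denote the Hilbert completion of $\mathfrak{g}$ with respect to $\gamma$.

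Next I would identify the completion bundle. At each $x \in G$, the linear isomorphism $T_x\mu^{x^{-1}}:(T_xG, \gamma_x) \to (\mathfrak{g}, \gamma)$ is an isometry, and hence induces an isometric isomorphism $\overline{T_xG} \cong \overline{\mathfrak{g}}$. Extending $R$ fiber-wise yields a bijection $\overline{R}: G \times \overline{\mathfrak{g}} \to \bigsqcup_{x \in G} \overline{T_xG}$; transporting the trivial smooth Hilbert bundle structure of $G \times \overline{\mathfrak{g}}$ through $\overline{R}$ equips the completion bundle with a smooth vector bundle structure over $G$ whose restriction to $TG$ is exactly the right trivialization.

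For the compatibility with a smooth atlas of $G$, I would over any chart domain $U_\alpha$ use the restriction of the right trivialization $R|_{U_\alpha}$ (rather than the chart trivialization $Tu_\alpha$) as the admissible trivialization of $TG|_{U_\alpha}$; its linear extension is the restriction of $\overline{R}$, and the transition maps on overlaps are the identity on the fiber $\overline{\mathfrak{g}}$ and thus trivially extend. This verifies condition (2) of the definition in \nmb!{9.1}; condition (1) holds by hypothesis. I do not expect a substantial obstacle — the only point that requires any care is to observe that the right trivialization qualifies as an admissible trivialization of $TG$ in the sense of the definition, so that its linear extension is the required smooth structure on $\bigsqcup_x \overline{T_xG}$.
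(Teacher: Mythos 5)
Your proposal is correct and follows essentially the same route as the paper: right invariance makes every right translation an isometry of the fibers, so the completions are all canonically $\overline{\mathfrak{g}}$ and the completion bundle is the trivial bundle $G\times\overline{\mathfrak{g}}$ via the (extended) right trivialization. The paper's proof is just a condensed version of this, additionally citing the smooth uniform boundedness theorem to see that the extended isomorphisms depend smoothly on the group element, a point your transport-of-structure formulation handles implicitly.
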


\begin{proof}
By right invariance, each right translation $T\mu^g$ extends to an isometric isomorphisms 
$\overline{T_xG} \to \overline{T_{xg}G}$. By the smooth uniform boundedness theorem these isomorphisms 
depend smoothly on $g\in G$.
\end{proof}

\subsection{Covariant curvature and O'Neill's formula}\nmb.{9.3}
In \cite[2.2]{Micheli2013} one finds the following 
formula for the numerator of sectional curvature,
which is valid for {\em closed smooth} 1-forms $\al,\be \in\Om^1_g(M)$ on a weak Riemannian
manifold $(M,g)$. Recall that we 
view $g:TM\to T^*M$ and so $g\i$ 
is the dual inner product on $g(TM)$ and $\al^\sharp = g\i(\al)$. 
$$
\begin{aligned}
&g\big(R(\al^{\sharp},\be^{\sharp})\al^{\sharp},\be^{\sharp}\big) =
\\&
-\tfrac12\al^{\sharp}\al^{\sharp}(\|\be\|_{g\i}^2)
-\tfrac12\be^{\sharp}\be^{\sharp}(\|\al\|_{g\i}^2)
+\tfrac12(\al^{\sharp}\be^{\sharp}+\be^{\sharp}\al^{\sharp})g\i(\al,\be)
\\&\qquad\qquad
\big(\text{last line }= -\al^\sharp \be([\al^\sharp,\be^\sharp])+\be^\sharp\al([\al^\sharp,\be^\sharp]])\big)
\\&
-\tfrac14\|d(g\i(\al,\be))\|_{g\i}^2
+\tfrac14 g\i\big(d(\|\al\|_{g\i}^2),d(\|\be\|_{g\i}^2)\big)
\\&
+\tfrac34 \big\|[\al^{\sharp},\be^{\sharp}]\big\|_{g}^2
\end{aligned}
$$
This is called Mario's formula since Mario Micheli derived the coordinate version in his 2008 
thesis. Each term depends only on $g\i$ with the exception of the last term. The role of the last 
term (which we call the O'Neill term) will become clear  in the next result.
Let $p:(E,g_E)\to (B,g_B)$ be a Riemannian submersion between infinite dimensional 
robust Riemannian manifolds; i.e., for each $b\in B$ and $x\in E_b:=p\i(b)$ 
the tangent mapping $T_xp:(T_xE, g_E)\to (T_bB,g_B)$ 
is a surjective metric quotient map so that 
\begin{equation*}
\|\xi_b\|_{g_B} := \inf\bigl\{\|X_x\|_{g_E}\,:\,X_x\in T_xE, T_xp.X_x=\xi_b\bigr\}.
\end{equation*}
The infinimum need not be attained in $T_xE$ but will be in the completion 
$\overline{T_xE}$. 
The orthogonal subspace $\{Y_x: g_E(Y_x,T_x(E_b))=0\}$ 
will therefore be taken in $\overline{T_x(E_b)}$ in $T_xE$. 
If $\al_b=g_B(\al_b^\sharp,\quad)\in g_B(T_bB)\subset T_b^*B$ 
is an element in the $g_B$-smooth dual, then 
$p^*\al_b:=(T_xp)^*(\al_b)= g_B(\al_b^\sharp,T_xp\quad):T_xE\to \mathbb R$ 
is in $T_x^*E$ but in general it is not an element in the smooth dual $g_E(T_xE)$. 
It is, however, an element of the Hilbert space completion $\overline{g_E(T_xE)}$ 
of the $g_E$-smooth dual $g_E(T_xE)$ with respect to the norm $\|\quad\|_{g_E\i}$, 
and the element $g_E\i(p^*\al_b)=: (p^*\al_b)^\sharp$ is in the 
$\|\quad\|_{g_E}$-completion $\overline{T_xE}$ of $T_xE$. 
We can call $g_E\i(p^*\al_b)=: (p^*\al_b)^\sharp$ the \emph{horizontal lift} of 
$\al_b^\sharp = g_B\i(\al_b)\in T_bB$.

\begin{theorem}\nmb.{9.4}
{\rm \cite[2.6]{Micheli2013}}
Let $p:(E,g_E)\to (B,g_B)$ be a Riemannian submersion between infinite
dimensional robust Riemannian manifolds. Then for {\bf closed} 1-forms
$\al,\be\in\Om_{g_B}^1(B)$ O'Neill's formula holds in the form:
\begin{align*}
 g_B\big(R^B(\al^{\sharp},\be^{\sharp})\be^{\sharp},\al^{\sharp}\big)
%\\&\qquad
&= g_E\big(R^E((p^*\al)^{\sharp},(p^*\be)^{\sharp})(p^*\be)^{\sharp},(p^*\al)^{\sharp}\big)
\\&\quad
+\tfrac34\|[(p^*\al)^{\sharp},(p^*\be)^{\sharp}]^{\text{ver}}\|_{g_E}^2
\end{align*}
\end{theorem}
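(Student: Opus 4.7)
The plan is to apply Mario's formula (from \nmb!{9.3}) to both $(B,g_B)$ on the pair $(\alpha,\beta)$ and to $(E,g_E)$ on the pair $(p^*\alpha, p^*\beta)$, and then to compare the two expressions term by term. Note that $p^*\alpha$ is closed because $d$ commutes with pullback, so Mario's formula is applicable, subject to the technical caveat discussed below.

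First I would record the pullback compatibilities forced by the Riemannian submersion property. Since $T_xp$ is a metric quotient map, its adjoint $(T_xp)^*$ is an isometric embedding from $g_B(T_bB)$ into $\overline{g_E(T_xE)}$ (the $\|\cdot\|_{g_E^{-1}}$-completion), with image consisting of horizontal covectors, and the corresponding sharps $(p^*\alpha)^\sharp\in\overline{T_xE}$ are precisely the horizontal lifts of $\alpha^\sharp$. Consequently
\begin{gather*}
\|p^*\alpha\|_{g_E^{-1}}^2 = \|\alpha\|_{g_B^{-1}}^2\circ p,\qquad
g_E^{-1}(p^*\alpha,p^*\beta) = g_B^{-1}(\alpha,\beta)\circ p,\\
(p^*\alpha)^\sharp(F\circ p) = (\alpha^\sharp F)\circ p\quad\text{for }F\in C^\infty(B).
\end{gather*}
Applying the last identity to $F = \|\beta\|_{g_B^{-1}}^2$, $g_B^{-1}(\alpha,\beta)$, etc., shows that the first two lines of Mario's formula on $E$ evaluated on $(p^*\alpha,p^*\beta)$ equal the corresponding lines on $B$ evaluated on $(\alpha,\beta)$, composed with $p$. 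The same applies to $\tfrac14\|d(g_E^{-1}(p^*\alpha,p^*\beta))\|_{g_E^{-1}}^2$ and $\tfrac14 g_E^{-1}(d\|p^*\alpha\|^2,d\|p^*\beta\|^2)$, since exterior derivatives of pullbacks of smooth functions on $B$ are horizontal and $p^*$ is fiberwise isometric on horizontal covectors.

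It remains to analyse the bracket term. Because horizontal lifts of sharp vector fields are $p$-related to their projections, the horizontal component of $[(p^*\alpha)^\sharp,(p^*\beta)^\sharp]$ is the horizontal lift of $[\alpha^\sharp,\beta^\sharp]_B$, so
\[
\bigl\|[(p^*\alpha)^\sharp,(p^*\beta)^\sharp]^{\text{hor}}\bigr\|_{g_E}^2 = \bigl\|[\alpha^\sharp,\beta^\sharp]\bigr\|_{g_B}^2\circ p,
\]
and the full bracket decomposes as horizontal $+$ vertical orthogonally with respect to $g_E$. Subtracting Mario's formula on $B$ (precomposed with $p$) from Mario's formula on $E$ then leaves precisely the vertical residue $\tfrac34\|[(p^*\alpha)^\sharp,(p^*\beta)^\sharp]^{\text{ver}}\|_{g_E}^2$, which rearranges to the claimed identity.

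The main obstacle is the fact that $(p^*\alpha)^\sharp$ a priori lies in the completion $\overline{T_xE}$ rather than in $T_xE$, so the expressions $[(p^*\alpha)^\sharp,(p^*\beta)^\sharp]$, $(p^*\alpha)^\sharp(p^*\alpha)^\sharp(\cdots)$, and the application of Mario's formula itself have to be justified in the robust setting. One handles this by exploiting robustness: in a local trivialization the completion forms a genuine smooth Hilbert bundle, so each ingredient of Mario's formula extends continuously from $\Omega^1_g$-arguments to closed $1$-forms taking values in the completed cotangent bundle. Equivalently, one approximates $\alpha,\beta$ by sequences in $\Omega^1_{g_B}(B)$ with sharps that are genuine smooth vector fields on $B$; the horizontal lifts are then smooth vector fields on $E$ whose brackets lie in $TE$, Mario's formula applies verbatim on $E$, and the identity passes to the limit by continuity of all terms in the $g_E^{-1}$- and $g_E$-topologies.
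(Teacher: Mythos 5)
Your proposal is correct and follows essentially the same route as the paper: the paper's own proof is the single remark that ``the last (O'Neill) term is the difference between curvature on $E$ and the pullback of the curvature on $B$,'' i.e.\ precisely your term-by-term comparison of Mario's formula on $B$ and on $E$, using that all terms except the bracket term depend only on $g^{-1}$ and hence pull back, while the bracket splits orthogonally into the horizontal lift of the downstairs bracket plus the vertical residue. Your additional care with the completions $\overline{T_xE}$ addresses a point the paper silently defers to the cited reference, and is a welcome elaboration rather than a deviation.
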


\begin{proof} The last (O'Neill) term is the difference between curvature on $E$ and the pullback 
of the curvature on $B$.
\end{proof}

\subsection{Semilocal version of Mario's formula, force, and stress}\nmb.{9.5}
In all interesting examples of orbits of diffeomorphisms groups through a template shape, 
Mario's covariant curvature formula leads to complicated and impenetrable formulas. 
Efforts to break this down to comprehensible pieces led to the concepts of symmetrized force and 
(shape-) stress explained below. Since acceleration sits in the second tangent bundle, one either 
needs a covariant derivative to map it down to the tangent bundle, or at least rudiments of  local 
charts. In \cite{Micheli2013} we managed the local version. Interpretations in mechanics or 
elasticity theory are still lacking. 

Let $(M,g)$ be a robust Riemannian manifold, $x\in M$, $\al,\be\in g_x(T_xM)$.
Assume we are given local smooth vector fields $X_\al$ and $X_\be$ such that:
\begin{enumerate}
\item $X_\al(x) = \al^\sharp(x),\quad X_\be(x) = \be^\sharp(x)$,
\item Then $\al^\sharp-X_\al$ is zero at $x$. Therefore it has a well-defined derivative 
       $D_x(\al^\sharp-X_\al)$ lying in Hom$(T_xM,T_xM)$. For a vector field $Y$ we have 
       $D_x(\al^\sharp-X_\al).Y_x = [Y,\al^\sharp-X_\al](x) = \L_Y(\al^\sharp-X_\al)|_x$.
       The same holds for $\be$.
\item $\L_{X_\al}(\al)=\L_{X_\al}(\be)=\L_{X_\be}(\al)=\L_{X_\be}(\be)=0$,
\item $[X_\al, X_\be] = 0$.
\end{enumerate}
Locally constant 1-forms and vector fields will do. We then define:
\begin{align*}
\mathcal F(\al,\be) :&= \tfrac12 d(g\i(\al,\be)), \qquad \text{a 1-form on $M$ called the \emph{force},}\\
\mathcal D(\al,\be)(x) :&= D_x(\be^\sharp - X_\be).\al^\sharp(x)
\\&
= d(\be^\sharp - X_\be).\al^\sharp(x), \quad\in T_xM\text{ called the \emph{stress}.}
\\
\implies &\mathcal D(\al,\be)(x) - \mathcal D(\be,\al)(x) = [\al^\sharp,\be^\sharp](x)
\end{align*}  
Then in terms of force and stress the numerator of sectional curvature looks as follows:
\begin{align*}
&g\big(R(\al^{\sharp},\be^{\sharp})\be^{\sharp},\al^{\sharp}\big)(x) = R_{11} +R_{12} + R_2 + 
R_3\,, 
\qquad\text{  where }
\\ 
& R_{11} = \tfrac12 \big(
\L_{X_\al}^2(g\i)(\be,\be)-2\L_{X_\al}\L_{X_\be}(g\i)(\al,\be)
%\\&\qquad\qquad\qquad
+\L_{X_\be}^2(g\i)(\al,\al)
 \big)(x)\,, 
\\
& R_{12} = \langle \mathcal F(\al,\al), \mathcal D(\be,\be) \rangle + \langle \mathcal F(\be,\be),\mathcal D(\al,\al)\rangle  
%\\&\qquad\qquad\qquad
- \langle \mathcal F(\al,\be), \mathcal D(\al,\be)+\mathcal D(\be,\al) \rangle 
\\
& R_2 = \big(
\|\mathcal F(\al,\be)\|^2_{g\i}
-\big\langle \mathcal F(\al,\al)),\mathcal F(\be,\be)\big\rangle_{g\i} \big)(x)\,, 
\\
& R_3 = -\tfrac34 \| \mathcal D(\al,\be)-\mathcal D(\be,\al) \|^2_{g_x} \,.
\end{align*}

\subsection{Landmark space as homogeneous space of solitons}\nmb.{9.6}
This subsection is based on \cite{Micheli2012};
the method explained here has many applications in computational anatomy and elsewhere, under the 
name LDDMM (large diffeomorphic deformation metric matching).

A {\em landmark} $q=(q_1,\dots,q_N)$ is an $N$-tuple 
of distinct points in $\mathbb R^n$;    
landmark space $\on{Land}^N(\mathbb R^n)\subset (\mathbb R^n)^N$ is open.
Let $q^0=(q^0_1,\dots,q^0_N)$ be a fixed standard template 
landmark. Then we have the surjective mapping
\begin{align*}
&\on{ev}_{q^0}:\on{Diff}_{\mathcal A}(\mathbb R^n)\to \on{Land}^N(\mathbb R^n),
\\&
\ph\mapsto \on{ev}_{q^0}(\ph)=\ph(q^0)=(\ph(q^0_1),\dots,\ph(q^0_N)).
\end{align*}
Given a Sobolev metric of order $s>\frac{n}2 + 2$ on $\Diff_{\mathcal A}(\mathbb R^n)$,
we want to induce a Riemannian metric on $\on{Land}^N(\mathbb R^n)$ such that $\on{ev}_{q_0}$ becomes a 
Riemannian submersion. 

The fiber of $\on{ev}_{q^0}$ over a landmark $q=\ph_0(q^0)$ is  
\begin{align*}
\{\ph\in\on{Diff}_{\mathcal A}(\mathbb R^n): \ph(q^0)=q\}
&= \ph_0\o\{\ph\in\on{Diff}_{\mathcal A}(\mathbb R^n): \ph(q^0)=q^0\}
\\&
= \{\ph\in\on{Diff}_{\mathcal A}(\mathbb R^n): \ph(q)=q\}\o\ph_0\,.
\end{align*}
The tangent space to the fiber is 
$$
\{X\o \ph_0: X\in \X_{\mathcal S}(\mathbb R^n), X(q_i) = 0 \text{  for all } i \}.
$$
A tangent vector $Y\o\ph_0 \in T_{\ph_0}\on{Diff}_{\mathcal S}(\mathbb R^n)$ is 
$G_{\ph_0}^L$-perpendicular to the fiber over $q$ if and only if
$$
\int_{\mathbb R^n} \langle LY, X \rangle\,dx =0\quad 
\forall X\text{  with }X(q)=0.
$$
If we require $Y$ to be smooth then $Y=0$. So we assume that 
$LY=\sum_i P_i.\de_{q_i}$, a distributional vector field with support
in $q$. Here $P_i\in T_{q_i}\mathbb R^n$. 
But then  
\begin{align*}
Y(x) &= L\i\Big(\sum_i P_i.\de_{q_i}\Big) 
= \int_{\mathbb R^n} K(x-y)\sum_i P_i.\de_{q_i}(y)\,dy
%\\&
= \sum_i K(x-q_i).P_i,
\\
&T_{\ph_0}(\on{ev}_{q^0}).(Y\o\ph_0) = Y(q_k)_k = \sum_i (K(q_k-q_i).P_i)_k\,.
\end{align*}
Now let us consider a tangent vector $P=(P_k)\in T_q\on{Land}^N(\mathbb R^n)$. Its
horizontal lift with footpoint $\ph_0$ is $P^{\text{hor}}\o\ph_0$
where the vector field $P^{\text{hor}}$ on $\mathbb R^n$ is given as follows:
Let $K\i(q)_{ki}$ be the inverse of the $(N\x N)$-matrix $K(q)_{ij}=K(q_i-q_j)$.
Then  
\begin{align*}
P^{\text{hor}}(x) &= \sum_{i,j} K(x-q_i)K\i(q)_{ij}P_j\,,
\\
L(P^{\text{hor}}(x))&= \sum_{i,j} \de(x-q_i)K\i(q)_{ij}P_j\,.
\end{align*}
Note that $P^{\text{hor}}$ is a vector field of class $H^{2l-1}$.

The Riemannian metric on the finite dimensional manifold $\on{Land}^N$ induced by the $g^L$-metric on
$\on{Diff}_{\mathcal S}(\mathbb R^n)$ is given by
\begin{align*}
g^L_q(P,Q) &= G^L_{\ph_0}(P^{\text{hor}}, Q^{\text{hor}})
%\\&\quad
= \int_{\mathbb R^n}\langle L(P^{\text{hor}}),Q^{\text{hor}} \rangle\,dx
\\&
= \int_{\mathbb R^n}\Big\langle \sum_{i,j}\de(x-q_i)K\i(q)_{ij}P_j,
%\\&\qquad\qquad\qquad
 \sum_{k,l} K(x-q_k)K\i(q)_{kl}Q_l \Big\rangle\,dx
\\&
= \sum_{i,j,k,l} K\i(q)_{ij}K(q_i-q_k)K\i(q)_{kl}\langle P_j,Q_l \rangle
\\
g^L_q(P,Q) &= \sum_{k,l} K\i(q)_{kl}\langle P_k,Q_l \rangle.
\end{align*}
The {\em geodesic equation} in vector form is:  
\begin{align*}
\ddot q_n =
& -\frac12
\sum_{k,i,j,l}K\i(q)_{ki}
\on{grad} K(q_i-q_j)(K(q)_{in}-K(q)_{jn}) 
%\\&\qquad\qquad
K\i(q)_{jl}\langle \dot q_k,\dot q_l \rangle
\\&
+ \sum_{k,i}K\i(q)_{ki}
\Big\langle\on{grad} K(q_i-q_n),\dot q_i-\dot q_n\Big\rangle 
\dot q_k\,.
\end{align*}
The cotangent bundle  $T^*{\on{Land}^N(\mathbb R^n)}=  
\on{Land}^N(\mathbb R^n)\x ((\mathbb R^n)^N)^*\ni (q,\al)$. 
We treat $\mathbb R^n$ like scalars;
$\langle\quad,\quad  \rangle$ is always the standard inner product on $\mathbb R^n$.
\\
The inverse metric is then given by  
\begin{align*}
(g^L)\i_q(\al,\be) &= \sum_{i,j} K(q)_{ij}\langle \al_i,\be_j \rangle,
\qquad
K(q)_{ij} = K(q_i-q_j).
\end{align*}
The energy function is
\begin{align*}
E(q,\al)&=\tfrac12 (g^L)\i_q(\al,\al) 
= \tfrac12\sum_{i,j} K(q)_{ij}\langle \al_i,\al_j \rangle
\end{align*}
and its Hamiltonian vector field (using $\mathbb R^n$-valued derivatives to save notation)  is
\begin{align*}
H_E(q,\al) &=
  \sum_{i,k=1}^N \Big(K(q_k-q_i)\al_i\frac{\p}{\p q_k} 
%\\&\qquad\qquad
+ \on{grad} K(q_i-q_k)\langle \al_i,\al_k \rangle\frac{\p}{\p \al_k}\Big). 
\end{align*}
So the {\em Hamiltonian version of the geodesic equation} is the flow of this vector field:
$$
\begin{cases}
\dot q_k &= \sum_i K(q_i-q_k)\al_i
\\
\dot \al_k &= -\sum_i  \on{grad} K(q_i-q_k) \langle \al_i,\al_k \rangle
\end{cases}
$$
We shall use {\em stress and force} to express the geodesic equation and curvature:
\begin{align*}
\al^\sharp_k =\sum_i &K(q_k-q_i)\al_i,
\quad \al^\sharp = \sum_{i,k} K(q_k-q_i)\langle \al_i,\tfrac{\p}{\p q^k} \rangle
%\\
%\mathcal D_{ij}(\al) :&=  dK(q_i-q_j)(\al^\sharp_i-\al^\sharp_j) = \mathcal D_{ji}(\al)
%\\
%\mathcal D_i(\al,\be) :&= \sum_j \mathcal D_{ij}(\al)\be_j =\sum_j dK(q_i-q_j)(\al^\sharp_i-\al^\sharp_j)\be_j 
\\
\mathcal D(\al,\be) :&%= \sum_i \Big\langle \mathcal D_i(\al,\be),\frac{\partial}{\partial q_i}\Big\rangle 
%= \sum_{i,j} \mathcal D_{ij}(\al)\Big\langle \be_j,\frac{\partial}{\partial q_i}\Big\rangle 
%\\&
= \sum_{i,j} 
dK(q_i-q_j)(\al^\sharp_i-\al^\sharp_j)\Big\langle \be_j,\frac{\partial}{\partial q_i}\Big\rangle, 
%\\&
\quad\text{the \emph{stress.}}
\\
\mathcal D(\al,\be) &- \mathcal D(\be,\al) = (D_{\al^\sharp}\be^\sharp) - 
D_{\be^\sharp}\al^\sharp =[\al^\sharp,\be^\sharp],
%\\&
\quad\text{\emph{Lie bracket.}}
\\
\mathcal F_i(\al,\be) &= \frac12\sum_k \on{grad} K(q_i-q_k)(\langle \al_i,\be_k \rangle + \langle \be_i,\al_k \rangle )
\\
\mathcal F(\al,\be) :&= \sum_i \langle \mathcal F_i(\al,\be),dq_i\rangle = \frac12\, d\,g\i(\al,\be) 
%\\&
\quad\text{the \emph{force}.} 
\end{align*}
The geodesic equation on $T^*\on{Land}^N(\mathbb R^n)$ then becomes 
$$
\begin{cases}
\dot q &= \al^\sharp 
\\
\dot \al &= - \mathcal F(\al,\al) \,.
\end{cases}
$$
Next we shall compute {\em curvature via the cotangent bundle}.
From the semilocal version of Mario's formula for the numerator of the sectional curvature
for constant 1-forms $\al,\be$ on landmark space, where 
$\al^\sharp_k =\sum_i K(q_k-q_i)\al_i$, we get directly:
\begin{align*}
&g^L\big(R(\al^{\sharp},\be^{\sharp})\al^{\sharp},\be^{\sharp}\big) =
\\&=
\big\langle \mathcal D(\al,\be) + \mathcal D(\be,\al),\mathcal F(\al,\be)\big\rangle 
\\&\quad
-\big\langle \mathcal D(\al,\al),\mathcal F(\be,\be)\big\rangle 
-\big\langle \mathcal D(\be,\be),\mathcal F(\al,\al)\big\rangle
\\&\quad
-\tfrac12\sum_{i,j}\Big(
d^2K(q_i-q_j)(\be^\sharp_i-\be^\sharp_j,\be^\sharp_i-\be^\sharp_j)\langle \al_i,\al_j\rangle
\\&\qquad\qquad
-2d^2K(q_i-q_j)(\be^\sharp_i-\be^\sharp_j,\al^\sharp_i-\al^\sharp_j)\langle \be_i,\al_j\rangle
\\&\qquad\qquad
+d^2K(q_i-q_j)(\al^\sharp_i-\al^\sharp_j,\al^\sharp_i-\al^\sharp_j)\langle \be_i,\be_j\rangle
\Big)
\\&\quad
-\|\mathcal F(\al,\be)\|_{g\i}^2 + g\i\bigl(\mathcal F(\al,\al),\mathcal F(\be,\be)\bigr). 
\\&\quad
+\tfrac34 \| [\al^\sharp,\be^\sharp] \|_g^2
\end{align*}

\subsection{Shape spaces of submanifolds as homogeneous spaces for the diffeomorphism group}\nmb.{9.7}
Let $M$ be a compact manifold and $(N,\bar g)$ a Riemannian manifold of bounded geometry as in 
subsection \nmb!{3.6}.
The diffeomorphism group $\Diff_{\mathcal A}(N)$ acts also from the left on the manifold of 
$\Emb(M,N)$ embeddings and also on the {\em nonlinear Grassmannian} or 
{\em differentiable Chow variety} $B(M,N)=\Emb(M,N)/\Diff(M)$. 
For a Sobolev metric of order $s>\frac{\dim(N)}2+ 2$ one can then again induce a Riemannian metric 
on each $\Diff_{\mathcal A}(N)$-orbit, as we did above for landmark spaces. 
This is done in \cite{Micheli2013}, where the geodesic equation is computed and where curvature is 
described in terms of stress and force. 

\newpage
\bibliographystyle{abbrv}
\bibliography{../../ref/ref,../../ref/biblio,../../ref/preprints,../../ref/articles}
%\def\cprime{$'$} 
%\begin{thebibliography}{100}
%\input \jobname.bbl

\end{document}